\documentclass[11pt,twoside,reqno]{amsart}

\usepackage{microtype}
\usepackage{cite}
\usepackage[OT1]{fontenc}
\usepackage{lmodern}
\usepackage{type1cm}
\usepackage{amssymb}
\usepackage{enumerate}
\usepackage{comment}
\usepackage{xcolor}
\usepackage{mathrsfs}
\usepackage{color}

\usepackage{geometry}
\geometry{a4paper,centering}

\usepackage{hyperref}
\hypersetup{
  colorlinks=true,
  linkcolor=black,
  anchorcolor=black,
  citecolor=black,
  filecolor=black,      
  menucolor=red,
  runcolor=black,
  urlcolor=black,
}


\numberwithin{equation}{section}


\theoremstyle{plain}
\newtheorem{theorem}{Theorem}[section]

\newtheorem{proposition}[theorem]{Proposition}

\newtheorem{lemma}[theorem]{Lemma}

\newtheorem{tmptheoremno}{Theorem}
\newtheorem{tmppropositionno}{Proposition}
\newtheorem{tmplemmano}{Lemma}

\theoremstyle{remark}
\newtheorem{remark}[theorem]{Remark}

\newtheorem{example}[theorem]{Example}

\theoremstyle{definition}

\newenvironment{propositionno}[1]
  { \begin{tmppropositionno} }
  { \end{tmppropositionno} }

\newcommand{\GL}{\mathrm{GL}}
\newcommand{\Gr}{\mathrm{Gr}}

\newcommand{\HH}{\mathcal{H}}
\newcommand{\LL}{\mathcal{L}}
\newcommand{\PP}{\mathcal{P}}
\newcommand{\QQ}{\mathcal{Q}}
\newcommand{\RR}{\mathcal{R}}
\newcommand{\MM}{\mathcal{M}}

\newcommand{\R}{\mathbb{R}}
\newcommand{\I}{\mathcal{I}}
\newcommand{\J}{\mathcal{J}}
\newcommand{\K}{\mathcal{K}}

\newcommand{\Z}{\mathbb{Z}}
\newcommand{\N}{\mathbb{N}}

\newcommand{\iii}{\mathtt{i}}
\newcommand{\jjj}{\mathtt{j}}
\newcommand{\kkk}{\mathtt{k}}
\renewcommand{\lll}{\mathtt{l}}
\newcommand{\eps}{\varepsilon}
\newcommand{\fii}{\varphi}

\newcommand{\A}{\mathsf{A}}

\newcommand{\End}{\mathrm{End}}

\newcommand{\dd}{\,\mathrm{d}}

\renewcommand{\ge}{\geqslant}
\renewcommand{\le}{\leqslant}
\renewcommand{\geq}{\geqslant}
\renewcommand{\leq}{\leqslant}

\newcommand{\threebar}[1]{{\left\vert\kern-0.25ex\left\vert\kern-0.25ex\left\vert #1 
    \right\vert\kern-0.25ex\right\vert\kern-0.25ex\right\vert}}

\DeclareMathOperator{\Span}{span}

\DeclareMathOperator{\dimh}{dim_H}

\DeclareMathOperator{\dimaff}{dim_{aff}}
\DeclareMathOperator{\udimaff}{\overline{dim}_{aff}}
\DeclareMathOperator{\ldimaff}{\underline{dim}_{aff}}

\DeclareMathOperator{\dist}{dist}
\DeclareMathOperator{\diam}{diam}

\renewcommand{\atop}[2]{\genfrac{}{}{0pt}{}{#1}{#2}}

\makeatletter
\@namedef{subjclassname@2020}{%
  \textup{2020} Mathematics Subject Classification}
\makeatother

\begin{document}

\title{Thermodynamic formalism of countably generated self-affine sets}

\author{Antti K\"aenm\"aki}
\address[Antti K\"aenm\"aki]
        {University of Eastern Finland \\ 
         Department of Physics and Mathematics \\
         P.O.\ Box 111 \\ 
         FI-80101 Joensuu \\ 
         Finland}
\email{antti.kaenmaki@uef.fi}

\author{Ian D. Morris}
\address[Ian D. Morris]
        {School of Mathematical Sciences \\
         Queen Mary University of London \\
         Mile End Road \\
         London E1 4NS \\
         United Kingdom}
\email{i.morris@qmul.ac.uk}

\thanks{The research of Ian D.~Morris was partially supported by the Leverhulme Trust (Research Project Grant RPG-2016-194).}
\subjclass[2020]{Primary 28A80, 37D35; Secondary 37H15.}
\keywords{Self-affine set, affinity dimension, thermodynamic formalism, equilibrium state}
\date{\today}

\begin{abstract}
  In this article, we further develop the thermodynamic formalism of affine iterated function systems with countably many transformations by showing the existence and extending earlier characterisations of the equilibrium states of finite affine iterated function systems to the countably infinite case. As an application, under mild conditions, we prove that the affinity dimension of a countable affine iterated function system is equal to the supremum of the affinity dimensions of its finite subsystems. We deduce corollaries concerning the Hausdorff dimension of countably generated self-affine sets in dimensions $1$, $2$, and $3$ satisfying mild deterministic assumptions and in arbitrary dimension with generic translations.
\end{abstract}

\maketitle

\tableofcontents

\section{Introduction and statement of results} \label{sec:intro-results}

\subsection{Background}

An \emph{iterated function system} acting on $\mathbb{R}^d$ is defined to be a collection $(T_i)_{i \in \mathcal{I}}$ of transformations $T_i \colon \R^d \to \R^d$ which are contracting with respect to some fixed norm $\threebar{\,\cdot\,}$ on $\R^d$, uniformly with respect to $i \in \I$, such that the fixed points of $T_i$ form a bounded set. In this article the set $\mathcal{I}$, which we call the \emph{index set} for the iterated function system $(T_i)_{i \in \mathcal{I}}$, will always be a nonempty set which is either finite or countably infinite. Since the transformations $T_i$ contract uniformly with respect to $i \in \I$, the mapping $A \mapsto \overline{\bigcup_{i \in \I} T_i(A)}$ defined on nonempty compact subsets of $\mathbb{R}^d$ is strictly contractive in Hausdorff distance. Therefore, by Banach's contraction mapping theorem, there exists a unique nonempty compact set $K \subset \mathbb{R}^d$ which satisfies
\begin{equation} \label{eq:X-invariant}
  K=\overline{\bigcup_{i \in \mathcal{I}} T_i(K)}.
\end{equation}
The set $K$ is called the \emph{attractor} of the iterated function system $(T_i)_{i \in \I}$. 

If the index set $\mathcal{I}$ is finite, then it is classical (and easily demonstrated) that the attractor $K$ is characterised by the following property: a point $x \in \R^d$ belongs to $K$ if and only if it is a \emph{limit point} of $(T_i)_{i \in \I}$, that is, there exists $(i_n)_{n=1}^\infty \in \I^\N$ such that for every $v \in \R^d$ we have
\begin{equation}\label{eq:limit-definition}
  \lim_{n \to \infty} T_{i_1} \circ \cdots \circ T_{i_n} (v) = x.
\end{equation}
In the countably infinite case, using the facts that the transformations $T_i$ contract uniformly with respect to $i \in \I$ and the fixed points of $T_i$ form a bounded set, it is not difficult to show that for every $(i_n)_{n=1}^\infty \in \I^\N$ there exists an associated limit point $x \in \R^d$ satisfying \eqref{eq:limit-definition} for all $v \in \R^d$. The union
\begin{equation} \label{eq:limit-set-def}
  X=\bigcup_{(i_n)_{n=1}^\infty \in \I^\N} \lim_{n \to \infty} T_{i_1} \circ \cdots \circ T_{i_n} (v)
\end{equation}
of all limit points is called the \emph{limit set} of the iterated function system $(T_i)_{i \in \I}$. It is easy to see that the attractor $K$ is the closure of the limit set $X$, which satisfies $X=\bigcup_{i \in \mathcal{I}} T_i(X)$ but which in general need not be compact.

As an example, let us consider an iterated function system $(x \mapsto (i+x)^{-1})_{i \in \N}$ acting on $(0,1)$\footnote{Strictly speaking this example does not define an iterated function system on $(0,1)$ since the map $f_i(x)=1/(i+x)$ is not contracting for $i=1$, but this point of detail may be circumvented by considering the larger system of maps $(f_i \circ f_j)_{i,j=1}^\infty$ which is uniformly contracting.}. This countably infinite system arises from continued fraction expansions, and its limit set $X$ as defined in \eqref{eq:limit-set-def} is precisely the set of all irrational numbers in the unit interval. The attractor $K$ in this case is the unit interval, so the limit set better reflects the dynamical properties of the system. We are therefore interested in the limit set of an iterated function system, i.e.\ the set of all points $x \in \R^d$ which arise as limits of the form \eqref{eq:limit-definition} for a given $(T_i)_{i \in \I}$. It is worthwhile to emphasize that in this example the limit set is not topologically closed.

Throughout this article we will be concerned with the situation in which every transformation $T_i$ is invertible and affine, having the form $T_i(x)=A_ix+v_i$ for some invertible linear map $A_i \in \GL_d(\R)$ and vector $v_i \in \R^d$, and we will describe such iterated function systems simply as \emph{affine iterated function systems}. The limit set $X$ of an affine iterated function system is conventionally called a \emph{self-affine set} as it consists of affine images of itself. In this article we will prefer to say that a set is a \emph{finitely generated self-affine set} if it is the attractor of an affine iterated function system with a finite index set and an \emph{infinitely generated self-affine set} if it is the limit set of an affine iterated function system with a countably infinite index set. The dimension theory of finitely generated self-affine sets, and study of the natural measures on finitely generated self-affine sets, has been very substantially developed in the last two decades in works such as \cite{BaranyKaenmakiRossi2021, HochmanRapaport2022, BaranyJordanKaenmakiRams2021, BaranyKaenmakiMorris2020, BaranyKaenmakiKoivusalo2018, KaenmakiMorris2018, BaranyKaenmaki2017, Feng2023, FengKaenmaki2011, BaranyHochmanRapaport2019, Baranski2007, BochiMorris2018, DasSimmons2017, Feng2023, FengShmerkin2014, Fraser2012, Morris2016, Rossi2014, Morris2018, KaenmakiVilppolainen2010, KaenmakiReeve2014, JordanPollicottSimon2007, Kaenmaki2004, MorrisSert2019, MorrisSert2023preprint, Rapaport2022preprint, Rapaport2023preprint, BaranyKaenmakiYu2021, FengFeng2023preprint}. In this article, we will be concerned with the extension of these results to infinitely generated self-affine sets, continuing a project which was initiated in \cite{KaenmakiReeve2014}. We will be particularly interested in extending the thermodynamic formalism of finitely generated self-affine sets to the case of infinitely generated self-affine sets, and in the approximation of infinitely generated self-affine sets by their finitely generated self-affine subsets. In this respect the present work parallels the now-classic article \cite{MauldinUrbanski1996} which extended the theory of conformal iterated function systems from the finitely-generated to the infinitely-generated context in an analogous manner.

\subsection{Singular value pressure}
In this article we let $\|\cdot\|$ denote the Euclidean norm on $\R^d$ and its induced operator norm on $d \times d$ real matrices. We denote the set of all real $d  \times d$ matrices by $\mathrm{M}_d(\R)$. If $\threebar{\,\cdot\,}$ is any norm on $\R^d$ then the same symbol will likewise be used to denote the corresponding operator norm on $\mathrm{M}_d(\R)$. We recall that the \emph{singular values} of $A \in \mathrm{M}_d(\R)$ are defined to be the non-negative square roots of the eigenvalues of the positive-semidefinite matrix ${A^\top}A$ and are denoted $\sigma_1(A),\ldots,\sigma_d(A)$ in non-increasing order. The identities $\sigma_1(A)=\|A\|$ and $\prod_{i=1}^d \sigma_i(A)=|\det A|$ for all $A \in \mathrm{M}_d(\R)$ are standard, as is the identity $\sigma_d(A)=\|A^{-1}\|^{-1}$ in the case where $A$ is invertible. We now recall some further definitions arising in \cite{Falconer1988}. For each $A \in \mathrm{M}_d(\R)$ and $s \geq 0$ we define the \emph{singular value function} by
\begin{equation*}
  \varphi^s(A)=
  \begin{cases}
    \sigma_1(A)\cdots \sigma_{\lfloor s\rfloor}(A) \sigma_{\lceil s\rceil}(A)^{s-\lfloor s\rfloor}, &\text{if } 0 \leq s \leq d,\\
    |\det A|^{\frac{s}{d}}, &\text{if } s > d.
  \end{cases}
\end{equation*}
Note that $\sigma_d(A)^{s}\le\fii^s(A)=\|A\|^s$ for all $0 \le s \le 1$ and $\sigma_d(A)^{s} \le \fii^s(A) \le \|A\|^s$ for all $s > 1$. The inequality $\varphi^s(AB) \leq \varphi^s(A)\varphi^s(B)$ was demonstrated in \cite{Falconer1988} to hold for all $A,B \in \mathrm{M}_d(\R)$. Given a finite or countably infinite tuple $\A=(A_i)_{i \in \I} \in \GL_d(\R)^\I$ of invertible matrices, we define for each $s \geq 0$ the \emph{pressure} of $\A$ at $s$ by setting
\begin{equation}\label{eq:pressure-defn}
  P(\A,s) = \lim_{n \to \infty} \frac{1}{n}\log \sum_{(i_1,\ldots,i_n) \in \I^n} \varphi^s(A_{i_1}\cdots A_{i_n}) \in (-\infty,\infty].
\end{equation}
The sequence $(a_n)_{n=1}^\infty$, where
\[
  a_n= \log \sum_{(i_1,\ldots,i_n) \in \I^n} \varphi^s(A_{i_1}\cdots A_{i_n}),
\]
satisfies the subadditivity property $a_{n+m} \leq a_n+a_m$ for all $n,m \geq 1$ as a consequence of the aforementioned inequality. If every $a_n$ is finite (as is necessarily the case when $\I$ is a finite set) this property suffices to guarantee the existence of the limit \eqref{eq:pressure-defn} as an element of $[-\infty,\infty)$ by the classical subadditivity lemma of Fekete. On the other hand when some of the terms $a_n$ are allowed to equal $\infty$ the existence of the limit is no longer guaranteed by subadditivity alone and additional arguments are needed. (For example, if $b_n=1$ for even $n$ and $b_n=\infty$ for odd $n$ then the sequence $(b_n)_{n=1}^\infty$ is subadditive but the sequence $(b_n/n)_{n=1}^\infty$ is not convergent.) We will see in Lemma \ref{th:first-lemma} below that the limit \eqref{eq:pressure-defn} always exists in $(-\infty,\infty]$ and is equal to the infimum of the same sequence.

In the case where $\I$ is finite it is well-established that $s \mapsto P(\A,s)$ is a continuous function $[0,\infty) \to \R$ and satisfies $P(\A,0)=\log\#\I$. If additionally $\sup_{i \in \I}\threebar{A_i}<1$ for some norm $\threebar{\,\cdot\,}$ on $\R^d$ then the function is additionally strictly decreasing with $\lim_{s \to \infty} P(\A,s)=-\infty$ and as such it has a unique zero. In the case where $\I$ is infinite, on the other hand, the situation is slightly more subtle. Let $\A=(A_i)_{i \in \mathcal{I}} \in \GL_d(\R)^\I$ where $\I$ is either finite or countably infinite. We define the \emph{finiteness threshold} of the pressure to be the quantity
\begin{equation} \label{eq:theta-def}
  \theta_\A=\inf\left\{ s \geq 0 \colon P(\A,s) <\infty\right\}
\end{equation}
if $\I$ is infinite, and $\theta_\A=0$ if $\I$ is finite. We also write
\begin{equation*}
  \mathscr{I}_\A=\left\{s \geq 0 \colon P(\A,s) \in \R \right\} \subseteq [\theta_\A,\infty).
\end{equation*}
A tuple $\A=(A_i)_{i \in \I} \in \GL_d(\R)^\I$ will be called \emph{irreducible} if there is no nonzero proper subspace $V \subset \R^d$ such that $A_iV=V$ for every $i \in \I$; otherwise $\A$ is \emph{reducible}. We also say that $\A$ is \emph{completely reducible} if in some basis the matrices in $\A$ are block-diagonal with irreducible blocks of the same size; see Section \ref{sec:completely-reducible} for further details. Our first main result describes the behaviour of the pressure functional $s \mapsto P(\A,s)$:

\begin{theorem}\label{th:aff}
  Let $\A=(A_i)_{i \in \I} \in \GL_d(\R)^\I$, where $\I$ is either finite or countably infinite. Then the following four assertions hold:
  \begin{enumerate}[(i)]
  \item \label{it:finiteness}
  The set $\mathscr{I}_\A$ is equal to either $[\theta_\A,\infty)$ or $(\theta_\A,\infty)$, and satisfies the alternative characterisation 
  \[
    \mathscr{I}_\A=\Bigl\{s \geq 0 \colon \sum_{i \in \I} \varphi^s(A_i)<\infty\Bigr\}.
  \]
  In particular, if $\I$ is finite then we have $\mathscr{I}_\A=[0,\infty)$.
  \item\label{it:right-con}
  The pressure function $s \mapsto P(\A,s)$ defined on $\mathscr{I}_\A$ is continuous, and is convex when restricted to the intervals $[k,k+1] \cap \mathscr{I}_\A$ for all $k \in \{0,\ldots,d-1\}$ and when restricted to the interval $[d,\infty) \cap \mathscr{I}_\A$. In particular, if $P(\A,\theta_\A)<\infty$, then
  \[
    \lim_{s \downarrow \theta_\A} P(\A,s)=P(\A,\theta_\A).
  \]
  \item\label{it:sup}
  For all $s \in \mathscr{I}_\A$ we have
  \[
    P(\A,s) = \sup\{ P((A_i)_{i \in \J} ,s) \colon \J\text{ is a nonempty finite subset of }\I\}
  \]
and if $\A$ is completely reducible then the above relation holds for every $s \ge 0$.
  \item\label{it:strictly}
  Let $\threebar{\,\cdot\,}$ be any norm on $\R^d$ and define $\kappa=-\log \sup_{i \in \I} \threebar{A_i}$. Then we have $P(\A,s+t) \leq P(\A,s) - \kappa t$ for all $s \in\mathscr{I}_\A$ and $t\geq 0$. In particular, if $ \sup_{i \in \I} \threebar{A_i}<1$, then $\kappa > 0$ and $s \mapsto P(\A,s)$ is strictly decreasing with $\lim_{s\to \infty} P(\A,s)=-\infty$.
  \end{enumerate}
\end{theorem}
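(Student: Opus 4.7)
The plan is to prove (iv), (i), (ii), (iii) in that order, using subadditivity of $a_n(s):=\log\sum_{(i_1,\ldots,i_n)\in\I^n}\varphi^s(A_{i_1}\cdots A_{i_n})$ in $n$ together with two elementary singular-value inequalities: the \emph{forward} bound $\varphi^{s+t}(A)\le \varphi^s(A)\|A\|^t$ (which follows from $\sigma_j(A)\le\sigma_1(A)=\|A\|$ and a short case analysis across integer breakpoints and $s=d$) and the \emph{inverse} bound $\varphi^s(AB)\ge\varphi^s(A)/\varphi^s(B^{-1})$, itself just a rewriting of submultiplicativity. For (iv), combining the forward bound with norm equivalence $\|A\|\le C\threebar{A}$ and submultiplicativity of $\threebar{\,\cdot\,}$ gives the uniform tuple-wise estimate $\varphi^{s+t}(A_{i_1}\cdots A_{i_n})\le C^t e^{-n\kappa t}\varphi^s(A_{i_1}\cdots A_{i_n})$. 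Summing, dividing by $n$, and letting $n\to\infty$ yields $P(\A,s+t)\le P(\A,s)-\kappa t$, from which the remaining claims of (iv) follow.

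For (i), submultiplicativity gives $a_n(s)\le n\,a_1(s)$, so $\sum_i\varphi^s(A_i)<\infty\Rightarrow P(\A,s)\le \log\sum_i\varphi^s(A_i)<\infty$. Conversely, if $\sum_i\varphi^s(A_i)=\infty$, then for any $n\ge 2$ and any fixed $(i_2,\ldots,i_n)\in\I^{n-1}$ the inverse bound forces $\sum_{i_1\in\I}\varphi^s(A_{i_1}A_{i_2}\cdots A_{i_n})\ge \varphi^s((A_{i_2}\cdots A_{i_n})^{-1})^{-1}\sum_{i_1}\varphi^s(A_{i_1})=\infty$, so $a_n(s)=\infty$ for every $n\ge 1$. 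For the interval structure, if $\sum_i\varphi^s(A_i)<\infty$ with $s\in[k,k+1]$ for some $k\le d-1$, then only finitely many $i\in\I$ can have $\sigma_{k+1}(A_i)\ge 1$: otherwise $\sigma_1(A_i)\cdots\sigma_k(A_i)\ge\sigma_{k+1}(A_i)^k\ge 1$ on an infinite set would give $\varphi^s(A_i)\ge 1$ infinitely often. Hence $\varphi^t(A_i)\le \varphi^s(A_i)$ for all but finitely many $i$ and every $t\in[s,k+1]$; analogous arguments propagate finiteness across integer breakpoints and into $[d,\infty)$, where $\varphi^s(A)=|\det A|^{s/d}$.

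For (ii), on each piece the map $s\mapsto\log\varphi^s(A)$ is linear, so $\sum_{\I^m}\varphi^s(A_{i_1}\cdots A_{i_m})$ is a sum of exponentials of linear functions in $s$ and is thus log-convex; consequently $a_m(\cdot)$ is convex on each piece, and $P(\A,\cdot)$ inherits convexity as a pointwise limit. Convexity delivers continuity on the interior of each piece. At the left endpoint $\theta_\A\in\mathscr{I}_\A$, convexity yields $\limsup_{s\downarrow\theta_\A}P(\A,s)\le P(\A,\theta_\A)$; the matching $\liminf$ requires (iii), since choosing a finite $\J$ with $P((A_i)_{i\in\J},\theta_\A)$ within $\epsilon$ of $P(\A,\theta_\A)$ and using the continuity of the finite subsystem pressure gives $P(\A,s)\ge P((A_i)_{i\in\J},s)>P(\A,\theta_\A)-2\epsilon$ for $s$ near $\theta_\A$.

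The main obstacle is the nontrivial direction of (iii): for $s\in\mathscr{I}_\A$, show $\sup_\J P((A_i)_{i\in\J},s)\ge P(\A,s)$. Working with the exhaustion $\J_N=\{1,\ldots,N\}$, so that $a_m^{\J_N}\uparrow a_m$ for every $m$, I would control the deficit by
\begin{equation*}
 \sum_{\I^m}\varphi^s(A_{i_1}\cdots A_{i_m})-\sum_{\J_N^m}\varphi^s(A_{i_1}\cdots A_{i_m})\le m\Bigl(\sum_{i>N}\varphi^s(A_i)\Bigr)\Bigl(\sum_{i\in\I}\varphi^s(A_i)\Bigr)^{m-1},
\end{equation*}
obtained by splitting tuples according to the first position with index outside $\J_N$ and applying submultiplicativity. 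The delicate step is the sup-inf exchange: the identity $P(\A,s)=\inf_m\sup_N m^{-1}a_m^{\J_N}(s)$ immediately gives only $\sup_N P_{\J_N}\le P(\A,s)$; reversing this requires choosing $N=N(m)$ adapted to the deficit estimate and invoking the inverse singular value bound to simulate a super-multiplicative inequality that is genuinely available only under irreducibility. The final clause on completely reducible $\A$ and every $s\ge 0$ would then follow by reducing to the irreducible case via the block decomposition discussed in Section~\ref{sec:completely-reducible} and invoking Feng--K\"aenm\"aki style super-multiplicativity inside each irreducible block.
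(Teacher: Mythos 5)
Your treatment of (i) and (iv) is essentially the paper's: (iv) via $\varphi^{s+t}(A)\le\|A\|^t\varphi^s(A)$ plus norm equivalence, and (i) via the dichotomy "either all $a_n(s)$ finite or all infinite" using sub-multiplicativity in one direction and the rearranged inequality $\varphi^s(AB)\ge\varphi^s(A)\sigma_d(B)^s$ in the other, then propagating finiteness rightward across integer breakpoints (the paper's Lemma~\ref{le:zeroth-lemma}). Your convexity argument in (ii), phrased as log-convexity of a sum of exponentials of linear functions, is the same as the Hölder argument of Lemma~\ref{le:cvx}. Up to this point the proposal is sound.

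The genuine gap is (iii), and by your own design everything nontrivial in (ii) has been deferred to it. You correctly identify the sup–inf exchange as the crux and that some form of super-multiplicativity is required, but the plan as stated does not close the gap. First, the phrase ``invoking the inverse singular value bound to simulate a super-multiplicative inequality'' is not a workable step: the inverse bound $\varphi^s(AB)\ge\varphi^s(A)/\varphi^s(B^{-1})$ is just sub-multiplicativity rearranged and loses a factor $\varphi^s(B^{-1})$ with no control; genuine quasi-multiplicativity in the sense needed here (the paper's Theorem~\ref{thm:conc}) is an algebraic statement about irreducible representations, proved via Zariski-density, finiteness of minimal invariant-subspace orbits, and a compactness argument (Propositions~\ref{pr:cr} and~\ref{pr:qm}), not a singular-value estimate. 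Second, you apply the reduction to the completely reducible case only for the ``final clause'' about $s\ge 0$; in fact the reduction (Theorem~\ref{th:detriangularisation}, $P(\A,s)=P(\A',s)$ on $\mathscr{I}_\A$ together with the corresponding equality for every finite subtuple) is needed to prove (iii) for arbitrary $\A$ and $s\in\mathscr{I}_\A$ in the first place, since the quasi-multiplicativity you wish to invoke is only available for the block-diagonalised tuple $\A'$. Third, even granting (iii), your proof of (ii) only addresses the left endpoint $\theta_\A$; continuity at the interior integer breakpoints $k\in\{1,\ldots,d-1\}\cap(\theta_\A,\infty)$ still requires the lower semicontinuity estimate (the paper obtains it by showing each block pressure $P^{(j)}(\A,\cdot)$ is simultaneously an infimum and, via quasi-multiplicativity, a supremum of continuous functions, as in~\eqref{eq:j-pressure-sup}); your ``(iii) implies $\liminf$'' trick does work at those points too, but you should say so. In short, what is missing is the algebraic core: the detriangularisation theorem and the quasi-multiplicative decomposition of $\varphi^s$, without which the deficit estimate and the exhaustion $\J_N$ cannot be upgraded to the required lower bound on $\sup_\J P_\J$.
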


Clauses \eqref{it:finiteness} and \eqref{it:strictly} of Theorem \ref{th:aff} are direct and straightforward to prove, and we present the proofs without delay in Section \ref{sec:preli}. But the proofs of \eqref{it:right-con} and \eqref{it:sup} are surprisingly involved and they are presented in Section \ref{sec:conditional-pressure} which further depends on the results of Section \ref{sec:algebra-stuff}. Those parts of \eqref{it:right-con} and \eqref{it:sup} which deal with the endpoint case $s=\theta_\A \in \mathscr{I}_\A$ are particularly involved. In Remark \ref{rem:assumption-needed}, we show that the assumption $P(\A,\theta_\A)<\infty$ in \eqref{it:right-con} is required for the right-continuity of the pressure at $\theta_\A$. If the tuple $\A$ consists only of a constant multiple of orthogonal matrices (or, more generally, the transformations $T_i$ are conformal and satisfy the bounded distortion property), then the corresponding theorem is much simpler and it is proved in \cite[Proposition 3.3]{MauldinUrbanski1996}.

If for some norm $\threebar{\,\cdot\,}$ on $\R^d$ we have $\sup_{i \in \I} \threebar{A_i}<1$,  we  define the \emph{upper affinity dimension} of $\A = (A_i)_{i \in \I}$ to be the quantity
\begin{equation} \label{eq:udimaff-def}
  \udimaff \A = \inf\{s \geq 0 \colon P(\A,s)<0\}
\end{equation}
and the \emph{lower affinity dimension} of $\A$ to be the quantity
\begin{equation}\label{eq:ldimaff-def}
  \ldimaff \A = \sup\left\{ \udimaff (A_i)_{i \in \J} \colon \J\text{ is a nonempty finite subset of }\I\right\}.
\end{equation}
It is readily checked that
\begin{equation} \label{eq:dimaff-ineq}
  \max\{\theta_\A,\ldimaff\A\} \leq \udimaff\A.
\end{equation}
Indeed, since the infimum is monotone with respect to inclusion, we necessarily have $\theta_{\mathsf{A}} \le \udimaff\mathsf{A}$. If $s>\udimaff \A$ and $\J \subseteq \I$ is any nonempty finite set then by a straightforward examination of the definition of the pressure we have $P((A_i)_{i \in \J},s) \leq P(\A,s)<0$ and it follows that $\dimaff (A_i)_{i \in \J} \leq s$. Thus $\ldimaff \A \leq s$ for all $s>\udimaff \A$ and hence, $\ldimaff \A \leq \udimaff \A$ as required.

If $\ldimaff \A = \udimaff \A$, then we denote the common value by $\dimaff \A$ and call it the \emph{affinity dimension} of $\A$. The following result is obtained as a corollary of Theorem \ref{th:aff} and we present its proof in Section \ref{sec:conditional-pressure}.

\newcommand{\propaffinity}{
  Let $\A=(A_i)_{i \in \I} \in \GL_d(\R)^\I$, where $\I$ is either finite or countably infinite, be such that $\sup_{i \in \I} \threebar{A_i}<1$ for some norm $\threebar{\,\cdot\,}$ on $\R^d$. If at least one of the following four assumptions,
  \begin{enumerate}[(i)]
    \item \label{it:propaff1} $0 \leq P(\A,\udimaff\A)<\infty$,
    \item \label{it:propaff2} $\theta_\A<\udimaff\A$,
    \item \label{it:propaff3} $\A$ is completely reducible,
    \item \label{it:propaff4} $\I$ is finite,
  \end{enumerate}
  holds, then $\ldimaff\A=\udimaff\A$.
}

\begin{proposition} \label{thm:propaffinity}
  \propaffinity
\end{proposition}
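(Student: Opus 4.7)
By \eqref{eq:dimaff-ineq} it suffices to prove $\ldimaff\A\ge\udimaff\A$. Case \eqref{it:propaff4} is immediate (take $\J=\I$), so I will assume $\I$ is countably infinite; the degenerate case $\udimaff\A=0$ is trivial since $\ldimaff\A\ge 0$ by definition, so I may also assume $\udimaff\A>0$. Let $\kappa=-\log\sup_{i\in\I}\threebar{A_i}>0$. Two elementary facts about any nonempty finite $\J\subseteq\I$ will be used repeatedly: the pressure $P((A_i)_{i\in\J},\cdot)$ is continuous and strictly decreasing on $[0,\infty)$, so $P((A_i)_{i\in\J},s)>0$ forces $\udimaff(A_i)_{i\in\J}>s$; and Theorem \ref{th:aff}\eqref{it:strictly} applied to $(A_i)_{i\in\J}$ yields the quantitative decrease
\[
  P((A_i)_{i\in\J},s-t)\ge P((A_i)_{i\in\J},s)+\kappa t
\]
for all $0\le t\le s$. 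The plan is to combine these observations with the sup formula Theorem \ref{th:aff}\eqref{it:sup} at a suitable base point $s_0$.

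Cases \eqref{it:propaff1} and \eqref{it:propaff2} I will handle in a single argument. In \eqref{it:propaff1} take $s_0=\udimaff\A$: by hypothesis $s_0\in\mathscr{I}_\A$ and $P(\A,s_0)\in[0,\infty)$. In \eqref{it:propaff2} take any $s_0\in(\theta_\A,\udimaff\A)$: Theorem \ref{th:aff}\eqref{it:finiteness} gives $s_0\in\mathscr{I}_\A$, and $P(\A,s_0)\in[0,\infty)$ follows from $s_0<\udimaff\A=\inf\{s\colon P(\A,s)<0\}$. Given $\eps\in(0,s_0)$, Theorem \ref{th:aff}\eqref{it:sup} furnishes a nonempty finite $\J\subseteq\I$ with $P((A_i)_{i\in\J},s_0)>P(\A,s_0)-\kappa\eps/2\ge-\kappa\eps/2$, and the quantitative decrease above then gives $P((A_i)_{i\in\J},s_0-\eps)>\kappa\eps/2>0$. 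Consequently $\udimaff(A_i)_{i\in\J}>s_0-\eps$, so $\ldimaff\A>s_0-\eps$. Sending $\eps\downarrow 0$ (and, in case \eqref{it:propaff2}, also $s_0\uparrow\udimaff\A$) delivers $\ldimaff\A\ge\udimaff\A$.

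For case \eqref{it:propaff3}, Theorem \ref{th:aff}\eqref{it:sup} holds at every $s\ge 0$, and I will show it forces $P(\A,\udimaff\A)<\infty$, reducing matters to case \eqref{it:propaff1}. Indeed, if $P(\A,\udimaff\A)=+\infty$ the sup formula at $s=\udimaff\A$ produces a nonempty finite $\J\subseteq\I$ with $P((A_i)_{i\in\J},\udimaff\A)>0$, whence $\udimaff(A_i)_{i\in\J}>\udimaff\A$; this contradicts the trivial monotonicity $\udimaff(A_i)_{i\in\J}\le\udimaff\A$, which is immediate from the pointwise inequality $P((A_i)_{i\in\J},\cdot)\le P(\A,\cdot)$. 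The main delicacy I expect throughout the argument is keeping the base point $s_0$ inside $\mathscr{I}_\A$, since this is exactly where Theorem \ref{th:aff}\eqref{it:sup} is guaranteed; the four hypotheses \eqref{it:propaff1}--\eqref{it:propaff4} correspond to four distinct ways of securing a usable base point at (or just below) $s_0=\udimaff\A$.
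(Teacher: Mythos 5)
Your treatment of cases \eqref{it:propaff1}, \eqref{it:propaff2}, and \eqref{it:propaff4} is correct and uses essentially the same ingredients as the paper's own proof, which packages the argument more tightly: for any finite $\J$, Theorem~\ref{th:aff}\eqref{it:strictly} together with $P((A_i)_{i\in\J},\ldimaff\A)\le 0$ gives $P((A_i)_{i\in\J},\udimaff\A)\le -\kappa(\udimaff\A-\ldimaff\A)$, and then the supremum formula at $\udimaff\A$ together with $P(\A,\udimaff\A)\ge 0$ finishes in one line. Your $\eps$-pushdown is a longer route to the same conclusion, and your reduction of \eqref{it:propaff2} to \eqref{it:propaff1} via $P(\A,\udimaff\A)=0$ matches the paper.

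There is a gap in case \eqref{it:propaff3}. You correctly extract $P(\A,\udimaff\A)<\infty$ from Proposition~\ref{le:dense-inner-approx}, but this only places you in the hypothesis of \eqref{it:propaff1} if you also know $P(\A,\udimaff\A)\ge 0$, which you do not establish and which can fail for completely reducible $\A$. For instance take $\I=\N$, $d=1$, and $a_i = c\, i^{-1/\gamma}(\log(i+1))^{-2/\gamma}$ with $\gamma\in(0,1)$ and $c>0$ small. This one-dimensional tuple is automatically completely reducible, $\theta_\A=\gamma$, and $P(\A,\gamma)=\log\bigl(c^\gamma\sum_{i\ge 1} i^{-1}(\log(i+1))^{-2}\bigr)<0$ for small enough $c$; since $P(\A,s)=\infty$ for $s<\gamma$, one gets $\udimaff\A=\gamma$ and hence $P(\A,\udimaff\A)<0$. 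The proposition still holds here, but not via the proposed reduction. The repair is short and you already have the right tools: under \eqref{it:propaff3}, Proposition~\ref{le:dense-inner-approx} gives $P(\A,s_0)=\sup_\J P((A_i)_{i\in\J},s_0)$ at \emph{every} $s_0\ge 0$, not only $s_0\in\mathscr{I}_\A$, so run your case-\eqref{it:propaff2} argument at an arbitrary $s_0\in(0,\udimaff\A)$, dropping the constraint $s_0>\theta_\A$. When $P(\A,s_0)=\infty$, the supremum formula directly yields a finite $\J$ with $P((A_i)_{i\in\J},s_0)>0$; when $P(\A,s_0)<\infty$, your $\eps$-argument applies since $s_0<\udimaff\A$ forces $P(\A,s_0)\ge 0$.
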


The situation in which $\ldimaff \A < \udimaff \A$ thus requires that $\udimaff \A = \theta_\A$, and this can be realised  both with  $P(\A,\udimaff\A)=\infty$ and with $P(\A,\udimaff\A)<0$,  as is demonstrated by the following proposition which will be proved in Section \ref{sec:conditional-pressure}:

\newcommand{\proppathologies}{
  For all $\alpha, \beta \in (0,1)$ and $\gamma \in (\beta,1]$ there exists a tuple of matrices $\mathsf{A} = (A_i)_{i \in \N} \in \GL_2(\R)^\N$ such that $\sup_{i \in \N} \|A_i\|<\alpha$ and
  \begin{equation*}
    \ldimaff\mathsf{A} = \beta < \gamma = \theta_{\mathsf{A}} = \udimaff\mathsf{A}.
  \end{equation*}
  Furthermore, $\mathsf{A}$ may be chosen such that $P(\mathsf{A},\theta_{\mathsf{A}})$ is either negative or infinite, as desired.
}

\begin{proposition} \label{thm:proppathologies}
  \proppathologies
\end{proposition}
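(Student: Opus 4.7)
The plan is to exhibit a family of invertible lower-triangular matrices
\[
A_i = \begin{pmatrix} \eta_i & 0 \\ c_i & \eta_i \end{pmatrix},
\]
where the positive scalars $c_i,\eta_i$ are chosen so that $\|A_i\|<\alpha$ uniformly, the series $\sum_{i\in\N} c_i^s$ has abscissa of convergence equal to $\gamma$, and $\sum_{i\in\N}\eta_i^s<\infty$ for every $s>0$ with the normalisation $\sum_{i\in\N}\eta_i^\beta = 1$. A concrete scheme is $\eta_i = K_\beta 2^{-i/\beta}$ with $K_\beta$ chosen to enforce the normalisation, together with $c_i = C_\gamma i^{-1/\gamma}$ to realise the \emph{infinite} case (in which $\sum c_i^\gamma = +\infty$), or $c_i = C_\gamma' i^{-1/\gamma}(\log i)^{-b}$ with $b\gamma>1$ to realise the \emph{negative} case (in which $\sum c_i^\gamma<\infty$). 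All constants are scaled so that $\eta_i \ll c_i$ and $\|A_i\| = c_i + O(\eta_i^2/c_i) < \alpha$ for every $i$. Each $A_i$ is invertible since $\det A_i = \eta_i^2>0$.

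A routine induction on $n$ shows that $A_I = A_{i_1}\cdots A_{i_n}$ is again lower-triangular, with both diagonal entries equal to $\prod_{k=1}^n \eta_{i_k}$ and off-diagonal entry
\[
b_I \;=\; \sum_{k=1}^n c_{i_k}\prod_{j \neq k}\eta_{i_j}.
\]
Because $\eta_i \ll c_i$, an elementary $2\times 2$ singular-value computation shows that the diagonal contribution $(\prod_k \eta_{i_k})^2$ to the eigenvalues of $A_I^\top A_I$ is dominated by the cross terms produced by $b_I$, yielding $\|A_I\| \asymp |b_I|$ uniformly in $I$. For any $s\in(0,1]$, the concavity inequality $(\sum_k x_k)^s \le \sum_k x_k^s$ together with the one-term lower bound $b_I \ge c_{i_1}\prod_{j>1}\eta_{i_j}$ gives, for every nonempty $\J\subseteq\N$ for which the relevant sums converge,
\[
\Bigl(\sum_{i\in\J}c_i^s\Bigr)\Bigl(\sum_{i\in\J}\eta_i^s\Bigr)^{n-1} \;\lesssim\; \sum_{I\in\J^n}\|A_I\|^s \;\lesssim\; n\Bigl(\sum_{i\in\J}c_i^s\Bigr)\Bigl(\sum_{i\in\J}\eta_i^s\Bigr)^{n-1}.
\]
Taking logarithms, dividing by $n$ and letting $n\to\infty$ produces the key pressure formula
\[
P((A_i)_{i\in\J},s) \;=\; \log\sum_{i\in\J}\eta_i^s.
\]

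The remaining conclusions then follow by direct inspection. Since $\|A_i\|\asymp c_i$, Theorem \ref{th:aff}\eqref{it:finiteness} identifies $\theta_{\mathsf{A}}=\gamma$. For each finite $\J$ the pressure formula shows that $\udimaff (A_i)_{i\in\J}$ is the unique $s$ solving $\sum_{i\in\J}\eta_i^s=1$; these solutions strictly increase to $\beta$ as $\J\nearrow\N$ by the normalisation $\sum_{i\in\N}\eta_i^\beta=1$, so $\ldimaff\mathsf{A}=\beta$. For $s > \gamma$ the formula extends to the full system and gives $P(\mathsf{A},s)=\log \sum_{i\in\N}\eta_i^s$, which is strictly negative because $s>\gamma>\beta$ and each $\eta_i<1$; combined with $P(\mathsf{A},s)=+\infty$ for $s<\gamma$, this forces $\udimaff \mathsf{A}=\gamma$. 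Finally, $P(\mathsf{A},\gamma)=+\infty$ in the infinite case (since $\gamma\notin\mathscr{I}_{\mathsf{A}}$), whereas in the negative case $\gamma\in\mathscr{I}_{\mathsf{A}}$ and the formula gives $P(\mathsf{A},\gamma) = \log\sum_{i\in\N}\eta_i^\gamma<0$, since $\gamma>\beta$ and $\sum\eta_i^\beta=1$.

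The main technical step is the norm estimate $\|A_I\|\asymp |b_I|$ together with the two-sided bound on $\sum_{I\in\J^n}\|A_I\|^s$. Establishing the former requires verifying that the diagonal entries of $A_I^\top A_I$ are indeed negligible against the off-diagonal terms generated by $b_I$; this is where the hypothesis $\eta_i \ll c_i$ is essential. Once the norm estimate is in place, the derivation of the pressure formula is routine multinomial bookkeeping, and all claims of the proposition fall out.
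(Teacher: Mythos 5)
Your construction is essentially the same as the paper's: $2\times2$ triangular matrices in which the diagonal entries $\eta_i$ control the finite-subsystem pressures and the off-diagonal entries $c_i$ control the finiteness threshold (the paper uses upper-triangular, you use lower-triangular, which is immaterial). The substantive methodological difference is that you derive the pressure identity $P((A_i)_{i\in\J},s)=\log\sum_{i\in\J}\eta_i^s$ from scratch via a two-sided estimate on $\sum_{I\in\J^n}\|A_I\|^s$, whereas the paper reads it off from Theorem \ref{th:detriangularisation} (applied to finite subsystems) and then deduces $\theta_\A=\udimaff\A$ and the sign/infiniteness of $P(\A,\theta_\A)$ from Proposition \ref{thm:propaffinity}, avoiding any direct computation of $P(\A,s)$ for the full system. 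Your route is more self-contained, at the cost of reproving a special case of Theorem \ref{th:detriangularisation} by hand.

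However, the concrete parameter scheme has a real gap. The normalisation $\sum_{i\in\N}\eta_i^\beta=1$ forces $K_\beta=1$ in $\eta_i=K_\beta 2^{-i/\beta}$, hence $\eta_1=2^{-1/\beta}$. Since $\|A_1\|\ge\eta_1$ (the operator norm dominates every matrix entry), the constraint $\|A_i\|<\alpha$ fails at $i=1$ whenever $\alpha\le 2^{-1/\beta}$, so the sentence ``all constants are scaled so that \dots\ $\|A_i\|<\alpha$ for every $i$'' is unattainable by adjusting $C_\gamma$ and $\eps$ alone: the diagonal sequence has no free scaling parameter once the $\beta$-normalisation is imposed. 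The fix is to make the geometric ratio depend on $\alpha$, e.g.\ take $\eta_i=(1-\rho)^{1/\beta}\rho^{(i-1)/\beta}$ with $\rho$ close enough to $1$ that $\eta_1=(1-\rho)^{1/\beta}<\alpha$, or, as in the paper, take diagonal entries $\alpha e^{-tk}$ with $t$ determined by $e^{\beta t}-1=\alpha^\beta$, which automatically keeps them below $\alpha$ while preserving the normalisation.

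A smaller remark: the hypothesis $\eta_i\ll c_i$ is not actually needed for the pressure identity. One always has $\|A_I\|\asymp\max(|b_I|,|d_I|)$ with absolute $2\times2$ constants, and the extra diagonal contribution $\sum_I d_I^s=(\sum_i\eta_i^s)^n$ grows at the same exponential rate $\log\sum_i\eta_i^s$ as your main term, so the limit is unchanged. The ratio $c_i/\eta_i$ only matters for the constraint $\|A_i\|<\alpha$, and that is exactly where your fixed normalisation causes trouble.
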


If the tuple $\A$ consists only of constant multiples of orthogonal matrices then the strict inequality $\ldimaff \A < \udimaff \A$ cannot hold; this follows from the fact that such a tuple is necessarily completely reducible, but follows also from the antecedent result \cite[Theorem 3.15]{MauldinUrbanski1996} (which also applies if the transformations $T_i$ are assumed only to be conformal transformations with an appropriate bounded distortion property). As such the outcome $\ldimaff \A < \udimaff \A$ demonstrated in Proposition \ref{thm:proppathologies} is a phenomenon which is specific to the case of non-conformal infinite iterated function systems, and which has to the best of our knowledge not previously been remarked.

\subsection{Equilibrium states} \label{sec:equilibrium-states}

Our second major result is a complete description of the equilibrium states of $\varphi^s$ with respect to the full shift over a countable alphabet. This result extends the description given in the finite-alphabet case in \cite{BochiMorris2018}. Whereas in the finite-alphabet case the existence of at least one equilibrium state follows from a weak$^*$ compactness argument (see \cite{Kaenmaki2004}), in the countable-alphabet case no general existence results were previously known.

The collection of all Borel probability measures on $\I^\N$ will be denoted by $\MM(\I^\N)$. We let $\MM_\sigma(\I^\N)$ denote the set of all $\sigma$-invariant measures in $\MM(\I^\N)$, where $\sigma \colon \I^\N \to \I^\N$ is the \emph{left shift} taking $(i_k)_{k=1}^\infty$ into $(i_k)_{k=2}^\infty$. Here and hereafter we denote $\iii = i_1 \cdots i_n = (i_k)_{k=1}^n \in \I^n$, $\iii = i_1i_2\cdots = (i_k)_{k=1}^\infty \in \I^\N$, $\jjj|_n = j_1 \cdots j_n$ for all $\jjj \in \I^\N$, and $[\iii] = \{\jjj \in \I^\N \colon \jjj|_n=\iii\} \subset \I^\N$ for all $\iii \in \I^n$. The set $[\iii]$ is called a \emph{cylinder set} at level $n$ whenever $\iii \in \I^n$. We also write $\I^* = \bigcup_{n \ge 1} \I^n$.

Let $\mu \in \MM_\sigma(\I^\N)$, $\A=(A_i)_{i \in \I} \in \GL_d(\R)^\I$, and $s \ge 0$ be such that $\sup_{i \in \I} \fii^s(A_i) < \infty$. Recall that the singular value function $\fii^s$ satisfies $\fii^s(A) \le \|A\|^s$ for all $A \in \GL_d(\R)$. Therefore, $\sup_{i \in \I} \|A_i\| < \infty$ implies $\sup_{i \in \I} \fii^s(A_i) < \infty$ for all $s \ge 0$. We define the \emph{energy} of $\A$ at $s \ge 0$ with respect to $\mu \in \MM_\sigma(\I^\N)$ by setting
\begin{equation} \label{eq:energy}
  \Lambda(\mu,\A,s) = \lim_{n \to \infty} \frac{1}{n} \int_{\I^\N} \log\fii^s(A_{\iii|_n}) \dd\mu(\iii) \in [-\infty,\log \sup_{i \in \I} \fii^s(A_i)],
\end{equation}
where $A_\iii = A_{i_1} \cdots A_{i_n}$ for all $\iii = i_1 \cdots i_n \in \I^n$. We will see in Lemma \ref{thm:energy-basic-prop}\eqref{it:energy1} below that the limit \eqref{eq:energy} exists in $[-\infty,\log \sup_{i \in \I} \fii^s(A_i)]$ and is equal to the infimum of the same sequence.

For $\mu \in \MM(\I^\N)$ and a finite Borel partition $\PP$ of $\I^\N$ we define the \emph{Shannon entropy} by
\begin{equation} \label{eq:shannon-entropy}
  H(\mu,\PP) = -\sum_{C \in \PP} \mu(C)\log\mu(C) \in [0,\#\PP/e].
\end{equation}
Here we adopt the usual convention according to which $0 \log 0 = 0$. Recall that the $n$-level refinement of the partition $\PP$, denoted by $\bigvee_{i=0}^{n-1} \sigma^{-i}(\PP)$, is the collection of sets of the form $\bigcap_{i=0}^{n-1} \sigma^{-i}(C_{j_i})$, where $C_{j_i} \in \PP$. Note that the refinements are finite Borel partitions of $\I^\N$. We write
\begin{equation*}
  h(\mu,\PP) = \lim_{n \to \infty} \frac{1}{n} H\biggl( \mu,\bigvee_{i=0}^{n-1} \sigma^{-i}(\PP) \biggr) \in [0,\infty)
\end{equation*}
and define the \emph{Kolmogorov-Sinai entropy} of $\mu \in \MM_\sigma(\I^\N)$ by setting
\begin{equation} \label{eq:entropy}
  h(\mu) = \sup\{h(\mu,\PP) \colon \PP \text{ is a finite Borel partition of } \I^\N \} \in [0,\infty].
\end{equation}
We will see in Lemma \ref{thm:entropy-basic-prop}\eqref{it:entropy1} below that the limit $h(\mu,\PP)$ always exists in $[0,\infty)$ and is equal to the infimum of the same sequence.

The following proposition shows how these quantities are related to the pressure. We postpone its proof until Section \ref{sec:conditional-equilibrium}.

\newcommand{\equilibriuminequality}{
  Let $\A=(A_i)_{i \in \I} \in \GL_d(\R)^\I$ and $s \ge 0$ be such that $\sup_{i \in \I} \fii^s(A_i) < \infty$, where $\I$ is either finite or countably infinite. If $\mu \in \MM_\sigma(\I^\N)$ is such that $h(\mu)<\infty$ or $\Lambda(\mu,\A,s)>-\infty$, then
  \begin{equation*}
    h(\mu) + \Lambda(\mu,\A,s) \le P(\A,s).
  \end{equation*} 
}

\begin{proposition} \label{thm:equilibriuminequality}
  \equilibriuminequality
\end{proposition}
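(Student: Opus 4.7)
Let $\PP_0 := \{[i] : i \in \I\}$ denote the (in general countably infinite) partition of $\I^\N$ into level-one cylinders and write $\PP_0^n := \bigvee_{k=0}^{n-1} \sigma^{-k} \PP_0 = \{[\iii] : \iii \in \I^n\}$. The approach is the classical proof of the ``easy half'' of the variational principle for subadditive potentials---as carried out in \cite{Kaenmaki2004} in the finite-alphabet case---performed directly with the countable partition $\PP_0$ in place of a finite generator, with extra care taken at the places where the values $\pm\infty$ may appear.

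First, I would apply Jensen's inequality (equivalently, the non-negativity of the Kullback--Leibler divergence) to the two probability mass functions $\iii \mapsto \mu([\iii])$ and $\iii \mapsto \fii^s(A_\iii) / \sum_\jjj \fii^s(A_\jjj)$ on the countable set $\I^n$, obtaining
\begin{equation*}
  H(\mu, \PP_0^n) + \int_{\I^\N} \log \fii^s(A_{\iii|_n}) \dd\mu(\iii) \le \log \sum_{\iii \in \I^n} \fii^s(A_\iii)
\end{equation*}
for every $n \ge 1$. The assumption $\sup_{i \in \I} \fii^s(A_i) < \infty$ together with submultiplicativity of $\fii^s$ bounds $\log \fii^s(A_{\iii|_n}) \le n \log \sup_i \fii^s(A_i) < \infty$, so the left-hand side is unambiguously defined in $[-\infty, \infty]$ except when $H(\mu, \PP_0^n) = \infty$ and the integral equals $-\infty$ simultaneously. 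Dividing by $n$ and letting $n \to \infty$, the right-hand side tends to $P(\A, s)$ by Lemma \ref{th:first-lemma}, the integral term tends to $\Lambda(\mu, \A, s)$ by Lemma \ref{thm:energy-basic-prop}, and the sequence $\tfrac{1}{n} H(\mu, \PP_0^n)$ converges to a value $h(\mu, \PP_0) \in [0, \infty]$ by the standard subadditivity $H(\mu, \PP_0^{n+m}) \le H(\mu, \PP_0^n) + H(\mu, \PP_0^m)$ (which uses only $\PP_0^{n+m} = \PP_0^n \vee \sigma^{-n} \PP_0^m$ and $\sigma$-invariance of $\mu$). This yields
\begin{equation*}
  h(\mu, \PP_0) + \Lambda(\mu, \A, s) \le P(\A, s).
\end{equation*}

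To finish I would establish the comparison $h(\mu) \le h(\mu, \PP_0)$. Given any finite Borel partition $\QQ$ of $\I^\N$ and any $\eps > 0$, each atom of $\QQ$ is approximable in $\mu$-measure by a $\PP_0^N$-measurable set (a finite union of cylinders of length $N$) for some $N = N(\QQ,\eps)$, because the algebra of cylinder sets generates the Borel $\sigma$-algebra; a disjointification then produces a finite partition $\tilde\QQ \preceq \PP_0^N$ which is close to $\QQ$ in the Rokhlin pseudometric. Continuity of Shannon entropy in that pseudometric gives $|h(\mu, \QQ) - h(\mu, \tilde\QQ)| = o_\eps(1)$, monotonicity under refinement gives $h(\mu, \tilde\QQ) \le h(\mu, \PP_0^N)$, and a direct computation using the subadditivity above gives $h(\mu, \PP_0^N) = h(\mu, \PP_0)$. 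The hypothesis ``$h(\mu) < \infty$ or $\Lambda(\mu,\A,s) > -\infty$'' is used solely to rule out the indeterminate form $\infty - \infty$ when combining the two displays: if $h(\mu) = \infty$ then the hypothesis forces $\Lambda > -\infty$, and since $h(\mu, \PP_0) \ge h(\mu) = \infty$ we must have $P(\A, s) = \infty$ so both sides of the desired inequality are $+\infty$; if $\Lambda = -\infty$ then the conclusion reads $-\infty \le P(\A,s)$, which is automatic.

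The main technical obstacle is precisely the comparison $h(\mu) \le h(\mu, \PP_0)$: in the finite-alphabet setting $\PP_0$ is itself a finite partition and this step is trivial, whereas for countable $\I$ one must combine a quantitative form of the generation of the Borel $\sigma$-algebra by cylinders with a quantitative continuity statement for Shannon entropy in the Rokhlin pseudometric. The analysis at $n$ fixed of the indeterminate case $H(\mu,\PP_0^n) = \infty$ with $\int \log \fii^s(A_{\iii|_n}) \dd\mu = -\infty$ is a minor secondary issue, disposed of by the same case split at the end.
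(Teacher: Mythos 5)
Your proposal is correct, and it takes a somewhat different (and slightly streamlined) route to the same place as the paper. The paper applies Jensen to the \emph{finite} partitions $\PP_\J^n$ for finite $\J\subseteq\I$, which requires an injective cylinder-selection map $C\mapsto \jjj_C$ with $[\jjj_C]\subset C$ to handle the fact that the generalized cylinder $C$ may contain infinitely many distinct level-$n$ cylinders on which $\log\fii^s(A_{\iii|_n})$ varies; it then takes the supremum over finite $\J$ using Lemma \ref{thm:entropy-via-cylinders}. You instead apply the Kullback--Leibler inequality directly with the countably infinite partition $\PP_\I^n$, on whose atoms the integrand $\log\fii^s(A_{\iii|_n})$ is constant, so no cylinder selection is needed; in exchange you must invoke the comparison $h(\mu)\le h(\mu,\PP_\I)$. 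That comparison is already available in the paper --- it is exactly \eqref{eq:entropy-bad-def-ineq}, a one-line consequence of Lemma \ref{thm:entropy-basic-prop}\eqref{it:entropy2} and Lemma \ref{thm:entropy-via-cylinders} --- so your Rokhlin-pseudometric digression, while morally the content of the proof of Lemma \ref{thm:entropy-via-cylinders}, is not strictly needed and could be replaced by a citation. Your case analysis for the potential $\infty-\infty$ indeterminacies is sound: when $\Lambda(\mu,\A,s)>-\infty$, Lemma \ref{thm:energy-basic-prop}\eqref{it:energy1} forces $\int\log\fii^s(A_{\iii|_n})\dd\mu>-\infty$ for every $n$, so the Jensen decomposition is well-defined level by level (and if additionally all $H(\mu,\PP_\I^n)=\infty$, the inequality at each $n$ directly forces $\sum_{\iii\in\I^n}\fii^s(A_\iii)=\infty$ and hence $P(\A,s)=\infty$); when $\Lambda=-\infty$ the hypothesis gives $h(\mu)<\infty$ and the conclusion is vacuous. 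The net effect is a cleaner Jensen step at the cost of relying on the $h(\mu)\le h(\mu,\PP_\I)$ comparison rather than the $\sup_\J$ characterisation, but the underlying machinery --- generation of the Borel $\sigma$-algebra by cylinders --- is identical.
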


If $\mu \in \MM_\sigma(\I^\N)$ is such that $h(\mu)<\infty$ or $\Lambda(\mu,\A,s)>-\infty$, then we say that $\mu$ is an \emph{$\fii^s$-equilibrium state} for $\A$ if it satisfies
\begin{equation*}
  h(\mu) + \Lambda(\mu,\A,s) = P(\A,s).
\end{equation*} 
If $\I$ is finite, then the existence of an $\fii^s$-equilibrium state is proved in \cite{Kaenmaki2004} and their complete description is given in \cite{BochiMorris2018}. The next theorem extends this information into the countably infinite case by completely classifying the structure of the family of all $\fii^s$-equilibrium states. A tuple $\A=(A_i)_{i \in \I} \in \GL_d(\R)^\I$ is \emph{strongly irreducible} if there does not exist a finite collection $\mathcal{V}$ of nonzero proper subspaces of $\R^d$ such that $A_i\mathcal{V}=\mathcal{V}$ for every $i \in \I$. A strongly irreducible tuple is clearly irreducible. Given $\A=(A_i)_{i \in \I} \in \GL_d(\R)^\I$, we define $\A^{\wedge k}$ to be the tuple $(A_i^{\wedge k})_{i \in \I}$, where $A_i^{\wedge k} \colon \wedge^k \R^d \to \wedge^k \R^d$ is the induced invertible linear map. Recall that $\theta_\A$ is the finiteness threshold of the pressure defined in \eqref{eq:theta-def} and the set $\mathscr{I}_\A$ is equal to either $[\theta_\A,\infty)$ or $(\theta_\A,\infty)$.

\begin{theorem} \label{thm:equilibrium-states-classification}
  Let $\A=(A_i)_{i \in \I} \in \GL_d(\R)^\I$ be such that $\sup_{i \in \I} \fii^s(A_i) < \infty$, where $\I$ is either finite or countably infinite. If $s \in \mathscr{I}_\A$, then 
  \begin{equation*}
    P(\A,s) = \sup\{h(\mu) + \Lambda(\mu,\A,s) \colon \mu\in \mathcal{M}_\sigma(\I^\N) \text{ is such that } h(\mu)<\infty \}.
  \end{equation*}
  Furthermore, if $s > \theta_\A$, then the following three assertions hold:
  \begin{enumerate}[(i)]
  \item\label{it:eq1}
  If $s \geq d$ then there is a unique $\fii^s$-equilibrium state for $\A$ and it is a Bernoulli measure.
  \item\label{it:eq2}
  If $s \in (0,d) \cap \Z$ then the number of distinct ergodic $\fii^s$-equilibrium states for $\A$ is at least one and is not more than $\binom{d}{s}$. If $\A^{\wedge s}$ is irreducible then there is a unique $\fii^s$-equilibrium state for $\A$, and if additionally $\A^{\wedge s}$ is strongly irreducible then this unique equilibrium state is mixing.
  \item\label{it:eq3}
  If $s \in (0,d) \setminus \Z$ then the number of distinct ergodic $\fii^s$-equilibrium states for $\A$ is at least one and is not more than $\binom{d}{\lfloor s \rfloor}\binom{d}{\lceil s \rceil}$. If one of $\A^{\wedge \lfloor s\rfloor}$ and $\A^{\wedge \lceil s\rceil}$ is irreducible and the other is strongly irreducible then there is a unique $\fii^s$-equilibrium state for $\A$, and if  both are strongly irreducible then this unique equilibrium state is mixing.
  \end{enumerate}
  In all cases every equilibrium state is fully supported on $\I^\N$.
\end{theorem}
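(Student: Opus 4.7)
\medskip
\noindent\textbf{Proof plan.} The variational principle follows from Theorem \ref{th:aff}\eqref{it:sup} combined with the finite-alphabet theory. For each nonempty finite $\J\subseteq\I$, the result of \cite{Kaenmaki2004} supplies an $\fii^s$-equilibrium state $\mu_\J$ for $(A_i)_{i\in\J}$. Viewing $\mu_\J$ as a $\sigma$-invariant measure on $\I^\N$ supported on $\J^\N$, one checks that both $h(\mu_\J)$ and $\Lambda(\mu_\J,\A,s)$ coincide with the quantities computed intrinsically on $\J^\N$ (the Shannon entropy is unchanged when passing to a larger ambient partition refining the original, and the energy integrand depends only on coordinates). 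Proposition \ref{thm:equilibriuminequality} supplies the reverse bound, so
\[
  P((A_i)_{i\in\J},s)=h(\mu_\J)+\Lambda(\mu_\J,\A,s)\le\sup\{h(\mu)+\Lambda(\mu,\A,s):\mu\in\MM_\sigma(\I^\N),\,h(\mu)<\infty\}\le P(\A,s),
\]
and taking the supremum over $\J$ gives equality via Theorem \ref{th:aff}\eqref{it:sup}.

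\medskip
For the existence statement under $s>\theta_\A$, the plan is to extract an equilibrium state as a weak$^*$ accumulation point of $\mu_{\J_n}$ for an exhaustion $\J_n\uparrow\I$. Because $\I^\N$ is not compact, the crux is tightness: this is where the hypothesis $s>\theta_\A$ is essential, since it yields $\sum_{i\in\I}\fii^s(A_i)<\infty$. Combined with the Gibbs-type upper bound for finite-alphabet equilibrium states obtained in \cite{BochiMorris2018} (of the form $\mu_{\J_n}([\iii])\le C\fii^s(A_\iii)e^{-nP_{\J_n}}$ for all cylinders, with $C$ independent of $n$), this gives $\mu_{\J_n}([i])\le C'\fii^s(A_i)$ uniformly in $n$, and hence tightness. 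Passing to a weak$^*$-convergent subsequence yields a $\sigma$-invariant limit $\mu$.

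\medskip
The main obstacle is the upper semi-continuity of $\mu\mapsto h(\mu)+\Lambda(\mu,\A,s)$ along this sequence: both the entropy and the energy could a priori drop under weak$^*$ limits on the non-compact space $\I^\N$. For the energy, I would split $\int\log\fii^s(A_{\iii|_n})\dd\mu$ using the countable partition $\{[i]:i\in\I\}$, control the tail contribution uniformly by $\sum_{i\notin\J_n}\fii^s(A_i)|\log\fii^s(A_i)|$ (which is a summable tail when $s>\theta_\A$, via a standard Abel summation/convexity argument), and pass to the limit on the remaining finite sum. For the entropy, I would apply the standard partition estimate $H(\mu,\PP_1\vee\PP_2)\le H(\mu,\PP_1)+H(\mu,\PP_2)$ and bound $-\sum_{i\notin\J_n}\mu([i])\log\mu([i])$ via the Gibbs bound, reducing to the same summable tail. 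Combining these estimates yields $h(\mu)+\Lambda(\mu,\A,s)\ge\lim_n P((A_i)_{i\in\J_n},s)=P(\A,s)$, so $\mu$ is an equilibrium state. The Gibbs-type lower bound for the finite-alphabet equilibrium states passes to the limit to give $\mu([\iii])>0$ on every cylinder, proving full support.

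\medskip
For the classification and counting in \eqref{it:eq1}–\eqref{it:eq3}, the plan is to transfer the corresponding results of \cite{BochiMorris2018} to the countable case. In that work, the ergodic equilibrium states of a finite tuple are parametrised by the invariant subspaces of $\A^{\wedge\lfloor s\rfloor}$ and $\A^{\wedge\lceil s\rceil}$, each completely reducible block contributing at most one ergodic state, which produces the upper bounds $\binom{d}{s}$ and $\binom{d}{\lfloor s\rfloor}\binom{d}{\lceil s\rceil}$. Given an ergodic $\fii^s$-equilibrium state $\mu$ for $\A$, restriction/conditioning to any finite sub-alphabet $\J$ on which the state has positive mass should yield (after normalisation) a state compatible with the finite-alphabet classification; conversely, distinct ergodic equilibrium states of $\A$ must correspond to distinct invariant subspaces of $\A^{\wedge k}$, because two ergodic states that agree on all finite-alphabet data coincide. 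Uniqueness under (strong) irreducibility of $\A^{\wedge\lfloor s\rfloor}$ and $\A^{\wedge\lceil s\rceil}$ then follows because these hypotheses eliminate all but one choice of invariant subspace, and mixing under strong irreducibility follows from the standard argument that the Oseledets filtration of a strongly irreducible cocycle is trivial, so the corresponding equilibrium state is isomorphic to a Bernoulli system. For $s\ge d$, the singular value function reduces to $|\det|^{s/d}$, which is a multiplicative potential, and the unique equilibrium state is explicitly the Bernoulli measure with weights proportional to $|\det A_i|^{s/d}$; the existence of this Bernoulli product requires exactly $\sum_i|\det A_i|^{s/d}<\infty$, which is implied by $s\in\mathscr{I}_\A$.
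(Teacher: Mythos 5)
Your variational principle argument is essentially the paper's: take the supremum over finite subalphabets, use the finite-alphabet existence result, and appeal to Theorem~\ref{th:aff}\eqref{it:sup} and Proposition~\ref{thm:equilibriuminequality} for the two inequalities. That part is fine.

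The existence and classification parts have genuine gaps. First, you assert that the Gibbs-type upper bound from \cite{BochiMorris2018} holds ``with $C$ independent of $n$'' along your exhaustion $\J_n\uparrow\I$. This is not automatic: the Gibbs constant in the finite-alphabet theory depends on the quasi-multiplicativity constants of the tuple $(A_i)_{i\in\J_n}$, which in general degenerate as $\J_n$ grows. Making that constant uniform is precisely the technical heart of the paper; it requires first reducing to completely reducible matrices (Theorem~\ref{th:detriangularisation}) and then establishing uniform quasi-multiplicativity of the potentials $\Phi_s^{(j)}$ over the whole countable alphabet with a fixed finite connecting set $F\subset\I^*$ and constant $K$ (Theorem~\ref{thm:conc}). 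Only then do the results of \cite{KaenmakiReeve2014} deliver a Gibbs constant uniform in $\J$ (the paper invokes \cite[Proposition~3.4, Theorem~3.5]{KaenmakiReeve2014} for this). Your proposal cites \cite{BochiMorris2018} as if it already gives this, but it does not. Related to this, the paper must work block-by-block with $p$ quasi-multiplicative potentials $\Phi_s^{(1)},\ldots,\Phi_s^{(p)}$ rather than with $\varphi^s$ itself, because $\varphi^s$ is not quasi-multiplicative when the tuple is reducible; that decomposition is also what produces the counting bounds $\binom{d}{s}$ and $\binom{d}{\lfloor s\rfloor}\binom{d}{\lceil s\rceil}$, each block contributing at most one ergodic equilibrium state. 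Your informal argument that ``distinct ergodic equilibrium states must correspond to distinct invariant subspaces'' does not by itself deliver the claimed upper bound.

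Second, your mixing argument is wrong. Strong irreducibility does not make the Oseledets filtration trivial, and even if it did, triviality of the Oseledets filtration would not yield mixing of the equilibrium state, let alone isomorphism to a Bernoulli shift (the Bernoulli property is a separate and harder result, \cite{Morris2021}, which this paper explicitly declines to prove in the infinite setting). The paper's mixing proof (Proposition~\ref{thm:eq-mixing}) runs as follows: one first checks that the exterior powers of the $n$-th iterate tuple $\A_n=(A_\iii)_{\iii\in\I^n}$ remain strongly irreducible, then deduces total ergodicity of the equilibrium state from uniqueness of equilibrium states for each $\A_n$, and finally combines total ergodicity with the Gibbs inequality $\mu([\iii]\cap\sigma^{-|\iii|-n}([\jjj]))\le C^4\mu([\iii])\mu([\jjj])$ via Ornstein's root-problem argument \cite{Ornstein1972, Morris2018}. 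None of this appears in your sketch, and your proposed substitute is not a valid argument.
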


The proof of the theorem is given in Section \ref{sec:conditional-equilibrium} which further relies on the results of Section \ref{sec:algebra-stuff}.
The ergodic equilibrium states admit a precise description which is essentially identical to that given in \cite{BochiMorris2018} and which will be given in detail later. In Example \ref{ex:no-theta-eq-state}, we demonstrate that the $\fii^{s}$-equilibrium state does not necessarily exist when $s = \theta_\A \in \mathscr{I}_\A$. The following result is a consequence of the fact that equilibrium states are always fully supported:

\newcommand{\proppressuredrop}{
  Let $\A=(A_i)_{i \in I} \in \GL_d(\R)^\I$ where $\I$ is either finite or countably infinite. Then the following two assertions hold:
  \begin{enumerate}[(i)]
  \item\label{it:drop1}
  For every $s > \theta_\A$, if $\J$ is a nonempty proper subset of $\I$, then $P((A_i)_{i \in \J},s)<P(\A,s)$.
  \item\label{it:drop2}
  If there exists a norm $\threebar{\,\cdot\,}$ on $\R^d$ such that $\sup_{i \in \I} \threebar{A_i}<1$, and if $\theta_\A < \udimaff\A$, then $\udimaff (A_i)_{i \in \J} < \dimaff \A$ for every nonempty proper subset $\J$ of $\I$.
  \end{enumerate}
}

\begin{proposition} \label{thm:proppressuredrop}
  \proppressuredrop
\end{proposition}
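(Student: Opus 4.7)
The plan is to derive both assertions from the full-support conclusion of Theorem~\ref{thm:equilibrium-states-classification} combined with the variational identity it establishes.

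For part~\eqref{it:drop1}, I would fix $s>\theta_\A$ and a nonempty proper subset $\J$ of $\I$. Since the sum defining $P((A_i)_{i \in \J},\cdot)$ is dominated term-by-term by that of $P(\A,\cdot)$, we have $\theta_{(A_i)_{i \in \J}} \le \theta_\A < s$, and by Theorem~\ref{th:aff}\eqref{it:finiteness} the quantity $\sup_{i \in \I}\fii^s(A_i)$ is at most $\sum_{i \in \I}\fii^s(A_i) < \infty$. Hence Theorem~\ref{thm:equilibrium-states-classification}, applied to $(A_i)_{i \in \J}$, produces a $\sigma$-invariant $\fii^s$-equilibrium state $\nu$ on $\J^\N$ satisfying $h(\nu) + \Lambda(\nu,(A_i)_{i \in \J},s) = P((A_i)_{i \in \J},s)$. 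Viewing $\nu$ as a $\sigma$-invariant measure on $\I^\N$ concentrated on $\J^\N$, both $\Lambda(\nu,\A,s) = \Lambda(\nu,(A_i)_{i \in \J},s)$ (since the integrand is only sampled on $\nu$-almost every $\iii \in \J^\N$) and the Kolmogorov--Sinai entropy $h(\nu)$ are preserved by the embedding. Proposition~\ref{thm:equilibriuminequality} applied to $\A$ then yields $h(\nu) + \Lambda(\nu,\A,s) \le P(\A,s)$, and equality would make $\nu$ an $\fii^s$-equilibrium state for $\A$---contradicting Theorem~\ref{thm:equilibrium-states-classification}, which forces every such state to be fully supported on $\I^\N$ while $\spt\nu \subseteq \J^\N \subsetneq \I^\N$. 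Hence $P((A_i)_{i \in \J},s) < P(\A,s)$.

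For part~\eqref{it:drop2}, I would observe that the hypotheses $\sup_{i \in \I}\threebar{A_i}<1$ and $\theta_\A < \udimaff\A$ combined with Proposition~\ref{thm:propaffinity} give $\dimaff\A = \udimaff\A$, while Theorem~\ref{th:aff}\eqref{it:right-con},\eqref{it:strictly} show that $P(\A,\cdot)$ is continuous and strictly decreasing on $\mathscr{I}_\A \supseteq (\theta_\A,\infty)$, so $\dimaff\A \in \mathscr{I}_\A$ and $P(\A,\dimaff\A) = 0$. Applying part~\eqref{it:drop1} at $s = \dimaff\A > \theta_\A$ gives $P((A_i)_{i \in \J},\dimaff\A) < 0$. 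Since $\theta_{(A_i)_{i \in \J}} \le \theta_\A < \dimaff\A$, Theorem~\ref{th:aff}\eqref{it:right-con} supplies continuity of $P((A_i)_{i \in \J},\cdot)$ at $\dimaff\A$, so there exists $s' < \dimaff\A$ with $P((A_i)_{i \in \J},s') < 0$, and hence $\udimaff(A_i)_{i \in \J} \le s' < \dimaff\A$.

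The main obstacle will be the verification that the entropy and energy of $\nu$ are indeed preserved under the embedding $\J^\N \hookrightarrow \I^\N$; the energy case is immediate from $\nu(\I^\N \setminus \J^\N)=0$, while for the entropy one checks that any finite Borel partition of $\I^\N$ restricts, modulo a $\nu$-null set, to a finite Borel partition of $\J^\N$, and conversely any finite Borel partition of $\J^\N$ extends to one of $\I^\N$ by adjoining the $\nu$-null cell $\I^\N \setminus \J^\N$, so that the suprema defining $h(\nu)$ agree. The remainder is a routine synthesis of the variational principle, strict monotonicity, and full-support theorem already in place.
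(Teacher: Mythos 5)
Your proposal is correct and follows essentially the same route as the paper: for part~\eqref{it:drop1}, apply Theorem~\ref{thm:equilibrium-states-classification} to $(A_i)_{i\in\J}$ to produce an $\fii^s$-equilibrium state supported on $\J^\N \subsetneq \I^\N$, then derive a contradiction from the full-support conclusion of that theorem for $\A$; for part~\eqref{it:drop2}, combine Proposition~\ref{thm:propaffinity} with the strict monotonicity and continuity of the pressure from Theorem~\ref{th:aff}. The additional bookkeeping you flag (preservation of entropy, energy, and the finiteness hypothesis under the embedding $\J^\N\hookrightarrow\I^\N$) is indeed routine and is taken for granted in the paper's version.
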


\subsection{Self-affine sets} \label{sec:self-affine-sets}

The third class of results proved in this article present some applications to the dimension theory of infinitely generated self-affine sets. Let $(T_i)_{i \in \I}$ be an affine iterated function system acting on $\R^d$. Recall that every transformation $T_i$ is invertible and affine, having the form $T_i(x)=A_ix+v_i$ for some invertible linear map $A_i \in \GL_d(\R)$ and translation vector $v_i \in \R^d$. Denote the associated self-affine set by $X \subset \R^d$. We are interested in determining $\dimh X$, the Hausdorff dimension of $X$. We use the convention that whenever we speak about a self-affine set, then it is automatically accompanied with a tuple of affine maps which defines it. Write $T_\iii = T_{i_1} \circ \cdots \circ T_{i_n}$ for all $\iii = i_1 \cdots i_n$. Relying on the equation $X = \bigcup_{i \in \I} T_i(X)$, the self-affine set $X$ can naturally be covered by the sets $T_\iii(B)$, where $B$ is a ball containing $X$. The singular value function $\fii^s(A_\iii)$ represents a measurement of the $s$-dimensional volume of the image of the Euclidean unit ball under $T_\iii$. For example, in the planar case, the set $T_\iii(B)$ can be covered by one ball of radius $\sigma_1(A_\iii)\diam(B)$ or by $O(\sigma_1(A_\iii)/\sigma_2(A_\iii))$ balls of radius $\sigma_2(A_\iii)\diam(B)$. This motivates the study of the limiting behavior of sums $\sum_{\iii \in \I^n} \fii^s(A_\iii)$ and hence, the pressure $P(\A,s)$ of $\A = (A_i)_{i \in \I} \in \GL_d(\R)^\I$. In particular, Theorem \ref{th:aff}\eqref{it:sup} introduces a way to approximate the dimension of infinitely generated self-affine sets by their finitely generated self-affine subsets.

The first proposition is a rather standard covering argument and it generalises the classical estimate \cite[Proposition 5.1]{Falconer1988} into the infinitely generated case. All the results announced below will be proved in Section \ref{sec:conditional-self-affine}.

\newcommand{\selfaffinedimupperbound}{
  Let $X \subset \R^d$ be a self-affine set. Then $\dimh X \le \udimaff\A$.
}

\begin{proposition} \label{thm:selfaffinedimupperbound}
  \selfaffinedimupperbound
\end{proposition}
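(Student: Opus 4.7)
The plan is to adapt Falconer's classical covering argument from \cite[Proposition 5.1]{Falconer1988} to the countably infinite setting. Let $\A=(A_i)_{i\in\I}$ denote the linear parts of the generating IFS. Since $\dimh X \le d$ holds automatically, it suffices to show $\dimh X \le s$ for any $s \in (\udimaff\A, d)$. By \eqref{eq:dimaff-ineq} we have $s > \theta_\A$, so Theorem \ref{th:aff}\eqref{it:finiteness} places $s$ in $\mathscr{I}_\A$ and ensures that $\sum_{\iii \in \I^n} \fii^s(A_\iii) < \infty$ for every $n$. By the definition \eqref{eq:udimaff-def} of $\udimaff\A$ together with the strict monotonicity of $s \mapsto P(\A,s)$ given by Theorem \ref{th:aff}\eqref{it:strictly}, we have $P(\A,s) < 0$, so by the definition \eqref{eq:pressure-defn} of the pressure these sums tend to $0$ exponentially as $n\to\infty$.

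Next I would exploit the self-affine identity $X = \bigcup_{\iii\in\I^n} T_\iii(X)$ and the compactness of the attractor $K = \overline{X}$. Let $B \subset \R^d$ be a closed ball of radius $r$ containing $K$; then $X \subseteq \bigcup_{\iii\in\I^n}T_\iii(B)$ for every $n$, and each set $T_\iii(B)$ is an ellipsoid with semi-axes of lengths $\sigma_j(A_\iii)r$, $j=1,\dots,d$. Writing $k=\lfloor s\rfloor \in \{0,\dots,d-1\}$, a standard geometric argument shows that this ellipsoid can be covered by at most $C_d\prod_{j=1}^{k}\sigma_j(A_\iii)/\sigma_{k+1}(A_\iii)$ Euclidean balls of diameter $C'_d\,\sigma_{k+1}(A_\iii)\,r$, where the constants $C_d,C'_d$ depend only on $d$. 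Summing the $s$-th powers of the diameters of the balls in this cover over $\iii \in \I^n$ yields
\[
  \sum_{\iii\in\I^n} C_d \prod_{j=1}^{k}\frac{\sigma_j(A_\iii)}{\sigma_{k+1}(A_\iii)} \bigl(C'_d\,\sigma_{k+1}(A_\iii)\,r\bigr)^s \le C r^s \sum_{\iii\in\I^n} \fii^s(A_\iii),
\]
where $C$ depends only on $d$ and $s$.

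Finally, to read off a Hausdorff measure bound I need the diameters of the covering balls to be uniformly small as $n$ grows. Let $\threebar{\,\cdot\,}$ be a norm on $\R^d$ with $c:=\sup_{i\in\I}\threebar{A_i}<1$; submultiplicativity and equivalence of norms give $\sigma_{k+1}(A_\iii) \le \|A_\iii\| \le Kc^n$ for some constant $K>0$, uniformly in $\iii \in \I^n$. Hence for any prescribed $\delta>0$, once $n$ is so large that $C'_d K c^n r \le \delta$, the constructed cover of $X$ consists entirely of balls of diameter at most $\delta$, and so
\[
  \mathcal{H}^s_\delta(X) \le C r^s \sum_{\iii\in\I^n} \fii^s(A_\iii).
\]
Letting $n\to\infty$ with $\delta$ fixed drives the right-hand side to $0$, so $\mathcal{H}^s_\delta(X)=0$ for every $\delta>0$, whence $\mathcal{H}^s(X)=0$ and $\dimh X\le s$. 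Taking $s\downarrow\udimaff\A$ yields the proposition.

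The argument is essentially the finite-alphabet proof, so no step is seriously difficult; the only point at which the countable setting intrudes is the opening step, where Theorem \ref{th:aff}\eqref{it:finiteness} is used to guarantee that the covering sums are finite for $s$ slightly above $\udimaff\A$, so that the usual exponential decay estimate from $P(\A,s)<0$ is available.
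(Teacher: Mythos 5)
Your argument is correct and is essentially the same covering argument the paper uses: both cover $X$ by the ellipsoids $T_\iii(B)$ and then by roughly $\prod_{j=1}^{k}\sigma_j(A_\iii)/\sigma_{k+1}(A_\iii)$ balls of radius comparable to $\sigma_{k+1}(A_\iii)$, so that the resulting $\HH^s_\delta$-estimate is controlled by $\sum_{\iii\in\I^n}\varphi^s(A_\iii)$. The only cosmetic difference is that you argue directly (for $s>\udimaff\A$ one has $P(\A,s)<0$, hence the covering sums tend to $0$ and $\HH^s(X)=0$), whereas the paper argues contrapositively (for $s<\dimh X$ one has $\HH^s(X)=\infty$, the covering bound forces $\sum_{\iii\in\I^n}\varphi^s(A_\iii)\ge 1$ for large $n$, hence $P(\A,s)\ge 0$ and $s\le\udimaff\A$). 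Both directions rest on the same two facts you identified: $s>\udimaff\A\ge\theta_\A$ puts $s\in\mathscr{I}_\A$ so the sums are finite, and $\sup_{i}\threebar{A_i}<1$ forces the diameters $\sup_{\iii\in\I^n}\|A_\iii\|\to 0$.
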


Let $\|\cdot\|$ be a norm on the vector space of affine maps from $\R^d$ into itself. We say that $X$ satisfies the \emph{exponential separation condition} if for every finite $\J \subseteq \I$ there exists $c_\J>0$ such that $\|T_\iii-T_\jjj\| \ge c_\J^n$ for all $n \ge 1$ and distinct $\iii,\jjj \in \J^n$. It is not difficult to see that the self-affine set $X$ satisfies exponential separation when the defining iterated function system generates a free semigroup and is defined by algebraic parameters (i.e.\ when $T_\iii \ne T_\jjj$ for all distinct $\iii, \jjj \in \I^*$ and all the entries of $A_i$ and $v_i$ are algebraic numbers).

The remaining theorems introduce sufficient conditions for the Hausdorff dimension of the self-affine set equal the affinity dimension. Furthermore, in such cases Proposition \ref{thm:proppressuredrop}\eqref{it:drop2} translates into a result that removing one of the defining affine maps results in a strict reduction of the Hausdorff dimension, a property which was previously demonstrated for finite affine iterated function systems in \cite{BochiMorris2018,KaenmakiMorris2018}. The following result generalises the seminal theorem \cite[Corollary 1.2]{Hochman2014} into the infinitely generated case. As affine maps acting on the real line are similarities, the self-affine sets on $\R$ are often called \emph{self-similar}.

\begin{theorem} \label{thm:selfaffinedimone}
  Let $X \subset \R$ be a self-similar set satisfying the exponential separation condition. Then $\dimh X = \min\{1,\dimaff\A\}$.
\end{theorem}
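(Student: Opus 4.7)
The plan is to prove the two inequalities $\dimh X \le \min\{1,\dimaff\A\}$ and $\dimh X \ge \min\{1,\dimaff\A\}$ separately. The key idea for the lower bound is to approximate $X$ from within by finite subsystems and then invoke Hochman's theorem, which applies since restricting the exponential separation condition to any finite subsystem yields exactly Hochman's separation hypothesis.

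First I would verify that $\dimaff\A$ is well-defined. Writing $\A = (A_i)_{i \in \I}$ where each $A_i = (r_i)$ is the $1 \times 1$ matrix encoding the similarity ratio of $T_i$, the tuple is trivially completely reducible since $\R$ has no nonzero proper subspaces. Proposition \ref{thm:propaffinity}\eqref{it:propaff3} therefore gives $\ldimaff \A = \udimaff \A = \dimaff \A$. The upper bound $\dimh X \le \dimaff\A$ is then immediate from Proposition \ref{thm:selfaffinedimupperbound}, and combining it with the trivial bound $\dimh X \le 1$ yields $\dimh X \le \min\{1,\dimaff\A\}$.

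For the lower bound, let $\J$ range over nonempty finite subsets of $\I$ and let $X_\J$ denote the self-similar set generated by $(T_i)_{i \in \J}$. Since $\J^\N$ is compact and projects continuously into $\R$, the set $X_\J$ is compact and coincides with the attractor of the finite subsystem; in particular $X_\J \subseteq X$, so $\dimh X \ge \dimh X_\J$. The exponential separation condition for $X$ restricted to $\J$ states precisely that $\|T_\iii - T_\jjj\| \ge c_\J^n$ for all $n \ge 1$ and distinct $\iii,\jjj \in \J^n$, which is exactly Hochman's exponential separation condition for the finite IFS $(T_i)_{i \in \J}$. Hence \cite[Corollary 1.2]{Hochman2014} yields
\[
  \dimh X_\J = \min\{1,\dimaff (A_i)_{i \in \J}\}.
\]
Taking the supremum over $\J$, commuting $\min\{1,\cdot\}$ with the supremum, and using the definition \eqref{eq:ldimaff-def} of $\ldimaff \A$, I would conclude
\[
  \dimh X \ge \sup_\J \dimh X_\J = \min\{1,\ldimaff \A\} = \min\{1,\dimaff \A\}.
\]

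There is no serious obstacle here: the heavy lifting is distributed between Proposition \ref{thm:selfaffinedimupperbound}, Proposition \ref{thm:propaffinity}, and Hochman's theorem, and the argument simply assembles these inputs. The only point meriting a brief sentence is identifying the paper's exponential separation condition with Hochman's formulation at the level of each finite subsystem, which is essentially a matter of unpacking definitions.
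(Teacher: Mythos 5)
Your proposal is correct and follows essentially the same route as the paper: the upper bound comes from Proposition \ref{thm:selfaffinedimupperbound}, the identification $\ldimaff\A = \udimaff\A$ uses the (trivial in dimension one) complete reducibility via Proposition \ref{thm:propaffinity}\eqref{it:propaff3}, and the lower bound is obtained by restricting to finite subsystems, noting that the exponential separation condition passes to them, invoking Hochman's theorem \cite[Corollary 1.2]{Hochman2014}, and taking a supremum --- which is precisely the content of the paper's Theorem \ref{thm:hausdorff-lower-affinity}. The only cosmetic difference is that the paper phrases the supremum step via countable stability of Hausdorff dimension applied to \eqref{eq:union-of-finite-subsets}, whereas you use plain monotonicity $\dimh X \ge \dimh X_\J$; both are fine.
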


We say that the self-affine set $X$ satisfies the \emph{fixed point condition} if the maps $T_i$ in the defining affine iterated function system do not have a common fixed point. Furthermore, we say that $\A = (A_i)_{i \in \I} \in \GL_d(\R)^\I$ is \emph{proximal} if there exist $\iii_1,\iii_2,\ldots \in \I^*$ and $\alpha_1,\alpha_2,\ldots \in \R$ such that the sequence $(\alpha_n A_{\iii_n})_{n \ge 1}$ converges to a rank one linear transformation. In the case where $\A$ is additionally irreducible, this is equivalent to the existence of $\iii \in \I^*$ such that $A_\iii$ has a simple leading eigenvalue. 
The next theorem generalises both \cite[Theorem 1.1]{HochmanRapaport2022} and \cite[Theorem 1.1]{BaranyHochmanRapaport2019}, whose proofs are based on \cite[Theorem 2.3]{BaranyKaenmaki2017} and \cite[Theorem 1.1]{MorrisShmerkin2019}, into the infinitely generated case.

\begin{theorem} \label{thm:selfaffinedimtwo}
  Let $X \subset \R^2$ be a self-affine set satisfying the fixed point condition and the exponential separation condition such that the associated matrix tuple $\A$ is strongly irreducible and proximal. Then $\dimh X = \min\{2,\dimaff\A\}$.
\end{theorem}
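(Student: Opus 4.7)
The plan is to combine a straightforward upper bound with a lower bound obtained by approximating $X$ from within by finitely generated self-affine subsets and invoking the finite-alphabet theorem \cite[Theorem 1.1]{BaranyHochmanRapaport2019}. For the upper bound, $\dimh X\le 2$ is trivial while Proposition \ref{thm:selfaffinedimupperbound} gives $\dimh X\le\udimaff\A$; since strong irreducibility implies irreducibility, and an irreducible tuple is completely reducible (a single irreducible block of size $d$), Proposition \ref{thm:propaffinity}\eqref{it:propaff3} yields $\udimaff\A=\dimaff\A$, so $\dimh X\le\min\{2,\dimaff\A\}$.

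For the lower bound, for each finite $\J\subseteq\I$ let $X_\J$ denote the attractor of the finite subsystem $(T_i)_{i\in\J}$. Since every sequence in $\J^\N$ belongs to $\I^\N$ and yields the same limit point, $X_\J\subseteq X$, hence $\dimh X\ge\sup_\J\dimh X_\J$. I would then fix a finite $\J_0\subseteq\I$ arranged so that: (i) $\J_0$ contains two maps without a common fixed point, securing the fixed point condition for every $\J\supseteq\J_0$; (ii) $\J_0$ contains all indices appearing in some word $\iii_0\in\I^*$ for which $A_{\iii_0}$ has a simple leading eigenvalue, securing proximality of $\A_{\J_0}$; and (iii) $\A_{\J_0}$ is strongly irreducible. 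Property (iii) is the delicate point and I would derive it from a Noetherian Zariski-closure argument: the Zariski closure of the semigroup generated by $\A$ in $\GL_2(\R)$ coincides with the Zariski closure of some finitely generated subsemigroup, and since strong irreducibility (equivalently, the non-existence of a finite invariant subset of $\RP$) depends only on the Zariski closure, $\A_{\J_0}$ may be chosen strongly irreducible. Exponential separation for $X_\J$ is automatic from that of $X$, and adjoining further maps preserves all four properties, so every finite $\J\supseteq\J_0$ satisfies the full set of hypotheses.

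Applying \cite[Theorem 1.1]{BaranyHochmanRapaport2019} to each such $\J$ gives $\dimh X_\J=\min\{2,\dimaff\A_\J\}$. It then remains to show that $\sup\{\dimaff\A_\J\colon\J\supseteq\J_0\text{ finite}\}=\dimaff\A$. By Proposition \ref{thm:propaffinity}\eqref{it:propaff3}, $\ldimaff\A=\dimaff\A$, which by the definition \eqref{eq:ldimaff-def} of $\ldimaff$ means precisely that $\dimaff\A$ is approximable by the affinity dimensions of finite subtuples; monotonicity of $\udimaff\A_\J$ in $\J$ (inherited from monotonicity of the pressure in the index set) then shows that this supremum is unchanged after restricting to $\J\supseteq\J_0$. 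The main obstacle is step (iii): while the Zariski-closure route is conceptually clean, it relies on standard but non-trivial algebraic facts about semigroups in $\GL_2(\R)$; an alternative is to use a proximal element of $\A$ to pin down attracting and repelling directions in $\RP$ and then exploit strong irreducibility of $\A$ to adjoin, one at a time, finitely many maps whose projective actions obstruct every finite candidate invariant subset of $\RP$ of bounded cardinality.
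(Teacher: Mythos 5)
Your overall structure matches the paper's: the upper bound via Proposition \ref{thm:selfaffinedimupperbound} and complete reducibility (Proposition \ref{thm:propaffinity}\eqref{it:propaff3}), the lower bound by approximating $X$ from within by finite subsystems and invoking \cite[Theorem 1.1]{BaranyHochmanRapaport2019}, and the use of \eqref{eq:ldimaff-def} to pass the supremum of affinity dimensions of finite subtuples to $\dimaff\A$. This is precisely how the paper proceeds (see Theorem \ref{thm:hausdorff-lower-affinity} and the surrounding discussion), so most of your proposal is sound.

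The genuine gap is in step (iii), producing a finite $\J_0$ for which $(A_i)_{i\in\J_0}$ is strongly irreducible. Your Zariski-closure claim --- that the Zariski closure of the semigroup generated by $\A$ coincides with that of some finitely generated subsemigroup --- is false without further hypotheses. The paper itself gives the counterexample immediately before Proposition \ref{pr:finite-upwards}: if $A_n$ is rotation by $\pi/2^n$, every finite subtuple generates a finite cyclic group and hence fails strong irreducibility, while the full tuple is strongly irreducible; the Zariski closures of the finitely generated subsemigroups form a strictly increasing chain of finite groups that never reaches the closure of the whole semigroup. The Noetherian (descending chain) property of the Zariski topology does not give an ascending chain condition on closed subsets, so the stabilization you appeal to simply does not hold in general. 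Proximality is exactly what excludes such examples, but you do not say how it enters your algebraic argument. The paper handles this via Lemma \ref{le:three-lines}, which shows that in dimension $d\le 3$, a proximal and irreducible tuple that fails strong irreducibility must preserve a union of exactly $d$ lines --- a uniform cardinality bound. This bound is what makes the compactness argument in $(\mathbb{RP}^{d-1})^d$ in Proposition \ref{pr:finite-upwards} work: if every finite subtuple $(A_i)_{i\in\J_n}$ failed strong irreducibility, each would preserve some $d$-tuple of lines, a nonempty compact subset $\mathcal{W}_n$ of $(\mathbb{RP}^{d-1})^d$, and the nested intersection $\bigcap_n\mathcal{W}_n$ would yield a finite invariant collection for $\A$, a contradiction. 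Your alternative sketch (adjoining maps to obstruct finite invariant subsets of $\RP$ ``of bounded cardinality'') is in the right spirit but hides precisely this point: without the cardinality bound supplied by Lemma \ref{le:three-lines}, the space of candidate finite invariant sets is not compact and the argument does not close up. You should also note that this lemma, and hence Proposition \ref{pr:finite-upwards}, is genuinely restricted to $d\le 3$ --- another sign that a generic Zariski-closure argument would prove too much.
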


Recall that $X$ satisfies the \emph{strong open set condition} if there exists a nonempty bounded open set $U \subset \R^d$ intersecting $X$ such that $\bigcup_{i \in \I} T_i(U) \subseteq U$ with disjoint union. By \cite[\S 6.2]{BaranyHochmanRapaport2019}, the strong open set condition implies the exponential separation. The following theorem generalises \cite[Theorem 1.5]{MorrisSert2023preprint}, whose proof rely on \cite[Theorem 1.9]{Rapaport2022preprint} and \cite[Theorem 1.4]{Feng2023}, into the infinitely generated case.

\begin{theorem} \label{thm:selfaffinedimthree}
  Let $X \subset \R^3$ be a self-affine set satisfying the strong open set condition such that the associated matrix tuple $\A$ is strongly irreducible and proximal. Then $\dimh X = \min\{3,\dimaff\A\}$.
\end{theorem}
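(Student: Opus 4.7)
The plan is to reduce to the finite-alphabet three-dimensional result \cite[Theorem 1.5]{MorrisSert2023preprint} by exhausting $\mathsf{A}$ with finite subsystems that inherit the three hypotheses (strong irreducibility, proximality, strong open set condition), and to transfer the dimension via Theorem \ref{th:aff}\eqref{it:sup}. The upper bound $\dimh X \le \udimaff \mathsf{A}$ is immediate from Proposition \ref{thm:selfaffinedimupperbound}. Since strong irreducibility implies irreducibility, the tuple $\mathsf{A}$ is completely reducible in the trivial sense of a single block of size $d$, so Proposition \ref{thm:propaffinity}\eqref{it:propaff3} yields $\ldimaff \mathsf{A} = \udimaff \mathsf{A} = \dimaff \mathsf{A}$, and the upper bound reads $\dimh X \le \min\{3,\dimaff \mathsf{A}\}$.

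For the lower bound I aim to construct an increasing sequence of finite subsets $\mathcal{J}_k \subset \I$ such that each sub-tuple $\mathsf{A}_k = (A_i)_{i \in \mathcal{J}_k}$ is strongly irreducible and proximal, the associated finite self-affine attractor $X_k$ satisfies the strong open set condition, and $\dimaff \mathsf{A}_k \to \dimaff \mathsf{A}$ as $k \to \infty$. Granting this, Morris--Sert gives $\dimh X_k = \min\{3,\dimaff \mathsf{A}_k\}$ for each $k$; since every limit of compositions drawn from $\mathcal{J}_k^\N$ is also a limit of compositions drawn from $\I^\N$, we have $X_k \subseteq X$, and hence $\dimh X \ge \min\{3,\dimaff \mathsf{A}_k\}$. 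Passing to the limit, using $\dimaff \mathsf{A}_k \to \ldimaff \mathsf{A} = \dimaff \mathsf{A}$ via the definition \eqref{eq:ldimaff-def}, closes the argument.

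The inheritance of the three hypotheses by a single finite sub-tuple is handled in three separate steps, and then their indices are gathered into a common $\mathcal{J}_0$, with $\mathcal{J}_k \supseteq \mathcal{J}_0$ chosen to exhaust $\I$. For the strong open set condition, the containment $\bigcup_{i \in \mathcal{J}_k} T_i(U) \subseteq \bigcup_{i \in \I} T_i(U) \subseteq U$ and the preservation of disjointness under restriction are automatic; the requirement that $U$ meet $X_k$ is handled by selecting a cylinder fixed point of the full system that lies in $U$ (such a point exists because periodic points of the full system are dense in $X$ and $U \cap X \ne \emptyset$) and by adding its finitely many constituent indices to $\mathcal{J}_0$. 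Proximality is witnessed by a single product $A_{i_1} \cdots A_{i_n}$ with a simple leading eigenvalue, so adding those indices to $\mathcal{J}_0$ suffices. Strong irreducibility descends by way of Noetherianity of the Zariski topology on $\GL_d(\R)$: the Zariski closure of the semigroup generated by $\mathsf{A}$ is already achieved by the semigroup generated by some finite sub-tuple, and strong irreducibility is a property of this Zariski closure.

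The main obstacle is the strong irreducibility inheritance step. Although it is a standard fact in the theory of linear group actions, it relies on algebro-geometric machinery (Zariski closure and Noetherianity) that is used nowhere else in the paper, and the statement needs to be formulated carefully for countable tuples of matrices rather than for groups. The other two inheritance arguments, the convergence of affinity dimensions via Theorem \ref{th:aff}\eqref{it:sup}, and the verification $X_k \subseteq X$ are essentially routine.
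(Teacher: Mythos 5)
Your overall strategy---exhaust $\I$ by finite subsets $\J_k$, apply the finite-alphabet three-dimensional result of Morris and Sert to the subsystems, pass to the limit via Theorem~\ref{th:aff}\eqref{it:sup} and the identity $\ldimaff\A=\udimaff\A$ from Proposition~\ref{thm:propaffinity}\eqref{it:propaff3}---is exactly the route the paper takes (Theorem~\ref{thm:hausdorff-lower-affinity} plus Proposition~\ref{pr:finite-upwards}). Your treatment of the upper bound, the strong open set condition inheritance, the proximality inheritance, and the limiting argument are all fine.

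However, your argument for the strong irreducibility inheritance step contains a genuine error. You claim that by Noetherianity of the Zariski topology, the Zariski closure of the semigroup generated by $\A$ is already achieved by the semigroup generated by some finite sub-tuple, and that strong irreducibility then descends. This is false. Noetherianity gives the descending chain condition for Zariski closed sets, not the ascending chain condition, and there is no reason for the increasing chain of Zariski closures $\overline{\Gamma_{\J_1}}\subseteq\overline{\Gamma_{\J_2}}\subseteq\cdots$ to stabilise. The paper itself gives a counterexample to the underlying claim in the paragraph immediately preceding Proposition~\ref{pr:finite-upwards}: if $\A=(A_n)_{n\in\N}\in\GL_2(\R)^\N$ where $A_n$ is rotation by $\pi/2^n$, then $\A$ is strongly irreducible, but every finite sub-tuple generates a finite cyclic group whose Zariski closure is a finite set of points, never equal to $\SO(2)$. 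In particular no finite sub-tuple is strongly irreducible. The failure here is precisely that the Zariski closures do not stabilise along the chain.

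What saves the argument in the paper is that the rotation example is not proximal, and the paper's Proposition~\ref{pr:finite-upwards} exploits this: it first shows irreducibility descends to finite subsystems (by a compactness argument in the Grassmannian---a decreasing chain of compact sets with empty intersection must have an empty term), and then uses Lemma~\ref{le:three-lines}, which says that in dimension $d\leq 3$ a proximal irreducible tuple that fails strong irreducibility must preserve a union of exactly $d$ lines. A compactness argument in $(\mathbb{RP}^{d-1})^d$ then upgrades failure of strong irreducibility along the whole chain to failure for the full system, giving a contradiction. This argument uses proximality and the restriction $d\leq 3$ in an essential way, whereas your Noetherianity step uses neither and is wrong as stated. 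You should replace that step with an argument along the lines of Proposition~\ref{pr:finite-upwards} and Lemma~\ref{le:three-lines}.
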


In the following result, which generalises \cite[Theorem 5.3]{Falconer1988} and \cite[Theorem B]{KaenmakiReeve2014}, the self-affine sets $X_{\mathsf{v}}$ are parametrised by the tuples of associated translation vectors $\mathsf{v} = (v_i)_{i \in \I} \in (\R^d)^\I$. Define $\mathbf{Q} = ([0,1]^d)^\I$ and note that by Kolmogorov extension theorem $\mathbf{Q}$ supports a natural probability measure $\LL_{\mathbf{Q}} = (\LL^d|_{[0,1]^d})^\I$, where $\LL^d$ is the Lebesgue measure on $\R^d$.

\begin{theorem} \label{thm:selfaffinedimd}
  Let $X_{\mathsf{v}} \subset \R^d$ be a self-affine set such that the associated matrix tuple $\A = (A_i)_{i \in \I}$ satisfies $\sup_{i \in \I} \|A_i\| < \tfrac12$. Then $\dimh X_{\mathsf{v}} \ge \min\{d,\ldimaff\A\}$ for $\LL_{\mathbf{Q}}$-almost all $\mathsf{v} \in \mathbf{Q}$.
\end{theorem}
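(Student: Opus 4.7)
The plan is to deduce the countably generated case from Falconer's classical almost-sure lower bound for finitely generated self-affine sets, exploiting the fact that the index set $\I$ is countable so that a countable union of exceptional null sets remains null.

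First, I would fix a nonempty finite subset $\J \subseteq \I$ and consider the finitely generated affine iterated function system $(T_i)_{i \in \J}$ with translations $\mathsf{v}|_{\J} = (v_i)_{i \in \J}$. Its attractor $X_{\mathsf{v}|_{\J}}$ is contained in $X_{\mathsf{v}}$: indeed, every sequence $(i_n)_{n=1}^\infty \in \J^\N \subseteq \I^\N$ produces a limit point in $X_{\mathsf{v}}$, so $X_{\mathsf{v}|_{\J}} \subseteq X_{\mathsf{v}}$. In particular, $\dimh X_{\mathsf{v}} \ge \dimh X_{\mathsf{v}|_{\J}}$. Since $\sup_{i \in \J}\|A_i\| < 1/2$, Falconer's theorem (with Solomyak's sharpening of the constant from $1/3$ to $1/2$) applies: for $\mathcal{L}^{d \cdot \#\J}$-almost every choice of $(v_i)_{i \in \J} \in ([0,1]^d)^\J$,
\[
\dimh X_{\mathsf{v}|_{\J}} \ge \min\{d, \dimaff (A_i)_{i \in \J}\} = \min\{d, \udimaff (A_i)_{i \in \J}\},
\]
where the equality uses Proposition \ref{thm:propaffinity}\eqref{it:propaff4}. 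By Fubini's theorem applied to the product decomposition $\mathbf{Q} = ([0,1]^d)^\J \times ([0,1]^d)^{\I \setminus \J}$, there is a $\LL_{\mathbf{Q}}$-null set $N_\J \subseteq \mathbf{Q}$ outside which the above lower bound is valid.

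Next, set $N = \bigcup_\J N_\J$, where $\J$ ranges over all nonempty finite subsets of $\I$. Because $\I$ is countable, the collection of its finite subsets is countable as well, and hence $\LL_{\mathbf{Q}}(N) = 0$. For any $\mathsf{v} \in \mathbf{Q} \setminus N$ and any such $\J$, we obtain
\[
\dimh X_{\mathsf{v}} \ge \dimh X_{\mathsf{v}|_{\J}} \ge \min\{d, \udimaff (A_i)_{i \in \J}\}.
\]
Taking the supremum over all nonempty finite $\J \subseteq \I$ and using the definition \eqref{eq:ldimaff-def} of $\ldimaff \A$ together with continuity of $t \mapsto \min\{d,t\}$,
\[
\dimh X_{\mathsf{v}} \ge \sup_{\J} \min\{d, \udimaff (A_i)_{i \in \J}\} = \min\{d, \ldimaff \A\},
\]
which completes the argument.

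The main obstacle is a purely bookkeeping issue rather than a deep one: verifying that the exceptional set in Falconer's theorem applied to a finite subsystem $\J$ lifts to a $\LL_{\mathbf{Q}}$-null set in the full product $\mathbf{Q}$ (handled by Fubini), and that the union over all finite $\J \subseteq \I$ remains null (handled by the countability of $\I$). The inclusion $X_{\mathsf{v}|_\J} \subseteq X_{\mathsf{v}}$ must be verified carefully in the infinitely generated setting, but follows immediately from the definition \eqref{eq:limit-set-def} of the limit set since $\J^\N \subseteq \I^\N$.
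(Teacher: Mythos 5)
Your proof is correct and takes essentially the same route as the paper: apply Falconer's almost-sure lower bound (with Solomyak's improvement of the norm constant from $\tfrac13$ to $\tfrac12$, which the hypothesis $\sup_{i}\|A_i\|<\tfrac12$ requires and which you correctly flag) to each finitely generated subsystem, lift the resulting finite-dimensional null set to a $\LL_{\mathbf{Q}}$-null set in $\mathbf{Q}$ using the product structure, take a countable union over finite $\J \subseteq \I$, and finish with the monotonicity of Hausdorff dimension together with the definition of $\ldimaff \A$. The paper packages the deterministic reduction as Theorem~\ref{thm:hausdorff-lower-affinity} applied to an increasing exhaustion $\J_n \uparrow \I$ rather than ranging over all finite subsets and handling Fubini inline as you do, but the two arguments are identical in substance.
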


Under any of the assumptions \eqref{it:propaff1}--\eqref{it:propaff4} in Proposition \ref{thm:propaffinity}, the above theorem improves into $\dimh X_{\mathsf{v}} = \min\{d,\dimaff\A\}$ for $\LL_{\mathbf{Q}}$-almost all $\mathsf{v} \in \mathbf{Q}$. Recalling Proposition \ref{thm:proppathologies}, it would be interesting to know if in the context of Theorem \ref{thm:selfaffinedimd} there exist infinitely generated self-affine sets $X$ with $\dimh X < \udimaff\A$. If not, then Proposition \ref{thm:proppathologies} shows that there are infinitely generated self-affine sets $X$ with $\dimh \bigcup_{\J \subset \I \text{ is finite}} X_\J < \dimh X$, where $X_\J \subseteq X$ is the self-affine set associated to $(T_i)_{i \in \J}$.

The remainder of the article is structured as follows. In the following section we prove the elementary clauses \eqref{it:finiteness} and \eqref{it:strictly} of Theorem \ref{th:aff}. In Section \ref{sec:preli-entropy-energy}, we introduce necessary preliminaries on the energy and entropy, and in Section \ref{sec:completely-reducible}, we verify an important reduction that it suffices to work with completely reducible matrices. All the remaining results stated in the introduction are proved in Section \ref{sec:conditional-proof} under the assumption of a technical result whose proof is more algebraic in flavor. Finally, Section \ref{sec:algebra-stuff} is dedicated to the proof of this technical result.

%
%

\section{Existence and finiteness of pressure} \label{sec:preli}

In this section, we collect some elementary properties of the pressure $P(\A,s)$ for countably infinite affine iterated function systems. In particular, we prove the clauses \eqref{it:finiteness} and \eqref{it:strictly} in Theorem \ref{th:aff}. Without further mentioning, we use notation concerning words introduced in Section \ref{sec:equilibrium-states}. We begin with a fundamental lemma concerning the sequence used to define the pressure.

\begin{lemma}\label{le:zeroth-lemma}
  Let $\I$ be countably infinite, $\A=(A_i)_{i \in \I} \in \GL_d(\R)^\I$, and $s \geq 0$. If $\sum_{i \in \I} \varphi^{s}(A_i)<\infty$, then for every $n \geq 1$ the series $\sum_{\iii \in \I^n} \varphi^t(A_\iii)$ converges uniformly on $[s,\infty)$.
\end{lemma}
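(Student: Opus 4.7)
The plan is to apply the Weierstrass M-test, using $\varphi^s(A_\iii)$ itself as the majorant for all but finitely many $\iii$. I would proceed in three steps.

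\emph{Step 1.} By Falconer's submultiplicativity inequality $\varphi^s(AB) \le \varphi^s(A)\varphi^s(B)$, one has
\[
  \sum_{\iii \in \I^n} \varphi^s(A_\iii) \le \Bigl(\sum_{i \in \I} \varphi^s(A_i)\Bigr)^n < \infty,
\]
so $\varphi^s(A_\iii) \to 0$ along $\iii \in \I^n$. In particular the set $F = \{\iii \in \I^n : \varphi^s(A_\iii) \ge 1\}$ is finite.

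\emph{Step 2 (the key analytic ingredient).} Prove the monotonicity claim: if $A \in \GL_d(\R)$ and $\varphi^s(A) < 1$, then $\varphi^t(A) \le \varphi^s(A)$ for every $t \ge s$. The function $u \mapsto \log\varphi^u(A)$ is piecewise linear on $[0,d]$ with slope $\log\sigma_{k+1}(A)$ on $[k,k+1]$; because singular values are non-increasing, this function is concave on $[0,d]$, and comparing with the chord through $(0,0)$ and $(d,\log|\det A|)$ gives $\varphi^u(A) \ge |\det A|^{u/d}$. Hence $\varphi^s(A) < 1$ forces $|\det A| < 1$. Next, if $\sigma_{\lceil s\rceil}(A) > 1$ then $\sigma_1(A),\ldots,\sigma_{\lceil s\rceil}(A)$ are all greater than $1$, and a direct computation from the definition of $\varphi^s$ gives $\varphi^s(A) > 1$; hence $\sigma_{\lceil s\rceil}(A) \le 1$. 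Combining the two facts: on $[s,d]$ the slope $\log\sigma_{\lceil u\rceil}(A)$ is non-positive, so $\log\varphi^u(A)$ is non-increasing there; on $[d,\infty)$ one has $\varphi^u(A) = |\det A|^{u/d}$, which is also decreasing. This yields the claim.

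\emph{Step 3.} For $\iii \notin F$ the claim supplies $\varphi^t(A_\iii) \le \varphi^s(A_\iii)$ for every $t \ge s$, so the Weierstrass M-test with majorant $(\varphi^s(A_\iii))_{\iii \notin F}$ (summable by Step 1) yields uniform convergence of $\sum_{\iii \notin F} \varphi^t(A_\iii)$ on $[s,\infty)$. The residual sum $\sum_{\iii \in F}\varphi^t(A_\iii)$ is a finite sum of continuous functions of $t$, so adding it back preserves uniform convergence of the full series.

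The main obstacle is Step 2. The difficulty is that $u \mapsto \log\varphi^u(A)$ is \emph{not} globally concave on $[0,\infty)$: its slope generically jumps upward at $u=d$, from $\log\sigma_d(A)$ to $(\log|\det A|)/d$, so one cannot close the argument by a single concavity appeal and must treat $[s,d]$ and $[d,\infty)$ separately. The key insight making this work is that a single hypothesis, $\varphi^s(A) < 1$, simultaneously delivers the two pieces of information needed on the two intervals: non-positivity of $\log\sigma_{\lceil s\rceil}(A)$ (governing the slope on $[s,d]$) and strict contractivity of $|\det A|$ (governing the behaviour on $[d,\infty)$).
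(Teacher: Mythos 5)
Your proposal is correct and follows essentially the same route as the paper: isolate the finite exceptional set where $\varphi^s\ge 1$, prove the monotonicity $\varphi^t(A)\le\varphi^s(A)$ for $t\ge s$ when $\varphi^s(A)<1$, and conclude uniform convergence (the paper via a uniform-Cauchy estimate, you via the equivalent Weierstrass M-test). The only difference is cosmetic: you justify the monotonicity step through the concavity of $u\mapsto\log\varphi^u(A)$ on $[0,d]$ and handle general $n$ directly, whereas the paper states the needed inequalities by cases and then remarks that the $n=1$ argument applies with $\I^n$ in place of $\I$.
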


\begin{proof}
  Suppose that $\sum_{i \in \I} \varphi^{s}(A_i)<\infty$ and define  $\I^+=\{i \in \I \colon \varphi^{s}(A_i) \geq 1\}$ and $\I^-=\{i \in \I \colon \varphi^{s}(A_i)<1\}$. Since by hypothesis $\sum_{i \in \I} \varphi^{s}(A_i)<\infty$, the set $\I^+$ is finite. For all $i \in \I^-$ we have
  \[
  \begin{cases}
    \sigma_{\lceil s\rceil}(A_i)^{s} \leq \varphi^{s}(A_i)<1, &\text{if } 0 \le s < d, \\ 
    |\det A_i| < 1, &\text{if } s \ge d,
  \end{cases}
  \]
  and hence,
  \[
  \varphi^{t}(A_i) \leq
  \begin{cases}
    \varphi^{s}(A_i) \sigma_{\lceil s\rceil}(A_i)^{t-s} \leq \varphi^{s}(A_i), &\text{if } 0 \le t < d, \\
    \varphi^{s}(A_i) |\det A_i|^{\frac{t-\max\{s,d\}}{d}} \leq \varphi^{s}(A_i), &\text{if } t \ge d
  \end{cases}
  \]
  for all $t \geq s$. Let $\varepsilon>0$ and choose a finite set $\mathcal{J}\subset \mathcal{I}$ such that $\sum_{i \in\mathcal{I}\setminus \mathcal{J}}\varphi^{s}(A_i)<\varepsilon$. It follows directly that if $t \geq s$, then
  \[
    \sum_{i \in \I \setminus (\I^+ \cup \J)}\varphi^t(A_i)=\sum_{i \in \I^- \setminus \J}\varphi^t(A_i) \leq \sum_{i \in \I^- \setminus \J}\varphi^{s}(A_i) \leq \sum_{i \in \I \setminus \J}\varphi^{s}(A_i)  <\varepsilon.
  \]
  We have proved that $\sum_{i \in \I}\varphi^t(A_i)$ converges uniformly on $[s,\infty)$ and this proves the lemma in the case $n=1$.

  To show the general case, observe first that if $A,B \in \GL_d(\R)$, then, by a well-known inequality,
  \begin{equation} \label{eq:svf-sub-multi}
    \fii^{s}(AB) \le \fii^{s}(A) \fii^{s}(B);    
  \end{equation}
  for example, see \cite[\S 3.4]{KaenmakiMorris2018}. If $n \ge 1$, then by summing we get
  \begin{equation} \label{eq:svf-sum-sub-multi}
    \sum_{\iii \in \I^{n+1}} \fii^{s}(A_\iii) \le \biggl( \sum_{\iii \in \I^n} \fii^{s}(A_\iii) \biggr)\biggl( \sum_{i \in \I} \fii^{s}(A_i) \biggr).
  \end{equation}
  Therefore, since $\sum_{\iii \in \I^n}\varphi^{s}(A_\iii)\le (\sum_{i \in \I}\varphi^{s}(A_i))^n$ by a simple induction, the convergence of $\sum_{i \in \I}\varphi^{s}(A_i)$ immediately implies the convergence of $\sum_{\iii \in \I^n}\varphi^{s}(A_\iii)$, and the result follows by applying the above arguments to $\I^n$ in place of $\I$.
\end{proof}

The following lemma verifies the existence of the pressure defined in \eqref{eq:pressure-defn} and proves Theorem \ref{th:aff}\eqref{it:finiteness}. The claim \eqref{it:l11} of the lemma extends \cite[Proposition 2.1.9]{MauldinUrbanski2003} into the self-affine setting.

\begin{lemma} \label{th:first-lemma}
  Let $\A=(A_i)_{i \in \I} \in \GL_d(\R)^\I$ where $\I$ is either finite or countably infinite. Then the following three assertions hold:
  \begin{enumerate}[(i)]
  \item\label{it:l11}
  For every $s \geq 0$ the limit
  \begin{equation}\label{eq:pressure-sequence}
    P(\mathsf{A},s) = \lim_{n\to\infty} \frac{1}{n}\log \sum_{\iii \in \I^n} \varphi^s(A_\iii) = \inf_{n \geq 1}\frac{1}{n}\log \sum_{\iii \in \I^n} \varphi^s(A_\iii)
  \end{equation}
  exists in $(-\infty,\infty]$ and is finite if and only if $\sum_{i \in \I}\varphi^s(A_i)<\infty$ is finite. In particular
  \[  
    \mathscr{I}_\A=\{s \ge 0 \colon P(\mathsf{A},s)<\infty\}=\Bigl\{s \geq 0 \colon \sum_{i \in \I} \varphi^s(A_i)<\infty\Bigr\}.
  \]
  \item\label{it:l12} The set $\mathscr{I}_\A$ is equal to either $(\theta_\A,\infty)$ or $[\theta_\A,\infty)$.
  \item\label{it:l13} For every $n \geq 1$ the function $s\mapsto \sum_{\iii \in \I^n} \varphi^s(A_\iii)$ defined on $\mathscr{I}_\A$ is continuous.
  \end{enumerate}
\end{lemma}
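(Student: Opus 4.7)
The plan is to derive all three parts from a careful finiteness analysis using the submultiplicative inequality \eqref{eq:svf-sub-multi} in one direction and the companion \emph{reverse} inequality
\[
  \varphi^s(AB) \ge \sigma_d(A)^s\,\varphi^s(B), \qquad A,B \in \GL_d(\R),\ s \ge 0,
\]
in the other. This reverse inequality follows from the identity $B = A^{-1}(AB)$, which yields $\sigma_i(B) \le \|A^{-1}\|\,\sigma_i(AB) = \sigma_i(AB)/\sigma_d(A)$ for every $i$; substituting this into the definition of $\varphi^s$ settles the range $0 \le s \le d$ directly, while the range $s>d$ is handled by multiplicativity of the determinant together with the elementary estimate $\sigma_d(A)^d \le \sigma_1(A)\cdots\sigma_d(A) = |\det A|$.

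\textbf{Proof of (i).} Write $a_n = \log \sum_{\iii \in \I^n}\varphi^s(A_\iii)$. If $\sum_{i\in\I}\varphi^s(A_i)<\infty$, then \eqref{eq:svf-sum-sub-multi} applied inductively gives $a_n<\infty$ for every $n \ge 1$, and the subadditivity $a_{n+m} \le a_n + a_m$ (itself immediate from \eqref{eq:svf-sum-sub-multi}) combined with Fekete's lemma ensures that the limit in \eqref{eq:pressure-sequence} exists in $[-\infty,\infty)$ and equals the infimum. If instead $\sum_{i\in\I}\varphi^s(A_i)=\infty$, fix any $j \in \I$ and apply the reverse inequality to the words $j^{n-1}i \in \I^n$ to obtain
\[
  \sum_{\iii \in \I^n}\varphi^s(A_\iii) \;\ge\; \sum_{i \in \I}\varphi^s(A_j^{n-1}A_i) \;\ge\; \sigma_d(A_j^{n-1})^s\sum_{i\in\I}\varphi^s(A_i) = \infty,
\]
so that $a_n \equiv \infty$ and both the limit and the infimum equal $\infty$. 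In either case the equation \eqref{eq:pressure-sequence} holds, and $P(\A,s)<\infty$ is equivalent to $\sum_{i\in\I}\varphi^s(A_i)<\infty$; this is the characterisation of $\mathscr{I}_\A$ claimed in (i).

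\textbf{Proofs of (ii) and (iii).} Part (ii) is now immediate from (i): the monotonicity argument in the proof of Lemma \ref{le:zeroth-lemma} shows that for $s_0 \in \mathscr{I}_\A$ and $s \ge s_0$ one has $\varphi^s(A_i) \le \varphi^{s_0}(A_i)$ for all but finitely many $i \in \I$, so $\mathscr{I}_\A$ is upward-closed in $[0,\infty)$ and hence equals $[\theta_\A,\infty)$ or $(\theta_\A,\infty)$. For part (iii), fix any $s_0 \in \mathscr{I}_\A$; Lemma \ref{le:zeroth-lemma} supplies uniform convergence of $\sum_{\iii \in \I^n}\varphi^t(A_\iii)$ on $[s_0,\infty)$, while each summand $t \mapsto \varphi^t(A_\iii)$ is continuous by direct inspection (piecewise a product of singular values multiplied by a single fractional power, with matching values at the integer joints $t=1,\ldots,d$ and at the switch at $t=d$). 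A uniform limit of continuous functions is continuous, so the sum is continuous on $[s_0,\infty)$, and letting $s_0$ range over $\mathscr{I}_\A$ yields continuity on all of $\mathscr{I}_\A$. The only step requiring genuine work is the reverse submultiplicative inequality driving part (i); everything else is an assembly of Lemma \ref{le:zeroth-lemma}, Fekete's lemma, and routine inspection of $\varphi^s$.
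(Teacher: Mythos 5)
Your proof is correct and follows essentially the same route as the paper: both rest on the reverse inequality $\varphi^s(AB)\ge\sigma_d(A)^s\varphi^s(B)$ (the paper states it in the symmetric form $\varphi^s(A)\sigma_d(B)^s\le\varphi^s(AB)$, derived via $\varphi^s(A)=\varphi^s(ABB^{-1})\le\varphi^s(AB)\varphi^s(B^{-1})$ rather than via the singular-value bound $\sigma_i(B)\le\|A^{-1}\|\sigma_i(AB)$, but the two derivations and conclusions are interchangeable), and both use this to show that either all the partial sums $\sum_{\iii\in\I^n}\varphi^s(A_\iii)$ are finite or all are infinite, with Fekete in the former case. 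Your handling of the infinite case via the specific words $j^{n-1}i$ is a clean one-step variant of the paper's inductive use of the summed super-multiplicativity inequality, and parts (ii) and (iii) are identical to the paper's, deduced from Lemma \ref{le:zeroth-lemma}.
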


\begin{proof}
  Recall that if $\I$ is finite, then $\theta_\A=0$ and $\mathscr{I}_\A=[0,\infty)$. To prove the claims, we first observe that if $A,B \in \GL_d(\R)$ and $s \ge 0$, then \eqref{eq:svf-sub-multi} implies $\fii^s(A) = \fii^s(ABB^{-1}) \le \fii^s(AB)\fii^s(B^{-1})$. Since $\fii^s(B^{-1}) \le \sigma_1(B^{-1})^s = \sigma_d(B)^{-s}$, we thus get
  \[
    \fii^s(A)\sigma_d(B)^s \le \fii^s(AB).
  \]
  The inequality
  \begin{equation} \label{eq:svf-sum-super-multi}
    \biggl( \sum_{\iii \in \I^n} \fii^s(A_\iii) \biggr)\biggl( \sum_{i \in \I} \sigma_d(A_\iii)^s \biggr) \le \sum_{\iii \in \I^{n+1}} \fii^s(A_\iii)
  \end{equation}
  for all $n \geq 1$ follows immediately by summation.

  A simple induction using \eqref{eq:svf-sum-sub-multi} shows that if $\sum_{i \in \I} \varphi^s(A_i)$ is finite, then $\sum_{\iii \in \I^n} \varphi^s(A_\iii) \leq (\sum_{i \in \I} \varphi^s(A_i))^n<\infty$ for all $n \geq 1$; on the other hand if $\sum_{i \in \I} \varphi^s(A_i)$ is infinite then a similar induction using \eqref{eq:svf-sum-super-multi} implies that $\sum_{\iii \in \I^{n}}\varphi^s(A_\iii)$ is infinite for every $n \geq 1$. We conclude that the sequence in \eqref{eq:pressure-sequence} is either finite for every $n \geq 1$ or infinite for every $n \geq 1$. In the former case the existence of the limit \eqref{eq:pressure-sequence} and its identity with the claimed infimum follow from the subadditivity lemma; in the latter case the same properties follow trivially since the sequence is identically equal to $\infty$. We have proved \eqref{it:l11}. Applying this result together with Lemma \ref{le:zeroth-lemma} we see that
  \[
    \mathscr{I}_\A=\bigcup_{s \in \mathscr{I}_\A} [s,\infty)
  \]
  and this is equal to either $(\theta_\A,\infty)$ or $[\theta_\A,\infty)$ depending respectively on whether or not $\theta_\A \in \mathscr{I}_\A$, which yields \eqref{it:l12}.

  For every $n\geq 1$, by Lemma \ref{le:zeroth-lemma} the series $\sum_{\iii \in \I^n} \varphi^s(A_\iii)$ converges uniformly with respect to $s$ on closed subintervals of $\mathscr{I}_\A$. As the function $s \mapsto \varphi^s(A_\iii)$ is continuous for every $\iii \in \I^*$, this implies that each of the function
  \[
    s \mapsto \sum_{\iii \in \I^n} \fii^s(A_\iii)
  \]
  is continuous on $\mathscr{I}_\A$ proving \eqref{it:l13}.
\end{proof}

With elementary methods, we see that the pressure is always decreasing in $s$ and, if $\I$ is finite, also continuous. In fact, the following lemma proves Theorem \ref{th:aff}\eqref{it:strictly}.

\begin{lemma}\label{le:first-cty}
  Let $\I$ be finite or countably infinite, $\threebar{\,\cdot\,}$ be any norm on $\R^d$, and $\A=(A_i)_{i \in \I} \in \GL_d(\R)^\I$ be bounded. Then $P(\A,s+t) \leq P(\A,s)+Ct$ for all $s \in \mathscr{I}_\A$ and all $t \geq 0$, where $C=\log\sup_{i \in \I} \threebar{A_i}$. Furthermore, if $\I$ is finite then $s \mapsto P(\A,s)$ is continuous on $[0,\infty)$.
\end{lemma}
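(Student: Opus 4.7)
The plan is to reduce the statement to a pointwise bound on the singular value function, then sum over words in $\I^n$ and pass to the limit. For the continuity assertion in the finite case, a symmetric lower bound will also be needed.

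\emph{Pointwise estimate.} For each $s\ge 0$ one can write $\fii^s(A)=\prod_{i=1}^d \sigma_i(A)^{a_i(s)}$, where $a_i(s)=1$ if $i\le\lfloor s\rfloor$ and $s\le d$, $a_i(s)=s-\lfloor s\rfloor$ if $i=\lceil s\rceil\le d$ with $s\notin\Z$, $a_i(s)=0$ if $\lceil s\rceil<i\le d$, and $a_i(s)=s/d$ if $s>d$. A direct check shows that $s\mapsto a_i(s)$ is non-decreasing and that $\sum_{i=1}^d a_i(s)=s$. Hence
\[
  \fii^{s+t}(A)=\fii^s(A)\prod_{i=1}^d \sigma_i(A)^{a_i(s+t)-a_i(s)},
\]
with non-negative exponents summing to $t$. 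Using the ordering $\sigma_d(A)\le\sigma_i(A)\le\sigma_1(A)=\|A\|$ I obtain the two-sided bound
\[
  \fii^s(A)\,\sigma_d(A)^t \;\le\; \fii^{s+t}(A) \;\le\; \fii^s(A)\,\|A\|^t
\]
for every $A\in\GL_d(\R)$ and every $s,t\ge 0$.

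\emph{Upper bound on pressure.} Since all norms on $\mathrm{M}_d(\R)$ are equivalent, there is a constant $c>0$ with $\|A\|\le c\,\threebar{A}$ for every $A$. Writing $M=\sup_{i\in\I}\threebar{A_i}$ (finite by hypothesis) and applying submultiplicativity of the operator norm $\threebar{\,\cdot\,}$ to $A_\iii=A_{i_1}\cdots A_{i_n}$, I get $\threebar{A_\iii}\le M^n$, so $\|A_\iii\|^t\le c^t M^{nt}$ uniformly in $\iii\in\I^n$. Combined with the pointwise upper bound and summation,
\[
  \sum_{\iii\in\I^n}\fii^{s+t}(A_\iii)\;\le\;c^t M^{nt}\sum_{\iii\in\I^n}\fii^s(A_\iii).
\]
Taking $\frac{1}{n}\log$ and letting $n\to\infty$ the term $\frac{t\log c}{n}$ disappears and I conclude $P(\A,s+t)\le P(\A,s)+Ct$ with $C=\log M$. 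This also shows $s+t\in\mathscr{I}_\A$ whenever $s\in\mathscr{I}_\A$ and $t\ge 0$.

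\emph{Continuity when $\I$ is finite.} In this case $\delta:=\min_{i\in\I}\sigma_d(A_i)$ is strictly positive. The standard inequality $\sigma_d(AB)\ge\sigma_d(A)\sigma_d(B)$ (immediate from $\|(AB)^{-1}\|\le\|A^{-1}\|\|B^{-1}\|$) gives $\sigma_d(A_\iii)\ge\delta^n$. Combining with the pointwise lower bound, summing, and passing to the limit yields $P(\A,s+t)\ge P(\A,s)+t\log\delta$. Together with the already-established upper bound this provides a two-sided Lipschitz estimate $|P(\A,s+t)-P(\A,s)|\le|t|\max\{|C|,|\log\delta|\}$ on $[0,\infty)$, whence continuity.

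\emph{Main point to watch.} The only step that requires care is the passage from $\|\cdot\|$ to $\threebar{\,\cdot\,}$: the norm-equivalence bound must be applied to the whole product $A_\iii$ and only afterwards may one invoke submultiplicativity of $\threebar{\,\cdot\,}$; performing these steps in the opposite order would produce $c^{nt}$ in place of $c^t$, and the constant would then fail to be absorbed by the factor $\frac{1}{n}\log(\cdot)$.
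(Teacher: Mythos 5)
Your proof is correct and takes essentially the same approach as the paper: both derive the pointwise two-sided bound $\sigma_d(A)^t\fii^s(A)\le\fii^{s+t}(A)\le\|A\|^t\fii^s(A)$, sum over words, and pass to the limit, using supermultiplicativity of $\sigma_d$ for the matching lower bound in the finite case. Your explicit derivation via the exponent functions $a_i(s)$ is a slightly more systematic way of organising a fact the paper treats as standard, and your remark on the order in which norm equivalence and submultiplicativity are applied correctly explains how the stated constant $C=\log\sup_i\threebar{A_i}$ is obtained exactly.
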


\begin{proof}
  Let $K>0$ be a constant such that $K^{-1}\|v\| \leq \threebar{v} \leq K\|v\|$ for all $v \in \R^d$, let $s \in \mathscr{I}_\A$ be arbitrary and let $t\geq0$. Using Lemma \ref{th:first-lemma}\eqref{it:l11} we have for all $n \geq 1$
  \[
    \exp\left( nP(\A,s+t)\right) \leq \sum_{\iii\in \I^n} \varphi^{s+t}(A_\iii) \leq \sum_{\iii\in \I^n} \|A_\iii\|^t \varphi^s(A_\iii) \leq Ke^{ntC} \sum_{\iii\in \I^n} \varphi^s(A_\iii)
  \]
  and the inequality $P(\A,s+t) \leq P(\A,s)+Ct$ follows easily. If $\I$ is finite then clearly
  \[
    \sum_{\iii\in \I^n} \varphi^{s+t}(A_\iii)\geq \sum_{\iii\in \I^n} \varphi^{s}(A_\iii)\sigma_d(A_\iii)^t \geq (\min_{i \in \I} \sigma_d(A_i))^{nt} \sum_{\iii\in \I^n} \varphi^s(A_\iii)
  \]
and so
  \[
    P(\A,s+t) \geq P(\A,s) + t\log \min_{i \in \I} \sigma_d(A_i)
  \]
  for all $s \in \mathscr{I}_\A$ and $t \geq 0$. The continuity of $s \mapsto P(\A,s)$ for finite $\I$ follows.
\end{proof}

To finish this section, let us verify the easy part of Theorem \ref{th:aff}\eqref{it:right-con} by showing that the pressure is convex between two consecutive integers.

\begin{lemma}\label{le:cvx}
  Let $\I$ be finite or countably infinite and let $\A=(A_i)_{i \in \I} \in \GL_d(\R)^\I$. Then the function $s \mapsto P(\A,s)$ is convex on $[k,k+1] \cap \mathscr{I}_\A$ for all $k \in \{0,\ldots,d-1\}$ and is also convex on $[d,\infty) \cap \mathscr{I}_\A$.
\end{lemma}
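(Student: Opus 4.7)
My approach is to observe that on each of the intervals $[k,k+1]$ with $0 \le k \le d-1$ and on $[d,\infty)$, the function $s \mapsto \log \fii^s(A)$ is \emph{affine} for every fixed $A \in \GL_d(\R)$, and then to upgrade this pointwise affinity to convexity of the pressure by a single application of Hölder's inequality. Concretely, straight from the definition of the singular value function one reads off that $\log \fii^s(A) = \sum_{j=1}^{k} \log \sigma_j(A) + (s-k)\log\sigma_{k+1}(A)$ for $s \in [k,k+1]$ with $k<d$, and $\log \fii^s(A) = (s/d)\log|\det A|$ for $s \in [d,\infty)$, both of which are affine in $s$.

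Fix one of these intervals $J$, pick $s_1, s_2 \in J \cap \mathscr{I}_\A$ and $\lambda \in [0,1]$, and set $s = \lambda s_1 + (1-\lambda)s_2$. Then $s \in J$ by convexity of $J$, and $s \in \mathscr{I}_\A$ by Lemma \ref{th:first-lemma}\eqref{it:l12}. From the affinity of $t \mapsto \log\fii^t(A)$ on $J$ it follows that
\[
  \fii^s(A_\iii) = \fii^{s_1}(A_\iii)^{\lambda}\,\fii^{s_2}(A_\iii)^{1-\lambda}
\]
for every $\iii \in \I^*$. Applying Hölder's inequality with conjugate exponents $1/\lambda$ and $1/(1-\lambda)$ to the counting measure on $\I^n$ gives
\[
  \sum_{\iii \in \I^n} \fii^s(A_\iii) \;\le\; \biggl(\sum_{\iii \in \I^n} \fii^{s_1}(A_\iii)\biggr)^{\!\lambda} \biggl(\sum_{\iii \in \I^n} \fii^{s_2}(A_\iii)\biggr)^{\!1-\lambda},
\]
where all three sums are finite by Lemma \ref{th:first-lemma}\eqref{it:l11}. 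Taking logarithms, dividing by $n$, and passing to the limit via \eqref{eq:pressure-sequence} yields $P(\A,s) \le \lambda P(\A,s_1) + (1-\lambda) P(\A,s_2)$, which is exactly the claimed convexity.

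I do not anticipate any real obstacle: the key structural input — that $\fii^s$ varies log-affinely in $s$ on each of the intervals under consideration — is immediate from its piecewise definition, and the passage to the pressure is a standard moment-generating-function-style use of Hölder's inequality, which extends without change to countable sums provided each of the sums involved is finite (which is guaranteed by the hypothesis $s_1,s_2 \in \mathscr{I}_\A$).
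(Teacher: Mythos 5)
Your proof is correct and follows essentially the same route as the paper's: both exploit the fact that $\fii^{ts_2+(1-t)s_1}(A_\iii) = \fii^{s_2}(A_\iii)^t\fii^{s_1}(A_\iii)^{1-t}$ on each of the intervals in question (which you phrase as log-affinity of $s\mapsto\log\fii^s(A)$) and then apply Hölder's inequality to the sum over $\I^n$ before passing to the limit. The paper only writes out the calculation for $[k,k+1]$ and leaves $[d,\infty)$ to the reader, whereas you treat both uniformly, but the argument is the same.
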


\begin{proof}
  We will prove the case of convexity on $[k,k+1] \cap \mathscr{I}_\A$; the case $[d,\infty) \cap \mathscr{I}_\A$ is similar and left to the reader. Fix $k \in \{0,\ldots,d-1\}$ and $t \in (0,1)$ and take $s_1,s_2 \in [k,k+1] \cap (\theta_\A,\infty)$ such that $s_1<s_2$. Since $\lfloor s_1 \rfloor = \lceil s_2 \rceil-1 = \lfloor ts_2+(1-t)s_1 \rfloor = k$, we have
  \begin{align*}
    \fii^{ts_2+(1-t)s_1}(A_\iii) &= \bigl(\sigma_1(A_\iii) \cdots \sigma_k(A_\iii)\sigma_{k+1}(A_\iii)^{s_2-k}\bigr)^t \\ 
    &\qquad\qquad\cdot\bigl(\sigma_1(A_\iii) \cdots \sigma_k(A_\iii)\sigma_{k+1}(A_\iii)^{s_1-k}\bigr)^{1-t} \\ 
    &= \fii^{s_2}(A_\iii)^t \fii^{s_1}(A_\iii)^{1-t}
  \end{align*}
  for all $\iii \in \I^n$ and $n \ge 1$. Therefore, by H\"older's inequality with H\"older conjugates $p=1/t$ and $q=1/(1-t)$,
  \begin{equation*}
    \sum_{\iii \in \I^n} \fii^{ts_2+(1-t)s_1}(A_\iii) \le \biggl( \sum_{\iii \in \I^n} \fii^{s_2}(A_\iii) \biggr)^t \biggl( \sum_{\iii \in \I^n} \fii^{s_1}(A_\iii) \biggr)^{1-t}
  \end{equation*}
  for all $n \ge 1$. The claim follows directly by taking the logarithm, dividing by $n$, and passing to the limit.
\end{proof}

Let $\A=(A_i)_{i \in \I} \in \GL_d(\R)^\I$ where $\I$ is either finite or countably infinite. If $\I$ is finite, then Lemma \ref{le:first-cty} shows the continuity of $s \mapsto P(\A,s)$ on $[0,\infty)$. Proving the continuity when $\I$ is countably infinite is more involved and is not fully addressed until in Section \ref{sec:conditional-proof}. The elementary results stated in this section give some information in this direction. At first, as a convex funtion defined on an open set is continuous, Lemma \ref{le:cvx} implies that $s \mapsto P(\A,s)$ is continuous on open intervals $(0,1) \cap (\theta_\A,\infty),\ldots,(d-1,d) \cap (\theta_\A,\infty)$, and $(d,\infty) \cap (\theta_\A,\infty)$.

For each $n \ge 1$, by Lemma \ref{th:first-lemma}\eqref{it:l13} we see that the function
\begin{equation*}
  s \mapsto \frac{1}{n} \log\sum_{\iii \in \I^n} \fii^s(A_\iii)
\end{equation*}
is continuous on $\mathscr{I}_\A$. Recalling \eqref{eq:pressure-sequence}, it follows that $s \mapsto P(\A,s)$ is upper semi-continuous on $\mathscr{I}_\A$ as it is the pointwise infimum of a sequence of continuous functions on that domain. In particular, we have
\begin{equation*}
  \limsup_{t \uparrow s} P(\A,t) \le P(\A,s)
\end{equation*}
for all $s \in (\theta_\A,\infty)$. If there exists a norm $\threebar{\,\cdot\,}$ on $\R^d$ such that $\sup_{i} \threebar{A_i} \le 1$, then $s \mapsto P(\A,s)$ is decreasing by Lemma \ref{le:first-cty} and, in particular, we have
\begin{equation*}
  \liminf_{t \uparrow s} P(\A,t) \ge P(\A,s)
\end{equation*}
for all $s \in (\theta_\A,\infty)$. Therefore, the function $s \mapsto P(\A,s)$ is in this case left-continuous at points in $\{1,\ldots,d\} \cap (\theta_\A,\infty)$. The task in Section \ref{sec:conditional-proof} is thus to prove the right-continuity at $\theta_\A$ and in $\{1,\ldots,d\} \cap (\theta_\A,\infty)$.

%
%

\section{Preliminaries on energy and entropy} \label{sec:preli-entropy-energy}

In this section, we examine the behaviour of the energy and entropy. Our first lemma verifies the basic properties of the energy. Recall that the singular value function $\fii^s$ is submultiplicative. We state the lemma for general submultiplicative potentials as we will later define the energy also for other functions. The lemma shows that the energy $\Lambda(\mu,\A,s)$ defined in \eqref{eq:energy} exists in $[-\infty,\sup_{i \in \I}\log\fii^s(A_i)]$ and is equal to the infimum of the same sequence over $n$.

\begin{lemma} \label{thm:energy-basic-prop}
  Let $\I$ be either finite or countably infinite set and $\psi \colon \I^* \to (0,\infty)$ be such that $\psi(\iii\jjj) \le \psi(\iii)\psi(\jjj)$ for all $\iii,\jjj \in \I^*$ and $\sup_{i \in \I} \psi(i) < \infty$. Then the following two assertions hold:
  \begin{enumerate}[(i)]
    \item\label{it:energy1} For each $\mu \in \MM_\sigma(\I^\N)$ the limit
    \begin{equation*}
      \Lambda(\mu,\psi) = \lim_{n \to \infty} \frac{1}{n} \int_{\I^\N} \log\psi(\iii|_n) \dd\mu(\iii)
    \end{equation*}
    exists in $[-\infty,\log\sup_{i \in \I}\psi(i)]$ and is equal to $\inf_{n \ge 1} \frac{1}{n} \int_{\I^\N} \log\psi(\iii|_n) \dd\mu(\iii)$.
    \item\label{it:energy2} The map $\mu \mapsto \Lambda(\mu,\psi)$ defined on $\MM_\sigma(\I^\N)$ is upper semicontinuous.
  \end{enumerate}
\end{lemma}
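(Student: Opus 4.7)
\textbf{Plan for Lemma \ref{thm:energy-basic-prop}.} For part \eqref{it:energy1}, I would set
\[
  a_n = \int_{\I^\N} \log \psi(\iii|_n) \dd\mu(\iii) \in [-\infty, n\log\sup_{i\in\I}\psi(i)]
\]
and verify subadditivity $a_{n+m} \le a_n + a_m$ directly from the two hypotheses. Taking logarithms in $\psi(\iii|_{n+m}) \le \psi(\iii|_n)\,\psi((\sigma^n \iii)|_m)$, integrating against $\mu$, and using $\sigma$-invariance $\mu\circ \sigma^{-n}=\mu$ to identify $\int \log\psi((\sigma^n\iii)|_m)\dd\mu$ with $a_m$, gives the subadditive bound. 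The uniform upper bound $\log\psi(\iii|_n) \le n\log\sup_{i\in\I}\psi(i)$ keeps $a_n/n$ bounded above by a finite constant, so Fekete's subadditive lemma yields $\lim_n a_n/n = \inf_n a_n/n$ with value in $[-\infty, \log\sup_i \psi(i)]$. The only minor wrinkle is that $a_n$ may take the value $-\infty$; but then subadditivity propagates this to all sufficiently large $m$, forcing both the limit and the infimum to equal $-\infty$, so the conclusion still holds.

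For part \eqref{it:energy2}, I would use part \eqref{it:energy1} to express
\[
  \Lambda(\mu,\psi) = \inf_{n\ge 1} \frac{1}{n}\int_{\I^\N} f_n \dd\mu, \qquad f_n(\iii):= \log\psi(\iii|_n).
\]
Since a pointwise infimum of an arbitrary family of upper semicontinuous functions on $\MM_\sigma(\I^\N)$ is itself upper semicontinuous, the task reduces to verifying u.s.c.\ of each $\mu \mapsto \int f_n \dd\mu$. Now $f_n$ depends only on the first $n$ coordinates, hence is locally constant and continuous, and is bounded above by $n\log\sup_i\psi(i)$; the obstacle is that it need not be bounded below. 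To circumvent this, I would truncate from below: set $f_n^M := \max\{f_n, -M\}$. Each $f_n^M$ is a bounded continuous cylinder function (locally constant on $\I^\N$), so $\mu \mapsto \int f_n^M \dd\mu$ is continuous in the weak-type topology on $\MM_\sigma(\I^\N)$. Moreover $f_n \le f_n^M$ and $f_n^M \downarrow f_n$ as $M\to\infty$, so applying monotone convergence to $C - f_n^M \uparrow C - f_n$ (with $C$ an upper bound for $f_n$) shows $\int f_n^M \dd\mu \downarrow \int f_n \dd\mu$. Chaining these facts gives
\[
  \limsup_k \int f_n \dd\mu_k \;\le\; \limsup_k \int f_n^M \dd\mu_k \;=\; \int f_n^M \dd\mu \;\xrightarrow{M\to\infty}\; \int f_n \dd\mu
\]
for any convergent $\mu_k \to \mu$, establishing upper semicontinuity.

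The main delicacy in both parts is the careful handling of potentially $-\infty$ values: ensuring Fekete's argument survives when some $a_n = -\infty$, and ensuring the truncation-plus-monotone-convergence step in part \eqref{it:energy2} yields the correct limit even when $\int f_n \dd\mu = -\infty$. Beyond these housekeeping issues, the argument is essentially a combination of classical subadditivity and the standard trick of approximating a bounded-above upper semicontinuous integrand by a decreasing sequence of bounded continuous ones.
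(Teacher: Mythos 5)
Your proposal is correct and follows essentially the same route as the paper's proof: part (i) uses submultiplicativity plus $\sigma$-invariance to get subadditivity and then Fekete, with the same caveat about $-\infty$ propagating; part (ii) truncates from below (your additive cutoff $\max\{f_n,-M\}$ is the logarithmic version of the paper's multiplicative cutoff $\max\{\eps,\psi\}$), invokes weak$^*$ continuity of bounded locally constant integrands and monotone convergence, and then uses that an infimum of upper semicontinuous functions is upper semicontinuous.
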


\begin{proof}
  Write $M = \sup_{i \in \I}\psi(i)$. By the submultiplicativity of $\psi$ and the $\sigma$-invariance of $\mu$, we have
  \begin{equation} \label{eq:energy-basic-prop1}
  \begin{split}
    \int_{\I^\N} \log\psi(\iii|_{m+n}) \dd\mu(\iii) &\le \int_{\I^\N} \log\psi(\iii|_m) \dd\mu(\iii) + \int_{\I^\N} \log\psi(\sigma^m\iii|_n) \dd\mu(\iii) \\
    &= \int_{\I^\N} \log\psi(\iii|_m) \dd\mu(\iii) + \int_{\I^\N} \log\psi(\iii|_n) \dd\mu(\iii).
  \end{split}
  \end{equation}
  In particular, we have
  \begin{equation*}
    \int_{\I^\N} \log\psi(\iii|_{n+1}) \dd\mu(\iii) \le \int_{\I^\N} \log\psi(\iii|_{n}) \dd\mu(\iii) + \log M
  \end{equation*}
  and therefore,
  \begin{equation*}
    \frac{1}{n}\int_{\I^\N} \log\psi(\iii|_{n}) \dd\mu(\iii) \le \log M
  \end{equation*}
  for all $n \ge 1$. It also follows that if $\int_{\I^\N} \log\psi(\iii|_{n}) \dd\mu(\iii) = -\infty$ for some $n$, then $\int_{\I^\N} \log\psi(\iii|_{m}) \dd\mu(\iii) = -\infty$ for all $m \ge n$. Hence, the limit $\Lambda(\mu,\psi)$ exists in $[-\infty,\log M]$ and is equal to the infimum of the same sequence over $n$ by the subadditivity \eqref{eq:energy-basic-prop1} as claimed in \eqref{it:energy1}. 

  Recall that, by the definition of the weak$^*$ convergence, a function $\mu \mapsto \int_{\I^\N} f \dd\mu$ is continuous whenever $f \colon \I^\N \to \R$ is bounded and continuous. Fix $n \ge 1$ and define for each $\eps > 0$ a function $\psi_\eps \colon \I^\N \to (0,M^n]$ by setting $\psi_\eps(\iii) = \max\{\eps, \psi(\iii)\}$ for all $\iii \in \I^\N$. It follows that $\mu \mapsto \int_{\I^\N} \log\psi_\eps(\iii|_n) \dd\mu(\iii)$ is continuous and $\int_{\I^\N} \log\psi(\iii|_n) \dd\mu(\iii) \le \int_{\I^\N} \log\psi_\eps(\iii|_n) \dd\mu(\iii)$ for all $\eps > 0$. By the monotone convergence theorem, we thus find that the map $\mu \mapsto \int_{\I^\N} \log\psi(\iii|_n) \dd\mu(\iii)$ is an infimum of continuous functions and hence upper semicontinuous for all $n \ge 1$. Therefore, by \eqref{it:energy1}, we see that $\mu \mapsto \Lambda(\mu,\psi)$ is an infimum of upper semicontinuous functions and the assertion \eqref{it:energy2} follows.
\end{proof}

Our second lemma verifies the existence of the entropy defined in \eqref{eq:entropy}. If $\I$ is finite, then $\I^\N$ is compact and the existence of the entropy and its basic properties follow immediately from \cite{Bowen}. To extend the results to case where $\I$ is countably infinite, we rely on the ultrametric structure of the non-compact set $\I^\N$. Let $\PP$ and $\QQ$ be either finite or countably infinite Borel partitions of $\I^\N$, extend the definition of the Shannon entropy \eqref{eq:shannon-entropy} to countably infinite partitions, and set
\begin{equation*}
  H(\mu,\PP\,\vert\,\QQ) = \sum_{D \in \QQ} \mu(D) H\biggl(\frac{\mu|_D}{\mu(D)},\PP\biggr) \in [0,\#\PP/e]
\end{equation*}
for all $\mu \in \MM(\I^\N)$.

\begin{lemma} \label{thm:entropy-basic-prop}
  Let $\I$ be either finite or countably infinite set and $\PP$ be either finite or countably infinite Borel partition of $\I^\N$. Then the following two assertions hold:
  \begin{enumerate}[(i)]
    \item\label{it:entropy1} For each $\mu \in \MM_\sigma(\I^\N)$ the limit
    \begin{equation*}
      h(\mu,\PP) = \lim_{n \to \infty} \frac{1}{n} H\biggl( \mu,\bigvee_{i=0}^{n-1} \sigma^{-i}(\PP) \biggr)
    \end{equation*}
    exists in $[0,\infty]$, is equal to $\inf_{n \ge 1} \frac{1}{n} H(\mu,\bigvee_{i=0}^{n-1} \sigma^{-i}(\PP))$, and is finite if $\PP$ is finite. Furthermore, either all terms of this sequence are finite, or all are infinite.
    \item\label{it:entropy2} If $\QQ$ is either finite or countably infinite Borel partition of $\I^\N$, then for each $\mu \in \MM_\sigma(\I^\N)$ we have
    \begin{equation*}
      h(\mu,\PP) \le h(\mu,\QQ) + H(\mu,\PP\,|\,\QQ).
    \end{equation*}
    In particular, if each set in $\PP$ is a union of sets in $\QQ$, then $h(\mu,\PP) \le h(\mu,\QQ)$.
  \end{enumerate}
\end{lemma}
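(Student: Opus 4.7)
The approach for (i) is the classical Kolmogorov--Sinai construction, adapted to countable partitions by treating the Shannon entropy as an element of $[0,\infty]$. I would first record two standard properties whose proofs extend verbatim to the countable case (all summands being non-negative): the subadditivity $H(\mu,\PP_1\vee\PP_2) \le H(\mu,\PP_1) + H(\mu,\PP_2)$ and the monotonicity $H(\mu,\PP_1) \le H(\mu,\PP_2)$ whenever $\PP_1$ is coarser than $\PP_2$. Setting $a_n = H(\mu,\bigvee_{i=0}^{n-1}\sigma^{-i}(\PP))$, the $\sigma$-invariance of $\mu$ together with the first property yields $a_{n+m} \le a_n + a_m$, while the second gives $a_n \le a_{n+1}$.

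The dichotomy in (i) is then immediate: if $a_1 = \infty$, monotonicity forces $a_n = \infty$ for every $n$; if $a_1 < \infty$, iterated subadditivity forces $a_n \le na_1 < \infty$ for every $n$. When all terms are finite, the existence of the limit and its identity with the infimum follow from Fekete's subadditivity lemma; when all terms equal $+\infty$, the claim is trivial. The finiteness of $h(\mu,\PP)$ for $\#\PP < \infty$ is then the elementary bound $a_n \le n\log\#\PP$.

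For (ii) the plan is to combine the chain rule $H(\mu,\PP_1\vee\PP_2) = H(\mu,\PP_2) + H(\mu,\PP_1\,|\,\PP_2)$, valid for countable partitions as a straightforward manipulation of non-negative sums, with the subadditivity of conditional entropy in its first argument. Applied to the partitions $\bigvee_{i=0}^{n-1}\sigma^{-i}(\PP)$ and $\bigvee_{i=0}^{n-1}\sigma^{-i}(\QQ)$ together with the $\sigma$-invariance of $\mu$, these ingredients yield
\[
  H\Bigl(\mu,\bigvee_{i=0}^{n-1}\sigma^{-i}(\PP)\Bigr) \le H\Bigl(\mu,\bigvee_{i=0}^{n-1}\sigma^{-i}(\QQ)\Bigr) + n H(\mu,\PP\,|\,\QQ).
\]
Dividing by $n$ and letting $n \to \infty$ gives the claimed inequality, and the final clause is immediate from $H(\mu,\PP\,|\,\QQ) = 0$ whenever $\PP$ is coarser than $\QQ$.

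The principal technical nuisance is the careful bookkeeping of possible infinities when $\PP$ is countably infinite: at each step one must verify either that the manipulations involve only non-negative terms (whose rearrangement and summation are unconditional) or that the convention $\infty + c = \infty$ preserves the relevant inequalities. Since the classical arguments ultimately reduce to Jensen's inequality for the concave function $-x\log x$ and to Fekete's subadditivity lemma, both of which accommodate $[0,\infty]$-valued data, no conceptually new ingredient is needed beyond the parallel preliminary work already carried out for the energy in Lemma~\ref{thm:energy-basic-prop}.
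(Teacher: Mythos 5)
Your proposal is correct and follows essentially the same route as the paper: for (i) you establish subadditivity $a_{m+n} \le a_m + a_n$ via $\sigma$-invariance and monotonicity $a_n \le a_{n+1}$ to obtain the finite/infinite dichotomy before applying Fekete's lemma, and for (ii) you use the chain rule together with the subadditivity and monotonicity properties of conditional entropy to reach $H(\mu,\PP^n) \le H(\mu,\QQ^n) + nH(\mu,\PP\,|\,\QQ)$, exactly as in the paper. The one step you leave implicit — that $H(\mu,\sigma^{-i}\PP\,|\,\QQ^n) \le H(\mu,\sigma^{-i}\PP\,|\,\sigma^{-i}\QQ)$ because $\QQ^n$ refines $\sigma^{-i}\QQ$, which the paper records as its inequality \eqref{eq:cond-entropy-inequality} via Jensen — is standard and clearly within the scope of the tools you list, so this is a matter of terseness rather than a gap.
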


\begin{proof}
  Let $\PP$ and $\QQ$ both be either finite or countably infinite Borel partitions of $\I^\N$. Recall that $\PP \vee \QQ$ is the collection of sets of the form $C \cap D$, where $C \in \PP$ and $D \in \QQ$. Note that
  \begin{equation} \label{eq:cond-entropy-as-difference}
  \begin{split}
    H(\mu,\PP\,|\,\QQ) + H(\mu,\QQ) &= -\sum_{D \in \QQ} \mu(D) \sum_{C \in \PP} \frac{\mu(C \cap D)}{\mu(D)} \log\frac{\mu(C \cap D)}{\mu(D)} \\ 
    &\qquad\qquad\qquad\qquad\qquad\qquad- \sum_{D \in \QQ} \mu(D)\log\mu(D) \\
    &= -\sum_{D \in \QQ} \sum_{C \in \PP} \mu(C \cap D) \log\mu(C \cap D) = H(\mu,\PP \vee \QQ)
  \end{split}
  \end{equation}
  and hence, $H(\mu,\QQ) \le H(\mu,\PP\vee\QQ)$ and
  \begin{equation} \label{eq:claim2-for-shannon}
  \begin{split}
    H(\mu,\PP) &= H(\mu,\PP\vee\QQ) - H(\mu,\QQ\,|\,\PP) \\ 
    &\le H(\mu,\PP\vee\QQ) = H(\mu,\QQ) + H(\mu,\PP\,|\,\QQ),
  \end{split}
  \end{equation}
  where, while calculating, we assume the quantities finite and then observe that the claimed inequality holds also when any of the quantities is infinite. In particular, if each set in $\QQ$ is a union of sets in $\PP$, then trivially $\PP \vee \QQ = \PP$ and
  \begin{equation} \label{eq:entropy-basic-prop12}
    H(\mu,\QQ) \le H(\mu,\PP \vee \QQ) = H(\mu,\PP).
  \end{equation}
  If $\RR$ is either finite or countably infinite Borel partition of $\I^\N$ such that each set in $\RR$ is a union of sets in $\QQ$, then Jensen's inequality on the concave function $x \mapsto -x\log x$ implies
  \begin{equation} \label{eq:cond-entropy-inequality}
  \begin{split}
    H(\mu,\PP\,|\,\QQ) &= \sum_{C \in \PP} \sum_{E \in \RR} \mu(E) \sum_{\atop{D \in \QQ}{D \subset E}} \frac{\mu(D)}{\mu(E)} \biggl( -\frac{\mu(C \cap D)}{\mu(D)} \log\frac{\mu(C \cap D)}{\mu(D)} \biggr) \\ 
    &\le \sum_{C \in \PP} \sum_{E \in \RR} \mu(E) \biggl( -\sum_{\atop{D \in \QQ}{D \subset E}} \frac{\mu(C \cap D)}{\mu(E)} \log\frac{\mu(C \cap D)}{\mu(E)} \biggr) \\ 
    &= -\sum_{E \in \RR} \mu(E) \sum_{C \in \PP} \frac{\mu(C \cap E)}{\mu(E)} \log\frac{\mu(C \cap E)}{\mu(E)} = H(\mu,\PP\,|\,\RR).
  \end{split}
  \end{equation}
  Note that choosing $\RR = \{\I^\N\}$ above gives $H(\mu,\PP\,|\,\QQ) \le H(\mu,\PP)$ and therefore, by \eqref{eq:cond-entropy-as-difference},
  \begin{equation} \label{eq:entropy-basic-prop11}
    H(\mu,\PP \vee \QQ) \le H(\mu,\PP) + H(\mu,\QQ).
  \end{equation}
  Furthermore, if $\RR$ is either finite or countably infinite Borel partition of $\I^\N$, then it follows from \eqref{eq:cond-entropy-as-difference} and \eqref{eq:cond-entropy-inequality} that
  \begin{equation} \label{eq:cond-entropy-finer}
  \begin{split}
    H(\mu,\PP\vee\QQ\,|\,\RR) &= H(\mu,\PP\vee\QQ\vee\RR) - H(\mu,\QQ\vee\RR) + H(\mu,\QQ\vee\RR) - H(\mu,\RR) \\ 
    &= H(\mu,\PP\,|\,\QQ\vee\RR) + H(\mu,\QQ\,|\,\RR) \le H(\mu,\PP\,|\,\RR) + H(\mu,\QQ\,|\,\RR),
  \end{split}
  \end{equation}
  where, while calculating, we assume the quantities finite and then observe that the claimed inequality holds also when any of the quantities is infinite. Write $\PP^n = \bigvee_{i=0}^{n-1} \sigma^{-i}(\PP)$ and note that $\PP^{m+n} = \PP^m \vee \bigvee_{i=m}^{m+n-1} \sigma^{-i}(\PP) = \PP^m \vee \sigma^{-m}(\PP^n)$. By \eqref{eq:entropy-basic-prop11} and the $\sigma$-invariance of $\mu$, we get
  \begin{equation*}
    H(\mu,\PP^{m+n}) \le H(\mu,\PP^m) + H(\mu,\sigma^{-m}(\PP^n)) = H(\mu,\PP^m) + H(\mu,\PP^n)
  \end{equation*}
  for all $m,n \ge 1$. Since each set in $\PP^n$ is a union of sets in $\PP^{n+1}$, we see that, by \eqref{eq:entropy-basic-prop12}, $H(\mu,\PP^n)$ is increasing with respect to $n$. Therefore, by the subadditivity, the limit $\lim_{n \to \infty} \frac{1}{n} H(\mu,\PP^n)$ exists in $[0,\infty]$, equals the infimum $\inf_{n \ge 1} \frac{1}{n} H(\mu,\PP^n)$, and either all terms of this sequence are finite, or all are infinite. The claim \eqref{it:entropy1} follows.

  To prove the claim \eqref{it:entropy2}, we first note that, as $\mu$ is $\sigma$-invariant, \eqref{eq:cond-entropy-as-difference} implies
  \begin{equation} \label{eq:cond-entropy-invariant}
  \begin{split}
    H(\mu,\sigma^{-i}(\PP)\,|\,\sigma^{-i}(\QQ)) &= H(\mu,\sigma^{-i}(\PP)\vee\sigma^{-i}(\QQ)) - H(\mu,\sigma^{-i}(\QQ)) \\ 
    &= H(\mu,\PP\vee\QQ) - H(\mu,\QQ) = H(\mu,\PP\,|\,\QQ)
  \end{split}
  \end{equation}
  for all $i \ge 1$. Write $\QQ^n = \bigvee_{i=0}^{n-1} \sigma^{-i}(\QQ)$ and observe that, by \eqref{eq:claim2-for-shannon}, \eqref{eq:cond-entropy-finer}, \eqref{eq:cond-entropy-inequality}, and \eqref{eq:cond-entropy-invariant},
  \begin{align*}
    H(\mu,\PP^n) &\le H(\mu,\QQ^n) + H(\mu,\PP^n\,|\,\QQ^n) \le H(\mu,\QQ^n) + \sum_{i=0}^{n-1} H(\mu,\sigma^{-i}(\PP)\,|\,\QQ^n) \\ 
    &\le H(\mu,\QQ^n) + \sum_{i=0}^{n-1} H(\mu,\sigma^{-i}(\PP)\,|\,\sigma^{-i}(\QQ)) = H(\mu,\QQ^n) + nH(\mu,\PP\,|\,\QQ).
  \end{align*}
  Dividing by $n$ before letting $n \to \infty$, the assertion \eqref{it:entropy1} shows that $h(\mu,\PP) \le h(\mu,\QQ) + H(\mu,\PP\,|\,\QQ)$. If each set in $\PP$ is a union of sets in $\QQ$, then trivially $\PP\vee\QQ = \QQ$ and $H(\mu,\PP\,|\,\QQ) = H(\mu,\PP \vee \QQ) - H(\mu,\QQ) = 0$ by \eqref{eq:cond-entropy-as-difference}.
\end{proof}

For each nonempty subset $\J$ of $\I$ let
\begin{equation*}
  \mathcal{I}_\J =
  \begin{cases}
    \{\{i\} \colon i \in \J\} \cup \{\I \setminus \J\}, &\text{if } \I \setminus \J \ne \emptyset, \\ 
    \{\{i\} \colon i \in \J\}, &\text{if } \I \setminus \J = \emptyset
  \end{cases}
\end{equation*}
be a partition of $\I$ and let
\begin{equation*}
  \PP_\J = \biggl\{ \bigcup_{i \in I} [i] \colon I \in \mathcal{I}_\J \biggr\}
\end{equation*}
be a Borel partition of $\I^\N$. Note that if $\J$ is finite, then $\PP_\J$ is a finite partition. If $\iii = (i_k)_{k=1}^n \in \I^n$, then the corresponding cylider is $[\iii] = \{\jjj \in \I^\N \colon \jjj|_n=\iii\}$. We extend this definition as follows: If $\mathtt{I} = I_1 \cdots I_n = (I_k)_{k=1}^n \in \mathcal{I}_\J^n$, then the corresponding \emph{generalized cylinder} is
\begin{equation*}
  [\mathtt{I}] = \bigcup_{i_1 \in I_1} \cdots \bigcup_{i_n \in I_n} [i_1 \cdots i_n].
\end{equation*}
Note that for each $\mathtt{I} = I_1 \cdots I_n \in \mathcal{I}_\I^n$ there is $\iii = i_1\cdots i_n \in \I^n$ such that $I_k=\{i_k\}$ for all $k \in \{1,\ldots,n\}$ and $[\mathtt{I}] = [\iii]$. Notice that $\PP_\J = \{[I] \colon I\in \mathcal{I}_\J\}$ and, since $\I^\N$ and all the first level cylinders $[i]$ are open and closed, the partition $\PP_\J$ consists of sets which are open and closed. The $n$-level refinement of the partition $\PP_\J$ is
\begin{equation} \label{eq:refinement-gen-cyl}
  \bigvee_{i=0}^{n-1} \sigma^{-i}(\PP_\J) = \{[\mathtt{I}] \colon \mathtt{I} \in \mathcal{I}_\J^n\}
\end{equation}
for all nonempty subsets $\J$ of $\I$. In particular, $\bigvee_{i=0}^{n-1} \sigma^{-i}(\PP_\I) = \{[\iii] \colon \iii \in \I^n\}$. Since $\sigma$ is continuous, the set $\sigma^{-i}([I])$ is open and closed for all $I \in \I_\J$ and $i \ge 1$. Since $[\mathtt{I}]$, where $\mathtt{I} \in \mathcal{I}_\J^n$, is a finite intersection of such sets, we conclude that all the generalized cylinders are open and closed. Let $\mu \in \MM_\sigma(\I^\N)$ and observe that, by \eqref{eq:refinement-gen-cyl} and Lemma \ref{thm:entropy-basic-prop}\eqref{it:entropy1}, we have
\begin{equation} \label{eq:entropy-I-def}
\begin{split}
  h(\mu,\PP_\J) &= -\lim_{n \to \infty} \frac{1}{n} \sum_{\mathtt{I} \in \mathcal{I}_\J^n} \mu([\mathtt{I}])\log\mu([\mathtt{I}]) \\ 
  &= \inf_{n \ge 1} -\frac{1}{n} \sum_{\mathtt{I} \in \mathcal{I}_\J^n} \mu([\mathtt{I}])\log\mu([\mathtt{I}]) \in [0,\infty)
\end{split}
\end{equation}
for all nonempty finite subsets $\J$ of $\I$ and, in particular,
\begin{equation} \label{eq:entropy-i-def}
\begin{split}
  h(\mu,\PP_\I) &= -\lim_{n \to \infty} \frac{1}{n} \sum_{\mathtt{i} \in \mathcal{I}^n} \mu([\mathtt{i}])\log\mu([\mathtt{i}]) \\ 
  &= \inf_{n \ge 1} -\frac{1}{n} \sum_{\mathtt{i} \in \mathcal{I}^n} \mu([\mathtt{i}])\log\mu([\mathtt{i}]) \in [0,\infty].
\end{split}
\end{equation}
Note that if $\mu$ is a Bernoulli measure, then $h(\mu,\PP_\I) = H(\mu,\PP_\I)$.

\begin{lemma} \label{thm:entropy-usc}
  Let $\I$ be either finite or countably infinite set and $\J$ be a nonempty finite subset of $\I$. Then the map $\mu \mapsto h(\mu,\PP_\J)$ defined on $\MM_\sigma(\I^\N)$ is upper semicontinuous.
\end{lemma}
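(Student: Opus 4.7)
The plan is to exploit that $\J$ is finite together with the clopen structure of generalized cylinders, reducing the claim to an application of Lemma \ref{thm:entropy-basic-prop}\eqref{it:entropy1} combined with the standard fact that a pointwise infimum of continuous functions is upper semicontinuous.

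First I would fix $n \geq 1$ and examine the $n$-level refinement. By \eqref{eq:refinement-gen-cyl}, this refinement is indexed by the finite set $\mathcal{I}_\J^n$ (which is finite since $\J$ is finite, making $\mathcal{I}_\J$ a finite partition of $\I$), and consists of the generalized cylinders $\{[\mathtt{I}] \colon \mathtt{I} \in \mathcal{I}_\J^n\}$. As noted just before the statement of the lemma, each generalized cylinder $[\mathtt{I}]$ is both open and closed in $\I^\N$, so its indicator function is continuous and bounded. Hence, by the very definition of the weak$^*$ topology on $\MM_\sigma(\I^\N)$, the evaluation map $\mu \mapsto \mu([\mathtt{I}])$ is continuous for each such $\mathtt{I}$.

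Next I would observe that $x \mapsto -x\log x$ is continuous on $[0,1]$ (with the convention $0\log 0 = 0$), so for every $n \geq 1$ the finite-sum expression
\begin{equation*}
  H_n(\mu) := \frac{1}{n} H\biggl(\mu, \bigvee_{i=0}^{n-1} \sigma^{-i}(\PP_\J)\biggr) = -\frac{1}{n} \sum_{\mathtt{I} \in \mathcal{I}_\J^n} \mu([\mathtt{I}])\log\mu([\mathtt{I}])
\end{equation*}
defines a continuous function $H_n \colon \MM_\sigma(\I^\N) \to [0,\infty)$ (the sum being finite, no issues of exchanging limits arise). Then Lemma \ref{thm:entropy-basic-prop}\eqref{it:entropy1} applied to the finite partition $\PP_\J$ gives
\begin{equation*}
  h(\mu,\PP_\J) = \inf_{n \geq 1} H_n(\mu),
\end{equation*}
so that $\mu \mapsto h(\mu,\PP_\J)$ is the pointwise infimum of a countable family of continuous functions on $\MM_\sigma(\I^\N)$ and is therefore upper semicontinuous.

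There is essentially no obstacle here: the only thing to take care of is ensuring that each $H_n$ is a genuine continuous function of $\mu$, which reduces to the finiteness of $\mathcal{I}_\J^n$ (guaranteed by the finiteness of $\J$) and the clopenness of the generalized cylinders (recorded in the discussion preceding the lemma). This is what distinguishes the case of finite $\J$ from the general setting: for infinite $\J$ the partition $\PP_\J$ is countably infinite and the finite-sum argument would need to be replaced by a monotone-convergence argument as in the proof of Lemma \ref{thm:energy-basic-prop}\eqref{it:energy2}.
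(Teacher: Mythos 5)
Your proof is correct and takes essentially the same approach as the paper's: both rely on (a) the finiteness of $\mathcal{I}_\J^n$, (b) the clopenness of generalized cylinders to get weak$^*$ continuity of $\mu \mapsto \mu([\mathtt{I}])$, and (c) the infimum characterisation of $h(\mu,\PP_\J)$ from Lemma \ref{thm:entropy-basic-prop}\eqref{it:entropy1}. The only difference is presentational: you invoke the general fact that a pointwise infimum of continuous functions is upper semicontinuous, while the paper unfolds this into an explicit $\eps$--$k_0$ argument along a convergent sequence $\mu_k \to \mu$.
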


\begin{proof}
  Let $\mu \in \MM_\sigma(\I^\N)$ and $(\mu_k)_{k=1}^\infty$ be a sequence of measures in $\MM_\sigma(\I^\N)$ such that $\mu_k \to \mu$ in the weak$^*$ topology and fix $\eps>0$. By \eqref{eq:entropy-I-def}, let $n \in \N$ be such that
  \begin{equation} \label{eq:entropy-basic-prop1}
    -\frac{1}{n} \sum_{\mathtt{I} \in \mathcal{I}_\J^n} \mu([\mathtt{I}])\log\mu([\mathtt{I}]) \le h(\mu,\PP_\J) + \frac{\eps}{2}.
  \end{equation}
  Since the generalized cylinders $[\mathtt{I}]$ are open and closed, the weak$^*$ convergence of the sequence $(\mu_k)_{k=1}^\infty$ implies $\mu_k([\mathtt{I}]) \to \mu([\mathtt{I}])$ for all $\mathtt{I} \in \I_\J^n$. Hence, by \eqref{eq:entropy-I-def}, the continuity of $x \mapsto -x\log x$ on $[0,1]$, and \eqref{eq:entropy-basic-prop1}, there exists $k_0 \in \N$ such that
  \begin{align*}
    h(\mu_k,\PP_\J) &\le -\frac{1}{n} \sum_{\mathtt{I} \in \mathcal{I}_\J^n} \mu_k([\mathtt{I}])\log\mu_k([\mathtt{I}]) \\ 
    &\le -\frac{1}{n} \sum_{\mathtt{I} \in \mathcal{I}_\J^n} \mu([\mathtt{I}])\log\mu([\mathtt{I}]) + \frac{\eps}{2} \le h(\mu,\PP_\J) + \eps
  \end{align*}
  for all $k \ge k_0$. It follows that $\limsup_{k \to \infty} h(\mu_k,\PP_\J) \le h(\mu,\PP_\J)$ and the claim follows.
\end{proof}

The following example recalls \cite[Remark 3.11]{GodofredoToddVelozo2020} and shows that if $\I$ is countably infinite, then the map $\mu \mapsto h(\mu,\PP_\I)$ defined on $\MM_\sigma(\I^\N)$ is not upper semicontinuous.

\begin{example} \label{ex:entropy-not-usc}
  In this example, we exhibit a sequence $(\mu_k)_{k=1}^\infty$ of measures in $\MM_\sigma(\I^\N)$, where $\I$ is countably infinite, such that $\mu_k \to \mu \in \MM_\sigma(\I^\N)$ in the weak$^*$ topology and
  \begin{equation*}
    \limsup_{k \to \infty} h(\mu_k,\PP_\I) > h(\mu,\PP_\I).
  \end{equation*}
  Let $\mu_k$ be the Bernoulli measure obtained from the probability vector
  \begin{equation*}
    \Bigl( 1-\frac{1}{\log k},\frac{1}{k\log k},\frac{1}{k\log k},\ldots,\frac{1}{k\log k},0,0,\ldots \Bigr),
  \end{equation*}
  where the term $\frac{1}{k\log k}$ appears $k$ times. Note that $\mu_k \to \delta_1$ in the weak$^*$ topology, where $\delta_1$ is the Dirac mass at $111\cdots \in \I^\N$. As $\mu_k$ is Bernoulli, we have
  \begin{align*}
    h(\mu_k,\PP_\I) &= H(\mu_k,\PP_\I) = -\Bigl( 1-\frac{1}{\log k} \Bigr)\log\Bigl( 1-\frac{1}{\log k} \Bigr) - k\frac{1}{k\log k}\log\frac{1}{k\log k} \\ 
    &= -\Bigl( 1-\frac{1}{\log k} \Bigr)\log\Bigl( 1-\frac{1}{\log k} \Bigr) - \frac{1}{\log k}\log\frac{1}{\log k} + \frac{1}{\log k}\log k \to 1
  \end{align*}
  as $k \to \infty$. Since $h(\delta_1,\PP_\I) = H(\delta_1,\PP_\I) = 0$, we conclude that $\lim_{k \to \infty}h(\mu_k,\PP_\I)=1>0=h(\delta_1,\PP_\I)$ as wished.
\end{example}

The partitions $\PP_\J$ can be used to define the entropy. Recall that, by Lemma \ref{thm:entropy-basic-prop}\eqref{it:entropy1}, $h(\mu,\PP_\J) \in [0,\infty)$ for all $\mu \in \MM_\sigma(\I^\N)$ whenever $\J$ is a nonempty finite subset of $\I$.

\begin{lemma} \label{thm:entropy-via-cylinders}
  Let $\I$ be either finite or countably infinite set. For each $\mu \in \MM_\sigma(\I^\N)$ we have $h(\mu) = \sup\{h(\mu,\PP_\J) \colon \J \text{ is a finite subset of } \I\}$.
\end{lemma}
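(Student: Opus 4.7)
The easy direction $\sup\{h(\mu,\PP_\J) \colon \J \text{ is a finite subset of } \I\} \le h(\mu)$ is immediate from \eqref{eq:entropy}, since each $\PP_\J$ is a finite Borel partition of $\I^\N$ whenever $\J$ is finite. My plan for the reverse inequality is the standard generator-approximation argument from ergodic theory, adapted to the fact that $\I^\N$ is non-compact: given an arbitrary finite Borel partition $\PP$ of $\I^\N$ and $\eps > 0$, I will produce a finite $\J \subseteq \I$ with $h(\mu,\PP) \le h(\mu,\PP_\J) + \eps$, which upon taking $\eps \downarrow 0$ and supremising over $\PP$ gives the claim.

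The key observation is that, by \eqref{eq:refinement-gen-cyl}, the atoms of $\bigvee_{i=0}^{N-1} \sigma^{-i}(\PP_\J)$ are the generalised cylinders $[\mathtt{I}]$ with $\mathtt{I} \in \mathcal{I}_\J^N$, and any standard cylinder $[\iii]$ becomes such an atom as soon as $\J$ contains every symbol of $\iii$ and $N \ge \length(\iii)$. Consequently the algebra of finite unions of these atoms, as $\J$ ranges over finite subsets of $\I$ and $N$ over positive integers, generates the Borel $\sigma$-algebra of $\I^\N$. Using the classical density of a generating algebra in the Borel $\sigma$-algebra under the pseudometric $(A,B) \mapsto \mu(A \triangle B)$, together with the uniform continuity of $x \mapsto -x\log x$ on $[0,1]$, I choose a finite $\J \subseteq \I$, an integer $N \ge 1$, and a partition $\PP'$ of $\I^\N$ whose cells are unions of atoms of $\bigvee_{i=0}^{N-1} \sigma^{-i}(\PP_\J)$ and for which $H(\mu,\PP \,|\, \PP') < \eps$.

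Two applications of Lemma \ref{thm:entropy-basic-prop}\eqref{it:entropy2}---first to $(\PP,\PP')$, then to the pair $(\PP', \bigvee_{i=0}^{N-1} \sigma^{-i}(\PP_\J))$ in which the latter refines the former---then yield
\begin{equation*}
  h(\mu,\PP) \le h(\mu,\PP') + H(\mu,\PP \,|\, \PP') \le h\Bigl(\mu, \bigvee_{i=0}^{N-1} \sigma^{-i}(\PP_\J)\Bigr) + \eps.
\end{equation*}
A short direct computation from Lemma \ref{thm:entropy-basic-prop}\eqref{it:entropy1}, using that the $n$-fold refinement of $\bigvee_{i=0}^{N-1} \sigma^{-i}(\PP_\J)$ is $\bigvee_{k=0}^{N+n-2} \sigma^{-k}(\PP_\J)$ and that $(N+n-1)/n \to 1$, shows that $h(\mu,\bigvee_{i=0}^{N-1} \sigma^{-i}(\PP_\J)) = h(\mu,\PP_\J)$. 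Combining the displays gives $h(\mu,\PP) \le h(\mu,\PP_\J) + \eps$ as required. The main technical point is the measure-theoretic approximation producing $\PP'$; once this is in place, everything else is bookkeeping with Lemma \ref{thm:entropy-basic-prop}.
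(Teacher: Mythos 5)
Your proposal is correct but reaches the key intermediate step --- a partition $\PP'$ whose cells are unions of generalised cylinders and for which $H(\mu,\PP\,|\,\PP')<\eps$ --- by a genuinely different route from the paper. The paper's proof uses inner regularity of $\mu$ on the Polish space $\I^\N$: it takes compact $K_i \subset C_i$ with $\mu(C_i \setminus K_i)$ small, covers $\bigcup_i K_i$ by finitely many $m$-level cylinders (which therefore involve only finitely many symbols), and then builds the approximating partition $\mathcal{D}$ explicitly from these cylinders. You instead invoke the classical measure-theoretic approximation lemma: the finite unions of atoms of $\bigvee_{i=0}^{N-1}\sigma^{-i}(\PP_\J)$ (over all finite $\J$ and all $N$) form an algebra generating the Borel $\sigma$-algebra (since every standard cylinder eventually appears as such an atom and cylinders generate), and any generating algebra is dense in the pseudometric $\mu(A\triangle B)$. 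This avoids compactness and regularity entirely. Both routes buy the same thing: a finite $\J$, an $N$, and a partition $\PP'$ coarser than $\PP_\J^N$ with small conditional entropy given $\PP$. The remainder --- the two applications of Lemma \ref{thm:entropy-basic-prop}\eqref{it:entropy2} and the observation that $h(\mu, \bigvee_{i=0}^{N-1}\sigma^{-i}(\PP_\J)) = h(\mu,\PP_\J)$ --- matches the paper exactly.

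One small caveat: your sketch compresses the passage from the approximation of individual sets $C_i$ by algebra elements $A_i$ to the construction of the partition $\PP'$ and the bound $H(\mu,\PP\,|\,\PP')<\eps$. Since the $A_i$ need be neither disjoint nor covering, one must disjointify (say, $P'_i = A_i \setminus \bigcup_{j<i} A_j$ for $i<p$ and $P'_p$ the remainder) and then track that each $\mu(C_i \triangle P'_i)$ remains small; from there the bound on $H(\mu,\PP\,|\,\PP')$ follows from uniform continuity of $x\mapsto -x\log x$ exactly as in the paper's calculation. This is standard and you flag it as the main technical point, but it is worth being explicit since this is precisely where the paper expends most of its effort.
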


\begin{proof}
  Let $\mu \in \MM_\sigma(\I^\N)$. Since trivially $\sup\{h(\mu,\PP_\J) \colon \J$ is a finite subset of $\I\} \le h(\mu)$, it suffices to prove that $h(\mu) \le \sup\{h(\mu,\PP_\J) \colon \J$ is a finite subset of $\I\}$. Fix a finite Borel partition $\PP$ and $\eps > 0$. It is enough to show that there exists a finite subset $\J$ of $\I$ such that
  \begin{equation} \label{eq:entropy-via-cylinders1}
    h(\mu,\PP) \le h(\mu,\PP_\J) + \eps.
  \end{equation}
  Write $\PP = \{C_1,\ldots,C_p\}$ and choose $0<\delta<1/e$ such that $-p^2 \delta\log\delta \le \eps/2$ and, relying on uniform continuity,
  \begin{equation*}
    -\sum_{i=1}^p y_i \log y_i \le -\sum_{i=1}^p x_i\log x_i + \frac{\eps}{2}
  \end{equation*}
  for all $x_i,y_i \in [0,1]$ with $x_i-\delta \le y_i \le x_i+\delta$. Since $\I^\N$ is complete and separable and $\mu$ is a Borel probability measure on $\I^\N$, there are compact sets $K_1,\ldots,K_p \subset \I^\N$ such that $K_i \subset C_i$ and $\mu(C_i \setminus K_i) < \delta/(p+1)$ for all $i \in \{1,\ldots,p\}$. Let $\eta = \min_{i \ne j} \dist(K_i,K_j) > 0$ and choose $m \ge 1$ such that $\diam([\iii]) < \eta/2$ for all $\iii \in \I^m$. Since the union of $K_i$'s is compact, it can be covered by finitely many $m$-level cylinders. In other words, there exists a finite collection $\{\iii^k = i^k_1 \cdots i^k_m\}_{k=1}^\ell \subset \I^m$ such that $\bigcup_{i=1}^p K_i \subseteq \bigcup_{k=1}^\ell [\iii^k]$. We may assume that each $[\iii^k]$ intersects $\bigcup_{i=1}^p K_i$. The choice of $m \ge 1$ now guarantees that for every $k$ there exists unique $i$ such that $[\iii^k] \cap K_i \ne \emptyset$. Choose a finite subset $\J \subseteq \I$ so large that all of these $m$-level cylinders $[\iii^k]$ appear as generalized $m$-level cylinders, i.e.\ $\{i^k_1\} \cdots \{i^k_m\} \in \I_\J^m$ for all $k \in \{1,\ldots,\ell\}$. Define
  \begin{equation*}
    D_i = \bigcup_{[\iii^k] \cap K_i \ne \emptyset} [\iii^k]
  \end{equation*}
  for all $i \in \{1,\ldots,p-1\}$ and $D_p = \I^\N \setminus \bigcup_{i=1}^{p-1} D_i$. We see that $\mathcal{D} = \{D_1,\ldots,D_p\}$ is a partition of $\I^\N$ such that for every $i \in \{1,\ldots,p\}$ we have $K_i \subseteq D_i$ and $D_i$ is a union of elements in $\PP_\J^m = \bigvee_{i=0}^{m-1} \sigma^{-i}(\PP_\J)$, i.e.\ generalized cylinders $[\mathtt{I}]$ with $\mathtt{I} \in \I_\J^m$. Furthermore,
  \begin{equation} \label{eq:entropy-via-cylinders2}
  \begin{split}
    \mu(C_i \setminus D_i) + \mu(D_i \setminus C_i) &\le \mu(C_i \setminus K_i) + \mu(D_i \setminus K_i) \\ 
    &\le \frac{\delta}{p+1} + \mu\biggl( \I^\N \setminus \bigcup_{i=1}^p K_i \biggr) \le \delta.
  \end{split}
  \end{equation}
  By Lemma \ref{thm:entropy-basic-prop}\eqref{it:entropy2}, we have
  \begin{equation*}
    h(\mu,\PP) \le h(\mu,\mathcal{D}) + H(\mu,\PP\,|\,\mathcal{D}).
  \end{equation*}
  Therefore, in order to obtain \eqref{eq:entropy-via-cylinders1}, it suffices to show that $h(\mu,\mathcal{D}) \le h(\mu,\PP_\J)$ and $H(\mu,\PP\,|\,\mathcal{D}) \le \eps$. Recall that, by \eqref{eq:cond-entropy-as-difference}, $H(\mu,\PP\,|\,\mathcal{D}) = H(\mu,\PP \vee \mathcal{D}) - H(\mu,\mathcal{D})$. By \eqref{eq:entropy-via-cylinders2}, we have $\mu(D_i)-\delta \le \mu(C_i \cap D_i) \le \mu(D_i)+\delta$ for all $i$ and $\mu(C_i \cap D_j) \le \mu(C_i \setminus D_i) \le \delta$ whenever $i \ne j$. Therefore, by the choice of $\delta>0$, we have
  \begin{align*}
    H(\mu,\PP \vee \mathcal{D}) &= -\sum_{i,j=1}^p \mu(C_i \cap D_j) \log\mu(C_i \cap D_j) \\ 
    &= -\sum_{i=1}^p \mu(C_i \cap D_i) \log\mu(C_i \cap D_i) - \sum_{i \ne j} \mu(C_i \cap D_j) \log\mu(C_i \cap D_j) \\ 
    &\le H(\mu,\mathcal{D}) + \frac{\eps}{2} - p^2 \delta\log\delta \le H(\mu,\mathcal{D}) + \eps
  \end{align*}
  and hence, $H(\mu,\PP\,|\,\mathcal{D}) \le \eps$. Furthermore, by Lemma \ref{thm:entropy-basic-prop}\eqref{it:entropy2}, we have
  \begin{align*}
    h(\mu,\mathcal{D}) &\le h(\mu,\PP_\J^m) = \lim_{n \to \infty} \frac{1}{n} H\biggl( \mu,\bigvee_{i=0}^{n-1} \sigma^{-i}(\PP_\J^m) \biggr) \\ 
    &= \lim_{n \to \infty} \frac{n+m-1}{n} \frac{1}{n+m-1} H(\mu,\PP_\J^{n+m-1}) = h(\mu,\PP_\J)
  \end{align*}
  and the proof is finished.
\end{proof}

If the index set $\I$ is finite, then Lemma \ref{thm:entropy-via-cylinders}, \eqref{eq:entropy-I-def}, and Lemma \ref{thm:entropy-basic-prop}\eqref{it:entropy1} imply the well-known fact that
\begin{equation} \label{eq:entropy-finite-def}
  h(\mu) = h(\mu,\PP_\I).
\end{equation}

\begin{example} \label{rem:entropy-def-explanation}
  In this example, we demonstrate that in the countable infinite case we can have $h(\mu)<h(\mu,\PP_\I)$. Hence the entropy cannot in general be defined by using the finest partition given by the first level cylinders as in the finite case. Let $\I$ be countably infinite and $\mu \in \MM_\sigma(\I^\N)$. Write
  \begin{equation*}
    a_n = -\sum_{\iii \in \I^n} \mu([\iii])\log \mu([\iii])
  \end{equation*}
  for all $n \ge 1$ and recall that, by Lemma \ref{thm:entropy-basic-prop}\eqref{it:entropy1}, either all terms of the sequence $(a_n)_{n=1}^\infty$ are finite, or all are infinite. Note that, by Lemmas \ref{thm:entropy-basic-prop}\eqref{it:entropy2} and \ref{thm:entropy-via-cylinders}, we have
  \begin{equation} \label{eq:entropy-bad-def-ineq}
    h(\mu) \le h(\mu,\PP_\I).
  \end{equation}
  To illustrate that $h(\mu,\PP_\I)$ does not in general give us a good definition for the entropy in the countable case, we construct a measure $\mu \in \MM_\sigma(\I^\N)$ for which
  \begin{equation} \label{eq:infinity-entropy-bad-def}
    h(\mu) = 0 < \infty = h(\mu,\PP_\I).
  \end{equation}
  Let $\I = \N \setminus \{1\}$ and $\delta_i$ be the Dirac mass at $iii\cdots \in \I^\N$. Define
  \begin{equation*}
     \mu = c\sum_{i \in \I}\frac{\delta_i}{i(\log i)^2},
  \end{equation*} 
  where the constant $c$ is chosen such that $c\sum_{i \in \I} i^{-1}(\log i)^{-2} = 1$. Since $\mu$ is a linear combination of ergodic measures on the compact set $(\I \cup \{\infty\})^\N$ each of which has zero entropy, it follows from \cite[Theorem 8.1]{Walters1982} that $h(\mu)=0$. On the other hand,
  \begin{equation*}
    a_1 = -\sum_{i \in \I} \mu([i])\log\mu([i]) = c\sum_{i \in \I} \frac{\log i + 2\log\log i - \log c}{i(\log i)^2} = \infty.
  \end{equation*}
  Therefore, as all terms in the sequence $(a_n)_{n=1}^\infty$ are either finite or infinite, we have $a_n = \infty$ for all $n \in \N$ and \eqref{eq:infinity-entropy-bad-def} holds.
\end{example}

The following lemma shows that the strict inequality $h(\mu) < h(\mu,\PP_\I)$ is possible only when $h(\mu,\PP_\I) = \infty$. Under the assumption $h(\mu,\PP_\I) < \infty$ the entropy can equivalently be defined by using the finest partition given by the first level cylinders.

\begin{lemma} \label{thm:entropy-finite}
  Let $\I$ be either finite or countably infinite set and $\mu \in \MM_\sigma(\I^\N)$. If $h(\mu,\PP_\I) < \infty$, then
  \begin{equation*}
    h(\mu) = h(\mu,\PP_\I).
  \end{equation*}
\end{lemma}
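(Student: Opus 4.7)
The plan is to establish the reverse inequality $h(\mu,\PP_\I) \le h(\mu)$; combined with \eqref{eq:entropy-bad-def-ineq} from Example \ref{rem:entropy-def-explanation}, this yields the equality. The strategy is to exhaust $\I$ by an increasing sequence of finite subsets $\J_1 \subseteq \J_2 \subseteq \cdots$ with $\bigcup_n \J_n = \I$, to apply Lemma \ref{thm:entropy-basic-prop}\eqref{it:entropy2} with $\PP=\PP_\I$ and $\QQ=\PP_{\J_n}$, and to show that the error term $H(\mu,\PP_\I\,|\,\PP_{\J_n})$ vanishes in the limit.

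Since every atom of $\PP_{\J_n}$ is a union of atoms of $\PP_\I$, we have $\PP_\I \vee \PP_{\J_n}=\PP_\I$, so identity \eqref{eq:cond-entropy-as-difference} gives $H(\mu,\PP_\I\,|\,\PP_{\J_n})=H(\mu,\PP_\I)-H(\mu,\PP_{\J_n})$, which is a valid arithmetic identity because $H(\mu,\PP_\I)<\infty$ (see below). Writing $D_n = \bigcup_{i \in \I \setminus \J_n}[i]$ and computing directly from the definitions of $H(\mu,\PP_\I)$ and $H(\mu,\PP_{\J_n})$, a short rearrangement gives
\[
H(\mu,\PP_\I\,|\,\PP_{\J_n}) = -\sum_{i \in \I \setminus \J_n} \mu([i])\log\mu([i]) + \mu(D_n)\log\mu(D_n).
\]

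The hypothesis $h(\mu,\PP_\I)<\infty$ together with Lemma \ref{thm:entropy-basic-prop}\eqref{it:entropy1} (in particular, the statement that either all terms of the sequence defining $h(\mu,\PP_\I)$ are finite or all are infinite) implies $H(\mu,\PP_\I) = -\sum_{i \in \I}\mu([i])\log\mu([i])<\infty$. Hence the tail $-\sum_{i \in \I \setminus \J_n}\mu([i])\log\mu([i]) \to 0$ as $n \to \infty$. Moreover $D_n \downarrow \emptyset$, so by continuity of the probability measure $\mu$ from above $\mu(D_n) \downarrow 0$, and therefore $\mu(D_n)\log\mu(D_n) \to 0$ since $x\log x \to 0$ as $x \downarrow 0$. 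Consequently $H(\mu,\PP_\I\,|\,\PP_{\J_n}) \to 0$.

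To conclude, Lemmas \ref{thm:entropy-basic-prop}\eqref{it:entropy2} and \ref{thm:entropy-via-cylinders} yield
\[
h(\mu,\PP_\I) \le h(\mu,\PP_{\J_n}) + H(\mu,\PP_\I\,|\,\PP_{\J_n}) \le h(\mu) + H(\mu,\PP_\I\,|\,\PP_{\J_n})
\]
for every $n$; letting $n \to \infty$ gives $h(\mu,\PP_\I) \le h(\mu)$, finishing the proof. There is no real obstacle here beyond bookkeeping; the only technical point worth care is the passage to the conditional-entropy identity in a setting where $\PP_\I$ is countably infinite, but this is legitimate under the assumed finiteness of $H(\mu,\PP_\I)$.
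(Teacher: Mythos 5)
Your proof is correct and follows essentially the same route as the paper: both reduce to bounding $H(\mu,\PP_\I\,|\,\PP_\J)$ via Lemma \ref{thm:entropy-basic-prop}\eqref{it:entropy2}, observe that $h(\mu,\PP_\I)<\infty$ forces $H(\mu,\PP_\I)<\infty$ (so the tail $-\sum_{i\in\I\setminus\J}\mu([i])\log\mu([i])$ can be made small), and conclude by Lemma \ref{thm:entropy-via-cylinders}. The only cosmetic difference is that the paper simply drops the nonpositive term $\mu([\I\setminus\J])\log\mu([\I\setminus\J])\le 0$ rather than arguing it tends to zero, and phrases the argument with a fixed $\eps$ instead of an exhaustion sequence.
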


\begin{proof}
  By Lemmas \ref{thm:entropy-basic-prop}\eqref{it:entropy2} and \ref{thm:entropy-via-cylinders}, we have $h(\mu) \le h(\mu,\PP_\I)$. Therefore, it suffices to prove that for every $\eps>0$ there exists a finite subset $\J$ of $\I$ such that $h(\mu,\PP_\I) \le h(\mu,\PP_\J) + \eps$. Fix $\eps>0$ and recall that, by Lemma \ref{thm:entropy-basic-prop}\eqref{it:entropy2},
  \begin{equation*}
    h(\mu,\PP_\I) \le h(\mu,\PP_\J) + H(\mu,\PP_\I\,\vert\,\PP_\J).
  \end{equation*}
  It is thus enough to show that $H(\mu,\PP_\I\,\vert\,\PP_\J) \le \eps$. Since $h(\mu,\PP_\I)<\infty$, Lemma \ref{thm:entropy-basic-prop}\eqref{it:entropy1} implies that $-\sum_{i \in \I} \mu([i])\log\mu([i]) < \infty$. Let $\J$ be a finite subset of $\I$ such that $-\sum_{i \in \I \setminus \J} \mu([i])\log\mu([i]) \le \eps$. Then
  \begin{align*}
    H(\mu,\PP_\I\,\vert\,\PP_\J) &= -\sum_{I \in \I_\J} \mu([I]) \sum_{i \in \I} \frac{\mu([I]\cap[i])}{\mu([I])} \log\frac{\mu([I]\cap[i])}{\mu([I])} \\ 
    &= -\mu([\I \setminus \J]) \sum_{i \in \I \setminus \J} \frac{\mu([i])}{\mu([\I \setminus \J])} \log\frac{\mu([i])}{\mu([\I \setminus \J])} \\ 
    &= -\sum_{i \in \I \setminus \J} \mu([i]) \log\mu([i]) + \sum_{i \in \I \setminus \J} \mu([i])\log\mu([\I \setminus \J]) \\ 
    &\le -\sum_{i \in \I \setminus \J} \mu([i]) \log\mu([i]) \le \eps
  \end{align*}
  as required.
\end{proof}

%
%

\section{Reduction to completely reducible matrices} \label{sec:completely-reducible}

In this section, we verify an important reduction according to which, to study the pressure and equilibrium states, it suffices to work with completely reducible matrices. This reduction serves as a basis in our analysis.

Let $V$ be a finite-dimensional real vector space and $\mathsf{A} \subseteq \GL(V)$ an arbitrary subset. We say that $\mathsf{A}$ is \emph{reducible} if there exists a proper nonzero subspace of $V$ which is preserved by every element of $\mathsf{A}$. When this is not the case we call $\mathsf{A}$ \emph{irreducible}. We furthermore say that $\mathsf{A}$ is \emph{completely reducible} if there exists a splitting $V=\bigoplus_{j=1}^\ell V_j$ such that $AV_j =V_j$ for all $A \in \mathsf{A}$ and $j\in\{1,\ldots,\ell\}$, and such that additionally $\{A|_{V_j} \colon A \in \mathsf{A}\}\subseteq \GL(V_j)$ is irreducible for every $j\in\{1,\ldots,\ell\}$. In other words, if $\A \subseteq \GL_d(\R)$, then completely reducibility means that the matrices in $\A$ are block-diagonal with irreducible blocks of the same size. Note that irreducibility implies complete reducibility since in this case we can take $\ell=1$. By a slight abuse of notation we will say that a tuple $(A_i)_{i \in \I} \in \GL(V)^\I$ is reducible, irreducible or completely irreducible if the corresponding set has the stated property.

The following technical result forms the first step in analysing a countably infinite affine iterated function system. It extends earlier work of the authors \cite[Theorem 5]{KaenmakiMorris2018} in the finite case, and also extends the antecedent result \cite[Proposition 1.4]{FengKaenmaki2011} which applies for finite affine iterated function systems in the parameter range $s \in (0,1] \cup [d-1,\infty)$. In effect it reduces the study of the pressure and equilibrium states to the case of tuples which are block diagonalisable with irreducible blocks.

\begin{theorem}\label{th:detriangularisation}
  Let $\I$ be either finite or countaby infinite and $\A=(A_i)_{i \in \I} \in \GL_d(\R)^\I$. Then there exist $X \in \GL_d(\R)$, $k \in \N$, and positive integers $d_1,\ldots,d_k$ such that we may write
  \[
    A_i = X^{-1}
    \begin{pmatrix}
      B_i^{11} &B_i^{12} & \cdots &B_i^{1k} \\
      0 & B_i^{22} & \cdots & B_i^{2k}\\
      \vdots & \vdots &\ddots &\vdots\\
      0 & 0& \cdots & B_i^{kk}
    \end{pmatrix}X
  \]
  for all $i \in \I$, where each matrix $B_i^{t_1t_2}$ is a real matrix with dimensions $d_{t_1} \times d_{t_2}$ and the family $(B_i^{tt})_{i \in \I}$ is irreducible for all $t \in \{1,\ldots,k\}$. If $\A'=(A_i')_{i \in \I}$ is defined by
  \[
    A_i' =
    \begin{pmatrix}
      B_i^{11} &0 & \cdots &0 \\
      0 & B_i^{22} & \cdots & 0\\
      \vdots & \vdots &\ddots &\vdots\\
      0 & 0& \cdots & B_i^{kk}
    \end{pmatrix}
  \]
  for all $i \in \I$, then $P(\A,s)=P(\A',s)$ for all $s \in \mathscr{I}_\A$ and in particular $\theta_{\A'} \leq \theta_{\A}$. Additionally, for every $s \in \mathscr{I}_\A$ the set of $\fii^s$-equilibrium states for $\A$ is identical to that of $\A'$.
\end{theorem}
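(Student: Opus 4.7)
The proof splits into three tasks: constructing the similarity $X$ algebraically; verifying the pressure identity $P(\A,s)=P(\A',s)$ on $\mathscr{I}_\A$; and deducing the identity of equilibrium state sets.

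For the algebraic step, I induct on $d$. If $\A$ is irreducible, take $k=1$ and $X=I$. Otherwise pick an $\A$-invariant subspace $V\subset\R^d$ of minimal positive dimension, so that by minimality $(A_i|_V)_{i\in\I}$ is automatically irreducible. Choosing any complement $W$ for $V$ in $\R^d$ and writing each $A_i$ as $\begin{pmatrix}A_i|_V & * \\ 0 & \tilde{A}_i\end{pmatrix}$ in the decomposition $\R^d=V\oplus W$, I apply the inductive hypothesis to the induced tuple $(\tilde{A}_i)_{i\in\I}$ on $\R^d/V\cong W$; concatenating the resulting bases gives the required $X$.

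Since conjugation by the fixed $X$ alters $\fii^s$ only by a multiplicative constant, I may assume $A_i=M_i$ is already block upper triangular with $A_i'=D_i$ its block diagonal part. For the easy direction $P(\A',s)\le P(\A,s)$, writing $P_t$ for orthogonal projection onto the $t$-th block, the identity $D_\iii=\sum_t P_t M_\iii P_t$ combined with pairwise orthogonality of the ranges yields $\|D_\iii v\|^2=\sum_t\|P_t M_\iii P_t v\|^2\le\|M_\iii\|^2\|v\|^2$, hence $\sigma_1(D_\iii)\le\sigma_1(M_\iii)$; the same argument applied in each $\wedge^j\R^d$ (with its induced multi-partition block decomposition) extends this to $\sigma_1(D_\iii)\cdots\sigma_j(D_\iii)\le\sigma_1(M_\iii)\cdots\sigma_j(M_\iii)$ for every $j\in\{1,\ldots,d\}$, and interpolation through the formula for $\fii^s$ yields $\fii^s(D_\iii)\le\fii^s(M_\iii)$ for all $s\ge 0$. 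For the reverse direction, expand $M_{i_j}=\sum_{1\le a\le b\le k}B_{i_j}^{ab}$ into block components to obtain
$$
M_\iii=\sum_\pi P_\iii^\pi,\qquad P_\iii^\pi=B_{i_1}^{t_0 t_1}B_{i_2}^{t_1 t_2}\cdots B_{i_n}^{t_{n-1}t_n},
$$
indexed by non-decreasing sequences $\pi=(t_0,\ldots,t_n)\in\{1,\ldots,k\}^{n+1}$. Each has at most $k-1$ strict increases, so the number of distinct paths is $O(n^{k-1})$; for a path with jumps at positions $j_1<\cdots<j_p$ visiting levels $s_0<\cdots<s_p$, the product $P_\iii^\pi$ factors as $p+1$ diagonal block segments interleaved with $p\le k-1$ off-diagonal jump factors. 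Combining submultiplicativity of $\fii^s$, summability of $\sum_i\fii^s(B_i^{ab})$ for $s\in\mathscr{I}_\A$ (derived from $\sum_i\fii^s(A_i)<\infty$ via Lemma~\ref{le:zeroth-lemma}), and the observation that each diagonal segment sum grows with exponential rate at most $P(\A',s)$, I obtain $\sum_{\iii\in\I^n}\fii^s(M_\iii)\le C\cdot n^{k-1}\exp(nP(\A',s)+o(n))$, yielding $P(\A,s)\le P(\A',s)$ upon taking $n^{-1}\log$ as $n\to\infty$.

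The equilibrium state equivalence rests on the fact that block upper triangular matrices share their Lyapunov spectrum with their block diagonal part. For ergodic $\mu\in\MM_\sigma(\I^\N)$, Kingman's subadditive ergodic theorem gives $\frac{1}{n}\log\fii^s(M_\iii)\to\Lambda(\mu,\A,s)$ for $\mu$-a.e.\ $\iii$; the asymptotic growth rate of each path norm $\|P_\iii^\pi\|$ is a convex combination of the diagonal block Lyapunov exponents, so the maximum over paths is attained by the no-jump paths $\pi=(t,\ldots,t)$ whose contributions $B_\iii^{tt}$ aggregate to $D_\iii$, giving $\Lambda(\mu,\A,s)=\Lambda(\mu,\A',s)$ (with analogous exterior-power statements for higher $s$). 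Ergodic decomposition extends this to every $\mu\in\MM_\sigma(\I^\N)$, so $h(\mu)+\Lambda(\mu,\A,s)$ and $h(\mu)+\Lambda(\mu,\A',s)$ coincide as functionals; combined with the pressure identity, the sets of $\fii^s$-equilibrium states match. The main obstacle is the reverse pressure inequality: because $\fii^s$ is sub-multiplicative but not additive for $s>1$, the path-decomposition estimate cannot be bounded by a naive triangle inequality and must be routed through exterior powers $\wedge^{\lfloor s\rfloor}\R^d$ and $\wedge^{\lceil s\rceil}\R^d$ with H\"older-type interpolation, while extracting off-diagonal summability $\sum_i\fii^s(B_i^{ab})<\infty$ from the single hypothesis $s\in\mathscr{I}_\A$ requires careful quantitative estimates linking block norms to matrix norms.
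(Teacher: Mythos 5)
Your algebraic (block-triangularisation) step is correct and is the standard inductive argument; the paper simply cites Feng--K\"aenm\"aki for this. Your proof of the easy inequality $P(\A',s)\le P(\A,s)$ via orthogonal projections in each exterior power $\wedge^j\R^d$ is a valid re-derivation of the content the paper gets from Weyl's inequality (Proposition~\ref{thm:not-previously-noted}), so that direction is sound, if less compact.

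The reverse inequality $P(\A,s)\le P(\A',s)$ is where the proposal has a genuine gap. You expand $M_\iii$ into $O(n^{k-1})$ path-products $P_\iii^\pi$ and wish to bound $\sum_\iii\fii^s(M_\iii)$ by a sum over paths of products of diagonal-segment sums times off-diagonal summability constants. As you yourself flag at the end, $\fii^s$ is not subadditive once $s>1$, so $\fii^s\bigl(\sum_\pi P_\iii^\pi\bigr)$ cannot be dominated by $\sum_\pi\fii^s(P_\iii^\pi)$, and you do not actually carry out the proposed remedy of passing to $\wedge^{\lfloor s\rfloor}\R^d$ and $\wedge^{\lceil s\rceil}\R^d$ with H\"older interpolation; the induced block decomposition on exterior powers introduces its own bookkeeping, and one must also justify that $\sum_i\fii^s(B_i^{ab})<\infty$ for the off-diagonal blocks and control products of segment sums belonging to \emph{different} diagonal blocks by the single quantity $\exp(nP(\A',s))$. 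None of this is done, and for countably infinite $\I$ there are further convergence issues you don't address. The paper avoids all of this by a conjugation trick: setting $X_\varepsilon=\mathrm{diag}(\varepsilon I^1,\varepsilon^2 I^2,\ldots,\varepsilon^k I^k)$ one has $X_\varepsilon^{-1}A_\iii X_\varepsilon\to A'_\iii$ as $\varepsilon\downarrow0$, conjugation by a fixed matrix does not change the pressure, and the interchange $\inf_n\liminf_\varepsilon\ge\liminf_\varepsilon\inf_n$ goes in exactly the needed direction, giving $P(\A',s)\ge\lim_{\varepsilon\downarrow0}P((X_\varepsilon^{-1}A_iX_\varepsilon)_i,s)=P(\A,s)$. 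This is a fundamentally different and much shorter route; your path-decomposition strategy is the one the authors say they deliberately replaced because it required working with measures and was significantly more involved.

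A similar remark applies to your treatment of the equilibrium-state identity: you invoke that block-upper-triangular cocycles share their Lyapunov spectrum with their block-diagonal part, which is true, but the reduction from $\Lambda(\mu,\A,s)=\Lambda(\mu,\A',s)$ for ergodic $\mu$ to general $\mu$ via ergodic decomposition, and the passage to non-integer $s$, both need the same exterior-power machinery you left unexecuted. The cleaner approach is to observe that $\fii^s(A'_\iii)\le\fii^s(A_\iii)$ pointwise gives $\Lambda(\mu,\A',s)\le\Lambda(\mu,\A,s)$, and the $X_\varepsilon$-conjugation gives the reverse inequality after the same $\liminf$ exchange, so the energies coincide for every $\mu$; combined with the already-established pressure identity, the equilibrium-state sets are then trivially equal.
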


Before going into the proof of Theorem \ref{th:detriangularisation}, let us study further properties of the singular value function. The following feature of $\varphi^s$ does not seem to have been previously noted (except in \cite[Lemma 6.1]{KaenmakiMorris2018} which covers the case $k=2$) and we believe it is original with this article:

\begin{proposition} \label{thm:not-previously-noted}
Let $d_1,\ldots,d_k \in \N$ and $d=\sum_{t=1}^k d_t$. If
\[
M_1=\begin{pmatrix} 
B^{11} & 0 & 0 & \cdots & 0\\
0& B^{22} & 0 & \cdots &0 \\
0&0& B^{33} & \cdots & 0 \\
\vdots &\vdots &\vdots & \ddots & \vdots \\
0&0&0& \cdots &B^{kk}
\end{pmatrix},
\qquad 
M_2=\begin{pmatrix} 
B^{11} & B^{12} & B^{13} & \cdots & B^{1k}\\
0& B^{22} & B^{23} & \cdots &B^{2k} \\
0&0& B^{33} & \cdots & B^{3k} \\
\vdots &\vdots &\vdots & \ddots & \vdots \\
0&0&0& \cdots &B^{kk}
\end{pmatrix},
\]
where $B^{tt} \in \mathrm{M}_{d_t}(\R)$ for every $t\in\{1,\ldots,k\}$ and $B^{t_1t_2} \in \mathrm{M}_{d_{t_1} \times d_{t_2}}(\R)$ for every $t_1,t_2$ such that $1 \leq t_1 < t_2 \leq k$, then 
\begin{equation*}
  \varphi^s(M_1) \leq \varphi^s(M_2)
\end{equation*}
for all $s \geq 0$.
\end{proposition}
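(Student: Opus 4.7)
The plan is to establish the inequality first for integer exponents $s \in \{1,\ldots,d\}$ via a test-wedge argument in the exterior algebra, and then to extend it to all $s \geq 0$ using log-linear interpolation and the identity $|\det M_1|=|\det M_2|$. For integer $s$ I would use the identity $\fii^s(A) = \|\Wedge^s A\|$ with respect to the standard inner product on $\Wedge^s \R^d$, writing $W_t \subset \R^d$ for the coordinate subspace attached to the $t$-th block so that $\R^d = W_1 \oplus \cdots \oplus W_k$ is an orthogonal direct sum.

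Since $M_1$ is block diagonal, its singular values form the union (as a multiset) of the singular values of the diagonal blocks $B^{11},\ldots,B^{kk}$, so there exist integers $0 \leq s_t \leq d_t$ with $s_1+\cdots+s_k=s$ satisfying $\sigma_1(M_1)\cdots\sigma_s(M_1) = \prod_{t=1}^k \sigma_1(B^{tt})\cdots\sigma_{s_t}(B^{tt})$. For each $t$ I would let $\xi_t \in \Wedge^{s_t} W_t$ be the wedge of the top $s_t$ right singular vectors of $B^{tt}$; then $\xi := \xi_1 \wedge \cdots \wedge \xi_k$ is a unit vector in $\Wedge^s \R^d$ satisfying $\|\Wedge^s M_1 \cdot \xi\| = \fii^s(M_1)$. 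The heart of the argument is then to show $\|\Wedge^s M_2 \cdot \xi\| \geq \|\Wedge^s M_1 \cdot \xi\|$, which combined with the trivial bound $\|\Wedge^s M_2\| \geq \|\Wedge^s M_2 \cdot \xi\|$ yields $\fii^s(M_2) \geq \fii^s(M_1)$.

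To prove the inner inequality, I would use the orthogonal decomposition of $\Wedge^s \R^d$ indexed by multi-indices $(r_1,\ldots,r_k)$ with $\sum_j r_j = s$ and $0 \leq r_j \leq d_j$, whose summands are $\Wedge^{r_1} W_1 \otimes \cdots \otimes \Wedge^{r_k} W_k$. Because $M_2$ is block upper triangular, each factor $M_2 \xi_t$ lies in $\Wedge^{s_t}(W_1 \oplus \cdots \oplus W_t)$, and the pure $W_t$-component of $M_2 \xi_t$ equals $B^{tt}\xi_t$. When one expands $M_2\xi_1 \wedge \cdots \wedge M_2 \xi_k$ and projects orthogonally onto the summand with multi-index $(s_1,\ldots,s_k)$, the triangular constraints cascade from $t=1$ upward and force each factor to contribute only from its pure $W_t$-component, yielding $(B^{11}\xi_1) \wedge \cdots \wedge (B^{kk}\xi_k) = \Wedge^s M_1 \cdot \xi$. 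Since orthogonal projection does not increase the norm, the inequality follows.

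Finally, for $s \in (k,k+1)$ with $k \in \{0,\ldots,d-1\}$ the identity $\log \fii^s = (k+1-s)\log\fii^k + (s-k)\log\fii^{k+1}$ transfers the inequality log-linearly, and for $s \geq d$ equality holds because $|\det M_1|=|\det M_2|=\prod_t |\det B^{tt}|$. The main obstacle will be the combinatorial bookkeeping in the projection step, namely verifying that no "mixed" wedges generated by the strictly upper-triangular blocks $B^{t_1 t_2}$ with $t_1 < t_2$ can contribute to the summand $\Wedge^{s_1} W_1 \otimes \cdots \otimes \Wedge^{s_k} W_k$. The cascading argument starts by noting that $M_2\xi_1 \in \Wedge^{s_1} W_1$ already exhausts the $W_1$-quota of the target multi-index; this forces every $M_2\xi_t$ with $t \geq 2$ to contribute zero in $W_1$, which together with $M_2\xi_2 \in \Wedge^{s_2}(W_1\oplus W_2)$ forces $M_2\xi_2$ to contribute only its pure $W_2$-component, and iterating down the filtration yields the diagonal term as the sole surviving contribution.
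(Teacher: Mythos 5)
Your proof is correct and takes a genuinely different route from the paper's. The paper conjugates $M_2$ by block-diagonal orthogonal matrices built from the singular value decompositions of the diagonal blocks, obtaining an upper-triangular matrix $U^\top M_2 V$ whose diagonal entries --- hence eigenvalues --- are exactly the singular values of $M_1$, and then invokes Weyl's inequality $\prod_{i\leq \ell}\lambda_i(A) \leq \prod_{i\leq\ell}\sigma_i(A)$. Your argument instead works directly in the exterior algebra: for integer $s$ you build a unit decomposable test vector $\xi = \xi_1 \wedge \cdots \wedge \xi_k$ from the top right singular vectors of the blocks so that $\|\Wedge^s M_1 \cdot \xi\| = \varphi^s(M_1)$, and you then show $\|\Wedge^s M_2 \cdot \xi\| \geq \|\Wedge^s M_1 \cdot \xi\|$ by observing that the orthogonal projection of $\Wedge^s M_2 \cdot \xi$ onto the graded summand $\Wedge^{s_1}W_1 \otimes \cdots \otimes \Wedge^{s_k}W_k$ of $\Wedge^s\R^d$ equals $\Wedge^s M_1 \cdot \xi$. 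The cascade you describe is airtight: since $\Wedge^{s_t}M_2\cdot\xi_t$ is supported on multi-indices with all entries at most $t$, the only way to assemble total multi-degree $(s_1,\ldots,s_k)$ from the $k$ factors is for each to contribute its pure $\Wedge^{s_t}W_t$-component $B^{tt}\xi_t$; every mixed term lands in a different summand and is annihilated by the projection. The paper's route is shorter at the price of citing Weyl; yours is self-contained and exhibits an explicit extremal test vector at the cost of some combinatorial bookkeeping --- and since Weyl's inequality is itself typically proved via exterior powers and a test eigenvector, your argument can be viewed as inlining that machinery for the case at hand. Your reductions to non-integer $s$ by log-linear interpolation and to $s \geq d$ by equality of determinants coincide exactly with the paper's.
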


\begin{proof}
We will first prove the proposition in the case where $s$ is one of the integers $1,\ldots,d$; the general case can then be easily deduced. For each $A \in \mathrm{M}_d(\R)$ let $\lambda_1(A),\ldots,\lambda_d(A)$ denote the absolute values of the eigenvalues of $A$, listed in non-increasing order. An inequality due to Weyl (see e.g. \cite[Theorem 3.3.2]{HornJohnson1991}) asserts that
\begin{equation}\label{eq:weyl}
  \prod_{i=1}^\ell \lambda_i(A) \leq \prod_{i=1}^\ell \sigma_i(A).
\end{equation}
for all $\ell\in\{1,\ldots,d\}$. For each $t\in\{1,\ldots,k\}$ let $B^{tt}=U_t D_t V_t^\top$ be a singular value decomposition of $B^{tt}$, where $D_t$ is a diagonal matrix with entries $\sigma_1(B^{tt}),\ldots,\sigma_{d_t}(B^{tt})$ and $U_t,V_t \in O(d_t)$. Define
\[
U=\begin{pmatrix} 
U_1 & 0 & 0 & \cdots & 0\\
0& U_2 & 0 & \cdots &0 \\
0&0&U_3 & \cdots & 0 \\
\vdots &\vdots &\vdots & \ddots & \vdots \\
0&0&0& \cdots &U_k
\end{pmatrix},
\qquad 
V=\begin{pmatrix} 
V_1 & 0 & 0 & \cdots & 0\\
0& V_2 & 0 & \cdots &0 \\
0&0&V_3 & \cdots & 0 \\
\vdots &\vdots &\vdots & \ddots & \vdots \\
0&0&0& \cdots &V_k
\end{pmatrix}
\]
and notice that $U,V \in O(d)$. We then have
\[
U^\top M_2V= \begin{pmatrix} 
D_1 & U_1^\top B^{12}V_2 & U_1^\top B^{13}V_3 & \cdots & U_1^\top B^{1k}V_k\\
0& D_2 & U_2^\top B^{23}V_3 & \cdots &U_2^\top B^{2k}V_k \\
0&0&D_3 & \cdots & U_3^\top B^{3k}V_k \\
\vdots &\vdots &\vdots & \ddots & \vdots \\
0&0&0& \cdots &D_k
\end{pmatrix}.
\]
The diagonal matrices of $U^\top M_2V$ are precisely the singular values of the matrices $B^{11},\ldots,B^{kk}$, which together form the singular values of $M_1$; but since $U^\top M_2V$ is upper triangular its diagonal entries are its eigenvalues, so the eigenvalues of $U^\top M_2 V$ are precisely the singular values of $M_1$. On the other hand, the singular values of $U^\top M_2 V$ are precisely the singular values of $M_2$ since the matrices $U$ and $V$ are orthogonal, and singular values are invariant with respect to pre- or post-multiplication by an orthogonal matrix. Thus
\[
\prod_{i=1}^\ell \sigma_i(M_1) = \prod_{i=1}^\ell \lambda_i(U^\top M_2 V) \leq \prod_{i=1}^\ell \sigma_i(U^\top M_2 V) =\prod_{i=1}^\ell \sigma_i(M_2)
\]
using \eqref{eq:weyl}. This yields $\varphi^\ell(M_1) \leq \varphi^\ell(M_2)$ for every integer $\ell\in\{1,\ldots,d\}$, and the case $\ell=0$ is also obvious. Now if $s=[0,d)$ let us write $s=\ell+\delta$ where $\ell=\lfloor s\rfloor$ and $\delta\in [0,1)$. The identity $\varphi^s(A)=\varphi^\ell(A)^{1-\delta} \varphi^{\ell+1}(A)^\delta$ for all $A \in \mathrm{M}_d(\R)$ follows easily from inspection of the definition of $\varphi^s$. We therefore have
\[
\varphi^s(M_1)=\varphi^\ell(M_1)^{1-\delta} \varphi^{\ell+1}(M_1)^\delta \leq \varphi^\ell(M_2)^{1-\delta} \varphi^{\ell+1}(M_2)^\delta = \varphi^s(M_2)
\]
as required. Since the case $s=d$ was already established this leaves only those cases where $s>d$, but in this case we clearly have
\[
\varphi^s(M_1)= \varphi^d(M_1)^{\frac{s}{d}} \leq \varphi^d(M_2)^{\frac{s}{d}} =\varphi^s(M_2).
\]
The proof is complete.
\end{proof}

We are now ready to prove Theorem \ref{th:detriangularisation}. We remark that the proof is related to the argument used to show \cite[Proposition 6.2]{KaenmakiMorris2018} but, from the theoretical point of view, is significantly simpler as it does not rely on measures.

\begin{proof}[Proof of Theorem \ref{th:detriangularisation}]
Since the inequality
\[
P((A_i')_{i \in \I},s) \leq P((A_i)_{i \in \I},s)
\]
for all $s \in \mathscr{I}_\A$ and therefore also the fact that $\theta_{\A'} \leq \theta_{\A}$ follow directly from Proposition \ref{thm:not-previously-noted}, it suffices to prove the inequality in the other way around. We first observe that if $A, X \in \GL_d(\R)$ and $s \geq 0$ then
\[
\varphi^s(X^{-1}AX) \leq \varphi^s(X^{-1}) \varphi^s(A) \varphi^s(X) \leq \|X\|^s \|X^{-1}\|^s \varphi^s(A)
\]
and by considering $X^{-1}AX$ in place of $A$
\[
\varphi^s(A) = \varphi^s(XX^{-1}AXX^{-1}) \leq \|X\|^s \|X^{-1}\|^s \varphi^s(X^{-1}AX).
\]
In particular, if $(A_i)_{i \in \I} \in \GL_d(\R)$, $X \in \GL_d(\R)$, and $s \geq 0$ are specified, then
\[
\biggl| \frac{1}{n} \log \sum_{\iii \in \I^n} \varphi^s(X^{-1}A_{\iii} X)  - \frac{1}{n} \log \sum_{\iii \in \I^n} \varphi^s(A_{\iii})  \biggr| \leq \frac{1}{n} \log (\|X\|^s \|X^{-1}\|^s)
\]
for every $n \geq 1$, and it follows that $P((A_i)_{i \in \I},s) = P((X^{-1}A_iX)_{i \in \I},s)$ for every $s \geq 0$. 

By \cite[Proposition 1.4]{FengKaenmaki2011}, we see that the tuple $\A$ is conjugated to a tuple of block upper-triangular matrices as the first displayed equation in the formulation claims. Furthermore, the above analysis shows that both tuples have the same pressure. Therefore, it suffices to consider two families $(A_i)_{i \in \I}$ and $(A_i')_{i \in \I}$, where 
\[
A_i=\begin{pmatrix} B_i^{11} &B_i^{12} & \cdots &B_i^{1k} \\
0 & B_i^{22} & \cdots & B_i^{2k}\\
\vdots & \vdots &\ddots &\vdots\\
0 & 0& \cdots & B_i^{kk}
\end{pmatrix},
\qquad
A_i'=\begin{pmatrix} B_i^{11} &0 & \cdots &0 \\
0 & B_i^{22} & \cdots & 0\\
\vdots & \vdots &\ddots &\vdots\\
0 & 0& \cdots & B_i^{kk}
\end{pmatrix}
\]
for all $i \in \I$. For each $\varepsilon \in (0,1]$ we define
\[
X_\varepsilon=\begin{pmatrix} \varepsilon I^1 &0 & \cdots &0 \\
0 & \varepsilon^2 I^2 & \cdots & 0\\
\vdots & \vdots &\ddots &\vdots\\
0 & 0& \cdots & \varepsilon^k I^k
\end{pmatrix},
\]
where $I^{t} \in \mathrm{M}_{d_t}(\R)$ is an identity matrix for every $t\in\{1,\ldots,k\}$. We have
\[
X_\varepsilon^{-1} A_iX_\varepsilon =\begin{pmatrix} B_i^{11} & \varepsilon B_i^{12} & \cdots & \varepsilon^{k-1} B_i^{1k} \\
0 & B_i^{22} & \cdots & \varepsilon^{k-2} B_i^{2k}\\
\vdots & \vdots &\ddots &\vdots\\
0 & 0& \cdots & B_i^{kk}
\end{pmatrix}
\]
for all $i \in \I$ and therefore, $\lim_{\varepsilon \downarrow 0} X_\varepsilon^{-1} A_\iii X_\varepsilon = A_\iii'$ for every $\iii \in \bigcup_{n \in \N} \I^n$. It follows that
\begin{align*}
  P((A_i')_{i \in \I},s) &= \inf_{n \geq 1}\liminf_{\varepsilon \downarrow 0}\frac{1}{n}\log \sum_{\iii \in \I^n} \varphi^s(X_\varepsilon^{-1} A_\iii X_\varepsilon) \\ 
  &\geq  \liminf_{\varepsilon \downarrow 0}\inf_{n \geq 1}\frac{1}{n}\log \sum_{\iii \in \I^n} \varphi^s(X_\varepsilon^{-1} A_\iii X_\varepsilon) \\ 
  &= \lim_{\varepsilon \downarrow 0} P((X_\varepsilon^{-1} A_i X_\varepsilon)_{i \in \I},s)  = P((A_i)_{i \in \I},s)
\end{align*}
for all $s \in \mathscr{I}_\A$.
\end{proof}

%
%

\section{Conditional proofs of the results} \label{sec:conditional-proof}

In this section, conditioned on the following technical result, we prove all the claims presented in Section \ref{sec:intro-results}. The proof of the following result is postponed until Section \ref{sec:algebra-stuff} in hoping to clarify the presentation as it is more algebraic in flavor. 

\begin{theorem} \label{thm:conc}
  Let $(A_i)_{i \in \I} \in \GL_d(\R)^\I$ be completely reducible where $\I$ is either finite or countably infinite. Then for each integer $k \in \{0,\ldots,d-1\}$ there exist an integer $p$ such that
  \begin{equation*}
    \begin{cases}
      1 \le p \le \binom{d}{k}, &\text{if } s=k, \\ 
      1 \le p \le \binom{d}{k}\binom{d}{k+1}, &\text{if } k < s \le k+1,
    \end{cases}
  \end{equation*}
  with functions $\Phi^{(1)}_{(\cdot)},\ldots,\Phi^{(p)}_{(\cdot)} \colon [k,k+1]\times \I^* \to (0,\infty)$, a constant $K>0$, and a finite set $F \subset \I^*$ such that the following three properties hold:
  \begin{enumerate}[(i)]
  \item\label{it:conc-1}
  For every $s \in [k,k+1]$ we have
  \[
    K^{-1}\varphi^s(A_\iii) \leq \max_{j \in \{1,\ldots,p\}} \Phi_s^{(j)}(\iii) \leq K\varphi^s(A_\iii)
  \]
  for all $\iii \in \I^*$.
  \item\label{it:conc-2}
  For every $s \in [k,k+1]$ and $j \in \{1,\ldots,p\}$ we have
  \[
    \Phi_s^{(j)}(\iii\jjj) \leq \Phi_s^{(j)}(\iii)\Phi_s^{(j)}(\jjj) \leq K\max_{\kkk \in F}  \Phi_s^{(j)}(\iii\kkk\jjj)
  \]
  for all $\iii,\jjj \in \I^*$. This property is called \emph{quasi-multiplicativity} of $\Phi_s^{(j)}$.
  \item\label{it:conc-3}
  For every $j \in \{1,\ldots,p\}$ and $\iii \in \I^*$ the function $s \mapsto \Phi_s^{(j)}(\iii)$ defined on $[k,k+1]$ is continuous. 
  \end{enumerate}
  If $s=k$ and $\A^{\wedge k}$ is irreducible then we may take $p=1$. We may do so also if $s \in (k,k+1]$ and both $\A^{\wedge k}$ and $\A^{\wedge(k+1)}$ are irreducible and at least one of them is strongly irreducible.
\end{theorem}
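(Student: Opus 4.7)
The plan is to build the $\Phi_s^{(j)}$ from the exterior algebra description of the singular value function. Since $\sigma_1(A)\cdots\sigma_j(A) = \|A^{\wedge j}\|$ for every integer $j$, for $s \in [k,k+1]$ we may write
\[
  \varphi^s(A) = \|A^{\wedge k}\|^{k+1-s} \|A^{\wedge(k+1)}\|^{s-k}.
\]
Because $\A$ is completely reducible with respect to a splitting $\R^d = V_1 \oplus \cdots \oplus V_\ell$, each exterior power $\A^{\wedge j}$ is block diagonal in the basis induced by
\[
  \wedge^j\R^d = \bigoplus_{k_1+\cdots+k_\ell = j} \wedge^{k_1} V_1 \otimes \cdots \otimes \wedge^{k_\ell} V_\ell.
\]
After further decomposing each tensor summand into irreducible sub-representations --- reapplying Theorem \ref{th:detriangularisation} to the tensor block itself if necessary, which introduces only a bounded multiplicative distortion that can be absorbed into the constant $K$ --- I would obtain, for $j \in \{k,k+1\}$, a direct sum decomposition $\wedge^j\R^d = \bigoplus_{r=1}^{p_j} U_r^{(j)}$ with every $\A^{\wedge j}|_{U_r^{(j)}}$ irreducible and $p_j \le \binom{d}{j}$.

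Next I would define the candidate functions. For $s=k$, set $\Phi_k^{(r)}(\iii) = \|A_\iii^{\wedge k}|_{U_r^{(k)}}\|$ with $r \in \{1,\ldots,p_k\}$. For $s \in (k,k+1]$, set
\[
  \Phi_s^{(r,r')}(\iii) = \|A_\iii^{\wedge k}|_{U_r^{(k)}}\|^{k+1-s}\|A_\iii^{\wedge(k+1)}|_{U_{r'}^{(k+1)}}\|^{s-k}
\]
with $(r,r') \in \{1,\ldots,p_k\}\times\{1,\ldots,p_{k+1}\}$, so that the index count is at most $\binom{d}{k}\binom{d}{k+1}$. Clause \eqref{it:conc-1} then follows from the equivalence of $\|A^{\wedge j}\|$ with $\max_r\|A^{\wedge j}|_{U_r^{(j)}}\|$ on block-diagonal matrices, the submultiplicativity half of \eqref{it:conc-2} is immediate from submultiplicativity of operator norms, and continuity in $s$ is clear because $s$ enters only via the exponents. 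The content of Theorem \ref{thm:conc} is therefore concentrated in the quasi-multiplicativity assertion.

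The central lemma I would invoke is the classical fact that if a tuple $(B_i)_{i \in \I}$ acts irreducibly on a finite-dimensional real vector space $V$, then there exist $c>0$ and a \emph{finite} subset $\mathsf{F} \subset \I^*$ such that for every $\iii,\jjj \in \I^*$ some $\kkk \in \mathsf{F}$ satisfies $\|B_\iii B_\kkk B_\jjj\| \ge c\|B_\iii\|\|B_\jjj\|$. This is proved by a compactness argument on the projective space $\mathbb{P}(V)$: irreducibility ensures that for every nonzero $v$ the orbit $\{B_\iii v \colon \iii \in \I^*\}$ spans $V$, so by compactness of $\mathbb{P}(V)$ finitely many words $\kkk_1,\ldots,\kkk_N \in \I^*$ suffice to guarantee that for every direction $v$ and every hyperplane $H$ at least one $B_{\kkk_\ell}v$ makes a definite angle with $H$; combining this with the singular value decomposition yields the desired inequality. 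Crucially, only finitely many elements of $\I^*$ are selected, so the argument is insensitive to whether $\I$ is finite or countably infinite. Applying this lemma to each irreducible tuple $\A^{\wedge k}|_{U_r^{(k)}}$ and taking the union of the witnessing finite sets over $r$ produces the $F$ required for the integer case $s=k$.

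The main obstacle will be adapting this argument to the non-integer case, in which a single $\kkk$ must simultaneously witness quasi-multiplicativity for both $\|\cdot^{\wedge k}|_{U_r^{(k)}}\|$ and $\|\cdot^{\wedge(k+1)}|_{U_{r'}^{(k+1)}}\|$. I would run the compactness argument on the product $\mathbb{P}(U_r^{(k)}) \times \mathbb{P}(U_{r'}^{(k+1)})$, using the diagonal action of $\I^*$ to route a prescribed pair of near-top singular directions into a prescribed pair of open regions; irreducibility on each factor is enough to produce such a $\kkk$. Under the strong irreducibility of one of $\A^{\wedge k}$ or $\A^{\wedge(k+1)}$, a F\"urstenberg-type positivity result for the top Lyapunov direction shows that the same $\kkk$ can be arranged to work uniformly across all pairs $(r,r')$, which collapses the index count to $p=1$ exactly as in the finite-alphabet treatment of \cite{BochiMorris2018}.
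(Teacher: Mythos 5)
Your exterior-power decomposition of $\fii^s$ and your identification of quasi-multiplicativity as the crux are the right starting points, but two of the steps you sketch do not go through, and they are precisely where the paper has to work hardest.

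First, you cannot obtain the irreducible decomposition of $\wedge^j\R^d$ by ``reapplying Theorem~\ref{th:detriangularisation} \ldots which introduces only a bounded multiplicative distortion.'' That theorem establishes equality of \emph{pressures} $P(\A,s)=P(\A',s)$ only; its proof (conjugating by $X_\eps$ and letting $\eps\downarrow 0$) exists precisely because the singular value functions of an upper block-triangular tuple and its block-diagonal part are \emph{not} comparable up to a uniform constant --- already $\left(\begin{smallmatrix}1&t\\0&1\end{smallmatrix}\right)$ has norm of order $t$ while its diagonal part is the identity. There is no bounded multiplicative distortion to absorb into $K$. What you actually need is Proposition~\ref{pr:cr}: complete reducibility of $\A$ forces complete reducibility of each $\A^{\wedge j}$. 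This is a genuine theorem requiring the theory of reductive linear algebraic groups (Zariski closure of the generated semigroup, reductivity of its identity component, semisimplicity of finite-dimensional representations of reductive groups), and it cannot be replaced by a bounded-distortion argument.

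Second, the compactness argument on $\mathbb{P}(U_r^{(k)})\times\mathbb{P}(U_{r'}^{(k+1)})$ does not reach the simultaneous quasi-multiplicativity: the assertion that ``irreducibility on each factor is enough to produce such a $\kkk$'' is false. Take both factors to be $\R^2$ with the tuple generated by a single rotation $R$ of order four. This tuple is irreducible, but the orbit of $e_1$ under $\{R^n\}$ is confined to the two coordinate lines; with $u_1=u_2=e_1$, $v_1=e_1$, and $v_2=e_2$, no word $\kkk$ makes both $\langle R^{|\kkk|}u_1,v_1\rangle$ and $\langle R^{|\kkk|}u_2,v_2\rangle$ nonzero, so compactness cannot extract a finite $F$ from an empty family. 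The paper's Proposition~\ref{pr:qm} achieves the simultaneity by a completely different route: it restricts to subspaces $U_j$ of \emph{minimal} dimension with finite orbit, quantifies only over tuples $(W_j)_j$ in the single diagonal orbit $\mathcal{W}=\{(\phi_j(g)U_j)_j\colon g\in\Gamma\}$ so that the factors are correlated by construction, and establishes nonvanishing by Zariski density --- each of the $k$ conditions $\langle\psi_j(g)u_j,v_j\rangle\neq 0$ cuts out a nonempty Zariski-open subset of a single Zariski-connected component $G^i$ of the Zariski closure, where the nonemptiness is exactly what minimality of $\dim U_j$ supplies, and the sets intersect because $G^i$ is irreducible as a variety. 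None of the minimality device, the diagonal-orbit restriction, or the Zariski-irreducibility argument appears in your outline, so clause~(ii) remains unproved in exactly the case (irreducible but not strongly irreducible exterior powers) that forces $p>1$.
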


Without further mentioning, this section uses notation and the definitions introduced in Theorem \ref{thm:conc}. We remark that the upper bound $p \le \binom{d}{k}\binom{d}{k+1}$ is unlikely to be sharp. In \cite{KaenmakiMorris2018}, we conjectured that the natural lower bound $p \ge (d-k)\binom{d}{k}$ for $s \in (k,k+1)$ serves also as an upper bound.

\subsection{Behaviour of the pressure} \label{sec:conditional-pressure}
Recall that the clauses \eqref{it:finiteness} and \eqref{it:strictly} in Theorem \ref{th:aff} were verified already in Section \ref{sec:preli}. Conditioned on Theorem \ref{thm:conc}, we now prove the remaining clauses \eqref{it:right-con} and \eqref{it:sup}. Notice that, by Theorem \ref{th:detriangularisation}, it suffices to work with completely reducible families of matrices. We also present the proofs for Propositions \ref{thm:propaffinity} and \ref{thm:proppathologies}.

Let $\A=(A_i)_{i \in \I} \in \GL_d(\R)^\I$ where $\I$ is either finite or countably infinite, let $k \in \{0,\ldots,d-1\}$ and let $p \geq 1$ and $\Phi_{(\cdot)}^{(1)},\ldots,\Phi_{(\cdot)}^{(p)}$ be as in Theorem \ref{thm:conc}. For each $j \in \{1,\ldots,p\}$ and $s \in [k,k+1]$, we define
\begin{equation} \label{eq:def-block-pressure}
  P^{(j)}(\A,s)=\lim_{n \to \infty} \frac{1}{n}\log \sum_{\iii \in \I^n} \Phi_s^{(j)}(\iii).
\end{equation}
It follows from Theorem \ref{thm:conc}\eqref{it:conc-1} and Lemma \ref{th:first-lemma}\eqref{it:l11} that $\sum_{i \in \I} \Phi_s^{(j)}(i)<\infty$ for every $j \in \{1,\ldots,p\}$ and $s \in [k,k+1]\cap \mathscr{I}_\A$, and therefore by subadditivity
\begin{equation} \label{eq:inf-block-pressure}
  P^{(j)}(\A,s)=\inf_{n \geq 1} \frac{1}{n}\log \sum_{\iii \in \I^n} \Phi_s^{(j)}(\iii) \in[-\infty,\infty)
\end{equation}
for every $s \in [k,k+1]\cap \mathscr{I}_\A$, where subadditivity itself follows from Theorem \ref{thm:conc}\eqref{it:conc-2}. In particular the limit in \eqref{eq:def-block-pressure} exists. Now, it follows directly from Theorem \ref{thm:conc}\eqref{it:conc-1} that for every $s \in [k,k+1]\cap \mathscr{I}_\A$ we have
\[
  K^{-1}\sum_{\iii \in \I^n} \varphi^s(A_\iii) \leq \sum_{j=1}^p\sum_{\iii \in \I^n} \Phi_s^{(j)}(\iii) \leq p\max_{j \in \{1,\ldots,p\}} \sum_{\iii \in \I^n} \Phi_s^{(j)}(\iii) \leq pK\sum_{\iii \in \I^n} \varphi^s(A_\iii)
\]
for every $n \geq 1$ and consequently
\begin{equation} \label{eq:pressure-maximum}
  P(\A,s) = \max_{j \in \{1,\ldots,p\}} P^{(j)}(\A,s)
\end{equation}
for all $s \in [k,k+1]\cap \mathscr{I}_\A$.

Together with Lemmas \ref{le:first-cty} and \ref{le:cvx}, and recalling Theorem \ref{th:detriangularisation}, the following proposition proves Theorem \ref{th:aff}\eqref{it:right-con} conditioned on Theorem \ref{thm:conc}.

\begin{proposition} \label{thm:pressure-continuity}
  Let $\A=(A_i)_{i \in \I} \in \GL_d(\R)^\I$ be completely reducible, where $\I$ is countably infinite. Then the function $s\mapsto P(\A,s)$ is continuous on $\mathscr{I}_\A$. In particular, if $P(\A,\theta_\A)<\infty$, then
  \[
    \lim_{s \downarrow \theta_\A} P(\A,s)=P(\A,\theta_\A).
  \]
\end{proposition}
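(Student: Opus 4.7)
The strategy is to combine the elementary upper semicontinuity of $s\mapsto P(\A,s)$ on $\mathscr{I}_\A$ established at the end of Section \ref{sec:preli} with a matching lower semicontinuity obtained by writing $P(\A,\cdot)$ as a supremum of pressures of its finite subsystems. Specifically, I will show that
\[
  P(\A,s)=\sup\{P(\J,s) \colon \J \text{ is a nonempty finite subset of } \I\}
\]
for every $s\in\mathscr{I}_\A$. Since for finite $\J$ the pressure $P(\J,\cdot)$ is continuous on $[0,\infty)$ by Lemma \ref{le:first-cty}, the right-hand side is lower semicontinuous on $\mathscr{I}_\A$; combined with upper semicontinuity this yields continuity, and in particular the right-continuity at $\theta_\A$ whenever $P(\A,\theta_\A)<\infty$.

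Only the intervals $[k,k+1]\cap\mathscr{I}_\A$ with $k\in\{0,\ldots,d-1\}$ require Theorem \ref{thm:conc}: on $[d,\infty)\cap\mathscr{I}_\A$ the identity $\varphi^s(A)=|\det A|^{s/d}$ forces multiplicativity, so $P(\A,s)=\log\sum_{i\in\I}|\det A_i|^{s/d}$, which is continuous in $s$ by Lemma \ref{le:zeroth-lemma} and visibly the monotone limit of the corresponding $P(\J,s)$ as $\J\uparrow\I$. Fix $k$ and apply Theorem \ref{thm:conc} to obtain quasi-multiplicative potentials $\Phi_s^{(1)},\ldots,\Phi_s^{(p)}$, a constant $K$, and a finite witness set $F\subset\I^*$ valid throughout $[k,k+1]$. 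By \eqref{eq:pressure-maximum} both $P(\A,s)=\max_j P^{(j)}(\A,s)$ and $P(\J,s)=\max_j P^{(j)}(\J,s)$, and a finite maximum commutes with the supremum over $\J$, so it suffices to prove $P^{(j)}(\A,s)=\sup_\J P^{(j)}(\J,s)$ for each $j$.

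Write $a_N^{(j)}(\J,s):=\sum_{\iii\in\J^N}\Phi_s^{(j)}(\iii)$ and $M_\J:=a_1^{(j)}(\J,s)$, and let $\J_0\subset\I$ be the finite set of indices appearing in the words of $F$. For any finite $\J\supseteq\J_0$ and any $n,m\ge 1$, I iterate the quasi-multiplicative inequality from Theorem \ref{thm:conc}\eqref{it:conc-2} to glue $m$ blocks $\iii_1,\ldots,\iii_m\in\J^n$ through $m-1$ connectors $\kkk_1,\ldots,\kkk_{m-1}\in F$, obtaining
\[
  \Bigl(\sum_{\iii\in\J^n}\Phi_s^{(j)}(\iii)\Bigr)^m \le K^{m-1}\sum_{\kkk_\bullet\in F^{m-1}}\sum_{\iii_\bullet\in(\J^n)^m}\Phi_s^{(j)}(\iii_1\kkk_1\iii_2\cdots\kkk_{m-1}\iii_m).
\]
For fixed $\kkk_\bullet$ distinct tuples $\iii_\bullet$ produce distinct words in $\J^{nm+L_{\mathrm{tot}}}$ with $L_{\mathrm{tot}}=\sum_i|\kkk_i|\le(m-1)L_{\max}$, where $L_{\max}=\max_{\kkk\in F}|\kkk|$, so submultiplicativity of $\Phi_s^{(j)}$ bounds the inner sum by $a_{nm+L_{\mathrm{tot}}}^{(j)}(\J,s)\le a_{nm}^{(j)}(\J,s)M_\J^{L_{\mathrm{tot}}}$. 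Summing over $\kkk_\bullet\in F^{m-1}$ and collapsing the resulting product yields
\[
  (a_n^{(j)}(\J,s))^m \le (K|F|)^{m-1}\max(1,M_\J)^{(m-1)L_{\max}} \, a_{nm}^{(j)}(\J,s).
\]
Taking logarithms, dividing by $nm$, and letting $m\to\infty$ (using Fekete's lemma for $\tfrac{1}{nm}\log a_{nm}^{(j)}(\J,s)\to P^{(j)}(\J,s)$) gives $\tfrac{1}{n}\log a_n^{(j)}(\J,s) \le P^{(j)}(\J,s) + \tfrac{C^*}{n}$, where $C^*:=\log(K|F|)+L_{\max}\log\max(1,M_\I)$ is independent of $\J$ because $M_\J\le M_\I<\infty$ by Theorem \ref{thm:conc}\eqref{it:conc-1} together with Lemma \ref{th:first-lemma}\eqref{it:l11}. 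Taking the supremum over finite $\J\supseteq\J_0$, monotone convergence gives $\sup_\J a_n^{(j)}(\J,s)=a_n^{(j)}(\I,s)$, whence $\tfrac{1}{n}\log a_n^{(j)}(\I,s) - \tfrac{C^*}{n} \le \sup_\J P^{(j)}(\J,s)$; sending $n\to\infty$ produces $P^{(j)}(\A,s)\le\sup_\J P^{(j)}(\J,s)$, and the reverse is immediate.

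The main obstacle is to carry out the iterated quasi-multiplicativity step with a correction term $C^*/n$ whose constant is independent of $\J$; this uniformity is precisely what allows the monotone convergence $\J\uparrow\I$ to be compatible with the subsequent limit $n\to\infty$, and hence with the passage to the supremum over finite subsystems that in turn delivers lower semicontinuity.
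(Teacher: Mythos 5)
Your proof is correct and takes a genuinely different route from the paper's. The paper establishes lower semicontinuity of each $P^{(j)}(\A,\cdot)$ by showing that the sequence $n\mapsto\log\bigl(\hat{K}(s)^{-1}\sum_{\iii\in\I^n}\Phi^{(j)}_s(\iii)\bigr)$ is superadditive, where $\hat K(s)$ is a constant depending continuously on $s$ (built from the continuity clause of Theorem~\ref{thm:conc}\eqref{it:conc-3}), so that $P^{(j)}(\A,\cdot)$ is a pointwise supremum over $n$ of continuous functions; the finite-subsystem approximation (the paper's Proposition~\ref{le:dense-inner-approx}) is then proved afterwards as a corollary, reusing the same $\hat K(s)$ bound. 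You instead prove the finite-subsystem approximation directly, by iterating quasi-multiplicativity $m$ times to produce a correction of order $C^*/n$ with $C^*$ uniform in $\J$ (the crucial observation being $M_\J\le M_\I<\infty$), and then read off lower semicontinuity because each finite-alphabet pressure $P(\J,\cdot)$ is continuous by the elementary Lemma~\ref{le:first-cty}. This inverts the paper's logical order, unifying Propositions~\ref{thm:pressure-continuity} and~\ref{le:dense-inner-approx} into a single argument, and as a bonus it avoids any use of the $s$-continuity of $\Phi^{(j)}_s$ from Theorem~\ref{thm:conc}\eqref{it:conc-3}, since the only continuity input needed is that of the finite pressures. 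The trade-off is that your argument requires the slightly more intricate bookkeeping of the $m$-fold splicing (counting the $|F|^{m-1}$ connector choices and the length inflation $L_{\mathrm{tot}}\le(m-1)L_{\max}$), whereas the paper's $\hat K(s)$ absorbs a single splice and relies on the ready-made subadditivity/superadditivity framework. Both are correct; yours is arguably more economical in its use of Theorem~\ref{thm:conc}.
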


\begin{proof}
  Since the determinant is multiplicative, the continuity on $[d,\infty)$ is straightforward. Therefore, we may suppose without loss of generality that $\theta_\A<d$. It is sufficient to show that $s \mapsto P(\A,s)$ is continuous on $[k,k+1]\cap \mathscr{I}_\A$ for every integer $k$ such that $\lfloor\theta_\A\rfloor \leq k <d$, so we fix such an integer $k$ for the remainder of the proof and demonstrate the result in this form. Furthermore, by \eqref{eq:pressure-maximum}, it is enough to prove that the function $s \mapsto P^{(j)}(\A,s)$ defined on $[k,k+1]\cap \mathscr{I}_\A$ is continuous for every $j \in \{1,\ldots,p\}$. We will show in due course that this function does indeed take real values only, i.e. that it cannot take the value $-\infty$. We therefore fix such an integer $j$ for the remainder of the proof and demonstrate the continuity of $s \mapsto P^{(j)}(\A,s)$ on $[k,k+1]\cap \mathscr{I}_\A$.

  For every $n\geq 1$, by Lemma \ref{le:zeroth-lemma} the series $\sum_{\iii \in \I^n} \varphi^s(A_\iii)$ converges uniformly with respect to $s$ on closed subintervals of  $[k,k+1]\cap \mathscr{I}_\A$. In view of Theorem \ref{thm:conc}\eqref{it:conc-1} it follows that $\sum_{\iii \in \I^n} \Phi_s^{(j)}(\iii) $ also converges uniformly with respect to $s$ on closed subintervals of  $[k,k+1]\cap \mathscr{I}_\A$. Since by Theorem \ref{thm:conc}\eqref{it:conc-3} each function $s\mapsto \Phi_s^{(j)}(\iii)$ is continuous with respect to $s \in [k,k+1]$, this implies that each of the functions
  \[
    s \mapsto \sum_{\iii \in \I^n} \Phi_s^{(j)}(\iii)
  \]
  is continuous with respect to $s \in [k,k+1]\cap \mathscr{I}_\A$. We deduce in particular that $s \mapsto P^{(j)}(\A,s)$ is an upper semi-continuous function $[k,k+1]\cap \mathscr{I}_\A \to [-\infty,\infty)$, being the pointwise infimum of a sequence of continuous functions on that domain.

  We next apply quasi-multiplicativity of Theorem \ref{thm:conc}\eqref{it:conc-2} to show that it is also the pointwise supremum of a similar sequence of functions, which also serves to demonstrate that it takes real values only. Let $t\geq 1$ be a natural number such that $F\subseteq \bigcup_{k=1}^t \I^k$. Using Theorem \ref{thm:conc}\eqref{it:conc-2} for every $\iii,\jjj\in\I^*$ we have
  \[
    \Phi^{(j)}_s(\iii)\Phi^{(j)}_s(\jjj) \leq K \max_{\kkk \in F} \Phi^{(j)}_s(\iii\kkk\jjj)  \leq K \max_{1 \leq k \leq t} \max_{\kkk \in \I^k} \Phi^{(j)}_s(\iii\kkk\jjj) \leq  K\sum_{k=1}^t\sum_{\kkk \in \I^k} \Phi^{(j)}_s(\iii\kkk\jjj)
  \]
  and therefore for every $n,m \geq 1$
  \begin{align*}
    \biggl(\sum_{\iii \in \I^n} \Phi^{(j)}_s(\iii) \biggr)&\biggl(\sum_{\jjj \in \I^m} \Phi^{(j)}_s(\jjj) \biggr) \leq  K\sum_{k=1}^t\sum_{\kkk \in \I^k}\sum_{\iii \in \I^n} \sum_{\jjj \in \I^m} \Phi^{(j)}_s(\iii\kkk\jjj)\\
    &= K\sum_{k=1}^t \sum_{\lll \in \I^{n+m+k}}  \Phi^{(j)}_s(\lll)
    = K\sum_{k=1}^t\sum_{\kkk \in \I^k} \sum_{\lll \in \I^{n+m}}  \Phi^{(j)}_s(\kkk\lll) \\
    &\leq K\sum_{k=1}^t \biggl(\sum_{\kkk \in \I^k}\Phi^{(j)}_s(\kkk) \biggr)\biggl( \sum_{\lll \in \I^{n+m}} \Phi^{(j)}_s(\lll)\biggr)
    \leq \hat{K}(s) \sum_{\lll \in \I^{n+m}} \Phi^{(j)}_s(\lll),
  \end{align*}
  say, where
  \begin{equation} \label{eq:K-hat}
    \hat{K}(s)=K\sum_{k=1}^t \biggl(\sum_{\kkk \in \I^k}\Phi^{(j)}_s(\kkk) \biggr) \in (0,\infty)
  \end{equation}
  which depends continuously on $s \in [k,k+1]\cap \mathscr{I}_\A$ since we have already established that $s\mapsto \sum_{\kkk \in \I^k}\Phi^{(j)}_s(\kkk) $ is continuous for every natural number $k$ . This inequality is precisely what is required to demonstrate that the sequence
  \[
    \log \biggl(\hat{K}(s)^{-1}\sum_{\iii \in \I^n}\Phi^{(j)}_s(\iii) \biggr)
  \]
  is superadditive for each $s \in [k,k+1]\cap \mathscr{I}_\A$. Consequently 
  \begin{equation} \label{eq:j-pressure-sup}
    P^{(j)}(\A,s)= \sup_{n \geq 1} \frac{1}{n}\log\biggl(\hat{K}(s)^{-1} \sum_{\iii \in \I^n} \Phi_s^{(j)}(\iii)\biggr)
  \end{equation}
  for every  $s \in [k,k+1]\cap \mathscr{I}_\A$. Thus $s \mapsto P^{(j)}(\A,s)$ is also a lower semi-continuous function $[k,k+1]\cap \mathscr{I}_\A \to (-\infty,\infty)$ as it is a pointwise supremum of a sequence of continuous functions and we conclude that it depends continuously on $s$ and takes real values only. The result follows by recalling \eqref{eq:pressure-maximum} and the fact that the maximum of finitely many continuous functions is continuous.
\end{proof}

Recalling Theorem \ref{th:detriangularisation}, the following proposition proves Theorem \ref{th:aff}\eqref{it:sup} conditioned on Theorem \ref{thm:conc}.

\begin{proposition} \label{le:dense-inner-approx}
  Let $\A=(A_i)_{i \in \I} \in \GL_d(\R)^\I$ be completely reducible, where $\I$ is countably infinite. Then
  \[
    P(\A,s) = \sup\{ P((A_i)_{i \in \J} ,s) \colon \J\text{ is a nonempty finite subset of }\I\}
  \]
  for all $s \ge 0$.
\end{proposition}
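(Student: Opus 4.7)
The direction $\sup_\J P((A_i)_{i \in \J}, s) \le P(\A, s)$ is immediate from monotonicity of $\sum_{\iii \in \J^n}\varphi^s(A_\iii)$ in $\J$ combined with the infimum characterisation in Lemma \ref{th:first-lemma}\eqref{it:l11}, so the plan is to focus on the reverse inequality. When $s \ge d$, the identity $\varphi^s(A_\iii) = \prod_k |\det A_{i_k}|^{s/d}$ factorises every level sum, yielding $P(\A, s) = \log \sum_{i \in \I}|\det A_i|^{s/d}$ and the analogous formula for every finite $\J$, so monotone convergence as $\J \nearrow \I$ closes this range. Suppose now $s \in [0, d)$. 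The plan is to invoke Theorem \ref{thm:conc} with $k = \lfloor s \rfloor$ to produce the quasi-multiplicative functions $\Phi_s^{(j)}$ and a finite set $F \subset \I^*$. In view of \eqref{eq:pressure-maximum} it suffices to approximate each $P^{(j)}(\A, s)$ by finite subsystems. As soon as a finite $\J \subseteq \I$ contains every letter occurring in a word of $F$, so that $F \subseteq \J^*$, the derivation in the proof of Proposition \ref{thm:pressure-continuity} transplants verbatim to $\J$, producing the supremum representation
\[
  P^{(j)}((A_i)_{i \in \J}, s) = \sup_{n \ge 1}\frac{1}{n}\log\Bigl(\hat{K}_\J(s)^{-1}\sum_{\iii \in \J^n}\Phi_s^{(j)}(\iii)\Bigr),
\]
where $\hat{K}_\J(s)$ is the restriction of \eqref{eq:K-hat} to $\J^*$ and is always finite.

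If additionally $s \in \mathscr{I}_\A$, then $\hat{K}(s) < \infty$ and the same supremum representation applies to $\A$ itself. Given $\eps > 0$, the plan is to fix $j$ attaining the maximum in \eqref{eq:pressure-maximum} for $\A$, choose $n_0$ so that the $n_0$-th term for $\A$ exceeds $P(\A, s) - \eps/2$, and then by monotone convergence enlarge $\J$ until $\sum_{\iii \in \J^{n_0}}\Phi_s^{(j)}(\iii) \ge e^{-n_0\eps/2}\sum_{\iii \in \I^{n_0}}\Phi_s^{(j)}(\iii)$. Since $\hat{K}_\J(s) \le \hat{K}(s)$, the representation for $\J$ evaluated at $n = n_0$ then yields $P((A_i)_{i \in \J}, s) \ge P^{(j)}((A_i)_{i \in \J}, s) \ge P(\A, s) - \eps$, settling the subcase.

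The main obstacle is the remaining subcase $s \in [0, d) \setminus \mathscr{I}_\A$, in which $P(\A, s) = \infty$ while $\hat{K}(s) = \infty$, so the supremum representation is unavailable for $\A$. The plan is to replace single-truncation by iterated quasi-multiplicativity. Theorem \ref{thm:conc}\eqref{it:conc-1} forces $\sum_j\sum_{i \in \I}\Phi_s^{(j)}(i) \ge K^{-1}\sum_{i \in \I}\varphi^s(A_i) = \infty$, so some $j$ satisfies $\sum_{i \in \I}\Phi_s^{(j)}(i) = \infty$; fix this $j$. Summing the quasi-multiplicativity inequality from Theorem \ref{thm:conc}\eqref{it:conc-2} over pairs $(\iii,\jjj) \in \J^n \times \J^m$ and using that for each fixed $\kkk$ a word of length $n + |\kkk| + m$ has at most one decomposition $\iii\kkk\jjj$ with prescribed part lengths yields
\[
  T_n(\J)T_m(\J) \le K|F|\sum_{k=1}^t T_{n+m+k}(\J),
\]
with $T_n(\J) = \sum_{\iii \in \J^n}\Phi_s^{(j)}(\iii)$ and $t = \max_{\kkk \in F}|\kkk|$, so for every $(n, m)$ some $k \in \{1, \ldots, t\}$ gives $T_{n+m+k}(\J) \ge C^{-1}T_n(\J)T_m(\J)$ with $C = K|F|t$. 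Setting $m_0 = 1$ and recursively choosing $m_{i+1} \in [2m_i + 1, 2m_i + t]$ with $T_{m_{i+1}}(\J) \ge C^{-1}T_{m_i}(\J)^2$, an induction yields $T_{m_i}(\J) \ge T_1(\J)^{2^i}/C^{2^i - 1}$ together with $m_i \le 2^i(1+t)$, so whenever $T_1(\J) > C$ one has $\frac{1}{m_i}\log T_{m_i}(\J) \ge \frac{1}{1+t}\log(T_1(\J)/C)$ for every $i$. Since $\lim_n \frac{1}{n}\log T_n(\J) = P^{(j)}((A_i)_{i \in \J}, s)$ exists by Fekete's lemma, every subsequence shares this limit, and therefore $P^{(j)}((A_i)_{i \in \J}, s) \ge \frac{1}{1+t}\log(T_1(\J)/C) \to \infty$ as $\J \nearrow \I$, completing the proof.
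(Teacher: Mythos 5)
Your proof is correct, and for $s \in \mathscr{I}_\A$ it follows essentially the same route as the paper: pass to the block potentials $\Phi_s^{(j)}$ from Theorem \ref{thm:conc}, exploit the supremum representation \eqref{eq:j-pressure-sup} coming from quasi-multiplicativity, and take a monotone limit over finite $\J$ with $F\subseteq\J^*$. The genuine addition is your fourth paragraph. The paper's own proof picks $j$ via \eqref{eq:pressure-maximum} and then feeds it into \eqref{eq:j-pressure-sup} and \eqref{eq:K-hat-j-estimate} with $\hat{K}(s)\in(0,\infty)$, all of which are established only on $\mathscr{I}_\A$; as written it does not address the divergent range $s\in[0,d)\setminus\mathscr{I}_\A$, where $P(\A,s)=\infty$ but the statement ``for all $s\ge 0$'' still requires showing $\sup_\J P((A_i)_{i\in\J},s)=\infty$. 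Your iterated quasi-multiplicativity recursion (doubling the block length $m_i$ while absorbing the drift from $F$, and then appealing to the existence of the Fekete limit for the finite subsystem) yields the uniform lower bound $P^{(j)}((A_i)_{i\in\J},s)\ge\frac{1}{1+t}\log(T_1(\J)/C)$, which diverges as $\J\nearrow\I$ since some $j$ has $\sum_i\Phi_s^{(j)}(i)=\infty$; this correctly closes the case. Two minor remarks: the inequality as you state it presupposes $C\ge1$, which does hold because Theorem \ref{thm:conc}\eqref{it:conc-1} forces $K\ge1$ (otherwise $K^{-1}\varphi^s>K\varphi^s$) and hence $C=K|F|t\ge1$, but it is worth saying; and a slightly shorter route to the same conclusion is the uniform Gibbs estimate underlying \cite[Proposition 3.4]{KaenmakiReeve2014} (already invoked by the paper elsewhere), which at level $n=1$ gives $|P^{(j)}((A_i)_{i\in\J},s)-\log T_1(\J)|\le\log C$ with $C$ independent of $\J$, though your self-contained recursion is equally valid.
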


\begin{proof}
  To simplify notation, we write $Q(\A,s) = \sup\{ P((A_i)_{i \in \J} ,s) \colon \J$ is a nonempty finite subset of $\I\} \in (-\infty,\infty]$ for all $s \ge 0$. Let $s \ge 0$ be such that $s \in [k,k+1)$ for some $k \in \{0,\ldots,d-1\}$. The case $s \ge d$ is relatively trivial and is left to the reader. Recalling \eqref{eq:pressure-maximum}, let $j \in \{1,\ldots,p\}$ be such that $P(\A,s) = P^{(j)}(\A,s)$. Let $(\J_N)_{N \ge 1}$ be an increasing sequence of nonempty finite subsets of $\I$ such that $\bigcup_{N \ge 1} \J_N = \I$ and $F \subseteq \J_1^*$, where $F \subset \I^*$ is as in Theorem \ref{thm:conc}. Recall that, by \eqref{eq:inf-block-pressure},
  \begin{equation*}
    P^{(j)}(\A,s) \le \frac{1}{n}\log\sum_{\iii \in \I^n} \Phi_s^{(j)}(\iii)
  \end{equation*}
  for all $n \ge 1$ and, by \eqref{eq:j-pressure-sup},
  \begin{equation} \label{eq:K-hat-j-estimate}
    \frac{1}{n}\log\biggl( \hat{K}(s)^{-1}\sum_{\iii \in \J_N^n}\Phi_s^{(j)}(\iii) \biggr) \le P^{(j)}((A_i)_{i \in \J_N},s)
  \end{equation}
  for all $n \ge 1$, where $\hat{K}(s) \in (0,\infty)$ is as in \eqref{eq:K-hat}. Fix $\eps > 0$ and observe that for every $n \ge 1$ we may choose $N(n) \ge 1$ such that
  \begin{equation*}
    P^{(j)}(\A,s) - \eps < \frac{1}{n}\log\sum_{\iii \in \J_{N(n)}^n} \Phi_s^{(j)}(\iii).
  \end{equation*}
  Therefore, it follows from \eqref{eq:K-hat-j-estimate} that
  \begin{equation*}
    P^{(j)}(\A,s) - \eps < P^{(j)}((A_i)_{i \in \J_{N(n)}},s) + \frac{1}{n}\log\hat{K}(s) \le Q(\A,s) + \frac{1}{n}\log\hat{K}(s).
  \end{equation*}
  By letting $n \to \infty$ and $\eps \downarrow 0$, this gives the claim.
\end{proof}

Assuming Theorem \ref{thm:conc}, we have now finished the proof of Theorem \ref{th:aff}. Let us next use Theorem \ref{th:aff} to show Proposition \ref{thm:propaffinity} which we repeat below.

\begin{propositionno}{\ref{thm:propaffinity}}
  \propaffinity
\end{propositionno}

\begin{proof}
To see that $0 \leq P(\A,\udimaff\A)<\infty$ implies $\ldimaff\A=\udimaff\A$ we argue as follows. Suppose $0 \leq P(\A,\udimaff\A)<\infty$ and define $\kappa=-\log \sup_{i \in \I} \threebar{A_i}>0$ as in Theorem \ref{th:aff}\eqref{it:strictly}. If $\J \subseteq \I$ is any nonempty finite set, applying Theorem \ref{th:aff}\eqref{it:strictly} to $(A_i)_{i \in \J}$ we find that
\begin{align*}
  P((A_i)_{i \in \J}, \udimaff \A) &\leq P((A_i)_{i \in \J}, \ldimaff \A) - \kappa(\udimaff \A - \ldimaff \A) \\ 
  &\leq - \kappa(\udimaff \A - \ldimaff \A),
\end{align*}
where the inequality  $P((A_i)_{i \in \J}, \ldimaff \A)\leq0$ follows from the definition of $\ldimaff \A$. Since by hypothesis $\udimaff \A \in \mathscr{I}_\A$, taking the supremum with respect to $\J$ and using Theorem \ref{th:aff}\eqref{it:sup} it follows that
\[
  0 \leq P(\A,\udimaff \A) \leq - \kappa(\udimaff \A - \ldimaff \A)
\]
and we deduce that $\udimaff \A \leq \ldimaff \A$ as required. If $\theta_\A<\udimaff\A$ then by Theorem \ref{th:aff}\eqref{it:right-con} together with the definition \eqref{eq:udimaff-def} of $\udimaff\A$ we must have $P(\A,\udimaff\A)=0$, and, by recalling Prosposition \ref{le:dense-inner-approx}, the same reasoning applies when $\A$ is completely reducible. Finally, if $\A$ is indexed over a finite set then the supremum \eqref{eq:ldimaff-def} is trivially attained by $\I$ itself and it follows that $\ldimaff \A = \udimaff \A$ whenever $\A$ is indexed over a finite set.
\end{proof}

Finally, let us prove Proposition \ref{thm:proppathologies} which is repeated below.

\begin{propositionno}{\ref{thm:proppathologies}}
  \proppathologies
\end{propositionno}

\begin{proof}
 Fix $\alpha, \beta \in (0,1)$ and $\gamma \in [\beta,1]$. Since $t \mapsto e^{\beta t}-1$ is a continuous and surjective function $(0,\infty) \to (0,\infty)$, there exists $t>0$ such that $e^{\beta t}-1 = \alpha^\beta$. Observe that then
  \begin{equation*}
    \sum_{k \in \N} (\alpha e^{-tk})^\beta = \sum_{k \in \N} e^{-\beta t(k-1)}-e^{-\beta tk} = 1.
  \end{equation*}
  In the case where we wish to have $P(\mathsf{A},\theta_{\mathsf{A}})=\infty$, let us define $a_k = k^{-\frac{1}{\gamma}}$ for all $k \in \N$ so that the series $\sum_{k \in \N} a_k^s = \sum_{k \in \N} k^{-\frac{s}{\gamma}}$ is finite if and only if $s > \gamma$. If on the other hand we wish to have $P(\mathsf{A},\theta_{\mathsf{A}}) < 0$ then we choose $a_k = k^{-\frac{1}{\gamma}}(\log(k+1))^{-\frac{2}{\gamma}}$ for all $k\in \N$ so that the sum $\sum_{k \in \N} a_k^s = \sum_{k \in \N} k^{-\frac{s}{\gamma}}(\log(k+1))^{-\frac{2s}{\gamma}}$ is finite if and only if $s \ge \gamma$. Finally, define
  \begin{equation*}
    A_k =
    \begin{pmatrix}
      \alpha e^{-tk} & \eps a_k \\
      0 & \alpha e^{-tk}
    \end{pmatrix}
  \end{equation*}
  for all $k \in \N$, where $\eps>0$ is chosen such that $\sup_{k \in \N} \|A_k\|<\alpha$. 

  Let us first show that $\ldimaff \A=\beta$. Define $\A' = (A_k')_{k \in \N}$, where
  \[
    A_k' =
    \begin{pmatrix}
      \alpha e^{-tk} & 0 \\
      0 & \alpha e^{-tk}
    \end{pmatrix}
  \]
  for all $k \in \N$. If $\J \subset \N$ is a nonempty finite set then, by Theorem \ref{th:detriangularisation}, we have $P((A_k)_{k \in \J},s)=P((A_k')_{k \in \J},s)$ for all $s \geq 0$, so it suffices to show that $\ldimaff (A_k')_{k \in \N}=\beta$. Since each $A_k'$ is conformal we simply have $P((A_k')_{k \in \J},s)=\log \sum_{k \in \J} (\alpha e^{-tk})^s$ for every finite set $\J \subset \N$. Since clearly $P((A_k')_{k \in \J},\beta) < 0 = \log\sum_{k \in \N} (\alpha e^{-tk})^\beta = P(\A',\beta)$ for all finite sets $\J \subset \N$ by the choice of $t>0$, we conclude that $\ldimaff (A_k)_{k \in \N} \leq \beta$. On the other hand, for every $s \in (0,\beta)$ we have $\sum_{k \in \N} (\alpha e^{-tk})^s>1$ and this implies the existence of a finite set $\J \subset \N$ such that $\sum_{k \in \J} (\alpha e^{-tk})^s>1$, and such a set necessarily has $\dimaff (A_k)_{k \in \J}>s$. It follows that $\ldimaff (A_k)_{k \in \N}>s$ for every $s \in (0,\beta)$ and this completes the proof.

  Let us next demonstrate that $\gamma=\theta_\A$ and that $P(\A,\theta_\A)$ is either finite or infinite. By Theorem \ref{th:aff}(\ref{it:finiteness}), it is sufficient to show that the series $\sum_{k\in \N} \varphi^s(A_k)=\sum_{k \in \N}\|A_k\|^s$ is infinite for all $s \in [0,\gamma)$, finite for all $s \in (\gamma,2]$, and either finite or infinite at $s=\gamma$ as appropriate. For all $s \in [0,2]$, we have the estimate
  \begin{align*}
    \varepsilon^s \sum_{k \in \N} a_k^s &\leq \sum_{k \in \N} \|A_k\|^s \leq   \sum_{k \in \N} (2\alpha e^{-t k} + \varepsilon a_k)^s \le \sum_{k \in \N} 2^{s-1}(2^s \alpha^s e^{-s tk} + \varepsilon^s a_k^s) \\
    &= \sum_{k \in \N} 2^{2s-1} \alpha^s e^{-s tk} + 2^{s-1}\varepsilon^s \sum_{k \in \N} a_k^s = \frac{2^{2s-1} \alpha^s}{e^{s t}-1} + 2^{s-1}\varepsilon^s \sum_{k \in \N} a_k^s,
  \end{align*}
  where we have used the fact that $\|A\|$ is bounded above by the total of the absolute values of the entries of $A$ and have also used H\"older's inequality which gives $(x+y)^s \leq 2^{s-1}(x^s + y^s)$ for all $x,y \geq 0$. It follows in particular that $\sum_{k\in \N} \|A_k\|^s$ is finite if and only if $\sum_{k \in \N} a_k^s$ is finite. The claim follows by the choice of the sequence $(a_k)_{k \in \N}$.

  Finally, let us verify that $\theta_\A = \udimaff\A$. By \eqref{eq:dimaff-ineq}, we have $\theta_\A \le \udimaff\A$. If it was $\theta_\A < \udimaff\A$, then, by Proposition \ref{thm:propaffinity}\eqref{it:propaff2}, we would have $\ldimaff\A=\udimaff\A$. This is a contradiction as $\A$ is chosen such that $\ldimaff\A = \beta < \theta_\A$. Furthermore, since now we have $\ldimaff\A < \udimaff\A$, Proposition \ref{thm:propaffinity}\eqref{it:propaff1} implies that $P(\A,\theta_\A)<0$ or $P(\A,\theta_\A)=\infty$.
\end{proof}

\begin{remark} \label{rem:assumption-needed}
  Let us demonstrate that the assumption $P(\A,\theta_\A)<\infty$ in Theorem \ref{th:aff}\eqref{it:right-con} is required for the right-continuity of the pressure at $\theta_\A$. We begin by choosing in Proposition \ref{thm:proppathologies} the tuple $\A$ of upper-triangular matrices such that $P(\A,\theta_\A)=\infty$ and $\beta = \theta_\A$. If $\A'$ is the corresponding tuple of diagonal matrices, then the choice of $t > 0$ in the proof of Proposition \ref{thm:proppathologies} implies $P(\A',\theta_\A)=0$. As $\A'$ is completely reducible, it follows from Theorem \ref{th:detriangularisation} that $P(\A,s)=P(\A',s)$ for all $s \in (\theta_\A,\infty)$ and, in particular,
  \begin{equation*}
    \lim_{s \downarrow \theta_\A} P(\A,s) = \lim_{s \downarrow \theta_\A} P(\A',s) = P(\A',\theta_\A) < P(\A,\theta_\A)
  \end{equation*}
  as wished.
\end{remark}

%
%

\subsection{Description of the equilibrium states} \label{sec:conditional-equilibrium}
Let us next verify Proposition \ref{thm:equilibriuminequality} and use Theorem \ref{thm:conc} to prove Theorem \ref{thm:equilibrium-states-classification}. We also present the proof for Proposition \ref{thm:proppressuredrop} which in fact is just a simple application of Theorem \ref{thm:equilibrium-states-classification}.

\begin{propositionno}{\ref{thm:equilibriuminequality}}
  \equilibriuminequality
\end{propositionno}

\begin{proof}
  Let us first assume that $h(\mu)<\infty$. If $\Lambda(\mu,\A,s)=-\infty$ or $P(\A,s)=\infty$, then there is nothing to prove. We may therefore assume that $\Lambda(\mu,\A,s)>-\infty$ and $P(\A,s)<\infty$. Let $\J$ be a nonempty finite subset of $\I$ and write $\PP_\J^n = \bigvee_{i=0}^{n-1} \sigma^{-i}(\PP_\J)$. Note that $h(\mu,\PP_\J)<\infty$. Fix $n \ge 1$ and for each $C \in \PP_\J^n$ choose $\jjj_C \in \I^n$ such that
  \begin{equation} \label{eq:equilibrium-estimate1}
    \int_C \log\fii^s(A_{\iii|_n}) \dd\mu(\iii) \le \mu(C) \log\fii^s(A_{\jjj_C}).
  \end{equation}
  Recall that the elements of $\PP_\J^n = \{[\mathtt{I}] \colon \mathtt{I} \in \mathcal{I}_\J^n\}$ are unions of the elements in $\{[\iii] \colon \iii \in \I^n\}$. Therefore, we may choose $\jjj_C \in \I^n$ such that $[\jjj_C] \subset C$ and the choice $C \mapsto \jjj_C$ is necessarily injective. By applying Lemma \ref{thm:entropy-basic-prop}\eqref{it:entropy1}, Lemma \ref{thm:energy-basic-prop}, \eqref{eq:equilibrium-estimate1}, and Jensen's inequality on logarithm, we get
  \begin{align*}
    h(\mu,\PP_\J) + \Lambda(\mu,\A,s) &\le \frac{1}{n}\biggl( H(\mu,\PP_\J^n) + \int_{\I^\N} \log\fii^s(A_{\iii|_n}) \dd\mu(\iii) \biggr) \\ 
    &\le \frac{1}{n} \sum_{C \in \PP_\J^n} \mu(C)\log\frac{\fii^s(A_{\jjj_C})}{\mu(C)} \\ 
    &\le \frac{1}{n} \log\sum_{C \in \PP_\J^n} \fii^s(A_{\jjj_C}) 
    \le \frac{1}{n} \log\sum_{\iii \in \I^n} \fii^s(A_\iii).
  \end{align*}
  The proof follows by letting $n \to \infty$, noticing that the choice of the finite set $\J \subseteq \I$ is free, and recalling the definition of the pressure from \eqref{eq:pressure-defn} and the characterisation of the entropy in Lemma \ref{thm:entropy-via-cylinders}.

  Let us then assume that $\Lambda(\mu,\A,s)>-\infty$. If $h(\mu)<\infty$, then we are in the situation we already have covered. We may thus assume that $h(\mu)=\infty$. Then for every $M>0$ there is a nonempty finite subset $\J$ of $\I$ such that $M \le h(\mu,\PP_\J) < \infty$ by Lemma \ref{thm:entropy-basic-prop}\eqref{it:entropy1}. But now we are again in the situation we have studied. It follows from the first part of the proof that
  \begin{equation*}
    M + \Lambda(\mu,\A,s) \le h(\mu,\PP_\J) + \Lambda(\mu,\A,s) \le \frac{1}{n} \log\sum_{\iii \in \I^n} \fii^s(A_\iii).
  \end{equation*}
  By letting $n \to \infty$ and then $M \to \infty$, we see that $P(\A,s)=\infty$ and the proof is complete.
\end{proof}

Let $\A=(A_i)_{i \in \I} \in \GL_d(\R)^\I$ be such that $\sup_{i \in \I} \fii^s(A_i) < \infty$, where $\I$ is either finite or countably infinite, and $\mu \in \MM_\sigma(\I^\N)$. For each $k \in \{1,\ldots,d-1\}$, $s \in [k,k+1]$, and $j \in \{1,\ldots,p\}$, we define
\begin{equation*}
  \Lambda^{(j)}(\mu,\A,s) = \lim_{n \to \infty} \frac{1}{n} \int_{\I^\N} \log\Phi^{(j)}_s(\iii|_n) \dd\mu(\iii) \in [-\infty,\log K\sup_{i \in \I} \fii^s(A_i)]
\end{equation*}
and notice that, by recalling Theorem \ref{thm:conc}\eqref{it:conc-1}--\eqref{it:conc-2} and Lemma \ref{thm:energy-basic-prop}, these quantities are well-defined and are equal to the infima of the same sequence over $n$. Furthermore, if $\mu$ is ergodic, then, by Theorem \ref{thm:conc}\eqref{it:conc-1} together with the subadditive ergodic theorem, we have
\begin{equation} \label{eq:measure-pressure-est}
\begin{split}
  \max_{j \in \{1,\ldots,p\}} \Lambda^{(j)}(\mu,\A,s) &= \max_{j \in \{1,\ldots,p\}} \lim_{n \to \infty} \frac{1}{n} \log\Phi^{(j)}_s(\iii|_n) \\ 
  &=\lim_{n \to \infty}\frac{1}{n} \log\varphi^s(\iii|_n) = \Lambda(\mu,\A,s)
\end{split}
\end{equation}
where the limit is taken almost everywhere with respect to $\mu$. We also define for each $s \in [k,k+1] \cap \mathscr{I}_\A$ and $j \in \{1,\ldots,p\}$ the \emph{measure-theoretical pressure} of $\A$ at $s$ with respect to $\mu$ by setting
\begin{equation*}
  P^{(j)}(\mu,\A,s) = \lim_{n \to \infty} \frac{1}{n} \sum_{\iii \in \I^n} \mu([\iii]) \log\frac{\Phi^{(j)}_s(\iii)}{\mu([\iii])}.
\end{equation*}
The main advantage in using the measure-theoretical pressure is that it can have a finite value even if $h(\mu,\PP_\I)=\infty$ and $\Lambda^{(j)}(\mu,\A,s)=-\infty$. The following lemma verifies the existence of the limit $P^{(j)}(\mu,\A,s)$.

\begin{lemma} \label{thm:measure-theoretical-pressure-prop}
  Let $\A=(A_i)_{i \in \I} \in \GL_d(\R)^\I$, where $\I$ is either finite or countably infinite. If $s \in \mathscr{I}_\A$, then the following two assertions hold:
  \begin{enumerate}[(i)]
    \item\label{it:m-pressure1} For each
    $\mu \in \MM_\sigma(\I^\N)$ the limit
    \begin{equation*}
      P^{(j)}(\mu,\A,s) = \lim_{n \to \infty} \frac{1}{n} \sum_{\iii \in \I^n} \mu([\iii]) \log\frac{\Phi^{(j)}_s(\iii)}{\mu([\iii])}.
    \end{equation*}
    exists in $[-\infty,P^{(j)}(\A,s)]$ and is equal to $\inf_{n \ge 1} \frac{1}{n} \sum_{\iii \in \I^n} \mu([\iii]) \log\frac{\Phi^{(j)}_s(\iii)}{\mu([\iii])}$. In particular, the map $\mu \mapsto P^{(j)}(\mu,\A,s)$ defined on $\MM_\sigma(\I^\N)$ is upper semicontinuous.
    \item\label{it:m-pressure2} If $\mu \in \MM_\sigma(\I^\N)$ is such that $h(\mu,\PP_\I) < \infty$ or $\Lambda^{(j)}(\mu,\A,s)>-\infty$, then
    \begin{equation*}
      P^{(j)}(\mu,\A,s) = h(\mu) + \Lambda^{(j)}(\mu,\A,s).
    \end{equation*} 
  \end{enumerate}
\end{lemma}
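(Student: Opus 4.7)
The plan is to analyse the sequence
\[
  a_n(\mu) := \sum_{\iii \in \I^n} \mu([\iii])\log\frac{\Phi^{(j)}_s(\iii)}{\mu([\iii])}
\]
via the same subadditive machinery developed in Section~\ref{sec:preli-entropy-energy}. For part~\eqref{it:m-pressure1}, I will first observe that formally $a_n(\mu) = \int_{\I^\N}\log\Phi^{(j)}_s(\iii|_n)\,\mathrm{d}\mu(\iii) + H(\mu,\PP^n_\I)$ and show that $(a_n(\mu))_{n\ge1}$ is subadditive: the energy term is subadditive in $n$ by the quasi-multiplicativity of $\Phi^{(j)}_s$ from Theorem~\ref{thm:conc}\eqref{it:conc-2} together with the $\sigma$-invariance of $\mu$, reasoning as in~\eqref{eq:energy-basic-prop1}, while the entropy term is subadditive via~\eqref{eq:entropy-basic-prop11} applied to $\PP^{m+n}_\I = \PP^m_\I \vee \sigma^{-m}\PP^n_\I$. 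Fekete's lemma then delivers existence of $P^{(j)}(\mu,\A,s)$ as both the limit and the infimum. The upper bound $P^{(j)}(\mu,\A,s) \le P^{(j)}(\A,s)$ is Jensen's inequality for the concave logarithm in the guise of Gibbs' inequality,
\[
  a_n(\mu) \leq \log\sum_{\iii \in \I^n}\Phi^{(j)}_s(\iii),
\]
after dividing by $n$ and invoking~\eqref{eq:def-block-pressure}.

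The main obstacle in part~\eqref{it:m-pressure1} is proving upper semicontinuity of $\mu \mapsto P^{(j)}(\mu,\A,s)$, because the naive decomposition of $a_n$ into energy plus entropy does not work: while the energy part is upper semicontinuous by Lemma~\ref{thm:energy-basic-prop}\eqref{it:energy2}, Example~\ref{ex:entropy-not-usc} shows that $\mu \mapsto H(\mu,\PP^n_\I)$ need not be. I will circumvent this by the Gibbs rewriting $a_n(\mu) = \log Z_n - D(\mu_n\,\|\,\nu_n)$, where $Z_n = \sum_{\iii \in \I^n}\Phi^{(j)}_s(\iii)$, the probability $\nu_n$ on the countable discrete space $\I^n$ is defined by $\nu_n(\{\iii\}) = \Phi^{(j)}_s(\iii)/Z_n$, $\mu_n$ is the level-$n$ marginal of $\mu$, and $D(\cdot\,\|\,\nu_n)$ is the Kullback--Leibler divergence. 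Because every cylinder $[\iii]$ is clopen, weak$^*$ convergence on $\I^\N$ forces $\int \tilde f\,\mathrm{d}\mu_k \to \int \tilde f\,\mathrm{d}\mu$ for each bounded $f \colon \I^n \to \R$ extended by $\tilde f(\jjj) = f(\jjj|_n)$, so the Donsker--Varadhan variational representation exhibits $D(\cdot\,\|\,\nu_n)$ as a supremum of continuous affine functionals of $\mu$ and hence as lower semicontinuous. This gives upper semicontinuity of each $a_n$, and consequently $P^{(j)}(\mu,\A,s) = \inf_{n\ge1} a_n(\mu)/n$ is upper semicontinuous as a pointwise infimum.

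For part~\eqref{it:m-pressure2}, my key observation is that when $s \in \mathscr{I}_\A$ the two alternative hypotheses collapse to the single one $h(\mu,\PP_\I) < \infty$. Indeed, if $h(\mu,\PP_\I) = \infty$ then $H(\mu,\PP^n_\I) = \infty$ for every $n$ by Lemma~\ref{thm:entropy-basic-prop}\eqref{it:entropy1}, but Gibbs' inequality applied to $\mu_n$ and $\nu_n$ yields
\[
  H(\mu,\PP^n_\I) \leq \log Z_n - \int_{\I^\N}\log\Phi^{(j)}_s(\iii|_n)\,\mathrm{d}\mu(\iii),
\]
which together with $Z_n < \infty$ forces $\int\log\Phi^{(j)}_s(\iii|_n)\,\mathrm{d}\mu = -\infty$ for every $n$, hence $\Lambda^{(j)}(\mu,\A,s) = -\infty$. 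So either alternative entails $h(\mu,\PP_\I) < \infty$. In that case Lemma~\ref{thm:entropy-finite} gives $h(\mu) = h(\mu,\PP_\I)$ and every $H(\mu,\PP^n_\I)$ is finite, so dividing the decomposition of $a_n(\mu)$ by $n$ and passing to the limit term by term via Lemma~\ref{thm:energy-basic-prop}\eqref{it:energy1} and Lemma~\ref{thm:entropy-basic-prop}\eqref{it:entropy1} yields $P^{(j)}(\mu,\A,s) = \Lambda^{(j)}(\mu,\A,s) + h(\mu)$, including the degenerate case $\Lambda^{(j)}(\mu,\A,s) = -\infty$ which produces $-\infty$ on both sides.
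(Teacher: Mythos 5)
Your Gibbs rewriting $a_n(\mu) = \log Z_n - D(\mu_n\,\|\,\nu_n)$ for the upper semicontinuity in part \eqref{it:m-pressure1} is the strongest step in the proposal. Since $D(\cdot\,\|\,\nu_n)$ is lower semicontinuous for fixed $\nu_n$ --- by the variational representation you cite, or more elementarily by writing it as the supremum of finite partial sums of the non-negative continuous summands $\mu([\iii])\log\tfrac{\mu([\iii])}{\nu_n(\iii)}-\mu([\iii])+\nu_n(\iii)$ --- each $a_n$ is upper semicontinuous and hence so is the infimum. The paper at the corresponding step asserts that each map $\mu\mapsto a_n(\mu)$ is \emph{continuous}; that is actually false when $\I$ is countably infinite (let $\nu_n$ have a light exponential tail and let $\mu_k$ push a vanishing amount of mass to high-indexed cylinders, so that $a_n(\mu_k)$ drops to $-\infty$ while the weak$^*$ limit has $a_n$ finite), although only upper semicontinuity is needed for the conclusion. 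So here your argument is not merely a different route but a more careful one.

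The genuine gap is in the subadditivity argument for part \eqref{it:m-pressure1}. You decompose $a_n(\mu)$ as $\int\log\Phi^{(j)}_s(\iii|_n)\,\mathrm{d}\mu + H\bigl(\mu,\bigvee_{i=0}^{n-1}\sigma^{-i}(\PP_\I)\bigr)$ and argue that each piece is subadditive; but this is an identity only when the right-hand side is not of the indeterminate form $(-\infty)+\infty$, and that degeneracy really occurs with $a_n$ finite. For instance in $d=1$, take $\fii^s(A_i)\asymp 1/(i\log^3 i)$ (so $s\in\mathscr{I}_\A$) and the Bernoulli measure with $\mu([i])\asymp 1/(i\log^2 i)$: then $H(\mu,\PP_\I)=\infty$ and $\int\log\fii^s(A_{\iii|_1})\,\mathrm{d}\mu=-\infty$, yet $a_1=\sum_i\mu([i])\log\tfrac{\fii^s(A_i)}{\mu([i])}$ converges, since the summand is $\asymp-\mu([i])\log\log i$. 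In that regime subadditivity of the two pieces says nothing about $(a_n)$, so Fekete does not apply and the existence of the limit $P^{(j)}(\mu,\A,s)$ is left unproved. The paper's proof avoids this entirely: it never decomposes $a_n$, but instead bounds $a_{m+n}-a_m$ directly using only submultiplicativity of $\Phi^{(j)}_s$, $\sigma$-invariance of $\mu$, and Jensen's inequality applied to $x\mapsto -x\log x$, and then handles $a_n=-\infty$ as a separate trivial case. You need this direct argument (or an equivalent one, e.g.\ a chain-rule bound for the relative entropies $D(\mu_n\,\|\,\nu_n)$ appearing in your Gibbs rewriting) to close the gap. Your treatment of part \eqref{it:m-pressure2} is essentially the paper's --- the collapse of the two alternative hypotheses to $h(\mu,\PP_\I)<\infty$ is the same observation, stated a little more crisply --- and is sound once part \eqref{it:m-pressure1} has been repaired.
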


\begin{proof}
  Fix $j \in \{1,\ldots,p\}$ and write
  \begin{equation*}
    a_n = \sum_{\iii \in \I^n} \mu([\iii]) \log\frac{\Phi^{(j)}_s(\iii)}{\mu([\iii])}
  \end{equation*}
  for all $n \ge 1$. Since $s \in \mathscr{I}_\A$, the submultiplicativity of $\Phi_s^{(j)}$ given by Theorem \ref{thm:conc}\eqref{it:conc-2} together with Theorem \ref{thm:conc}\eqref{it:conc-1} and Lemma \ref{th:first-lemma}\eqref{it:l11} show that
  \begin{equation} \label{eq:phi-sum-finite}
    \sum_{\iii \in \I^n} \Phi_s^{(j)}(\iii) \le \biggl(\sum_{i \in \I} \Phi_s^{(j)}(i)\biggr)^n \le K^n\biggl(\sum_{i \in \I} \fii^s(A_i)\biggr)^n < \infty
  \end{equation}
  for all $n \ge 1$. Therefore, by Jensen's inequality on the concave function $x \mapsto -x\log x$, we have
  \begin{align*}
    \sum_{\iii \in \I^n} \mu([\iii])\biggl( &\log\frac{\Phi_s^{(j)}(\iii)}{\mu([\iii])} - \log\sum_{\jjj \in \I^n} \Phi_s^{(j)}(\jjj) \biggr) \\ 
    &= \sum_{\iii \in \I^n} \frac{\Phi_s^{(j)}(\iii)}{\sum_{\jjj \in \I^n} \Phi_s^{(j)}(\jjj)} \biggl( -\frac{\mu([\iii])\sum_{\jjj \in \I^n} \Phi_s^{(j)}(\jjj)}{\Phi_s^{(j)}(\iii)} \log\frac{\mu([\iii])\sum_{\jjj \in \I^n} \Phi_s^{(j)}(\jjj)}{\Phi_s^{(j)}(\iii)} \biggr) \\ 
    &\le -\biggl( \sum_{\iii \in \I^n} \mu([\iii]) \biggr) \log\biggl( \sum_{\iii \in \I^n} \mu([\iii]) \biggr) = 0
  \end{align*}
  and hence, $a_n \le \log\sum_{\iii \in \I^n} \Phi_s^{(j)}(\iii) < \infty$ for all $n \ge 1$. In particular, dividing by $n$ before letting $n \to \infty$ shows that $P^{(j)}(\mu,\A,s) \le P^{(j)}(\A,s)$ provided that the limit $P^{(j)}(\mu,\A,s)$ exists. To show that $P^{(j)}(\mu,\A,s)$ exists, observe that, by the submultiplicativity of $\Phi_s^{(j)}$, Jensen's inequality on the concave function $x \mapsto -x\log x$ and the $\sigma$-invariance of $\mu$ imply
  \begin{equation} \label{eq:measure-pressure-subadd}
  \begin{split}
    a_{m+n} - a_m &\le \sum_{\iii \in \I^m} \sum_{\jjj \in \I^n} \mu([\iii\jjj])\log\frac{\Phi_s^{(j)}(\iii)\Phi_s^{(j)}(\jjj)}{\mu([\iii\jjj])} - \sum_{\iii \in \I^m} \mu([\iii])\log\frac{\Phi_s^{(j)}(\iii)}{\mu([\iii])} \\ 
    &= \sum_{\jjj \in \I^n} \Phi_s^{(j)}(\jjj) \sum_{\iii \in \I^m} \mu([\iii])\biggl( -\frac{\mu([\iii\jjj])}{\mu([\iii])\Phi_s^{(j)}(\jjj)}\log\frac{\mu([\iii\jjj])}{\mu([\iii])\Phi_s^{(j)}(\jjj)} \biggr) \\ 
    &\le \sum_{\jjj \in \I^n} \Phi_s^{(j)}(\jjj) \biggl( -\sum_{\iii \in \I^m} \frac{\mu([\iii\jjj])}{\Phi_s^{(j)}(\jjj)} \log\sum_{\iii \in \I^m} \frac{\mu([\iii\jjj])}{\Phi_s^{(j)}(\jjj)} \biggr) = a_n
  \end{split}
  \end{equation}
  for all $m,n \ge 1$. It follows that if $a_n = -\infty$ for some $n \ge 1$, then $a_m = -\infty$ for all $m \ge n$. Hence, the limit $P^{(j)}(\mu,\A,s)$ exists in $[-\infty,P^{(j)}(\A,s)]$ and is equal to the infimum of the same sequence over $n$ by the subadditivity \eqref{eq:measure-pressure-subadd} as claimed in \eqref{it:m-pressure1}. Since the map $\mu \mapsto \sum_{\iii \in \I^n} \mu([\iii]) \log\frac{\Phi^{(j)}_s(\iii)}{\mu([\iii])}$ is continuous for all $n \in \N$, we see that $\mu \mapsto P^{(j)}(\mu,\A,s)$ is an infimum of continuous functions and the claim \eqref{it:m-pressure1} follows.

  To show the assertion \eqref{it:m-pressure2}, notice first that the assumption $s \in \mathscr{I}_\A$ necessarily implies $\sup_{i \in \I} \fii^s(A_i) < \infty$ since $\sum_{i \in \I} \fii^s(A_i) < \infty$ by \eqref{eq:phi-sum-finite}. Therefore the energy $\Lambda^{(j)}(\mu,\A,s)$ defined in \eqref{eq:energy} exists in $[-\infty,\log K\sup_{i \in \I} \fii^s(A_i)]$ by Lemma \ref{thm:energy-basic-prop}\eqref{it:energy1}. Let us first assume that $\Lambda^{(j)}(\mu,\A,s)>-\infty$ in which case
  \begin{equation*}
    -\infty < \Lambda^{(j)}(\mu,\A,s) \le \frac{1}{n}\int_{\I^\N} \log\Phi_s^{(j)}(\iii|_n) \dd\mu(\iii) = \frac{1}{n}\sum_{\iii \in \I^n}\mu([\iii])\log\Phi_s^{(j)}(\iii)
  \end{equation*}
  for all $n \ge 1$ again by Lemma \ref{thm:energy-basic-prop}\eqref{it:energy1}. By the assertion \eqref{it:m-pressure1}, there exists $n_0 \ge 1$ such that
  \begin{equation*}
    \frac{1}{n}\sum_{\iii \in \I^n} \mu([\iii])\log\frac{\Phi_s^{(j)}(\iii)}{\mu([\iii])} \le P^{(j)}(\mu,\A,s)+1 < \infty
  \end{equation*}
  for all $n \ge n_0$. Thus,
  \begin{equation*}
    h(\mu,\PP_\I) \le P^{(j)}(\mu,\A,s)+1-\Lambda^{(j)}(\mu,\A,s) < \infty
  \end{equation*}
  and Lemma \ref{thm:entropy-finite} shows that $P^{(j)}(\mu,\A,s) = h(\mu)+\Lambda^{(j)}(\mu,\A,s)$.

  Let us then assume that $h(\mu,\PP_\I) < \infty$ in which case, by Lemma \ref{thm:entropy-finite}, there exists $n_0 \ge 1$ such that
  \begin{equation*}
    \frac{1}{n} \sum_{\iii \in \I^n} \mu([\iii]) \log\mu([\iii]) \le h(\mu)+1 < \infty
  \end{equation*}
  for all $n \ge n_0$. If $\Lambda^{(j)}(\mu,\A,s)>-\infty$, then we are in the situation already covered above. Therefore, it suffices to show that $\Lambda^{(j)}(\mu,\A,s) = -\infty$ implies $P^{(j)}(\mu,\A,s) = -\infty$. To that end, suppose that $P^{(j)}(\mu,\A,s) > -\infty$ in which case
  \begin{equation*}
    -\infty < P^{(j)}(\mu,\A,s) \le \frac{1}{n}\sum_{\iii \in \I^n} \mu([\iii])\log\frac{\Phi_s^{(j)}(\iii)}{\mu([\iii])}
  \end{equation*}
  for all $n \ge 1$ by the assertion \eqref{it:m-pressure1}. It is now evident that $\Lambda^{(j)}(\mu,\A,s) = P^{(j)}(\mu,\A,s)-h(\mu,\PP_\I) > -\infty$. The proof of \eqref{it:m-pressure2} is now finished by recalling Lemma \ref{thm:entropy-finite}.
\end{proof}

Let us now turn to prove Theorem \ref{thm:equilibrium-states-classification}. As the assertion in Theorem \ref{thm:equilibrium-states-classification}\eqref{it:eq1} can be treated by existing methods, we will cover it in the next lemma.

\begin{lemma}
  Let $\A=(A_i)_{i \in \I} \in \GL_d(\R)^\I$ be such that $\sup_{i \in \I} \|A_i\| < \infty$, where $\I$ is either finite or countably infinite. If $s > \theta_\A$ and $s \ge d$, then there is a unique $\fii^s$-equilibrium state for $\A$ and it is a Bernoulli measure.
\end{lemma}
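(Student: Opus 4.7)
The approach exploits the fact that for $s \ge d$ the singular value function reduces to $\fii^s(A) = |\det A|^{s/d}$, which is \emph{multiplicative} (not merely submultiplicative) in $A$. Consequently $\fii^s(A_\iii) = \prod_{k=1}^n |\det A_{i_k}|^{s/d}$, so the sum defining the pressure factorises and I obtain
\[
  P(\A,s) = \log Z_s, \qquad \text{where } Z_s := \sum_{i \in \I} |\det A_i|^{s/d} \in (0,\infty),
\]
the finiteness of $Z_s$ coming from $s > \theta_\A$ together with Theorem \ref{th:aff}\eqref{it:finiteness}. The natural candidate equilibrium state is then the Bernoulli measure $\nu$ associated to the probability vector $p_i = |\det A_i|^{s/d}/Z_s$, and the plan is to produce this $\nu$ and prove it is the unique equilibrium state.

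The main technical point in the existence argument is verifying that $H(\nu,\PP_\I) = -\sum_i p_i \log p_i < \infty$, which reduces to $\sum_i p_i|\log|\det A_i|| < \infty$. The upper tail is immediate from $\log^+|\det A_i| \le d\log\sup_i\|A_i\| < \infty$. For the lower tail I use $s > \theta_\A$ (this is the only place the strict inequality is essential) to pick $s' \in (\theta_\A, s)$; the elementary inequality $|\det A|^{s'/d} \le \fii^{s'}(A)$, which follows from a weighted AM--GM / majorisation comparison between $(s'/d,\ldots,s'/d)$ and the exponent vector of $\fii^{s'}$ on the decreasing log-singular-values, together with Theorem \ref{th:aff}\eqref{it:finiteness} gives $\sum_i |\det A_i|^{s'/d} < \infty$. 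Combining with the bound $\log^-(x) \le C_\alpha x^{-\alpha}$ on $(0,1]$, taking $\alpha = (s-s')/d$, one obtains
\[
  |\det A_i|^{s/d}\log^-|\det A_i| \le C_\alpha |\det A_i|^{s'/d},
\]
which is summable. Lemma \ref{thm:entropy-finite} and the Bernoulli identity $H(\nu,\PP_\I^n) = nH(\nu,\PP_\I)$ then yield $h(\nu) = H(\nu,\PP_\I)$, and a direct computation gives $h(\nu) + \Lambda(\nu,\A,s) = \log Z_s = P(\A,s)$, confirming $\nu$ is an equilibrium state.

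For uniqueness, let $\mu$ be any $\fii^s$-equilibrium state. Since $\log Z_s$ is finite and $\Lambda(\mu,\A,s) \le s\log\sup_i\|A_i\| < \infty$, the identity $h(\mu) + \Lambda(\mu,\A,s) = \log Z_s$ forces both quantities to be finite. Multiplicativity and $\sigma$-invariance of $\mu$ yield $\Lambda(\mu,\A,s) = \tfrac{s}{d}\sum_i \mu([i])\log|\det A_i|$, and Jensen's inequality in its Gibbs form gives
\[
  H(\mu,\PP_\I) + \Lambda(\mu,\A,s) = \sum_i \mu([i])\log\frac{|\det A_i|^{s/d}}{\mu([i])} \le \log Z_s,
\]
with equality iff $\mu([i]) = p_i$ for every $i$. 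Combining with the general bound $h(\mu) \le h(\mu,\PP_\I) \le H(\mu,\PP_\I)$ coming from Lemmas \ref{thm:entropy-via-cylinders} and \ref{thm:entropy-basic-prop}, the sandwich
\[
  \log Z_s = h(\mu) + \Lambda(\mu,\A,s) \le H(\mu,\PP_\I) + \Lambda(\mu,\A,s) \le \log Z_s
\]
collapses into equalities. The right inequality being sharp pins down the one-dimensional marginals as $\mu([i]) = p_i$; the left one gives $h(\mu) = H(\mu,\PP_\I)$, which via the standard chain-rule argument $H(\mu,\PP_\I^n) \le nH(\mu,\PP_\I)$ (with equality iff the $n$th coordinate is independent of the first $n-1$ under $\mu$) forces the coordinates of $\mu$ to be independent, so $\mu$ is Bernoulli and equals $\nu$. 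Full support is immediate from $p_i > 0$.

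The principal obstacle is securing $H(\nu,\PP_\I) < \infty$ in the existence step: without it, both $h(\nu)$ and $-\Lambda(\nu,\A,s)$ would be infinite, the sum $h(\nu) + \Lambda(\nu,\A,s)$ would be indeterminate, and $\nu$ would fail the paper's formal definition of an equilibrium state even while remaining the Gibbs measure in spirit. It is exactly here that the strict inequality $s > \theta_\A$, rather than merely $s \in \mathscr{I}_\A$, plays its essential role.
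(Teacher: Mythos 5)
Your proof is correct and, unlike the paper, is fully self-contained. The paper's proof of this lemma is a one-line delegation: it observes that for $s\ge d$ the potential $\fii^s(A)=|\det A|^{s/d}$ is multiplicative (hence effectively conformal), and then cites \cite[\S 3]{MauldinUrbanski1996} and \cite[\S 2]{Urbanski1998}, where the existence, uniqueness and Bernoulli property of the equilibrium state for a multiplicative Gibbs potential over a countable shift are established. You have, in effect, re-derived the relevant content of those references in the paper's own language: identifying $P(\A,s)=\log Z_s$, constructing the Bernoulli candidate $\nu$ with weights $p_i=|\det A_i|^{s/d}/Z_s$, proving $H(\nu,\PP_\I)<\infty$ via the clean device of shifting the exponent from $s$ to some $s'\in(\theta_\A,s)$ and using $|\det A|^{s'/d}\le\fii^{s'}(A)$ together with Lemma \ref{th:first-lemma}\eqref{it:l11}, and then pushing uniqueness through the Gibbs form of Jensen's inequality and the equality case of the entropy chain rule. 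Each of those steps is sound, and your remark that the strict inequality $s>\theta_\A$ enters precisely to control the lower tail of $-\sum_i p_i\log p_i$ (and hence to make $h(\nu)+\Lambda(\nu,\A,s)$ a determinate sum, not $\infty-\infty$) correctly identifies the role of the hypothesis. The trade-off is the usual one: your route is longer but exposes the mechanism and uses only the entropy machinery already developed in Section~\ref{sec:preli-entropy-energy}, whereas the paper opts for economy by reusing the classical conformal infinite-IFS thermodynamic formalism.
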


\begin{proof}
  Since the singular value function in this regime is a power of a determinant and the determinant is multiplicative, the result follows from \cite[\S 3]{MauldinUrbanski1996} and \cite[\S 2]{Urbanski1998}.
\end{proof}

Conditioned on Theorem \ref{thm:conc}, the following proposition proves all the remaining claims in Theorem \ref{thm:equilibrium-states-classification} which do not assume irreducibility. The proof in the countably infinite case is more complicated than in the finite case as we cannot rely on the upper semicontinuity of the entropy. Showing even the existence of an $\fii^s$-equilibrium state requires more delicate approach. The idea in the proof is to use the fact that for each $j \in \{1,\ldots,p\}$ the quasi-multiplicativity of $\Phi^{(j)}_s$ implies the existence of a unique $\mu^{(j)} \in \MM_\sigma(\I^\N)$ for which $P^{(j)}(\mu^{(j)},\A,s) = P^{(j)}(\A,s)$. Then, by proving $h(\mu^{(j)},\PP_\I)<\infty$ and considering only pressure maximising indices $j \in \{1,\ldots,p\}$, Lemmas \ref{thm:measure-theoretical-pressure-prop}\eqref{it:m-pressure2} and \ref{thm:entropy-finite} show that $\mu^{(j)}$ is an $\fii^s$-equilibrium state.

\begin{proposition}
  Let $\A=(A_i)_{i \in \I} \in \GL_d(\R)^\I$ be such that $\sup_{i \in \I} \|A_i\| < \infty$, where $\I$ is either finite or countably infinite, and $p\geq 1$ as in Theorem \ref{thm:conc}. If $s \in \mathscr{I}_\A$, then
  \begin{equation*}
    P(\A,s) = \sup\{h(\mu) + \Lambda(\mu,\A,s) \colon \mu\in \mathcal{M}_\sigma(\I^\N) \text{ is such that } h(\mu)<\infty\}.
  \end{equation*}
  Furthermore, if $s > \theta_\A$, then the number of distinct ergodic $\fii^s$-equilibrium states $\mu$ for $\A$ is at least one and is not more than $p$. Finally, all the equilibrium states are fully supported on $\I^\N$.
\end{proposition}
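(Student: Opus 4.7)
The plan is to analyze the problem through each of the $p$ quasi-multiplicative potentials $\Phi_s^{(1)},\ldots,\Phi_s^{(p)}$ provided by Theorem \ref{thm:conc} separately and then assemble the conclusions via \eqref{eq:pressure-maximum} and \eqref{eq:measure-pressure-est}. The upper bound in the variational principle is immediate from Proposition \ref{thm:equilibriuminequality}, so the crux is to construct, for each $j$ with $P^{(j)}(\A,s) > -\infty$, an ergodic $\sigma$-invariant measure $\mu^{(j)}$ attaining $P^{(j)}(\mu^{(j)},\A,s) = P^{(j)}(\A,s)$, and afterwards to translate this back to $\fii^s$.

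First I would construct $\mu^{(j)}$ by finite-subsystem approximation. Exhaust $\I$ by an increasing sequence $(\J_N)_{N \ge 1}$ of nonempty finite sets containing the finite set $F$ from Theorem \ref{thm:conc}\eqref{it:conc-2}, so that $\Phi_s^{(j)}$ restricted to $\J_N^*$ remains quasi-multiplicative with the same $F$ and the same constant $K$. The finite-alphabet theory of quasi-multiplicative potentials from \cite{FengKaenmaki2011, KaenmakiMorris2018, BochiMorris2018, Kaenmaki2004} yields, for each $N$, a unique ergodic equilibrium state $\mu^{(j)}_N \in \MM_\sigma(\J_N^\N)$ satisfying a Gibbs-type estimate of the form
\[
  C^{-1}\Phi_s^{(j)}(\iii)e^{-nP^{(j)}((A_i)_{i \in \J_N},s)} \le \mu^{(j)}_N([\iii]) \le C\,\Phi_s^{(j)}(\iii)e^{-nP^{(j)}((A_i)_{i \in \J_N},s)}
\]
for $\iii \in \J_N^n$, where inspection of the standard proof shows that $C$ depends only on $K$ and $F$, not on $N$. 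The uniform convergence of $\sum_{i \in \I}\Phi_s^{(j)}(i)$ supplied by Lemma \ref{le:zeroth-lemma} controls the mass that $\mu^{(j)}_N$ can place outside of $[I]$ for any fixed finite $I \subseteq \I$ at the first coordinate, and by the shift-invariance this yields tightness of the family $(\mu^{(j)}_N)$; a weak$^*$ limit point $\mu^{(j)}$ inherits the Gibbs bounds on cylinders and hence satisfies $h(\mu^{(j)},\PP_\I) < \infty$ and has full support on $\I^\N$. Upper semicontinuity of $\mu \mapsto P^{(j)}(\mu,\A,s)$ from Lemma \ref{thm:measure-theoretical-pressure-prop}\eqref{it:m-pressure1}, together with $P^{(j)}(\mu^{(j)}_N,\A,s) = P^{(j)}((A_i)_{i \in \J_N},s) \to P^{(j)}(\A,s)$ from Proposition \ref{le:dense-inner-approx} applied to the quasi-multiplicative potentials, then forces $P^{(j)}(\mu^{(j)},\A,s) = P^{(j)}(\A,s)$; quasi-multiplicativity combined with the Gibbs bounds gives ergodicity by a standard argument.

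Next I would translate from $\Phi_s^{(j)}$ to $\fii^s$. Since $h(\mu^{(j)},\PP_\I) < \infty$, Lemma \ref{thm:entropy-finite} yields $h(\mu^{(j)}) = h(\mu^{(j)},\PP_\I) < \infty$, and Lemma \ref{thm:measure-theoretical-pressure-prop}\eqref{it:m-pressure2} then gives $h(\mu^{(j)}) + \Lambda^{(j)}(\mu^{(j)},\A,s) = P^{(j)}(\A,s)$. Choosing $j_0$ attaining the maximum in \eqref{eq:pressure-maximum}, ergodicity of $\mu^{(j_0)}$ and \eqref{eq:measure-pressure-est} give $\Lambda(\mu^{(j_0)},\A,s) \ge \Lambda^{(j_0)}(\mu^{(j_0)},\A,s)$, whence $h(\mu^{(j_0)}) + \Lambda(\mu^{(j_0)},\A,s) \ge P(\A,s)$; combined with Proposition \ref{thm:equilibriuminequality} this establishes both the variational principle and the existence of at least one ergodic $\fii^s$-equilibrium state, and full support is inherited from the Gibbs bound. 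Finally, to bound the number of ergodic equilibrium states, let $\mu$ be any such state. By ergodicity and \eqref{eq:measure-pressure-est} there exists $j$ with $\Lambda(\mu,\A,s) = \Lambda^{(j)}(\mu,\A,s)$, and then
\[
  P(\A,s) = h(\mu) + \Lambda^{(j)}(\mu,\A,s) \le P^{(j)}(\mu,\A,s) \le P^{(j)}(\A,s) \le P(\A,s)
\]
forces equality throughout, so $\mu$ is an equilibrium state for $\Phi_s^{(j)}$; uniqueness of the latter (again via the Gibbs bound, which pins down the cylinder masses up to the constant $C$ and hence, by a standard absolute continuity argument for ergodic measures, uniquely) identifies $\mu$ with $\mu^{(j)}$, giving at most $p$ ergodic equilibrium states.

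The main obstacle will be the loss of weak$^*$ compactness of $\MM_\sigma(\I^\N)$ in the countable case; overcoming it genuinely requires the summability $\sum_{i \in \I}\Phi_s^{(j)}(i) < \infty$ (available since $s \in \mathscr{I}_\A$) together with Gibbs bounds that are uniform over the approximating finite subsystems. A related delicate point is that, because the entropy $\mu \mapsto h(\mu,\PP_\I)$ is not upper semicontinuous on $\MM_\sigma(\I^\N)$ (Example \ref{ex:entropy-not-usc}), we cannot prove the variational principle by a direct compactness-plus-upper-semicontinuity argument; instead we work throughout with the measure-theoretical pressure $P^{(j)}(\mu,\A,s)$, whose upper semicontinuity is available from Lemma \ref{thm:measure-theoretical-pressure-prop}\eqref{it:m-pressure1}, and only at the very end convert back to $h(\mu) + \Lambda(\mu,\A,s)$ using Lemma \ref{thm:entropy-finite}.
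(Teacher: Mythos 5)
Your architecture mirrors the paper's: reduce to the quasi-multiplicative potentials $\Phi_s^{(j)}$ of Theorem \ref{thm:conc}, build a candidate equilibrium state $\mu^{(j)}$ for each $j$ by finite-subsystem approximation with Gibbs constants uniform over the finite subsystems, and translate back to $\fii^s$ through \eqref{eq:pressure-maximum} and \eqref{eq:measure-pressure-est}. The uniqueness-counting and full-support arguments are in the same spirit as the paper's.

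There is, however, a genuine gap at the pivot of your argument: the assertion that the Gibbs bound on $\mu^{(j)}$ ``hence'' gives $h(\mu^{(j)},\PP_\I) < \infty$ does not follow. The Gibbs estimate gives $\mu^{(j)}([i]) \asymp e^{-P^{(j)}(\A,s)}\Phi_s^{(j)}(i)$, but the summability $\sum_i \Phi_s^{(j)}(i) < \infty$ (that is, $s \in \mathscr{I}_\A$) does not control $-\sum_i \Phi_s^{(j)}(i)\log\Phi_s^{(j)}(i)$. Example \ref{ex:no-theta-eq-state} exhibits precisely this failure at $s = \theta_\A$: the Gibbs measure there is Bernoulli with weights $a_i$ satisfying $\sum_i a_i = 1$ yet $-\sum_i a_i\log a_i = \infty$, and no equilibrium state exists. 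Establishing $H(\mu^{(j)},\PP_\I) < \infty$ genuinely requires the strict inequality $s > \theta_\A$ and a nontrivial argument; the paper chooses $\delta > 0$ with $s-\delta>\theta_\A$ and $\lceil s\rceil = \lceil s-\delta\rceil$, observes $-\log\fii^s(A_i) \le \sigma_{\lceil s\rceil}(A_i)^{-\delta}$ outside a finite set, and then dominates $-\fii^s(A_i)\log\fii^s(A_i)$ by $\fii^{s-\delta}(A_i)$, which is summable since $s-\delta\in\mathscr{I}_\A$. Without a step of this kind your existence claim at $s>\theta_\A$ is incomplete.

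The same gap undermines your route to the variational principle. You derive it by exhibiting $\mu^{(j_0)}$ as an actual maximizer, but the identity is asserted for every $s \in \mathscr{I}_\A$, including a possible endpoint $s = \theta_\A$ where, by Example \ref{ex:no-theta-eq-state}, no maximizer exists. The paper instead proves the variational principle directly from the finite-alphabet approximants $\mu^{(j)}_\J$ (each supported on $\J^\N$ and hence of trivially finite entropy) using $h(\mu^{(j)}_\J) + \Lambda^{(j)}(\mu^{(j)}_\J,\A,s) = P^{(j)}((A_i)_{i\in\J},s)$ together with Proposition \ref{le:dense-inner-approx}; this works for every $s \in \mathscr{I}_\A$ independently of whether a maximizer exists. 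You should decouple the variational principle from the existence of an equilibrium state and argue via the finite subsystems directly, reserving the limit-measure construction for the existence and counting claims at $s > \theta_\A$.
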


\begin{proof}
  Recalling Theorem \ref{th:detriangularisation}, we may assume that $\A=(A_i)_{i \in \I} \in \GL_d(\R)^\I$ is completely reducible. Fix $s \in \mathscr{I}_\A$ and $j \in \{1,\ldots,p\}$, let $F \subset \I^*$ be as in Theorem \ref{thm:conc}, and notice that, by Theorem \ref{thm:conc}\eqref{it:conc-2}, the function $\Phi^{(j)}_s \colon \I^* \to (0,\infty)$ is quasimultiplicative. Note that, by \eqref{eq:j-pressure-sup} and \eqref{eq:pressure-maximum}, we have $-\infty < P^{(j)}((A_i)_{i \in \J},s) \le P(\A,s) < \infty$. It follows from \cite[Proposition 3.4]{KaenmakiReeve2014} that there exists a constant $C \ge 1$ such that for every nonempty finite subset $\J$ of $\I$ with $F \subset \J^*$ there is a measure $\mu_\J^{(j)} \in \MM_\sigma(\I^\N)$ supported on $\J^\N$ for which
  \begin{equation} \label{eq:quasimulti-equilibrium-state-finite}
    C^{-1} \le \frac{\mu_\J^{(j)}([\iii])}{\exp(-nP^{(j)}((A_i)_{i \in \J},s)) \Phi^{(j)}_s(\iii)} \le C
  \end{equation}
  for all $\iii \in \J^n$. Note that $h(\mu^{(j)}_\J)=h(\mu^{(j)}_\J,\PP_\J) < \infty$ by \eqref{eq:entropy-finite-def} and Lemma \ref{thm:entropy-basic-prop}\eqref{it:entropy1}. Therefore, \eqref{eq:quasimulti-equilibrium-state-finite} gives
  \begin{equation} \label{eq:finite-eq-state}
    \Lambda^{(j)}(\mu^{(j)}_\J,\A,s) = P^{(j)}((A_i)_{i \in \J},s) - h(\mu^{(j)}_\J) > -\infty.
  \end{equation}
  Therefore, by \eqref{eq:pressure-maximum} and Proposition \ref{le:dense-inner-approx},
  \begin{align*}
    \sup_{\atop{\J \subseteq \I \text{ is finite}}{k \in \{1,\ldots,p\}}} \{h(\mu^{(k)}_\J) + \Lambda^{(k)}(\mu^{(k)}_\J,\A,s)\} &= \sup_{\J \subseteq \I \text{ is finite}} \max_{k \in \{1,\ldots,p\}} P^{(k)}((A_i)_{i \in \J},s) \\ 
    &= \sup_{\J \subseteq \I \text{ is finite}} P((A_i)_{i \in \J},s) = P(\A,s).
  \end{align*}
  Since, by Theorem \ref{thm:conc}\eqref{it:conc-1} and Proposition \ref{thm:equilibriuminequality},
  \begin{equation*}
    h(\mu^{(j)}_\J) + \Lambda^{(j)}(\mu^{(j)}_\J,\A,s) \le h(\mu^{(j)}_\J) + \Lambda(\mu^{(j)}_\J,\A,s) \le P(\A,s),
  \end{equation*}
  we have shown that
  \begin{equation*}
    P(\A,s) = \sup\{h(\mu) + \Lambda(\mu,\A,s) \colon \mu\in \mathcal{M}_\sigma(\I^\N) \text{ and }h(\mu)<\infty\}
  \end{equation*}
  as claimed.

  Since $s \in \mathscr{I}_\A$, \cite[Theorem 3.5]{KaenmakiReeve2014} guarantees the existence of a fully supported measure $\mu^{(j)} \in \MM_\sigma(\I^\N)$ which is an accumulation point of $\{\mu_\J^{(j)} \colon \J$ is a nonempty finite subset of $\I$ such that $F \subset \J^*\}$ in the weak$^*$ topology and satisfies
  \begin{equation} \label{eq:quasimulti-equilibrium-state}
    C^{-1} \le \frac{\mu^{(j)}([\iii])}{\exp(-nP^{(j)}(\A,s)) \Phi^{(j)}_s(\iii)} \le C
  \end{equation}
  for all $\iii \in \I^n$. By \cite[Theorem 3.6]{KaenmakiReeve2014}, we see that $\mu^{(j)}$ is ergodic. Furthermore, \cite[Lemmas 3.7 and 3.8]{KaenmakiReeve2014} show that $\mu^{(j)}$ is the unique ergodic measure satisfying
  \begin{equation*}
    P^{(j)}(\A,s) = P^{(j)}(\mu^{(j)},\A,s).
  \end{equation*}
  Since, by Lemma \ref{thm:measure-theoretical-pressure-prop}\eqref{it:m-pressure1} and \eqref{eq:pressure-maximum},
  \begin{equation*}
    P^{(j)}(\mu^{(j)},\A,s) \le \max_{k \in \{1,\ldots,p\}} P^{(k)}(\mu^{(j)},\A,s) \le \max_{k \in \{1,\ldots,p\}} P^{(k)}(\A,s) = P(\A,s),
  \end{equation*}
  there are at least one and not more than $p$ distinct indices $j \in \{1,\ldots,p\}$ for which
  \begin{equation} \label{eq:k-reeve-equilibrium}
    P(\A,s) = \max_{k \in \{1,\ldots,p\}} P^{(k)}(\mu^{(j)},\A,s).
  \end{equation}
  To finish the proof it suffices to show that if $s > \theta_\A$, then
  \begin{equation} \label{eq:entropy-finite-goal}
    h(\mu^{(j)},\PP_\I) < \infty
  \end{equation}
  for all $j \in \{1,\ldots,p\}$. Indeed, if this was the case, then Lemma \ref{thm:measure-theoretical-pressure-prop}\eqref{it:m-pressure2} and \eqref{eq:measure-pressure-est} would show that
  \begin{align*}
    \max_{k \in \{1,\ldots,p\}} P^{(k)}(\mu^{(j)},\A,s) &= h(\mu^{(j)}) + \max_{k \in \{1,\ldots,p\}}\Lambda^{(k)}(\mu^{(j)},\A,s) \\ 
    &= h(\mu^{(j)}) + \Lambda(\mu^{(j)},\A,s)
  \end{align*}
  for $s>\theta_\A$ and \eqref{eq:k-reeve-equilibrium} finishes the proof. To prove \eqref{eq:entropy-finite-goal}, fix $j \in \{1,\ldots,p\}$ and notice that, by Lemma \ref{thm:entropy-basic-prop}\eqref{it:entropy1}, it suffices to show
  \begin{equation*}
    -\sum_{i \in \I} \mu^{(j)}([i])\log\mu^{(j)}([i]) < \infty
  \end{equation*}
  when $\I$ is countably infinite. Since $s > \theta_\A$, we have $P^{(j)}(\A,s)<\infty$ by \eqref{eq:pressure-maximum} and there is $\delta>0$ such that $s-\delta>\theta_\A$ and $\lceil s \rceil = \lceil s-\delta \rceil$. By Lemma \ref{th:first-lemma}\eqref{it:l11}, we have $\sum_{i \in \I} \fii^{s-\delta}(A_i) < \infty$. Since $\sigma_{\lceil s \rceil}(A_i)^s \le \fii^s(A_i)$ and $\sum_{i \in \I} \fii^s(A_i) < \infty$ by Lemma \ref{le:zeroth-lemma}, there exists a finite set $\K \subset \I$ such that
  \begin{equation*}
    -\log\fii^s(A_i) \le -s\log\sigma_{\lceil s \rceil}(A_i) \le \sigma_{\lceil s \rceil}(A_i)^{-\delta}
  \end{equation*}
  and
  \begin{equation*}
    CKe^{-P^{(j)}(\A,s)}\fii^s(A_i) < \frac{1}{e}
  \end{equation*}
  for all $i \in \I \setminus \K$, where $C \ge 1$ is as in \eqref{eq:quasimulti-equilibrium-state} and $K>0$ is as in Theorem \ref{thm:conc}. Therefore, as $\fii^s(A_i) \le \fii^{s-\delta}(A_i)\sigma_{\lceil s \rceil}(A_i)^\delta$, we see that
  \begin{equation*}
    -\sum_{i \in \I \setminus \K} \fii^s(A_i) \log\fii^s(A_i) \le \sum_{i \in \I \setminus \K} \fii^s(A_i) \sigma_{\lceil s \rceil}(A_i)^{-\delta} \le \sum_{i \in \I \setminus \K} \fii^{s-\delta}(A_i) < \infty.
  \end{equation*}
  Since the function $x \mapsto -x\log x$ is increasing on $[0,\frac{1}{e}]$, we get by recalling \eqref{eq:quasimulti-equilibrium-state} and Theorem \ref{thm:conc}\eqref{it:conc-1} that
  \begin{align*}
    -\sum_{i \in \I \setminus \K} \mu^{(j)}([i])\log\mu^{(j)}([i]) &\le -\sum_{i \in \I \setminus \K} CKe^{-P^{(j)}(\A,s)} \fii^s(A_i) \log CKe^{-P^{(j)}(\A,s)} \fii^s(A_i) \\ 
    &\le CKe^{-P^{(j)}(\A,s)}\biggl( -\sum_{i \in \I \setminus \K} \fii^s(A_i) \log \fii^s(A_i) \\ 
    &\qquad\qquad\quad+ (P^{(j)}(\A,s)-\log CK) \sum_{i \in \I \setminus \K} \fii^s(A_i) \biggr) < \infty
  \end{align*}
  and hence, $-\sum_{i \in \I} \mu^{(j)}([i])\log\mu^{(j)}([i]) < \infty$ as wished. The proof is finished.
\end{proof}

The next proposition proves all the remaining claims in Theorem \ref{thm:equilibrium-states-classification} which assume irreducibility and hence, also finishes the proof of Theorem \ref{thm:equilibrium-states-classification} under the assumption of Theorem \ref{thm:conc}.

\begin{proposition} \label{thm:eq-mixing}
  Let $\A=(A_i)_{i \in \I} \in \GL_d(\R)^\I$ be such that $\sup_{i \in \I} \|A_i\| < \infty$, where $\I$ is either finite or countably infinite. If $s > \theta_\A$, then the following two assertions hold:
  \begin{enumerate}[(i)]
    \item\label{it:eka} If $s \in (0,d) \cap \Z$ and $\A^{\wedge s}$ is irreducible then there is a unique $\fii^s$-equilibrium state for $\A$, and if additionally $\A^{\wedge s}$ is strongly irreducible then this unique equilibrium state is mixing.
    \item\label{it:toka} If $s \in (0,d) \setminus \Z$ and one of $\A^{\wedge \lfloor s\rfloor}$ and $\A^{\wedge \lceil s\rceil}$ is irreducible and the other is strongly irreducible then there is a unique $\fii^s$-equilibrium state for $\A$, and if both are strongly irreducible then this unique equilibrium state is mixing.
  \end{enumerate}
\end{proposition}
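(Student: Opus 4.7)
The plan is to derive both assertions simultaneously by identifying the irreducibility hypotheses in \eqref{it:eka} and \eqref{it:toka} as precisely those under which Theorem \ref{thm:conc} permits the choice $p=1$. With $p=1$, the preceding proposition guarantees at least one and at most one ergodic $\fii^s$-equilibrium state $\mu$, so an ergodic equilibrium state exists and is unique.

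To promote this to uniqueness among all invariant equilibrium states, I would invoke the ergodic decomposition $\mu'=\int\mu_\omega\dd\rho(\omega)$ of any putative equilibrium state $\mu'$. Entropy is affine in $\mu'$, and the energy satisfies $\Lambda(\mu',\A,s)\leq\int\Lambda(\mu_\omega,\A,s)\dd\rho(\omega)$ by reverse Fatou applied to the sequence $\omega\mapsto n^{-1}\int\log\fii^s(A_{\iii|_n})\dd\mu_\omega(\iii)$, which is uniformly bounded above by $\log\sup_{i\in\I}\|A_i\|^s$. Combined with the variational inequality from Proposition \ref{thm:equilibriuminequality}, this forces $h(\mu_\omega)+\Lambda(\mu_\omega,\A,s)=P(\A,s)$ for $\rho$-almost every $\omega$, so each ergodic component coincides with $\mu$ and $\mu'=\mu$. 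This settles the uniqueness assertions in both \eqref{it:eka} and \eqref{it:toka}.

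For mixing under strong irreducibility, I would exploit the Gibbs-type inequality
\[
  C^{-1}\leq\frac{\mu([\iii])}{e^{-nP(\A,s)}\Phi^{(1)}_s(\iii)}\leq C,\qquad \iii\in\I^n,
\]
derived in the proof of the preceding proposition, together with the quasi-multiplicativity from Theorem \ref{thm:conc}\eqref{it:conc-2}. Summing the quasi-multiplicative lower bound over connecting words in the finite set $F$, and combining with the matching submultiplicative upper bound, yields bilateral estimates of the shape
\[
  C^{-1}\mu([\iii])\mu([\jjj])\leq\mu([\iii]\cap\sigma^{-(|\iii|+n)}[\jjj])\leq C\mu([\iii])\mu([\jjj])
\]
for all $\iii,\jjj\in\I^*$ and all sufficiently large $n$, i.e.\ strong mixing up to multiplicative constants. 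The upgrade to the genuine limit $\mu([\iii]\cap\sigma^{-n}[\jjj])\to\mu([\iii])\mu([\jjj])$ is where strong irreducibility of both $\A^{\wedge\lfloor s\rfloor}$ and $\A^{\wedge\lceil s\rceil}$ enters decisively: it rules out the cyclic permutation obstructions that are still compatible with mere irreducibility. I would carry out this upgrade by restricting $\mu$ to the finite subsystems $\J^\N$ from \eqref{eq:quasimulti-equilibrium-state-finite}, where the finite-alphabet results of \cite{BochiMorris2018} give mixing directly, and then transferring the conclusion to $\mu$ via the uniform Gibbs control over $\J$.

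The main obstacle will be this final passage to the limit. A weak-$*$ limit of mixing measures is not in general mixing, so the bilateral Gibbs bound must be paired with the uniqueness of the equilibrium state in order to identify every weak-$*$ accumulation point of the averages $\sigma^n_*(\mathbf{1}_{[\iii]}\mu/\mu([\iii]))$ with $\mu$ itself, and strong irreducibility of both exterior powers is precisely what supplies the aperiodicity required to make this identification rigorous.
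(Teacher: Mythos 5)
Your handling of uniqueness is correct and actually more explicit than the paper's. The paper leans on Theorem~\ref{thm:conc} giving $p=1$ and refers to the finite-alphabet result \cite[Theorem 3]{KaenmakiMorris2018} for the passage from ergodic to global uniqueness, while you spell out the ergodic decomposition argument: reverse Fatou applied to the subadditive energy sequence shows $\Lambda(\mu',\A,s)\le\int\Lambda(\mu_\omega,\A,s)\dd\rho(\omega)$, so affinity of entropy and Proposition~\ref{thm:equilibriuminequality} force almost every ergodic component to be an equilibrium state. That is a sound and self-contained route to the uniqueness claims in both parts.

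The mixing argument, however, has a genuine gap and departs from what actually works. First, your claimed two-sided Gibbs bound does not follow for all sufficiently large $n$: the quasi-multiplicativity of Theorem~\ref{thm:conc}\eqref{it:conc-2} only guarantees a comparison $\Phi_s^{(j)}(\iii)\Phi_s^{(j)}(\jjj)\le K\max_{\kkk\in F}\Phi_s^{(j)}(\iii\kkk\jjj)$ for insertion words $\kkk$ drawn from the \emph{finite} set $F$, whose lengths are bounded; extending this to a lower bound on $\sum_{\kkk\in\I^n}\mu([\iii\kkk\jjj])$ for \emph{arbitrary} large $n$ requires additional iteration or a different estimate, which you do not provide. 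Second, and more fundamentally, your proposed upgrade from ``mixing up to constants'' to genuine mixing--via mixing of the finite-subsystem Gibbs measures $\mu_\J^{(j)}$ and a weak-$*$ limit--is a non-argument: as you acknowledge yourself, mixing is not preserved under weak-$*$ limits, and the measures $\mu_\J^{(j)}$ are not restrictions of $\mu$ but distinct Gibbs states for the truncated systems. The paper's actual route is different in kind and cannot be skipped: one must show the block families $\A_n=(A_\iii)_{\iii\in\I^n}$ inherit strong irreducibility of both exterior powers (a pigeonhole argument), deduce from the resulting uniqueness of the block-system equilibrium state that $\mu$ is ergodic for every power $\sigma^n$ (total ergodicity), and then invoke the theorem that total ergodicity together with the \emph{one-sided} Gibbs inequality $\mu([\iii]\cap\sigma^{-|\iii|-n}[\jjj])\le C^4\mu([\iii])\mu([\jjj])$ implies mixing (Ornstein's argument, as packaged in \cite[Theorem 5(ii)]{Morris2018}). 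Only the one-sided bound is needed, which follows directly from \eqref{eq:quasimulti-equilibrium-state} and submultiplicativity of $\varphi^s$; the missing ingredient in your sketch is total ergodicity, which is precisely what strong irreducibility of both exterior powers buys you and which cannot be replaced by the uniqueness argument alone.
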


\begin{proof}
  We only prove the assertion \eqref{it:toka} as the proof of \eqref{it:eka} is similar and slightly easier. Write $k = \lfloor s \rfloor$. If one of $\A^{\wedge k}$ and $\A^{\wedge(k+1)}$ is irreducible and the other is strongly irreducible then the first claim follows immediately from \cite[Theorem 3]{KaenmakiMorris2018}. Let us thus assume that both $\A^{\wedge k}$ and $\A^{\wedge(k+1)}$ are strongly irreducible. Let $\mu \in \MM_\sigma(\I^\N)$ be the unique $\fii^s$-equilibrium state for $\A$. Recall that, by \eqref{eq:quasimulti-equilibrium-state}, \eqref{eq:pressure-maximum}, and Theorem \ref{thm:conc}\eqref{it:conc-1}, $\mu$ is ergodic and it satisfies
  \begin{equation} \label{eq:mu-mixing-gibbs}
    C^{-1} \le \frac{\mu([\iii])}{\exp(-nP(\A,s))\fii^s(A_\iii)} \le C
  \end{equation}
  for all $\iii \in \I^n$.

  Define $\A_n=(A_\iii)_{\iii \in \I^n}$ for each $n \geq 1$. We claim that $\A^{\wedge k}_n$ and $\A^{\wedge(k+1)}_n$ are both strongly irreducible for each $n \geq 1$. To see this, suppose for a contradiction that there is a finite collection of nonzero subspaces $U_1,\ldots,U_m\subset \wedge^k\R^d$ which is preserved by $\A_n^{\wedge k}$, say. Since $\A^{\wedge k}$ is strongly irreducible the set $\{A_\iii^{\wedge k} U_1 \colon \kkk \in \I^*\}$ must be infinite, and this set is clearly contained in the set $\{A_\iii^{\wedge k} U_j \colon 1 \leq |\iii| \leq n\text{ and }1\leq j \leq m\}$ by writing any arbitrary word $\kkk$ in the form $\iii\jjj$ where $n$ divides $|\jjj|$ and where $1 \leq |\iii|\leq n$. The latter set is therefore also infinite,  so by the pigeonhole principle there exist integers $\ell \in \{1,\ldots,n\}$ and $j_0 \in \{1,\ldots,m\}$ such that $\{A_\iii^{\wedge k}U_{j_0} \colon |\iii|=\ell\}$ is infinite; but $\{(A_1^{\wedge k})^{n-\ell}A_\iii^{\wedge k}U_{j_0} \colon |\iii|=\ell\}\subseteq \{A_\iii^{\wedge k} U_{j_0} \colon |\iii|=n\}\subseteq \{U_1,\ldots,U_m\}$ is finite, so $(A_1^{\wedge k})^{n-\ell}$ acts non-injectively on subspaces of $\wedge^k\R^d$, which is impossible since $A_1$ is invertible. The claim follows.

  Let us show that $\mu$ is totally ergodic for which we use the argument of \cite[Theorem 5(i)]{Morris2018}. Let $\iota_n \colon \I^\N \to (\I^n)^\N$ denote the natural identification of elements of $\I^\N$ with elements of $(\I^n)^\N$ given by $\iota_n((i_k)_{k=1}^\infty) = ( i_{(k-1)n+1}\cdots i_{kn})_{k=1}^\infty$, and observe that $\sigma \circ \iota_n = \iota_n \circ \sigma^n$. In particular, $(\iota_n)_*\mu$ defines an element of $\mathcal{M}_\sigma((\I^n)^\N)$ for every $n \geq 1$. By a straightforward calculation, we see that $(\iota_n)_*\mu$ is an $\fii^s$-equilibrium state for $\A_n$. By the strong irreduciblity of  $\A^{\wedge k}_n$ and $\A^{\wedge(k+1)}_n$ there exists a unique $\fii^s$-equilibrium state for $\A_n$ and that measure is ergodic with respect to $\sigma \colon (\I^n)^\N \to (\I^n)^\N$, so $(\iota_n)_*\mu$ is ergodic with respect to $\sigma \colon (\I^n)^\N \to (\I^n)^\N$. This implies via the relation $\sigma \circ \iota_n = \iota_n \circ \sigma^n$ that $\mu$ is ergodic with respect to $\sigma^n \colon \I^\N \to \I^\N$. Since $n$ is arbitrary this demonstrates that $\mu$ is totally ergodic.

  By \eqref{eq:mu-mixing-gibbs} and the sub-multiplicativity of the singular value function \eqref{eq:svf-sub-multi}, we have
  \begin{equation*}
    \mu([\iii\kkk\jjj]) \le C^4\mu([\iii])\mu([\kkk])\mu([\jjj])
  \end{equation*}
  for all $\iii,\kkk,\jjj \in \I^*$ and, consequently,
  \begin{equation*}
    \mu([\iii] \cap \sigma^{-|\iii|-n}([\jjj])) = \sum_{\kkk \in \I^n} \mu([\iii\kkk\jjj]) \le C^4\mu([\iii])\mu([\jjj])
  \end{equation*}
  for all $n \ge 1$. By arguments ultimately originating with \cite[Theorem 2.1]{Ornstein1972} and which are expressed in language more convenient for our purposes in the proof of \cite[Theorem 5(ii)]{Morris2018} this inequality together with the total ergodicity shows that $\mu$ is mixing.
\end{proof}

We remark that \cite[Proposition 6]{Morris2018} demonstrates that an $\fii^s$-equilibrium state for an irreducible $\A$ is not necessarily mixing when $0<s<1$. For non-integer parameters $s \in (1,d-1)$ it should also be possible to construct examples such that one of $\A^{\wedge \lfloor s\rfloor}$ and $\A^{\wedge \lceil s\rceil}$ is irreducible and the other is strongly irreducible, while the equilibrium state is not mixing. No example with these features has yet been noted explicitly   in the literature, but we believe that such an example could likely be constructed by a suitable modification of the examples in \cite{MorrisSert2019}. In the finite case, the results of \cite{Morris2021} extend Proposition \ref{thm:eq-mixing} to demonstrate that the unique equilibrium state is $\psi$-mixing and hence is measurably isomorphic to a Bernoulli shift in its natural extension. It is likely that this result can also be obtained in the infinite case, but we do not attempt this here. 

Let us next show that the $\fii^{\theta_\A}$-equilibrium state may not exist when $P(\A,\theta_\A)<\infty$. Our example mimics previous constructions such as \cite[Example 5.3]{MauldinUrbanski1996} and \cite[Example 4.5]{MauldinWilliams1986}.

\begin{example} \label{ex:no-theta-eq-state}
  Consider a tuple $\A=(a_i)_{i \in \I}$ of $1$-dimensional positive matrices, where $\I$ is countably infinite. We identify each real matrix $a_i$ with the corresponding positive real number. We suppose that $\sum_{i \in \I}a_i=1$ and $\sum_{i \in \I} a_i \log a_i=-\infty$, which holds for example in the case where $\I=\N$ and $a_n\equiv C/n(\log(n+1))^2$ for a suitable constant $C>0$. Straightforward calculations show that $\theta_\A=1$ and $P(\A,\theta_\A)=0$. We suppose for a contradiction that there exists a $\fii^{\theta_\A}$-equilibrium state $\mu \in \MM_\sigma(\I^\N)$, which by definition must have finite entropy. If $\nu \in  \MM_\sigma(\I^\N)$ is the Bernoulli measure satisfying $\nu([i])=\mu([i])$ for every $i \in \I$ then it is clear by direct calculations that $\Lambda(\nu,\A,\theta_\A)=\Lambda(\mu,\A,\theta_\A)$ and $h(\nu)\geq h(\mu)$. Since $\mu$ is an equilibrium state this implies that $\nu$ is also an equilibrium state and satisfies $h(\nu)=h(\mu)<\infty$, and this in turn gives $\sum_{i \in \I}-\nu([i])\log \nu([i])<\infty$. By Jensen's inequality and the fact that $\nu$ is an equilibrium state,
  \begin{align*}
    P(\A,\theta_\A) &= \log \sum_{i \in \I} a_i = \log\sum_{i \in \I} \nu([i])\exp(\log a_i - \log \nu([i])) \\
    &\geq \sum_{i \in \I}\nu([i])(\log a_i - \log \nu([i]))=\Lambda(\nu,\A,\theta_\A) + h(\nu)=P(\A,\theta_\A)=0.
  \end{align*}
  Since $\sum_{i \in \I}a_i=1$, in order for $\log\sum_{i\in\I} \nu([i])\exp(\log a_i - \log \nu([i]))$ to equal zero it must be the case that $\nu([i])>0$ for every $i \in \I$. Exact equality in Jensen's inequality now implies that $\log a_i - \log \nu([i])$ must be constant for all $i\in\I$, and we deduce that necessarily $a_i = \nu([i])$ for all $i\in \I$. This contradicts the requirement that $\sum_{i \in \I}-\nu([i])\log \nu([i])<\infty$ and we conclude that $\mu$ cannot be an equilibrium state.
\end{example}

Finally, let us use Theorem \ref{thm:equilibrium-states-classification} to prove Proposition \ref{thm:proppressuredrop} which we repeat below.

\begin{propositionno}{\ref{thm:proppressuredrop}}
  \proppressuredrop
\end{propositionno}

\begin{proof}
If $\J$ is a nonempty proper subset of $\I$ then the inequality $P((A_i)_{i \in \J},s)\leq P(\A,s)$ is clear from the definition of the pressure, so we need only show that $P((A_i)_{i \in \J},s)\neq P(\A,s)$. By Theorem \ref{thm:equilibrium-states-classification}, there exists an $\varphi^s$-equilibrium state $\mu \in \mathcal{M}_\sigma(\J^\N)$ for $(A_i)_{i \in \J}$. It is clear that we may identify $\mu$ with a measure on $\I^\N$ which has support equal to $\J^\N$, and if $P((A_i)_{i \in \J},s)= P(\A,s)$, this measure satisfies the definition of an $\varphi^s$-equilibrium state for $\A$. Therefore $\A$ has an $\varphi^s$-equilibrium state which is not fully supported on $\I^\N$, contradicting Theorem \ref{thm:equilibrium-states-classification}. This proves \eqref{it:drop1}.

Now suppose that for some norm $\threebar{\,\cdot\,}$ on $\R^d$ we have $\sup_{i \in \I} \threebar{A_i}<1$. Since $\theta_\A < \udimaff\A$, we see that $\dimaff\A$ exists by Proposition \ref{thm:propaffinity}\eqref{it:propaff2}. Note that, by the assertion \eqref{it:drop1}, we have $P((A_i)_{i \in \J},s) < P(\A,s)$ for every $s > \theta_\A$. By Theorem \ref{th:aff}\eqref{it:strictly} and \eqref{it:right-con}, the pressure as a function of $s$ is strictly decreasing and continuous at every $s > \theta_\A$. It follows that $\udimaff (A_i)_{i \in \J} < \dimaff \A$ which is the claim \eqref{it:drop2}.
\end{proof}

%
%

\subsection{Dimension of infinitely generated self-affine sets} \label{sec:conditional-self-affine}

In this section, we prove all the results announced in Section \ref{sec:self-affine-sets} by applying Theorem \ref{th:aff} which, at this stage, depends on Theorem \ref{thm:conc}. Let $X \subset \R^d$ be a self-affine set and $(T_i)_{i \in \I}$ its defining affine iterated function system associated such that $T_i(x)=A_ix+v_i$ for all $x \in \R^d$ and $i \in \I$. Write $\A = (A_i)_{i \in \I} \in \GL_d(\R)^\I$.

\begin{propositionno}{\ref{thm:selfaffinedimupperbound}}
  \selfaffinedimupperbound
\end{propositionno}

\begin{proof}
  If $\dimh X = 0$ then there is nothing to prove, so fix $0 < s < \dimh X$. Choose $k \in \{0,\ldots,d-1\}$ so that $s \in (k,k+1]$ and let $B \subset \R^d$ be a closed ball such that $T_i(B) \subseteq B$ for all $i \in \I$. It follows from the definition of the singular values that for each $\iii \in \I^*$ we may cover $T_\iii(B)$ with at most a constant times
  \begin{equation*}
    \frac{\sigma_1(A_\iii)}{\sigma_{k+1}(A_\iii)} \frac{\sigma_2(A_\iii)}{\sigma_{k+1}(A_\iii)} \cdots \frac{\sigma_k(A_\iii)}{\sigma_{k+1}(A_\iii)}
  \end{equation*}
  balls of radius $\sigma_{k+1}(A_\iii)$. Writing $\delta_n = \sup_{\iii \in \I^n}\|A_\iii\|$ for each $n \geq 1$ we thus see that there exists $c \ge 1$ so that
  \begin{equation*}
    \HH_{\delta_n}^s(X) \le \sum_{\iii \in \I^n} \HH_{\delta_n}^s(T_\iii(B)) \le c\sum_{\iii \in \I^n} \varphi^s(A_\iii)
  \end{equation*}
  for all $n \ge 1$, where $\HH^s$ is the $s$-dimensional Hausdorff measure. Since $\sup_{n \ge 1}\HH^s_{\delta_n}(X) = \HH^s(X) = \infty$, it follows that $\sum_{\iii \in \I^n} \varphi^s(A_\iii) \ge 1$ for all $n \ge 1$ large enough. Thus $P(\A,s) \ge 0$ and, by Theorem \ref{th:aff}\eqref{it:strictly} and the definition \eqref{eq:udimaff-def} of the upper affinity dimension, $s \le \udimaff\A$ which finishes the proof.
\end{proof}

If $\J$ is a nonempty finite subset of $\I$, then we denote the self-affine set associated to the finite affine iterated function system $(T_i)_{i \in \J}$ by $X_\J$. Observe that
for any sequence $(\J_n)_{n \ge 1}$ of finite subsets of $\I$ we have
\begin{equation} \label{eq:union-of-finite-subsets}
  \bigcup_{n=1}^\infty X_{\J_n} \subseteq X.
\end{equation}
If $\A = (A_i)_{i \in \I} \in \GL_d(\R)^\I$ is strongly irreducible, it does not automatically follow that there exists a finite a nonempty finite subset $\J$ of $\I$ such that $(A_i)_{i \in \J}$ is strongly irreducible: for example, if $\A=(A_i)_{i \in \N} \in \GL_2(\R)^\N$ where each $A_i$ is the matrix corresponding to rotation by $\pi/2^n$, then $\A$ is strongly irreducible but has no finite subsystem which is strongly irreducible. To circumvent this problem we use the following result:

\begin{proposition}\label{pr:finite-upwards}
Let $\A=(A_i)_{i \in \I} \in \GL_d(\R)^\I$, where $d \leq 3$ and $\I$ is countably infinite, and suppose that $\A$ is proximal and strongly irreducible. Then there exists a finite set $\J\subset \I$ such that $(A_i)_{i \in \J}$ is proximal and strongly irreducible.
\end{proposition}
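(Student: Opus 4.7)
The argument splits into ensuring proximality and ensuring strong irreducibility. Since $\A$ is proximal and strongly irreducible (hence irreducible), the equivalence recalled just before Theorem~\ref{thm:selfaffinedimtwo} produces a word $\iii_0 \in \I^*$ with $A_{\iii_0}$ having a simple leading eigenvalue. Let $\J_0 \subset \I$ be the finite set of letters appearing in $\iii_0$, so that $\iii_0 \in \J^*$ for every finite $\J \supseteq \J_0$. Thus $(A_i)_{i \in \J}$ is automatically proximal for all such $\J$, and the task reduces to finding a finite $\J \supseteq \J_0$ for which $(A_i)_{i \in \J}$ is strongly irreducible.

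I argue by contradiction: assume that for every finite $\J \supseteq \J_0$ there is a nonempty finite collection $\mathcal{V}$ of proper nonzero subspaces of $\R^d$ preserved by every $A_i$, $i \in \J$. The heart of the argument is the following claim: under the hypothesis $d \le 3$, the word $\iii_0$ may be chosen so that the union $\mathfrak{F}$ over all integers $k \ge 1$ of the $A_{\iii_0}^k$-invariant proper nonzero subspaces of $\R^d$ is a \emph{finite} set. Granting this claim, every invariant $\mathcal{V}$ lies inside $\mathfrak{F}$: since $A_{\iii_0}$ permutes $\mathcal{V}$, each $V \in \mathcal{V}$ satisfies $A_{\iii_0}^k V = V$ for some $k \le |\mathcal{V}|$, hence $V \in \mathfrak{F}$. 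Enumerate $\I = \{i_1, i_2, \ldots\}$ and set $\J_n = \J_0 \cup \{i_1, \ldots, i_n\}$; then the sets $S_n = \{\mathcal{V} \subseteq \mathfrak{F} : \mathcal{V} \neq \emptyset \text{ and } A_i \mathcal{V} = \mathcal{V} \text{ for every } i \in \J_n\}$ form a decreasing sequence of nonempty subsets of the finite power set $2^{\mathfrak{F}}$, so $\bigcap_n S_n \neq \emptyset$. Any element of this intersection is preserved by $(A_i)_{i \in \I}$, contradicting strong irreducibility of $\A$.

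The main obstacle is establishing the claim on $\mathfrak{F}$. For $d = 2$ this is immediate: proximality forces $A_{\iii_0}$ to have two distinct real eigenvalues, so the only $A_{\iii_0}^k$-invariant lines, for any $k$, are the two eigenlines. For $d = 3$ the situation is delicate, since the subdominant spectrum of $A_{\iii_0}$ may be degenerate---either a repeated real non-leading eigenvalue with scalar action on the complementary plane $W$, or a complex conjugate non-leading pair whose argument is a rational multiple of $\pi$---in which case some iterate $A_{\iii_0}^k$ acts as a scalar on $W$ and thereby admits a continuum of invariant lines in $W$. To avoid this, I would first produce a second proximal word $\iii_1 \in \I^*$ whose dominant eigendirection differs from that of $\iii_0$: such $\iii_1$ must exist, because if every proximal word in $\I^*$ had the same dominant direction $L$, then the dominant direction of $A_i A_{\iii_0}$, namely $A_i L$, would equal $L$ for every $i \in \I$, making $\{L\}$ an $\A$-invariant collection. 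Applying the Cartan decomposition and standard asymptotics for eigenvalues of matrix products, one then shows that the composition $\iii_0^m \iii_1^n$ is, for all sufficiently large $m$ and $n$, proximal with three eigenvalues whose pairwise ratios are not roots of unity. Replacing $\iii_0$ by such a composition yields $A_{\iii_0}$ whose iterates share a single finite collection of proper invariant subspaces (at most three lines and three planes), which is the required $\mathfrak{F}$.
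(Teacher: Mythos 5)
Your outline identifies the right obstacle—in dimension $3$ a proximal word $A_{\iii_0}$ may have an iterate that acts as a scalar on the complementary plane, which destroys the finiteness of $\mathfrak{F}$—but your proposed repair is not a proof. The claim that ``the composition $\iii_0^m\iii_1^n$ is, for all sufficiently large $m$ and $n$, proximal with three eigenvalues whose pairwise ratios are not roots of unity'' is the entire content of the argument in the hard case, and ``Cartan decomposition and standard asymptotics'' does not establish it. Standard asymptotics control the \emph{top} Lyapunov data of $A^mB^n$: they tell you the product is proximal and locate its attracting direction, but they say essentially nothing about the ratio of the two subdominant eigenvalues, which is exactly the quantity you need to keep away from roots of unity. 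The set of ``bad'' matrices (subdominant pair real and opposite, or complex with rational argument in $\pi$) is a \emph{countable} union of codimension-one subvarieties, not a single variety, so Zariski density of the semigroup does not immediately give you an escape; ruling out that all iterates $\iii_0^m\iii_1^n$ happen to land in that countable union is a genuine Diophantine-type statement. Until it is proved, the proposal has a gap precisely where the paper's Lemma~\ref{le:three-lines} does its work.

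The paper takes a structurally different and simpler route that sidesteps this issue entirely. It does not try to produce one ``spectrally generic'' word. Instead it first notes, via a Grassmannian compactness argument, that irreducibility passes to a finite subsystem $(A_i)_{i\in\J_n}$ for $n$ large. Then Lemma~\ref{le:three-lines} shows that for $d\le 3$ any proximal irreducible tuple which fails to be strongly irreducible preserves a union of exactly $d$ lines; this bounds the invariant collection uniformly (by $d$, not by $|\mathcal{V}|$) and, crucially, places it in the \emph{compact} space $(\mathbb{RP}^{d-1})^d$. A second nested-compactness argument then yields, from the assumed failure of strong irreducibility for every finite subsystem, a $d$-tuple of lines invariant for all of $\A$, contradicting strong irreducibility. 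The compactness of the projective space does the work that your finiteness of $\mathfrak{F}$ was meant to do, and it does so without any spectral genericity condition. If you want to salvage your approach, you would need to actually prove the existence of a word $\iii_0$ with $\mathfrak{F}$ finite (which I believe is true for strongly irreducible proximal $\A$, but requires an argument along the lines of the existence of ``regular'' elements in Zariski-dense subsemigroups), or, more economically, prove an analogue of Lemma~\ref{le:three-lines} to bound the size of the invariant collections and then use compactness as the paper does.

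Two minor points in addition. First, your argument that a second dominant direction exists treats $A_iA_{\iii_0}$ as if it were automatically proximal with attracting direction $A_iL$; this is only true in the limit for $A_iA_{\iii_0}^n$ as $n\to\infty$, so the statement should be phrased with high powers of $\iii_0$. Second, when passing from ``$A_{\iii_0}$ permutes $\mathcal{V}$'' to ``each $V$ lies in $\mathfrak{F}$,'' the bound $k\le|\mathcal{V}|$ is correct, but note that you need $k$ to range over \emph{all} positive integers in the definition of $\mathfrak{F}$ precisely because $|\mathcal{V}|$ is a priori unbounded; this is one more place where an analogue of Lemma~\ref{le:three-lines} would have simplified your argument considerably.
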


The proof of Proposition \ref{pr:finite-upwards} requires the following lemma, which exploits the fact that proximality together with the failure of strong irreducibility in dimension two or three implies the existence of an invariant finite set of lines. (This implication becomes false in dimension four.)

\begin{lemma}\label{le:three-lines}
Let $\A =(A_i)_{i \in \I} \in  \GL_d(\R)^\I$ be proximal and irreducible, where $d \leq 3$. If $\A$ is not strongly irreducible then it preserves a union of $d$ one-dimensional subspaces.
\end{lemma}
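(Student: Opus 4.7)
The plan is to analyze a minimal finite $\A$-invariant collection $\mathcal{V}$ of nonzero proper subspaces of $\R^d$, which exists because $\A$ is not strongly irreducible. Since each $A_i$ preserves dimension, I may take every element of $\mathcal{V}$ to have a common dimension, and minimality forces $\A$ to act transitively on $\mathcal{V}$. The case $d=1$ is vacuous as there are no nonzero proper subspaces. For $d=2$, $\mathcal{V}$ must consist of lines. For $d=3$, either $\mathcal{V}$ consists of lines or of planes; in the plane case I pass to the dual tuple $\A^\top = (A_i^\top)_{i \in \I}$, which is also proximal and irreducible, with $V \mapsto V^\perp$ converting the minimal $\A$-invariant family of planes into a minimal $\A^\top$-invariant family of lines of the same cardinality. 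Thus it suffices to treat the line case for $d \in \{2,3\}$.

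For the line case, proximality produces sequences $(\iii_n) \subseteq \I^*$ and $(\alpha_n) \subseteq \R$ with $\alpha_n A_{\iii_n} \to R$ and $\rank R = 1$; write $U = \mathrm{Im}\,R$ and $W = \ker R$. Passing to a subsequence I may assume that each $A_{\iii_n}$ induces the same permutation $\pi$ of the finite set $\mathcal{V}$. Given $L \in \mathcal{V}$ with $L \not\subseteq W$, pick $v \in L$ with $R(v) \neq 0$; since $\alpha_n A_{\iii_n}(v) \in \pi(L)$ for every $n$ and $\pi(L)$ is closed, one obtains $0 \neq R(v) \in U \cap \pi(L)$, forcing $U = \pi(L)$. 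In particular $U \in \mathcal{V}$, and bijectivity of $\pi$ allows at most one such $L$, so at least $|\mathcal{V}| - 1$ elements of $\mathcal{V}$ are contained in the $(d-1)$-dimensional kernel $W$.

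To bound $|\mathcal{V}|$ from above, irreducibility rules out $B(W) = W$ for every $B$ in the semigroup $\langle \A \rangle$ generated by $\A$, so I can choose $B \in \langle \A \rangle$ with $B(W) \neq W$. Since $B$ permutes $\mathcal{V}$, it sends the $|\mathcal{V}| - 1$ lines of $\mathcal{V}$ in $W$ to $|\mathcal{V}| - 1$ lines of $\mathcal{V}$ in $B(W)$. Inclusion-exclusion applied to $\mathcal{V} \cap (W \cup B(W))$ then gives
\[
  2(|\mathcal{V}| - 1) - |\mathcal{V} \cap W \cap B(W)| \;\leq\; |\mathcal{V}|.
\]
For $d=2$ the intersection $W \cap B(W) = \{0\}$ contains no line, so $|\mathcal{V}| \leq 2$; for $d=3$ the intersection is one-dimensional and contains at most one line of $\mathcal{V}$, yielding $|\mathcal{V}| \leq 3$. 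The matching lower bound $|\mathcal{V}| \geq d$ is immediate because $\Span \mathcal{V}$ is $\A$-invariant and equals $\R^d$ by irreducibility. Hence $|\mathcal{V}| = d$ in both dimensions.

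Finally, to close the dual reduction, suppose $d=3$ and the minimal $\A$-invariant $\mathcal{V} = \{W_1, W_2, W_3\}$ consists of three planes. The pairwise intersections $L_{ij} = W_i \cap W_j$ are lines, and they are pairwise distinct because otherwise all three $W_i$ would share a common line that would then be $\A$-invariant, violating irreducibility. The permutation action of $\A$ on $\{W_1, W_2, W_3\}$ induces a permutation action on $\{L_{12}, L_{13}, L_{23}\}$, producing the required union of $d = 3$ one-dimensional subspaces. The main obstacle is the upper bound $|\mathcal{V}| \leq d$, which is precisely where the hypothesis $d \leq 3$ is essential: the inclusion-exclusion step relies on two distinct $(d-1)$-dimensional subspaces of $\R^d$ meeting in at most a $1$-dimensional subspace, which fails for $d \geq 4$.
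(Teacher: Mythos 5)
Your proof is correct, and it takes a genuinely different route from the one in the paper. The paper works with a single product $A_\iii$ having a simple leading eigenvalue and studies the dynamics of the powers $A_\iii^n$ on lines: any line outside the stable plane $V$ and off the dominant eigendirection $v$ has infinite forward orbit, so almost all lines of the invariant collection lie in the plane $V$; then, when $m\geq 4$, three invariant lines with linearly dependent spanning vectors sit in $V$, one is pushed outside $V$ by irreducibility, and a linear-dependence pigeonhole contradiction follows, with $d=2$ handled by a separate short argument. You instead work directly with the rank-one limit $R=\lim_n \alpha_n A_{\iii_n}$: any $L\in\mathcal V$ with $L\not\subseteq \ker R$ is forced (after extracting a subsequence inducing a fixed permutation $\pi$) to satisfy $\pi(L)=\mathrm{Im}\,R$, so injectivity of $\pi$ squeezes all but at most one element of $\mathcal V$ into the hyperplane $W=\ker R$; you then pick $B$ in the generated semigroup with $B(W)\neq W$ (possible by irreducibility) and bound $|\mathcal V|$ via inclusion–exclusion on lines in $W\cup B(W)$, which treats $d=2$ and $d=3$ uniformly and cleanly isolates why $d\geq 4$ breaks (two hyperplanes can then meet in a subspace big enough to absorb many lines). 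Your duality reduction for the plane case is slightly more elaborate than the paper's direct passage to pairwise intersections $U_i\cap U_j$, but it is correct, and you in fact also supply the pairwise-intersection construction at the end so the conclusion is about lines preserved by $\A$ rather than $\A^\top$. Both arguments rest on the same interplay of proximality (to force almost everything into a codimension-one subspace) and irreducibility (to move that subspace), but the mechanism for the final bound is different: the paper's linear-dependence pigeonhole versus your inclusion–exclusion count.
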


\begin{proof}
We begin with the more difficult case $d=3$. Suppose that $\A$ is proximal, irreducible, and not strongly irreducible, in which case it preserves a finite set of proper subspaces $U_1,\ldots,U_m$. We suppose without loss of generality that these subspaces have the same dimension and we note that $m\neq 1$ by irreducibility. If the dimension of these subspaces is $2$ then $\A$ also preserves the nonempty, finite collection of all one-dimensional subspaces of the form $U_i \cap U_j$, so without loss of generality we suppose that each of $U_1,\ldots,U_m$ has dimension $1$. If $m<3$ then $\Span (U_1 \cup\cdots \cup U_m)$ is an invariant proper subspace of $\R^3$, contradicting irreducibility, so to prove the lemma it suffices for us to show that $m$ cannot be greater than $3$. 

Suppose for a contradiction that $m \geq 4$. Fix a  product $A_\iii$ with a simple leading eigenvalue, let $v \in \R^3$ be a leading eigenvector of $A_\iii$, and let $V$ be the unique $A_\iii$-invariant plane complementary to $v$. 
If one of the subspaces $U_i$ is not contained in $V$ and is also not parallel to $v$, then it is straightforward to verify that $\lim_{n \to \infty} A_\iii^n U$ is the leading eigenspace of $A_\iii$ and that $\{A_\iii^n U_i \colon n \geq 1\}$ is infinite, which is a contradiction since this set must be a subset of $\{U_1,\ldots,U_m\}$. Suppose instead that every subspace $U_i$ which is not parallel to $v$ is contained in $V$. Since at most one of these subspaces can be parallel to $v$, at least three subspaces must be contained in $V$. Let $U_1, U_2, U_3\subset V$ and let $u_1, u_2, u_3$ be nonzero vectors which span these respective spaces. Since $V$ is two-dimensional the vectors $u_1, u_2, u_3$ are linearly dependent. By irreducibility the smallest $\A$-invariant subspace which contains $u_1$ is $\R^3$ itself, and for this to be possible we must be able to choose a product $A_\jjj$ such that $A_\jjj u_1$ does not lie in $V$. By linear dependence either $A_\jjj u_2$ or $A_\jjj u_3$ \emph{also} does not lie in $V$. Thus at least two of the subspaces $A_\jjj U_1$, $A_\jjj U_2$ and $A_\jjj U_3$ are not contained in $V$, and by the pigeonhole principle at least one of those two is not parallel to $v$; but this subspace is necessarily equal to one of the $m$ subspaces $U_1,\ldots,U_m$, and this contradicts our supposition. We have arrived at a contradiction and we conclude that our earlier hypothesis $m \geq 4$ was impossible. It follows that $m=3$ and we have proved the lemma in the case $d=3$.

The remaining cases are much easier. The case $d=1$ is vacuous. In the case $d=2$, suppose that $\A$ preserves a finite union of one-dimensional subspaces $U_1,\ldots,U_m$, say, with $m\neq 2$. If $m<2$ then irreducibility is contradicted, so suppose instead that $m\geq 3$. Let $A_\iii$ have a simple leading eigenvalue and observe that one of the three subspaces $U_1, U_2, U_3$ is not an eigenspace of $A_\iii$, which implies that one of the sets $\{A_\iii^nU_i \colon n\geq 1\}$ is infinite, a contradiction. The lemma is proved.
\end{proof}

\begin{proof}[Proof of Proposition \ref{pr:finite-upwards}]
Choose arbitrarily an increasing sequence of finite sets $\J_1\subseteq \J_2 \subseteq \cdots$ whose union is $\I$. Since $\A$ is irreducible and proximal, there exists a product $A_\iii$ which has a simple leading eigenvalue. Clearly if $n$ is large enough that every symbol of $\iii$ belongs to $\J_n$ then $(A_i)_{i \in \J_n}$ is also proximal, so  $(A_i)_{i \in \J_n}$ is proximal for all sufficiently large $n$. Let $\mathcal{V}_n$ denote the set of all nonzero proper subspaces $V \subset \R^d$ which are preserved by $(A_i)_{i \in \J_n}$. Clearly each $\mathcal{V}_n$ is a closed subset of the Grassmannian manifold of $\R^d$, and $\mathcal{V}_{n+1}\subseteq \mathcal{V}_n $ for every $n \geq 1$. Any element of $\bigcap_{n=1}^\infty \mathcal{V}_n$ is an invariant subspace for $\A$, and since $\A$ is irreducible this intersection must be empty, which is only possible if $\mathcal{V}_n=\emptyset$ for all large enough $n$. We conclude that  $(A_i)_{i \in \J_n}$  is both proximal and irreducible for all large enough $n$.

Finally, it is clear that either $(A_i)_{i \in \J_n}$ is strongly irreducible for all large enough $n$, or for every $n\geq 1$ it fails to be strongly irreducible. In the latter case, let $\mathcal{W}_n$ denote the set of all  $(A_i)_{i \in \J_n}$-invariant $d$-tuples of lines in $\R^d$, which is a compact subset of $(\mathbb{RP}^{d-1})^d$.  By Lemma \ref{le:three-lines} the set $\mathcal{W}_n$ is nonempty for all large enough $n$, and this implies that it is nonempty for every $n\geq 1$. By a similar compactness argument, $\bigcap_{n=1}^\infty \mathcal{W}_n$ contains a tuple of $d$ lines (not necessarily all distinct) which are permuted by the matrices in $\A$, and this contradicts the strong irreducibility of $\A$. We conclude that $(A_i)_{i \in \J_n}$ is strongly irreducible for all large enough $n$, which proves the lemma.
\end{proof}

Recall that a strongly irreducible tuple is completely reducible. In view of \cite[Corollary 1.2]{Hochman2014}, \cite[Theorem 1.1]{HochmanRapaport2022}, \cite[Theorem 1.5]{MorrisSert2023preprint}, and \cite[Theorem 5.3]{Falconer1988}, the next theorem, together with Propositions \ref{thm:propaffinity}, \ref{thm:selfaffinedimupperbound}, and \ref{pr:finite-upwards}, proves Theorems \ref{thm:selfaffinedimone}--\ref{thm:selfaffinedimd}.

\begin{theorem} \label{thm:hausdorff-lower-affinity}
  Let $X \subset \R^d$ be a self-affine set and $(\J_n)_{n \ge 1}$ be an increasing sequence of finite subsets of $\I$ such that $\bigcup_{n=1}^\infty \J_n = \I$ and $\dimh X_{\J_n} = \min\{d,\dimaff(A_i)_{i \in \J_n}\}$ for all $n \ge 1$. Then $\dimh X \ge \min\{d,\ldimaff\A\}$.
\end{theorem}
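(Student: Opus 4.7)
The plan is to deduce the theorem from the set-theoretic inclusion \eqref{eq:union-of-finite-subsets} combined with monotonicity of the affinity dimension under inclusion of the index set, together with the identification $\dimaff = \udimaff$ for finite systems supplied by Proposition \ref{thm:propaffinity}\eqref{it:propaff4}. There is nothing dimension-theoretic left to do after extracting the finite approximation hypothesis; the whole proof is a bookkeeping argument about suprema.

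First I would observe that \eqref{eq:union-of-finite-subsets} gives $X_{\J_n} \subseteq X$ for every $n$, and hence
\[
  \dimh X \;\geq\; \sup_{n\geq 1} \dimh X_{\J_n} \;=\; \sup_{n\geq 1} \min\{d,\dimaff(A_i)_{i\in\J_n}\} \;=\; \min\Bigl\{d,\;\sup_{n\geq 1}\dimaff(A_i)_{i\in\J_n}\Bigr\}
\]
by the hypothesis on $X_{\J_n}$, where the last equality holds because $\min\{d,\cdot\}$ commutes with monotone suprema.

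Next I would verify that $\sup_{n\ge 1}\dimaff(A_i)_{i\in\J_n} = \ldimaff\A$. The inequality $\leq$ is immediate from the definition \eqref{eq:ldimaff-def}, since $\dimaff(A_i)_{i\in\J_n} = \udimaff(A_i)_{i\in\J_n}$ by Proposition \ref{thm:propaffinity}\eqref{it:propaff4}. For the reverse inequality, fix any nonempty finite $\J\subseteq\I$; since $\J$ is finite and $\bigcup_n\J_n=\I$, there is some $n_0$ with $\J\subseteq\J_{n_0}$. Monotonicity of the pressure in the index set (which is clear from the definition \eqref{eq:pressure-defn}, as adding indices only increases each partial sum) implies $P((A_i)_{i\in\J},s)\leq P((A_i)_{i\in\J_{n_0}},s)$ for all $s\ge 0$, so by \eqref{eq:udimaff-def} we get $\udimaff(A_i)_{i\in\J}\leq\udimaff(A_i)_{i\in\J_{n_0}} = \dimaff(A_i)_{i\in\J_{n_0}}$. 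Taking the supremum over $\J$ yields $\ldimaff\A \leq \sup_n \dimaff(A_i)_{i\in\J_n}$.

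Combining these two steps gives $\dimh X \geq \min\{d,\ldimaff\A\}$, as required. There is no real obstacle here: the hypothesis has packaged the hard (dimension-theoretic) content into the equality for each finite subsystem, and all that remains is the elementary observation that every finite subset of $\I$ is eventually absorbed by the increasing sequence $(\J_n)$, so the supremum defining $\ldimaff\A$ is realised along that sequence.
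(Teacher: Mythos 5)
Your proof is correct and follows essentially the same route as the paper's: lower-bound $\dimh X$ by $\sup_n \dimh X_{\J_n}$ via the inclusion \eqref{eq:union-of-finite-subsets}, substitute the hypothesis, and identify the resulting supremum with $\min\{d,\ldimaff\A\}$. The only thing you do that the paper leaves implicit is spelling out why $\sup_n \dimaff(A_i)_{i\in\J_n}=\ldimaff\A$ (cofinality of $(\J_n)$ among finite subsets plus monotonicity of pressure in the index set), which is a worthwhile detail but not a different argument.
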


\begin{proof}
  The monotonicity and the countable stability of the Hausdorff dimension applied with \eqref{eq:union-of-finite-subsets}, the assumption on the Hausdorff dimension of finitely generated self-affine subsets, and the definition \eqref{eq:ldimaff-def} of the lower affinity dimension immediately imply
  \begin{equation*}
    \dimh X \ge \sup_{n \ge 1} \dimh X_{\J_n} = \min\{d,\sup_{n \ge 1}\dimaff(A_i)_{i \in \J_n}\} = \min\{d,\ldimaff\A\}
  \end{equation*}
  as wished.
\end{proof}

%
%

\section{Algebraic arguments: Proof of Theorem \ref{thm:conc}} \label{sec:algebra-stuff}

In this section, we prove Theorem \ref{thm:conc} and hence, also conclude the proofs of Theorems \ref{th:aff} and \ref{thm:equilibrium-states-classification}.

\subsection{Linear algebraic groups and the Zariski topology}

A function $p \colon \GL_d(\mathbb{R}) \to \mathbb{R}$ is called a \emph{polynomial} if it maps each matrix $A=[a_{ij}]_{i,j=1}^d$ to the same polynomial function of the $d^2+1$ variables $a_{11},\ldots,a_{dd}$ and $1/\det A$. The \emph{Zariski topology} on $\GL_d(\mathbb{R})$ is then defined to be the smallest topology in which every set of the form $\{A \in \GL_d(\mathbb{R}) \colon p(A)=0\}$ is closed. The Zariski topology has the following important property, called the \emph{descending chain condition}: if $(Z_n)_{n=1}^\infty$ is a sequence of Zariski-closed sets such that $Z_{n+1} \subseteq Z_n$ for every $n \ge 1$, then $(Z_n)_{n=1}^\infty$ is eventually constant. This property implies that a set is Zariski closed if and only if it is the intersection of the zero loci of a finite collection of polynomials.

We recall that if $G$ is a group or semigroup then a \emph{representation} of $G$ is a homomorphism $\phi \colon G \to \GL(V)$ for some vector space $V$ over a field $\mathbb{K}$. In this article it will always be the case that $V$ is finite-dimensional and $\mathbb{K}$ is $\mathbb{R}$, although all our results also hold over the complex field without modification. The representation  $\phi \colon G \to \GL(V)$ is called \emph{irreducible} if the only subspaces of $V$ which are preserved by every element of $\phi(G)$ are $\{0\}$ and $V$; equivalently, $\phi$ is an irreducible representation if $\phi(G)$ is irreducible in the sense of Section \ref{sec:completely-reducible}. A representation is called \emph{faithful} if it is injective. A representation $\phi \colon G \to \GL(V)$ is called \emph{semisimple} if there exists a splitting $V=\bigoplus_{j=1}^\ell V_j$ such that each $V_j$ is preserved by every element of $\phi(G)$, and such that additionally each of the representations $\phi_j \colon G \to \GL(V_j)$ defined by $\phi_j(g)=\phi(g)|_{V_j}$ is an irreducible representation. One of the simplest examples of a representation which is not semisimple is the representation $\mathbb{R} \to \GL_2(\R)$ defined by
\[
  \phi(t)=\begin{pmatrix}1&t\\ 0 &1\end{pmatrix}.
\]

\subsection{Key results}
We require two fundamental results in order to prove Theorem \ref{thm:conc}, the first dealing principally with part \eqref{it:conc-1} of that theorem. This result is now standard in the theory of reductive linear groups.
\begin{proposition}\label{pr:cr}
Let $V$ be a finite-dimensional real or complex vector space, let $\I$ be a nonempty set, let $(A_i)_{i \in \I} \in \GL(V)^{\I}$ be completely reducible, and let $\ell \in \{1,\ldots,\dim V\}$. Then $(A_i^{\wedge \ell})_{i \in \I} \in \GL(\wedge^\ell V)^\I$ is completely reducible.
\end{proposition}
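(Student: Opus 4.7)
The plan is to reduce the claim to two classical results about linear algebraic groups in characteristic zero: first, that a Zariski-closed subgroup of $\GL(V)$ whose defining representation is semisimple has reductive identity component; and second, that every rational representation of a reductive group in characteristic zero is semisimple.

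First, I would use the complete reducibility hypothesis to decompose $V = \bigoplus_{j=1}^k V_j$, with each $V_j$ invariant and the restricted tuple $(A_i|_{V_j})_{i \in \I}$ irreducible. The standard multilinear-algebra decomposition
\begin{equation*}
  \wedge^\ell V = \bigoplus_{\substack{0 \le \ell_j \le \dim V_j \\ \ell_1+\cdots+\ell_k = \ell}} \wedge^{\ell_1}V_1 \otimes \cdots \otimes \wedge^{\ell_k}V_k
\end{equation*}
exhibits $\wedge^\ell V$ as an $(A_i^{\wedge \ell})_{i \in \I}$-invariant direct sum, each summand being a tensor product of exterior powers of the $V_j$. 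The claim thus reduces to showing that such tensor products of exterior powers of irreducible modules are themselves completely reducible under the induced action.

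Second, I would pass to the Zariski closure in order to import machinery from the theory of linear algebraic groups. Let $G \subseteq \GL(V)$ denote the Zariski closure of the multiplicative semigroup generated by $\{A_i \colon i \in \I\}$. For any subspace $W \subseteq V$ the set $\{g \in \GL(V) \colon gW \subseteq W\}$ is Zariski-closed, so every $(A_i)$-invariant subspace is automatically $G$-invariant; the same applies to subspaces of $\wedge^\ell V$ under the $G$-action. In particular, $V$ is semisimple as a $G$-module, and complete reducibility of $(A_i^{\wedge \ell})_{i \in \I}$ on $\wedge^\ell V$ is equivalent to semisimplicity of $\wedge^\ell V$ as a $G$-module. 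Moreover, $G$ is in fact a Zariski-closed \emph{subgroup}, not merely a subsemigroup: by the descending chain condition for the Zariski topology, left-multiplication by any $g \in G$ is an injective Zariski-continuous self-map of $G$ whose chain of iterated images must stabilise, forcing surjectivity and hence the existence of a two-sided inverse.

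Third, I would invoke the structural theorem for reductive groups in characteristic zero. Since $G$ admits $V$ as a faithful semisimple rational representation, a standard criterion (found, for example, in Borel's \emph{Linear Algebraic Groups}) implies that the identity component $G^\circ$ is reductive. The classical theorem of Chevalley--Weyl then asserts that every rational representation of a reductive linear algebraic group in characteristic zero is semisimple; equivalently, the category of rational $G$-representations is closed under the standard multilinear constructions. Applying this to $\wedge^\ell V$ yields the desired semisimplicity, and hence the complete reducibility of $(A_i^{\wedge \ell})_{i \in \I}$. The main obstacle will be the correct invocation of the reductivity criterion together with the real-versus-complex distinction, which is handled by complexifying $V$, applying the theorem over $\C$, and then descending using the fact that restriction of scalars preserves semisimplicity in characteristic zero.
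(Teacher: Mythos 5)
Your proposal follows essentially the same route as the paper: pass to the Zariski closure $G$ of the semigroup generated by the $A_i$, observe that $V$ is a faithful semisimple $G$-module, deduce that the identity component $G^0$ is reductive, conclude that all rational representations of $G$ (in particular $\wedge^\ell V$) are semisimple, and translate back to $(A_i^{\wedge\ell})_{i\in\I}$ via Zariski density. Two remarks are worth making. First, the opening paragraph decomposing $\wedge^\ell V$ as $\bigoplus \wedge^{\ell_1}V_1 \otimes\cdots\otimes \wedge^{\ell_k}V_k$ is never used afterwards: once you have the reductivity of $G^0$, semisimplicity of the $G$-module $\wedge^\ell V$ follows directly, with no need to reduce to the summands. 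Second, the step "$V$ is a faithful semisimple $G$-module $\Rightarrow$ $G^0$ is reductive" is slightly compressed: the standard reductivity criterion is stated for $G^0$, so one first needs that $V$ is semisimple as a $G^0$-module, not merely as a $G$-module. The paper bridges this by an explicit appeal to Clifford's theorem (restriction of a semisimple representation to a finite-index normal subgroup remains semisimple); your phrase "a standard criterion" elides this, so it is worth making explicit that Clifford's theorem is the tool. Your auxiliary observations — that the Zariski closure of a subsemigroup of $\GL(V)$ is a group, and that one may complexify and descend — are correct and harmless, though the cited references in the paper (Milne) work directly over $\R$ so the latter is unnecessary.
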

\begin{proof}
Let $G \leq \GL(V)$ denote the Zariski closure of the semigroup $\{A_\iii \colon \iii \in \I^*\}$. By hypothesis the inclusion representation $\iota \colon G \to \GL(V)$ is faithful and semisimple, which by Clifford's theorem implies that the restriction of $\iota$ to $G^0$ is also faithful and semisimple. By \cite[Theorem 22.42]{Milne2017} this implies that $G^0$ is reductive, hence by \cite[Corollary 22.43]{Milne2017} every representation from $G$ to a finite-dimensional real vector space is semisimple. In particular the representation $G \to \GL(\wedge^\ell V)$ defined by $g\mapsto g^{\wedge \ell}$ is semisimple. Since $(A_i)_{i \in \I}$ is Zariski dense in $G$, the invariant subspaces of $G$ acting on $\GL(\wedge^\ell V)$ are precisely the invariant subspaces of $(A_i^{\wedge \ell})_{i \in \I}$, so $(A_i^{\wedge \ell})_{i \in \I} \in \GL(\wedge^\ell V)^\I$ is completely reducible as required.
\end{proof}

The second result forms the core of Theorem \ref{thm:conc}\eqref{it:conc-2} and is similar in content to the results of \cite[\S 4]{BochiMorris2018}.

\begin{proposition}\label{pr:qm}
Let $\Gamma$ be a semigroup, let $\mathbb{K}$ be either $\mathbb{R}$ or $\mathbb{C}$, let $k \geq 1$ and for each $j \in \{1,\ldots,k\}$ let $V_j$ be a finite-dimensional inner product space  over $\mathbb{K}$ and $\phi_j \colon \Gamma \to \GL(V_j)$ a representation. For each $j \in \{1,\ldots,k\}$ let $U_j$ be a nonzero subspace of $V_j$ which has finite orbit under $\phi_j(\Gamma)$ and has the least possible dimension of any such subspace. Define
\[
  \mathcal{W}=\{ (\phi_j(g)U_j)_{j=1}^k \colon g \in \Gamma\} \subseteq \prod_{j=1}^k \Gr(V_j),
\]
which is necessarily a finite set. Then there exist a finite set $F \subseteq \Gamma$ and a real number $\kappa>0$ such that for every $(W_j)_{j=1}^k, (W_j')_{j=1}^k \in \mathcal{W}$  we may choose $h \in F$ such that for every $g_1,g_2 \in \Gamma$ we have
\[
  \|\phi_j(g_1h  g_2)|_{W_j}\| \geq \kappa \|\phi_j(g_1)|_{W_j'}\| \|\phi_j(g_2)|_{W_j}\|
\]
simultaneously for all $j \in \{1,\ldots,k\}$.
\end{proposition}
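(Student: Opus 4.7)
The plan is to adapt the Zariski density argument underlying single-representation quasimultiplicativity (see \cite{BochiMorris2018}) to the joint multi-representation setting. Let $\mathbf{G}$ denote the Zariski closure of $\{(\phi_j(g))_{j=1}^k : g \in \Gamma\}$ inside $\prod_j \GL(V_j)$; this is a real algebraic group (the Zariski closure of a semigroup in $\GL$ is always a group), containing $\phi(\Gamma)$ as a Zariski-dense subsemigroup. Identifying $\mathcal{W}$ with the $\mathbf{G}$-orbit of $(U_j)_{j=1}^k$, the assumed finiteness implies that the identity component $\mathbf{G}^0$ stabilizes each coordinate $U_j$. The minimality of $U_j$ forces the projection $\pi_j(\mathbf{G}^0) \subseteq \GL(V_j)$ to act irreducibly on $U_j$: any proper $\pi_j(\mathbf{G}^0)$-invariant subspace would have finite $\pi_j(\mathbf{G})$-orbit, hence finite $\phi_j(\Gamma)$-orbit, contradicting minimality. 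By transitivity of $\mathbf{G}$ on $\mathcal{W}_j$ together with normality of $\mathbf{G}^0$, this irreducibility propagates to every $W \in \mathcal{W}_j$.

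For data $(W_j), (W_j') \in \mathcal{W}$ and $g_1, g_2 \in \Gamma$, I would choose unit top right singular vectors $u_j^* \in W_j$ of $\phi_j(g_2)|_{W_j}$ and $v_j^* \in W_j'$ of $\phi_j(g_1)|_{W_j'}$, and set $\tilde u_j := \phi_j(g_2) u_j^* / \|\phi_j(g_2)|_{W_j}\|$, a unit vector in $Z_j := \phi_j(g_2) W_j \in \mathcal{W}_j$. A direct singular-value computation (decomposing $\phi_j(h)\tilde u_j \in W_j'$ in a basis of right singular vectors of $\phi_j(g_1)|_{W_j'}$) shows that whenever $h \in \Gamma$ satisfies $\phi_j(h) Z_j = W_j'$, so that $\phi_j(h) \tilde u_j \in W_j'$, one has
\[
\|\phi_j(g_1 h g_2)|_{W_j}\| \;\ge\; \|\phi_j(g_2)|_{W_j}\|\,\|\phi_j(g_1)|_{W_j'}\|\,|\langle \phi_j(h) \tilde u_j, v_j^*\rangle|.
\]
Thus the task reduces to producing $h$, drawn from a finite set $F \subseteq \Gamma$, such that $\phi_j(h) Z_j = W_j'$ and $|\langle \phi_j(h) \tilde u_j, v_j^*\rangle| \ge \kappa$ for all $j$ simultaneously.

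For each configuration $(Z_j, W_j', \tilde u_j, v_j^*)_j$ the existence of such $h$ within $\mathbf{G}$ is purely algebraic. Transitivity of $\mathbf{G}$ on $\mathcal{W}$ produces $g_0 \in \mathbf{G}$ with $\phi_j(g_0) Z_j = W_j'$ for all $j$. Writing $h = g_0 g'$ with $g' \in \mathbf{G}^0$ preserves the subspace matching, and the constraint $\langle \phi_j(g_0 g') \tilde u_j, v_j^*\rangle = 0$ rewrites as $\langle \phi_j(g') \tilde u_j, \phi_j(g_0)^* v_j^*\rangle = 0$, a Zariski-closed condition on $g' \in \mathbf{G}^0$. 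By irreducibility of $\pi_j(\mathbf{G}^0)$ on $Z_j$, this condition fails to be proper only if the orthogonal projection of $\phi_j(g_0)^* v_j^*$ onto $Z_j$ vanishes, i.e., $v_j^* \perp \phi_j(g_0) Z_j = W_j'$; but $v_j^* \in W_j'$ is a unit vector, a contradiction. A finite union of proper Zariski-closed subsets of the irreducible variety $\mathbf{G}^0$ remains proper, so some $g' \in \mathbf{G}^0$ satisfies the joint non-vanishing conditions, and Zariski density of $\phi(\Gamma) \cap g_0 \mathbf{G}^0$ in the coset $g_0 \mathbf{G}^0$ then yields $h \in \phi(\Gamma)$.

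A compactness argument closes the proof: the configuration space is a finite union of products of unit spheres drawn from the finite family $\mathcal{W}_j$, hence compact; continuity of matrix coefficients provides around each configuration an open neighborhood on which a single $h$ yields a uniform positive lower bound; and extracting a finite subcover produces the finite set $F$ and the constant $\kappa > 0$. The main obstacle is the simultaneous non-vanishing across all $j$: irreducibility handles each $j$ separately on $\mathbf{G}^0$, but the joint statement hinges on $\mathbf{G}^0$ being irreducible as a variety together with the precise coupling $\phi_j(g_0) Z_j = W_j'$ that prevents the $j$th defining polynomial from being identically zero on $\mathbf{G}^0$.
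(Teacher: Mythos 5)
Your proof is correct and relies on the same core machinery as the paper: the Zariski closure of $\phi(\Gamma)$, minimality of $U_j$ to preclude identically vanishing pairings, irreducibility of a Zariski-connected piece of the closure to intersect finitely many nonempty open conditions simultaneously across all $j$, Zariski density of $\phi(\Gamma)$ to pull a witness back into $\Gamma$, and compactness of products of spheres to extract the finite set $F$ and the constant $\kappa$. The bookkeeping, however, is different enough to be worth comparing. The paper forms the closure $G$ in $\GL(\bigoplus_j V_j)$, fixes a Zariski component $G^i$ on which $(\psi_j(g)W_j)_{j=1}^k=(W_j')_{j=1}^k$ holds identically, and shows the non-vanishing of $\langle\psi_j(g)u,v\rangle$ on $G^i$ via a span argument: if $\Span\{\psi_j(g)u\colon g\in G^i\}$ were a proper subspace of $W_j'$, it would be a nonzero $G^0$-invariant subspace of smaller dimension with finite orbit, contradicting minimality. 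You instead extract the intermediate statement that $\pi_j(\mathbf{G}^0)$ acts irreducibly on each $W\in\mathcal{W}_j$ — the same minimality argument recast as an explicit lemma — and then work on a coset $g_0\mathbf{G}^0$ chosen so that the subspace-matching condition $\pi_j(g_0)Z_j=W_j'$ holds, where $Z_j=\phi_j(g_2)W_j$. The benefit of your framing is that under this matching the vector $\phi_j(h)\tilde u_j$ genuinely lies in $W_j'$, so the required lower bound $|\langle\phi_j(h)\tilde u_j,v_j^*\rangle|\geq\kappa$ is exactly a sphere-against-sphere pairing to which the compactness step applies directly; the trade-off is that your compactness argument has to run over a finite union of sphere products indexed by $(Z_j)_{j=1}^k\in\mathcal{W}$ rather than a single product for the fixed pair, which is harmless since $\mathcal{W}$ is finite. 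On the whole your version makes the role of the identity component and the subspace matching explicit, and in that respect it is the tidier packaging of the same idea.
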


The first proposition is used to show that under the main hypothesis of Theorem \ref{thm:conc} also the exterior powers of $(A_i)_{i \in \I}$ can be block diagonalised with irreducible diagonal blocks. This allows us to write the singular value function $\varphi^s$ directly as a maximum of potentials arising from restrictions to the blocks. The second proposition is used to show that these potentials can in turn be written as maxima of finite collections of quasi-multiplicative potentials. We remark that in \cite{BochiMorris2018} it was only possible to say that the singular value pressure is equal to the maximum of the pressures of potentials arising from the blocks. Under complete reducibility we can say that the potentials are also equal.

\subsection{Proof of Proposition \ref{pr:qm}}

To prove the proposition it is sufficient for us to fix an arbitrary pair $(W_j)_{j=1}^k, (W_j')_{j=1}^k \in \mathcal{W}$ and construct a finite set $F' \subset \Gamma$ and real constant $\kappa'>0$ having the claimed properties only with respect to that specific choice of $(W_j)_{j=1}^k, (W_j')_{j=1}^k \in \mathcal{W}$. We may then define $F$ to be the union of the finite sets $F'$ thus constructed for different pairs $(W_j)_{j=1}^k, (W_j')_{j=1}^k \in \mathcal{W}$ and likewise take $\kappa$ to be the minimum of the finitely many constants $\kappa'$ in order to deduce the conclusion of the proposition. We therefore fix $(W_j)_{j=1}^k, (W_j')_{j=1}^k \in \mathcal{W}$ throughout the proof and prove the proposition in this simpler form.

Define $V=\bigoplus_{j=1}^k V_j$ and $\phi(h)=\bigoplus_{j=1}^k \phi_j(h) \in \GL(V)$ for all $h \in \Gamma$. Let $G\leq \GL(V)$ denote the Zariski closure of $\phi(\Gamma)$ in $\GL(V)$ and for every $j \in \{1,\ldots,k\}$ define a representation $\psi_j \colon G \to \GL(V_j)$ by setting $\psi_j(g)=g|_{V_j}$. We have $\phi_j = \psi_j \circ \phi$ for every $j \in \{1,\ldots,k\}$ and $\psi_j \colon G \to \GL(V_j)$ is clearly Zariski continuous. We may write $G$ as the disjoint union of finitely many Zariski-connected components $G^0,\ldots,G^m$ each of which is an irreducible variety. We let $G^0$ denote the unique component which contains the identity. For each $j$ let $U_j^1,\ldots,U_j^{r_j}$ denote the orbit of $U_j$ under $\phi_j(\Gamma)$. For fixed $j$ the sets
\begin{equation}\label{eq:etc}
\{g \in G \colon \psi_j(g)U_j = U_j^i\}
\end{equation}
for $i \in \{1,\ldots,r_j\}$ clearly partition $G$. Each such set is Zariski closed since the condition $\psi_j(g)U_j = U_j^i$ is equivalent to the statement that $\psi_j(g)$ takes every element of a basis for $U_j$ into the orthogonal complement of a basis for $(U_j^i)^\perp$; thus each of these sets is the common zero locus of some finite collection of polynomial functions of the matrix entries of $g$. Since the sets $\{g \in G \colon \psi_j(g)U_j = U_j^i\}$ are Zariski closed and partition $G$, they are clopen in the Zariski topology and hence each is equal to the union of a finite collection of connected components of $G$. The set
\begin{equation}\label{eq:its-a-set}
\{g \in G \colon (\psi_j(g)W_j)_{j=1}^k = (W_j')_{j=1}^k\}
\end{equation}
is therefore also equal to the union of a finite collection of connected components of $G$, since it is equal to a finite intersection of sets of the form \eqref{eq:etc}. Since by definition there exist $h_1,h_2 \in \Gamma$ such that $(W_j)_{j=1}^k = (\phi_j(h_1)U_j)_{j=1}^k$ and $(W_j')_{j=1}^k = (\phi_j(h_2)U_j)_{j=1}^k$, the set \eqref{eq:its-a-set} contains $\phi(h_2h_1^{-1})$ and is therefore nonempty. We conclude that at least one of the Zariski-connected components of $G$ must be a subset of \eqref{eq:its-a-set}.

For the remainder of the proof we fix a component $G^i$ of $G$ such that $(\psi_j(g)W_j)_{j=1}^k = (W_j')_{j=1}^k$ for all $g \in G^i$, which is possible by the preceding discussion. We claim that if $j \in \{1,\ldots,k\}$, $u \in W_j$, and $v \in W_j'$ with $u,v \neq 0$, then the set
\begin{equation}\label{eq:nonempty}
\{g \in G^i \colon \langle \psi_j(g)u,v\rangle \neq 0\}
\end{equation}
is nonempty. Fix $j$ and suppose for a contradiction that the set is empty. In this case
\[
W=\Span\{ \psi_j(g)u \colon g \in G^i\}
\]
defines a vector subspace of $W_j'$ which contains at least one nonzero vector but does not contain $v$. In particular this subspace is a nonzero proper subspace of $W_j'$. Since $G^0G^i=G^i$ we moreover have $\psi_j(g)W=W$ for every $g \in G^0$. If $g_1,g_2 \in G$ belong to the same component $G^r$ of $G$ then $g_1^{-1}g_2 \in g_1^{-1}G^r=G^0$ and therefore $\psi_j(g_1^{-1}g_2)W=W$, so $\psi_j(g_1)W=\psi_h(g_2)W$ whenever $g_1$ and $g_2$ belong to the same component of $G$. Therefore $g \mapsto \psi_j(g)W$ takes only finitely many values as $g$ varies over $G$. Hence the set
\[
\{\phi_j(h)W \colon h \in \Gamma \} = \{\psi_j(\phi(h))W \colon h \in \Gamma\} \subseteq \{\psi_j(g)W \colon g \in G\}
\]
is finite and we have $0 < \dim W < \dim W_j' = \dim U_j$. This contradicts the definition of $U_j$ as a nonzero subspace of $V_j$ with finite orbit under $\phi_j(\Gamma)$ and having the least dimension among all such subspaces. We conclude that the set \eqref{eq:nonempty} must be nonempty as claimed.

Now let $u_j \in W_j$ and $v_j \in W_j'$ be nonzero vectors for each $j \in \{1,\ldots,k\}$. By the previous claim the set
\[
\{g \in G^i \colon \langle \psi_j(g)u_j,v_j\rangle \neq 0\}
\]
is nonempty for each $j$, and it is clearly Zariski open. Since $G^i$ is an irreducible variety all of its nonempty Zariski open subsets are Zariski dense, so the intersection
\[
\bigcap_{j=1}^k \{g \in G^i \colon \langle \psi_j(g)u_j,v_j\rangle \neq 0\}
\]
is nonempty and Zariski open. The set $\{\phi(h) \colon h \in \Gamma\}$ is by definition Zariski dense in $G$, so its intersection with this nonempty open set is nonempty. Hence there exists $h \in \Gamma$ such that $\langle \psi_j(\phi(h))u_j,v_j\rangle \neq 0$ for every $j \in \{1,\ldots,k\}$. This means precisely that  $\langle \phi_j (h)u_j,v_j\rangle \neq 0$ for every $j \in \{1,\ldots,k\}$.

For each $j$ let $S_{W_j}$ and $S_{W_j'}$ denote the unit spheres of $W_j$ and $W_j'$ respectively. The preceding paragraph implies that for every pair $((u_j)_{j=1}^k,(v_j)_{j=1}^k) \in (\prod_{j=1}^k S_{W_j}) \times (\prod_{j=1}^k S_{W_j'})$ there exists $h \in \Gamma$ such that $\langle \phi_j (h)u_j,v_j\rangle \neq 0$ for all $j \in \{1,\ldots,k\}$. But such an $h$ clearly also has this property for every $((u_j')_{j=1}^k,(v_j')_{j=1}^k) \in (\prod_{j=1}^k S_{W_j}) \times (\prod_{j=1}^k S_{W_j'})$ which is sufficiently close to $((u_j)_{j=1}^k,(v_j)_{j=1}^k)$. By the compactness of $(\prod_{j=1}^k S_{W_j}) \times (\prod_{j=1}^k S_{W_j'})$ it follows that there is a finite set $F \subseteq \Gamma$ such that 
\[
\max_{h \in F} \min_{j \in \{1,\ldots,k\}} |\langle \phi_j(h)u_j,v_j\rangle| \neq 0
\]
for every $((u_j)_{j=1}^k,(v_j)_{j=1}^k) \in (\prod_{j=1}^k S_{W_j}) \times (\prod_{j=1}^k S_{W_j'})$. By continuity and compactness it follows that the real number
\[
\kappa=\min_{(u_j)_{j=1}^k\in \prod_{j=1}^k S_{W_j}} \min_{(v_j)_{j=1}^k\in \prod_{j=1}^k S_{W_j'}}  \max_{h \in F} \min_{j \in \{1,\ldots,k\}} |\langle \phi_j(h)u_j,v_j\rangle|
\]
must be strictly positive. By homogeneity it follows that if for every $j \in \{1,\ldots,k\}$ we let $u_j \in W_j$, $v_j \in W_j'$ be arbitrary vectors then there exists $h \in F$ such that
\[
|\langle \phi_j(h)u_j,v_j\rangle| \geq \kappa \|u_j\|\|v_j\|
\]
simultaneously for every $j \in \{1,\ldots,k\}$. 

Now let $g_1,g_2 \in \Gamma$ be arbitrary. For each $j \in \{1,\ldots,k\}$ there exist a unit vector $u_j \in W_j$ such that $\|\phi_j(g_2)u_j\|=\|\phi_j(g_2)|_{W_j}\|$ and a unit vector $v_j \in \phi_j(g_1)W_j'$ such that $\|\phi_j(g_1)^Tv_j\|=\|\phi_j(g_1)^T|_{\phi_j(g_1)W_j'}\|=\|\phi_j(g_1)|_{W_j'}\|$. We may therefore choose $h \in F$ such that for all $j \in \{1,\ldots,k\}$ we have
\[
|\langle \phi_j(h)\phi_j(g_2)u_j,\phi_j(g_1)^Tv_j\rangle| \geq \kappa \|\phi_j(g_2)u_j\|\|\phi_j(g_1)^Tv_j\|.
\]
But this implies the inequality 
\begin{align*}
  \|\phi_j(g_1hg_2)|_{W_j}\| &\geq \|\phi_j(g_1hg_2)u_j\| \geq |\langle \phi_j(g_1)\phi_j(h)\phi_j(g_2)u_j,v_j\rangle| \\ 
  &= |\langle \phi_j(h)\phi_j(g_2)u_j,\phi_j(g_1)^Tv_j\rangle| \geq  \kappa \|\phi_j(g_2)u_j\|\|\phi_j(g_1)^Tv_j\| \\ 
  &= \kappa \|\phi_j(g_1)|_{W_j}\| \|\phi_j(g_2)|_{W_j'}\|
\end{align*}
simultaneously for all $j \in \{1,\ldots,k\}$. The proposition is proved.

\subsection{Applications to quasi-multiplicativity}

The following result extends the main technical tool of \cite{BochiMorris2018} to the case of a countably infinite index set $\I$ and also makes explicit the dependence on the parameters $\beta_i$.

\begin{theorem} \label{thm:technical-irreducible}
Let $k \geq 1$, let $\I$ be finite or countably infinite, and for each $j \in \{1,\ldots,k\}$ let $V_j$ be a real or complex inner product space and let $\A^{(j)}=(A^{(j)}_i)_{i\in\I}\in\GL(V_j)^\I$ be irreducible. For each $j \in \{1,\ldots,k\}$ let $\ell_j \geq 1$ denote the smallest possible dimension of a nonzero subspace of $V_j$ which has finite orbit under the  action of $\A^{(j)}$. Then there exist constants $\kappa,\tau>0$, an integer $p \geq 1$, finite sets $\mathscr{W}_1,\ldots,\mathscr{W}_p \subseteq \prod_{j=1}^k \Gr_{\ell_j}(V_j)$, and a finite set $F \subset \I^*$ such that for every $t \in \{1,\ldots,p\}$ and all non-negative real numbers $\beta_1,\ldots,\beta_k \geq 0$ the function
\[
\Psi^{(t)}(\iii)=\max_{(W_j)_{j=1}^k \in \mathscr{W}_t} \prod_{j=1}^k \|A_\iii^{(j)}|_{W_j}\|^{\beta_j}
\]
and the number $\beta = \sum_{j=1}^k \beta_j \ge 0$ satisfy
\begin{equation}\label{eq:qm2}
\Psi^{(t)}(\iii\jjj) \le \Psi^{(t)}(\iii)\Psi^{(t)}(\jjj) \le \kappa^{-\beta} \max_{\kkk \in F} \Psi^{(t)}(\iii\kkk\jjj)
\end{equation}
for all $\iii,\jjj \in \I^*$, and 
\begin{equation}\label{eq:max-equivalence}
\tau^{\beta} \prod_{j=1}^k \|A_\iii^{(j)}\|^{\beta_j} \leq \max_{t \in \{1,\ldots,p\}} \Psi^{(t)}(\iii) \leq \prod_{j=1}^k \|A_\iii^{(j)}\|^{\beta_j}.
\end{equation}
Furthermore $\ell_j$ divides $\dim V_j$ for every $j \in \{1,\ldots,k\}$ and the integer $p$ satisfies
\begin{equation} \label{eq:p-number}
  p \leq \min_{t \in \{1,\ldots,k\}} \prod_{\atop{j \in \{1,\ldots,k\}}{j \neq t}} \frac{\dim V_j}{\ell_j} \le \prod_{j=1}^{k-1} \dim V_j.
\end{equation}
In particular, if $\A^{(j)}$ is strongly irreducible for at least $k-1$ values of $j$, then $p=1$.
\end{theorem}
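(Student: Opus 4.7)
The plan is to combine the structural machinery afforded by Proposition~\ref{pr:cr} with a suitable extension of the Zariski-density argument of Proposition~\ref{pr:qm}. For each $j$ I would first let $G_j$ denote the Zariski closure of the semigroup $\{A^{(j)}_\iii \colon \iii \in \I^*\}$ in $\GL(V_j)$. Irreducibility of $\A^{(j)}$ makes $G_j$ act irreducibly on $V_j$, and the argument inside Proposition~\ref{pr:cr} shows that the identity component $G_j^0$ is reductive. Because a connected algebraic group cannot act nontrivially on a finite set, $G_j^0$ stabilises $U_j$; by the minimality of $\ell_j$ the subspace $U_j$ must then be a simple $G_j^0$-submodule. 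Clifford's theorem applied to the irreducible $G_j$-module $V_j$ produces a decomposition $V_j = W_j^1 \oplus \cdots \oplus W_j^{r_j}$ into simple $G_j^0$-submodules of dimension $\ell_j$, each realised as an element of the $G_j$-orbit $\mathcal{O}_j$ of $U_j$ (which coincides with the forward semigroup orbit because $\mathcal{O}_j$ is finite and $G_j$ acts on it by Zariski continuity). This yields the divisibility $\ell_j \mid \dim V_j$ with $r_j = \dim V_j/\ell_j$, and strong irreducibility of $\A^{(j)}$ forces $\ell_j = \dim V_j$ and $r_j = 1$, since otherwise invertibility of the $A^{(j)}_i$ would turn a finite forward orbit into a finite $\A^{(j)}$-stable collection of nonzero proper subspaces.

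Next I would construct the sets $\mathscr{W}_t$. Fix $t_0 \in \{1,\ldots,k\}$ minimising $\prod_{j \ne t_0} r_j$, and for each tuple $\underline{i} = (i_j)_{j \ne t_0} \in \prod_{j \ne t_0} \{1,\ldots,r_j\}$ let $\mathscr{W}_{\underline{i}}$ be the smallest forward-$\Gamma$-invariant subset of $\prod_j \mathcal{O}_j$ containing the $r_{t_0}$ base tuples $\{((W_j^{i_j})_{j \ne t_0}, W_{t_0}^m) \colon m=1,\ldots,r_{t_0}\}$. Indexing $t$ by $\underline{i}$ yields $p = \prod_{j \ne t_0} r_j$, matching the claimed bound; under the strong-irreducibility hypothesis at least $k-1$ of the $r_j$'s equal one, so choosing $t_0$ to be any exceptional index gives $p=1$. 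Sub-multiplicativity of $\Psi^{(\underline{i})}$ is immediate from forward invariance combined with $\|A^{(j)}_{\iii\jjj}|_{W_j}\| \le \|A^{(j)}_\iii|_{A^{(j)}_\jjj W_j}\| \cdot \|A^{(j)}_\jjj|_{W_j}\|$, since $(A^{(j)}_\jjj W_j)_j$ still lies in $\mathscr{W}_{\underline{i}}$. The upper bound in \eqref{eq:max-equivalence} is trivial; for the lower bound, a standard spanning argument applied to the decomposition $V_j = W_j^1 \oplus \cdots \oplus W_j^{r_j}$ produces constants $c_j > 0$ such that every $A \in \GL(V_j)$ satisfies $\max_{i} \|A|_{W_j^i}\| \ge c_j\|A\|$, so choosing $i_j(\iii)$ to maximise this bound for each $j \ne t_0$ (and some $m$ achieving the same bound in the $t_0$-coordinate) places the resulting tuple inside the base set of $\mathscr{W}_{(i_j(\iii))_{j \ne t_0}}$, yielding the bound with $\tau = \min_j c_j$.

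The main obstacle is the quasi-multiplicativity inequality in \eqref{eq:qm2}, which demands a single finite set $F \subset \I^*$ and constant $\kappa > 0$ governing all pairs of optimisers inside each $\mathscr{W}_{\underline{i}}$. Proposition~\ref{pr:qm} as stated only handles pairs lying in a single diagonal $\Gamma$-orbit, whereas the optimisers of $\Psi^{(\underline{i})}$ may lie in distinct diagonal orbits inside $\mathscr{W}_{\underline{i}} \subseteq \prod_j \mathcal{O}_j$. I would extend Proposition~\ref{pr:qm} by revisiting its proof: the pivotal nonemptiness of $\{g \in G \colon \langle \psi_j(g)u, v \rangle \ne 0\}$ for nonzero $u \in W_j$, $v \in W_j'$ uses only irreducibility of $\phi_j$, which guarantees $\linspan\{\psi_j(g)u \colon g \in G\} = V_j$, and does not rely on any diagonal-orbit relation between $(W_j)$ and $(W_j')$. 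The subsequent compactness of unit spheres and Zariski density of $\phi(\Gamma)$ in $G$ then produce a uniform finite $F$ and $\kappa > 0$ valid for all pairs $(W_j),(W_j') \in \prod_j \mathcal{O}_j$. Feeding this extension the optimisers $(W_j^{(1)})$ and $(W_j^{(2)})$ of $\Psi^{(\underline{i})}(\iii)$ and $\Psi^{(\underline{i})}(\jjj)$ delivers $\kkk \in F$ with $\|A^{(j)}_{\iii\kkk\jjj}|_{W_j^{(2)}}\| \ge \kappa \|A^{(j)}_\iii|_{W_j^{(1)}}\| \|A^{(j)}_\jjj|_{W_j^{(2)}}\|$ simultaneously for all $j$, and raising each factor to the power $\beta_j$ completes the verification of \eqref{eq:qm2}.
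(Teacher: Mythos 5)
Your structural setup is sound but more heavy-handed than the paper's: you invoke the Zariski closure, reductivity of $G_j^0$, and Clifford's theorem to obtain the direct sum decomposition $V_j = W_j^1 \oplus \cdots \oplus W_j^{r_j}$ into $\ell_j$-dimensional pieces and the divisibility $\ell_j \mid \dim V_j$, whereas the paper achieves the same conclusion by a short dimension-counting argument (take a maximal internal direct sum of elements of the orbit of $U_j$ and observe that any further orbit element would either extend it or intersect it in a subspace of dimension strictly between $0$ and $\ell_j$, contradicting minimality). Your $\tau$-bound $\max_i \|A|_{W_j^i}\| \geq c_j\|A\|$ is likewise correct and essentially identical to the paper's $\tau_1$.

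The genuine gap is in the quasi-multiplicativity step. You construct $\mathscr{W}_{\underline{i}}$ as the forward $\Gamma$-closure of the $r_{t_0}$ base tuples $\{((W_j^{i_j})_{j\ne t_0}, W_{t_0}^m) : m = 1,\ldots,r_{t_0}\}$, and you correctly notice that this set need not be a single diagonal $\Gamma$-orbit, so Proposition~\ref{pr:qm} does not apply directly. However, your proposed extension of Proposition~\ref{pr:qm} to \emph{arbitrary} pairs $(W_j),(W_j') \in \prod_j \mathcal{O}_j$ is false. The nonemptiness of each individual set $\{g \in G : \langle \psi_j(g)u,v\rangle \neq 0\}$ does follow from irreducibility, but this is not sufficient: one needs all $k$ such sets to meet a \emph{single} irreducible component $G^i$ of $G$ so that the finite intersection of Zariski-dense open subsets is nonempty. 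In the paper that component is produced precisely from the assumption that $(W_j)$ and $(W_j')$ lie in the same diagonal $G$-orbit (so there is a component on which $\psi_j(g)W_j = W_j'$ for every $j$, whence $\Span\{\psi_j(g)u : g\in G^i\} \subseteq W_j'$ and the minimality of $\ell_j$ applies). If $(W_j)$ and $(W_j')$ are not diagonally related, no such component need exist, and the intersection can genuinely be empty. For a concrete illustration take $k=2$, $V_1=V_2=\R^2$, $\phi_1=\phi_2$ equal to the dihedral representation of order $8$, $\mathcal{O}_1=\mathcal{O}_2 = \{\R e_1,\R e_2\}$, and $(W_1,W_2)=(\R e_1,\R e_1)$, $(W_1',W_2')=(\R e_1,\R e_2)$; then with $u_1=u_2=e_1$, $v_1=e_1$, $v_2=e_2$ the set $\{g : \langle g e_1,e_1\rangle\ne 0\}$ and $\{g : \langle g e_1,e_2\rangle\ne 0\}$ are disjoint. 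Relatedly, if you let $\Psi^{(\underline{i})}$ be the maximum over a \emph{union} of transitivity classes, this maximum of several quasi-multiplicative potentials need not itself be quasi-multiplicative: the optimiser for $\iii$ and the optimiser for $\jjj$ may lie in different classes, and there is no mechanism to compare them.

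The fix, as in the paper, is to take the $\mathscr{W}_t$ to be the transitivity classes (diagonal $\Gamma$-orbits) of $\prod_j \mathcal{O}_j$, so that Proposition~\ref{pr:qm} applies verbatim. The price of doing so is that the max-equivalence lower bound requires more care: since one cannot freely vary the $t_0$-component inside a single transitivity class, the paper fixes the $t_0$-component to the basepoint $U_{t_0}^1$ and compensates by a conjugation trick — for each $i$ choose a word $\lll_i$ with $A^{(t_0)}_{\lll_i}U_{t_0}^1 = U_{t_0}^i$, and rewrite $\|A^{(j)}_\iii\|$ as $\|A^{(j)}_\iii A^{(j)}_{\lll_i}(A^{(j)}_{\lll_i})^{-1}\|$ so that the resulting optimal tuple $(A^{(j)}_{\lll_i}U_j^{i_j})_j$ lies in the $\Gamma$-orbit of a base tuple with fixed $t_0$-coordinate. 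This is what allows the count $p \leq \prod_{j\ne t_0} m_j$ while keeping each $\mathscr{W}_t$ a single orbit. Your argument as written misses both of these ingredients, so as it stands the proof of \eqref{eq:qm2} does not go through.
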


\begin{proof}
For each $j \in \{1,\ldots,k\}$ choose an $\ell_j$-dimensional subspace $U_j \subseteq V_j$ which has finite orbit under the action of $\A^{(j)}$ and let $\{U_j^1,\ldots,U_j^{n_j}\}$ denote the orbit of $U_j$. Since $\Span \bigcup_{i=1}^{n_j} U_j^i$ is invariant under $\A^{(j)}$ and has nonzero dimension, by irreducibility it must equal $V_j$. 

We claim that for each $j \in \{1,\ldots,k\}$ we may write $V_j$ as a direct sum of a subset of the spaces $U_j^1,\ldots,U_j^{n_j}$. To see this, fix $j$ and let $U_j^{i_1},\ldots,U_j^{i_{m_j}}$ be elements of the orbit of $U_j$ which form a direct sum $U_j^{i_1}\oplus \cdots \oplus U_j^{i_{m_j}}$ with $m_j$ as large as possible. Since $U_j^1$ itself forms a direct sum with a single summand, the set of direct sums among the spaces $U_j^i$ is nonempty. The set of all such direct sums is clearly also finite and it follows that $m_j$ is well-defined. We wish to show that $U_j^{i_1}\oplus \cdots \oplus U_j^{i_{m_j}}=V_j$, and to demonstrate this it is sufficient to show that $m_j\ell_j=\dim V_j$. Obviously $\ell_jm_j= \dim U_j^{i_1}\oplus \cdots \oplus U_j^{i_{m_j}} \leq \dim V_j$, so suppose for a contradiction that $m_j\ell_j<\dim V_j$. Since $\Span \bigcup_{i=1}^{n_j} U_j^i=V_j$ we must be able to choose $U_j^{i'}$ such that $U_j^{i'}$ is not a subspace of $U_j^{i_1}\oplus \cdots \oplus U_j^{i_{m_j}}$, which implies $\dim (U_j^{i'} \cap (U_j^{i_1}\oplus \cdots \oplus U_j^{i_{m_j}}))<\dim U_j^{i'}=\ell_j$. On the other hand we cannot have $\dim (U_j^{i'} \cap (U_j^{i_1}\oplus \cdots \oplus U_j^{i_{m_j}}))=0$ since then $U_j^{i_1}\oplus \cdots \oplus U_j^{i_{m_j}} \oplus U_j^{i'}$ would be a direct sum with more than $m_j$ summands, contradicting the maximality of $m_j$; and  if $0<\dim (U_j^{i'} \cap (U_j^{i_1}\oplus \cdots \oplus U_j^{i_{m_j}}))<\ell_j$ then the subspace $U_j^{i'} \cap (U_j^{i_1}\oplus \cdots \oplus U_j^{i_{m_j}})$ has finite orbit under the action of $\A^{(j)}$ but has dimension strictly between $0$ and $\ell_j$, contradicting the definition of $\ell_j$. We conclude that the subspace $U_j^{i'}$ cannot exist since there are no viable possibilities for the dimension of the subspace $U_j^{i'} \cap (U_j^{i_1}\oplus \cdots \oplus U_j^{i_{m_j}})$ and therefore the inequality $m_j\ell_j<\dim V_j$ must be false. We conclude that $V_j$ is equal to the direct sum of $m_j$ spaces $U_j^{i_1},\ldots,U_j^{i_{m_j}}$.

By permuting the labels of the spaces $U_j^i$ if necessary, for the remainder of the proof we assume without loss of generality that $V_j=U_j^1\oplus U_j^2 \oplus \cdots \oplus U_j^{m_j}$ for each $j \in \{1,\ldots,k\}$. We observe that $m_j\ell_j=\dim V_j$ and in particular $\ell_j$ divides $\dim V_j$ for each $j$ as required. By permuting the indices $j \in \{1,\ldots,k\}$ we further assume without loss of generality that $m_1=\max_{j \in \{1,\ldots,k\}}m_j$ and therefore
\begin{equation} \label{eq:p-number1}
  \min_{t \in \{1,\ldots,k\}} \prod_{\atop{j \in \{1,\ldots,k\}}{j \neq t}} \frac{\dim V_j}{\ell_j}=\prod_{j=2}^k m_j.
\end{equation}
We next claim that there exists $\tau_1>0$ such that for every $j \in \{1,\ldots,k\}$ we have
\begin{equation}\label{eq:what-tau1-does}
\max_{i \in \{1,\ldots,m_j\}} \|B|_{U_j^i}\| \geq \tau_1\|B\|
\end{equation}
for every $B \in \End(V_j)$, where the set $\End(V)$ is the collection of all endomorphisms of $V$, i.e. the collection of all linear transformations $V \to V$. Clearly it suffices to prove this claim individually for each $j \in \{1,\ldots,k\}$ and then take $\tau_1$ to be the minimum of the $k$ distinct constants thus obtained. For fixed $j$ it is in turn clearly sufficient to show that
\[
\max_{i \in \{1,\ldots,m_j\}} \|B|_{U_j^i}\| \geq \tau_1
\]
for every $B \in \End(V_j)$ with norm $\|B\|=1$, and by compactness this will follow if $ \max_{i \in \{1,\ldots,m_j\}} \|B|_{U_j^i}\|>0$ for every nonzero  $B \in \End(V_j)$. But if $\max_{i \in \{1,\ldots,m_j\}} \|B|_{U_j^i}\|=0$ then $B$ must be identically zero on $\bigcup_{i=1}^{m_j} U_j^i$ and is therefore also identically zero on $V_j = \Span \bigcup_{i=1}^{m_j} U_j^i$. The claim follows.

Define
\[
\mathcal{W}=\{ (U_j^{i_j})_{j=1}^k \colon i_j \in \{1,\ldots,n_j\}\text{ for every }j \in \{1,\ldots,k\}\} \subseteq \prod_{j=1}^k \Gr_{\ell_j}(V_j)
\]
which is clearly a finite set. Each $\iii \in \I^*$ induces a permutation of $\mathcal{W}$ by the map $(U_j^{i_j})_{j=1}^k\mapsto (A^{(j)}_\iii U_j^{i_j})_{j=1}^k$, so we may partition $\mathcal{W}$ into finitely many disjoint sets $\mathscr{W}_1,\ldots,\mathscr{W}_q$ each of which is closed with respect to this action and such that the action of $\I^*$ by these permutations is transitive on each $\mathscr{W}_t$. By relabelling $\mathscr{W}_1,\ldots,\mathscr{W}_q$ if necessary, we assume without loss of generality that
\begin{equation}\label{eq:what-w-does}
\{(U_j^{i_j})_{j=1}^k \colon i_1=1\text{ and }i_j \in \{1,\ldots,m_j\} \text{ for all }j \in \{2,\ldots,k\}\} \subseteq \bigcup_{t=1}^p \mathscr{W}_t
\end{equation}
where $p \leq \prod_{j=2}^k m_j$. This is possible since the former set contains exactly $\prod_{j=2}^k m_j$ distinct elements and these clearly must be distributed among no more than $\prod_{j=2}^k m_j$ of the distinct transitivity classes $\mathscr{W}_t$. By recalling \eqref{eq:p-number1}, we have now shown \eqref{eq:p-number}.

To see the final claim of the theorem, suppose that $\A^{(j)}$ is strongly irreducible for at least $k-1$ values of $j$. Since $\ell_j \ge 1$ is the smallest dimension of a nonzero subspace of $V_j$ which has finite orbit under the action of $\A^{(j)}$, we see that $\ell_j = \dim V_j$ whenever $\A^{(j)}$ is strongly irreducible. Therefore $\dim V_j/\ell_j = 1$ for at least $k-1$ values of $j$ and \eqref{eq:p-number} gives
\begin{equation*}
  p \leq \min_{t \in \{1,\ldots,k\}} \prod_{\atop{j \in \{1,\ldots,k\}}{j \neq t}} \frac{\dim V_j}{\ell_j} = 1
\end{equation*}
as claimed.

Let us next prove \eqref{eq:max-equivalence}. We first claim that for every $i \in \{1,\ldots,n_1\}$ there exists $\lll_i \in \I^*$ such that $A_{\lll_i}^{(1)}U_1^1=U_1^i$. Indeed, by the definition of $U_1^1,\ldots,U_1^{n_1}$ there exist $\kkk_1,\ldots,\kkk_{n_1} \in \I^*$ such that $U_1^i=A_{\kkk_i}^{(1)}U_1$ for every $i \in \{1,\ldots,n_1\}$. We allow here $\kkk_i$ to be an empty word, in which case $A_{\kkk_i}^{(1)}$ is the identity map. For each $i_0 \in \{1,\ldots,n_1\}$ the map $U_1^i \mapsto A_{\kkk_{i_0}}^{(1)}U_1^i$ induces a permutation of the set $\{U_1^1,\ldots,U_1^{n_1}\}$ and therefore the map $U_1^i \mapsto A_{\kkk_{i_0}^{n_1!-1}}^{(1)}U_1^i$ induces its inverse permutation. We therefore have $A_{\kkk_i}^{(1)}A_{\kkk_1^{n_1!-1}}^{(1)}U_1^1=A_{\kkk_i}^{(1)}U_1=U_1^i$ for each $i \in \{1,\ldots,n_j\}$ and the claim follows by taking $\lll_i=\kkk_i\kkk_1^{n_1!-1}$ for each $i$. Recall that for every $t \in \{1,\ldots,p\}$ and $\iii \in \I^*$,
\[
  \Psi^{(t)}(\iii)=\max_{(W_j)_{j=1}^k \in \mathscr{W}_t} \prod_{j=1}^k \|A_\iii^{(j)}|_{W_j}\|^{\beta_j},
\]
and define also
\[
  \tau_2=\min_{j \in \{1,\ldots,k\}} \min_{i \in \{1,\ldots,n_1\}} \bigl(\|A_{\lll_i}^{(j)}\|^{-1} \|(A_{\lll_i}^{(j)})^{-1}\|^{-1} \bigr).
\]
We will show that
\[
  (\tau_1\tau_2)^{\beta} \prod_{j=1}^k \|A_\iii^{(j)}\|^{\beta_j} \leq \max_{t \in \{1,\ldots,p\}} \Psi^{(t)}(\iii)
\]
for every $\iii \in \I^*$. Given $\iii \in \I^*$, using \eqref{eq:what-tau1-does} we may choose an integer $i \in \{1,\ldots,m_1\}$ such that $\|A_\iii^{(1)}|_{U_j^i}\| \geq \tau_1 \|A_\iii^{(1)}\|$. We then have 
\begin{equation}
\begin{split}
  \prod_{j=1}^k \|A_\iii^{(j)}\|^{\beta_j} &\leq \tau^{-\beta_1}_1 \|A_\iii^{(1)}|_{U_j^i}\|^{\beta_j}  \prod_{j=2}^k \|A_\iii^{(j)}\|^{\beta_j} \\
  &=\tau^{-\beta_1}_1 \|A_\iii^{(1)}A_{\lll_i}^{(1)}(A_{\lll_i}^{(1)})^{-1} |_{U_j^i}\|^{\beta_j}  \prod_{j=2}^k \|A_\iii^{(j)}A_{\lll_i}^{(j)}(A_{\lll_i}^{(j)})^{-1}\|^{\beta_j} \\
  &\leq \tau^{-\beta_1}_1\biggl(\prod_{j=1}^k \|(A_{\lll_i}^{(j)})^{-1}\|^{\beta_j}\biggr) \|A_\iii^{(1)}A_{\lll_i}^{(1)}|_{(A_{\lll_i}^{(1)})^{-1}U_j^i}\|^{\beta_j}  \prod_{j=2}^k \|A_\iii^{(j)}A_{\lll_i}^{(j)}\|^{\beta_j}\\\label{eq:last}
  &\leq \tau^{-\beta_1}_1\biggl(\prod_{j=1}^k \|(A_{\lll_i}^{(j)})^{-1}\|^{\beta_j}\biggr) \|A_\iii^{(1)}A_{\lll_i}^{(1)}|_{U_1^1}\|^{\beta_j}  \prod_{j=2}^k \|A_\iii^{(j)}A_{\lll_i}^{(j)}\|^{\beta_j}
\end{split}
\end{equation}
where we have used the definition of $\lll_i$. Now define $i_1=1$ and, again using \eqref{eq:what-tau1-does}, choose $i_2,\ldots,i_k$ such that $i_j \in \{1,...,m_j\}$ and $\|A_\iii^{(j)}A_{\lll_i}^{(j)}|_{U_j^{i_j}}\|\geq \tau_1 \|A_\iii^{(j)}A_{\lll_i}^{(j)}\|$ for each $j \in \{2,\ldots,k\}$. Combining this property with \eqref{eq:last} we have
\begin{align*}
  \prod_{j=1}^k \|A_\iii^{(j)}\|^{\beta_j} &\leq \tau^{-\beta}_1\biggl(\prod_{j=1}^k \|(A_{\lll_i}^{(j)})^{-1}\|^{\beta_j}\biggr)\biggl(\prod_{j=1}^k \|A_\iii^{(j)}A_{\lll_i}^{(j)}|_{U_j^{i_j}}\|^{\beta_j} \biggr) \\
  &\leq \tau^{-\beta}_1\biggr(\prod_{j=1}^k \|(A_{\lll_i}^{(j)})^{-1}\|^{\beta_j}\|A_{\lll_i}^{(j)}\|^{\beta_j}\biggr)\biggr(\prod_{j=1}^k \|A_\iii^{(j)}|_{A_{\lll_i}^{(j)}U_j^{i_j}}\|^{\beta_j} \biggr) \\
  &\leq  (\tau_1\tau_2)^{-\beta}\prod_{j=1}^k \|A_\iii^{(j)}|_{A_{\lll_i}^{(j)}U_j^{i_j}}\|^{\beta_j}.
\end{align*}
By \eqref{eq:what-w-does}, the tuple $(U_j^{i_j})_{j=1}^k$ belongs to some  $\mathscr{W}_t$ such that $t \in \{1,\ldots,p\}$. Since each $\mathscr{W}_t$ is invariant under each of the maps $(W_j)_{j=1}^k \mapsto (A_\jjj^{(j)}W_j)_{j=1}^k$ for $\jjj \in \I^*$, we have
 $(A_{\lll_i}^{(j)}U_j^{i_j})_{j=1}^k \in \mathscr{W}_t$ also. We conclude that
 \[
   \prod_{j=1}^k \|A_\iii^{(j)}\|^{\beta_j} \leq (\tau_1\tau_2)^{-\beta} \max_{t \in \{1,\ldots,p\}}\max_{(W_j)_{j=1}^k \in \mathscr{W}_t} \prod_{j=1}^k \|A_\iii^{(j)}|_{W_j}\|^{\beta_j}.
\]
The inequality
\[
\max_{t \in \{1,\ldots,p\}}\max_{(W_j)_{j=1}^k \in \mathscr{W}_t} \prod_{j=1}^k \|A_\iii^{(j)}|_{W_j}\|^{\beta_j} \leq \prod_{j=1}^k \|A_\iii^{(j)}\|^{\beta_j}
\]
is trivial. Since $\iii$ was arbitrary we have proved \eqref{eq:max-equivalence}.

It remains only to prove \eqref{eq:qm2}, for which we use Proposition \ref{pr:qm}. We note that $\I^*$ is a semigroup with respect to the operation $(\iii,\jjj) \mapsto \iii\jjj$, and for each $j \in \{1,\ldots,k\}$ the map $\phi_j \colon \I^* \to \GL(V_j)$ defined by $\phi_j(\iii)=A_\iii^{(j)}$ is an irreducible representation. For each $\mathscr{W}_t$ let $(\hat U_j)_{j=1}^k \in \mathscr{W}_t$ be arbitrary and observe that
\[
  \mathscr{W}_t = \{(\phi_j(\iii) \hat U_j)_{j=1}^k \colon \iii \in \I^*\}
\]
since the action of $\I^*$ by $(\iii, (W_j)_{j=1}^k) \mapsto (\phi_j(\iii)W_j)_{j=1}^k$ is by definition transitive on $\mathscr{W}_t$. By Proposition \ref{pr:qm} there exist for each $t \in \{1,\ldots,p\}$ a finite set $F_t \subset \I^*$ and a real number $\kappa_t>0$ such that for every $(W_j)_{j=1}^k, (W_j')_{j=1}^k \in \mathscr{W}_t$ there exists $\kkk \in F_t$ such that
\begin{equation}\label{eq:that-one}
  \|A_{\iii}^{(j)}A_{\kkk}^{(j)}A_{\jjj}^{(j)} |_{W_j}\| \geq \kappa_t \|A_{\iii}^{(j)}|_{W_j'}\| \|A_{\jjj}^{(j)}|_{W_j}\|
\end{equation}
for all $j \in \{1,\ldots,k\}$. Define $F=\bigcup_{t=1}^p F_t$ and $\kappa = \min_{t \in \{1,\ldots,p\}} \kappa_t$. By \eqref{eq:that-one} it follows easily that for every $(W_j)_{j=1}^k, (W_j')_{j=1}^k \in \mathscr{W}_t$ we have
\[
  \max_{\kkk \in F} \prod_{j=1}^k \|A_\iii^{(j)}A_\kkk^{(j)}A_\jjj^{(j)}|_{W_j}\|^{\beta_j} \geq \kappa^{\beta} \biggl(\prod_{j=1}^k \|A_\iii^{(j)}|_{W_j'}\|^{\beta_j}\biggr)\biggl(\prod_{j=1}^k \|A_\jjj^{(j)}|_{W_j}\|^{\beta_j}\biggr)
\]
and by taking the maximum with respect to $(W_j)_{j=1}^k, (W_j')_{j=1}^k \in \mathscr{W}_t$ we find that
\[
  \max_{\kkk \in F} \Psi^{(t)}(\iii \kkk \jjj ) \geq \kappa^{\beta} \Psi^{(t)}(\iii)\Psi^{(t)}(\jjj)
\]
for every $\iii,\jjj \in \I^*$ and $t \in \{1,\ldots,p\}$. On the other hand if $t$ is fixed then for every $\iii, \jjj \in \I^*$ and every $(W_j)_{j=1}^k \in \mathscr{W}_t$ we clearly have 
\begin{align*}
  \prod_{j=1}^k \|A_\iii^{(j)}A_\jjj^{(j)} |_{W_j}\|^{\beta_j} &\leq \prod_{j=1}^k \|A_\iii^{(j)}|_{A_\jjj^{(j)}W_j} \|^{\beta_j} \|A_\jjj^{(j)} |_{W_j}\|^{\beta_j} \\
  &=\biggl(\prod_{j=1}^k \|A_\iii^{(j)}|_{A_\jjj^{(j)}W_j} \|^{\beta_j} \biggr) \biggl(\prod_{j=1}^k \|A_\jjj^{(j)} |_{W_j}\|^{\beta_j}\biggr) \leq \Psi^{(t)}(\iii)\Psi^{(t)}(\jjj)
\end{align*}
where we have used the fact that $(A_\jjj^{(j)}W_j)_{j=1}^k \in \mathscr{W}_t$ by the definition of $\mathscr{W}_t$. The inequality
\[
  \Psi^{(t)}(\iii\jjj) \leq \Psi^{(t)}(\iii)\Psi^{(t)}(\jjj)
\]
follows straightforwardly by taking the maximum over $(W_j)_{j=1}^k \in \mathscr{W}_t$. We have established \eqref{eq:qm2} and the proof of the theorem is complete.
\end{proof}

Let us next extend Theorem \ref{thm:technical-irreducible} into the completely reducible case. Observe that Theorem \ref{thm:conc} follows immediately from Theorem \ref{thm:technical-completely} which we state shortly. Indeed, by recalling for example \cite[\S 3.4]{KaenmakiMorris2018}, we have
\begin{equation*}
  \fii^s(A) = \|A^{\wedge \lfloor s \rfloor}\|^{\lceil s \rceil - s}\|A^{\wedge \lceil s \rceil}\|^{s - \lfloor s \rfloor}
\end{equation*}
for all $A \in \GL_d(\R)$ and $0 \le s < d$ with the convention that $\|A^{\wedge 0}\|=1$. Therefore, if $\A = (A_i)_{i \in \I} \in \GL_d(\R)^{\I}$ is completely reducible, then, by Proposition \ref{pr:cr}, also $\A^{(\lfloor s \rfloor)} = (A_i^{\wedge \lfloor s \rfloor})_{i \in \I} \in \GL(\wedge^{\lfloor s \rfloor}\R^d)^{\I}$ and $\A^{(\lceil s \rceil)} = (A_i^{\wedge \lceil s \rceil})_{i \in \I} \in \GL(\wedge^{\lceil s \rceil}\R^d)^{\I}$ are completely reducible, and Theorem \ref{thm:technical-completely} shows there exist an integer $p$ such that
\begin{equation*}
  \begin{cases}
    1 \le p \le \dim \wedge^{\lfloor s \rfloor}\R^d = \binom{d}{\lfloor s \rfloor}, &\text{if } s = \lfloor s \rfloor, \\ 
    1 \le p \le \dim \wedge^{\lfloor s \rfloor}\R^d \dim \wedge^{\lceil s \rceil}\R^d = \binom{d}{\lfloor s \rfloor}\binom{d}{\lceil s \rceil}, &\text{if } s > \lfloor s \rfloor,
  \end{cases}
\end{equation*}
and the functions $\Phi_s^{(t)} = \Psi_{\lceil s \rceil - s, s - \lfloor s \rfloor}^{(t)}$, $t \in \{1,\ldots,p\}$, satisfy the claimed properties.

\begin{theorem} \label{thm:technical-completely}
  Let $k \geq 1$, let $\I$ be finite or countably infinite, and for each $j \in \{1,\ldots,k\}$ let $V_j$ be a real inner product space and let $\A^{(j)}=(A^{(j)}_i)_{i\in\I}=(\bigoplus_{t_j=1}^{r_j}B^{(j,t_j)}_i)_{i\in\I}\in\GL(V_j)^\I$ be completely reducible. Then there exist an integer $p$ such that
  \begin{equation*}
    1 \le p \le \Bigl( \min_{j \in \{1,\ldots,k\}} \frac{r_j}{\dim V_j} \Bigr) \prod_{j=1}^k \dim V_j \le \prod_{j=1}^k \dim V_j
  \end{equation*}
  with functions $\Psi^{(1)}_{(\cdot)},\ldots,\Psi^{(p)}_{(\cdot)} \colon [0,\infty)^k \times \I^* \to (0,\infty)$, constants $\kappa,\tau>0$, and a finite set $F \subset \I^*$ such that writing $\beta = \sum_{j=1}^k \beta_j$ the following three properties hold: 
  \begin{enumerate}[(i)]
    \item\label{it:tech-comp1} We have
    \begin{equation*}
      \tau^{\beta} \prod_{j=1}^k \|A_\iii^{(j)}\|^{\beta_j} \le \max_{t \in \{1,\ldots,p\}} \Psi^{(t)}_{\beta_1,\ldots,\beta_k}(\iii) \le \prod_{j=1}^k \|A_\iii^{(j)}\|^{\beta_j}
    \end{equation*}
    for all $\iii \in \I^*$.
    \item\label{it:tech-comp2} For every $t \in \{1,\ldots,p\}$ we have
    \begin{equation*}
      \Psi^{(t)}_{\beta_1,\ldots,\beta_k}(\iii\jjj) \le \Psi^{(t)}_{\beta_1,\ldots,\beta_k}(\iii)\Psi^{(t)}_{\beta_1,\ldots,\beta_k}(\jjj) \le \kappa^{-\beta} \max_{\kkk \in F} \Psi^{(t)}_{\beta_1,\ldots,\beta_k}(\iii\kkk\jjj)
    \end{equation*}
    for all $\iii,\jjj \in \I^*$.
    \item\label{it:tech-comp3} For every $t \in \{1,\ldots,p\}$ and $\iii \in \I^*$ the function $(\beta_1,\ldots,\beta_k) \mapsto \Psi^{(t)}_{\beta_1,\ldots,\beta_k}(\iii)$ defined on $[0,\infty)^k$ is continuous.
  \end{enumerate}
\end{theorem}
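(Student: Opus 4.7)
The plan is to reduce Theorem \ref{thm:technical-completely} to Theorem \ref{thm:technical-irreducible} by enumerating over multi-indices of irreducible blocks. For each $j \in \{1,\ldots,k\}$, write $V_j = \bigoplus_{t_j=1}^{r_j} W_{j,t_j}$ where $W_{j,t_j}$ is the invariant subspace on which $\A^{(j)}$ restricts to the irreducible tuple $(B^{(j,t_j)}_i)_{i \in \I}$. For each multi-index $\underline{t} = (t_1,\ldots,t_k) \in \prod_{j=1}^k \{1,\ldots,r_j\}$, I would apply Theorem \ref{thm:technical-irreducible} to the family of $k$ irreducible representations $((B^{(j,t_j)}_i)_{i \in \I})_{j=1}^k$, obtaining functions $\Psi^{(\underline{t},s)}_{\beta_1,\ldots,\beta_k}$ for $s \in \{1,\ldots,p_{\underline{t}}\}$ together with constants $\kappa_{\underline{t}},\tau_{\underline{t}}>0$ and a finite set $F_{\underline{t}} \subset \I^*$ satisfying \eqref{eq:qm2} and \eqref{eq:max-equivalence} in the restricted setting. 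I would then relabel the whole collection $\{\Psi^{(\underline{t},s)}\}_{\underline{t},s}$ as $\Psi^{(1)},\ldots,\Psi^{(p)}$ with $p = \sum_{\underline{t}} p_{\underline{t}}$, and set $F = \bigcup_{\underline{t}} F_{\underline{t}}$, $\kappa = \min_{\underline{t}} \kappa_{\underline{t}}$, $\tau = \min_{\underline{t}} \tau_{\underline{t}}$.

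The observation linking the irreducible blocks to the original tuple is that for any block-diagonal $A^{(j)}_\iii = \bigoplus_{t_j} B^{(j,t_j)}_\iii$ and $\beta_j \ge 0$ one has $\|A^{(j)}_\iii\|^{\beta_j} = \max_{t_j}\|B^{(j,t_j)}_\iii\|^{\beta_j}$, and since the factors are independent across $j$ this upgrades to
\begin{equation*}
  \prod_{j=1}^k \|A^{(j)}_\iii\|^{\beta_j} = \max_{\underline{t}} \prod_{j=1}^k \|B^{(j,t_j)}_\iii\|^{\beta_j}.
\end{equation*}
Property \eqref{it:tech-comp1} would then follow from \eqref{eq:max-equivalence} applied within each block: the upper bound because each $\Psi^{(\underline{t},s)}$ is dominated by $\prod_j\|B^{(j,t_j)}_\iii\|^{\beta_j} \le \prod_j\|A^{(j)}_\iii\|^{\beta_j}$, and the lower bound by choosing the optimal $\underline{t}$ in the display above. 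Property \eqref{it:tech-comp2} is inherited blockwise from \eqref{eq:qm2}, since enlarging $F_{\underline{t}}$ to $F$ and decreasing $\kappa_{\underline{t}}$ to $\kappa$ preserves the inequality. Property \eqref{it:tech-comp3} is immediate because each $\Psi^{(\underline{t},s)}_{\beta_1,\ldots,\beta_k}(\iii)$ is a finite maximum of products of positive real powers.

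The remaining issue is the bound on $p$. Letting $\ell_{j,t_j}\ge 1$ be the least dimension of a nonzero subspace of $W_{j,t_j}$ with finite $(B^{(j,t_j)}_i)_{i \in \I}$-orbit, Theorem \ref{thm:technical-irreducible} gives $p_{\underline{t}} \le \min_{t^*}\prod_{j \ne t^*}(\dim W_{j,t_j}/\ell_{j,t_j})$. Using the pointwise inequality $\sum \min \le \min \sum$, for any fixed $t^* \in \{1,\ldots,k\}$ we have
\begin{equation*}
  p = \sum_{\underline{t}} p_{\underline{t}} \le \sum_{\underline{t}}\prod_{j \ne t^*}\frac{\dim W_{j,t_j}}{\ell_{j,t_j}} = r_{t^*}\prod_{j \ne t^*}\sum_{t_j=1}^{r_j}\frac{\dim W_{j,t_j}}{\ell_{j,t_j}} \le r_{t^*}\prod_{j \ne t^*}\dim V_j,
\end{equation*}
using $\ell_{j,t_j} \ge 1$ and $\sum_{t_j}\dim W_{j,t_j} = \dim V_j$. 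Taking the minimum over $t^*$ and rewriting the resulting expression as $(r_{t^*}/\dim V_{t^*})\prod_j \dim V_j$ yields the claimed bound, while $p \ge 1$ is immediate. The only real obstacle is the bookkeeping in this last step --- exchanging the minimum over the excluded index with the summation over multi-indices so that the bound assumes the clean product form stated in the theorem.
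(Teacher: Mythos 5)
Your proposal is correct and follows essentially the same route as the paper's proof: enumerate multi-indices of irreducible diagonal blocks, apply Theorem \ref{thm:technical-irreducible} to each, collect the resulting functions and constants, use the identity $\prod_j\|A^{(j)}_\iii\|^{\beta_j}=\max_{\underline{t}}\prod_j\|B^{(j,t_j)}_\iii\|^{\beta_j}$ for part (i), and bound $p$ by factoring the sum over multi-indices into a product. The only cosmetic difference is that the paper permutes the indices $j$ so that the minimising $t^*$ becomes $k$ and drops the $\ell_{j,t_j}\ge 1$ factors at the start, whereas you carry the minimum over $t^*$ and the $\ell_{j,t_j}$ through to the end — this changes nothing.
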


\begin{proof}
  Let
  \begin{equation*}
    \mathfrak{R}=\{(t_1,\ldots,t_k) \in \N^k \colon t_j \in \{1,\ldots,r_j\}\text{ for all }j \in \{1,\ldots,k\}\}
  \end{equation*}
  and observe that $(B_i^{(j,t_j)})_{i \in \I}$ is irreducible for every $j \in \{1,\ldots,k\}$. Note also that there exists a splitting $V_j = \bigoplus_{t_j=1}^{r_j} V_{j,t_j}$ such that $A_iV_{j,t_j} = B_i^{(j,t_j)}V_{j,t_j} = V_{j,t_j}$. For each $\mathfrak{r} = (t_1,\ldots,t_k) \in \mathfrak{R}$ let $\Psi^{(\mathfrak{r},t)}_{\beta_1,\ldots,\beta_k}$ be the functions associated to irreducible tuples $(B_i^{(j,t_j)})_{i \in \I}$, $j \in \{1,\ldots,k\}$, given by Theorem \ref{thm:technical-irreducible}. By definition, the functions $(\beta_1,\ldots,\beta_k) \mapsto \Psi^{(\mathfrak{r},t)}_{\beta_1,\ldots,\beta_k}(\iii)$ are clearly continuous proving \eqref{it:tech-comp3} and therefore, for the rest of the proof, we may consider $\beta_1,\ldots,\beta_k \ge 0$ being fixed and omit it in notation of $\Psi^{(\mathfrak{r},t)}_{\beta_1,\ldots,\beta_k}$.

  By permuting the indices $j \in \{1,\ldots,k\}$, we assume without loss of generality that $\min_{j \in \{1,\ldots,k\}} r_j/\dim V_j = r_k/\dim V_k$. For a fixed $\mathfrak{r} = (t_1,\ldots,t_k) \in \mathfrak{R}$, Theorem \ref{thm:technical-irreducible} shows that there are at most $p_{\mathfrak{r}} \le \prod_{j=1}^{k-1} \dim V_{j,t_j}$ many functions $\Psi^{(\mathfrak{r},t)}$. Therefore, the total number $p$ of functions $\Psi^{(\mathfrak{r},t)}$ is bounded above by
  \begin{align*}
    p &\le \sum_{\mathfrak{r} \in \mathfrak{R}} \prod_{j=1}^{k-1} \dim V_{j,t_j} = r_k\prod_{j=1}^{k-1} \sum_{t_j=1}^{r_j} \dim V_{j,t_j} \\ 
    &= r_k\prod_{j=1}^{k-1} \dim V_j = \Bigl( \min_{j \in \{1,\ldots,k\}} \frac{r_j}{\dim V_j} \Bigr) \prod_{j=1}^k \dim V_j
  \end{align*}
  as claimed. 

  For each $\mathfrak{r} = (t_1,\ldots,t_k) \in \mathfrak{R}$, let the constants $\kappa_{\mathfrak{r}}, \tau_{\mathfrak{r}} > 0$ and the finite set $F_{\mathfrak{r}} \subset \I^*$ be as in Theorem \ref{thm:technical-irreducible}. Define $\kappa = \min_{\mathfrak{r} \in \mathfrak{R}} \kappa_{\mathfrak{r}}$, $\tau = \min_{\mathfrak{r} \in \mathfrak{R}} \tau_{\mathfrak{r}}$, and $F = \bigcup_{\mathfrak{r} \in \mathfrak{R}} F_{\mathfrak{r}}$. Observe that \eqref{it:tech-comp2} follows immediately from Theorem \ref{thm:technical-irreducible}. Since
  \begin{equation*}
    \tau^{\beta} \prod_{j=1}^k \|B_\iii^{(j,t_j)}\|^{\beta_j} \leq \max_{t \in \{1,\ldots,p_{\mathfrak{r}}\}} \Psi^{(\mathfrak{r},t)}(\iii) \leq \prod_{j=1}^k \|B_\iii^{(j,t_j)}\|^{\beta_j}
  \end{equation*}
  by Theorem \ref{thm:technical-irreducible} and
  \begin{equation*}
    \prod_{j=1}^k \|A_{\iii}^{(j)}\|^{\beta_j} = \max_{\mathfrak{r}\in\mathfrak{R}} \prod_{j=1}^k \|B_{\iii}^{(j,t_j)}\|^{\beta_j},
  \end{equation*}
  we have shown \eqref{it:tech-comp1} and finished the proof.
\end{proof}

\end{document}